\newtheorem{theorem}{Theorem}[section]
\newtheorem{lemma}[theorem]{Lemma}
\newtheorem{prop}[theorem]{Proposition}
\newtheorem{corollary}[theorem]{Corollary}
 \newtheorem{definition}[theorem]{Definition}
 \newtheorem{note}[theorem]{Note}
\newenvironment{remark}{\noindent \textbf{Remark}.}{\hfill $\square$}
\renewcommand{\Re}{\mathop{\rm Re}\nolimits}
\renewcommand{\Im}{\mathop{\rm Im}\nolimits}
\newcommand{\sgn}{\mathop{\rm sgn}\nolimits}
\numberwithin{equation}{section} \makeindex
\begin{document}

\title[Global existence for a translating  bubble]{Global
existence for a translating near-circular Hele-Shaw bubble with surface tension}
\author{ J. Ye$^1$ and S.  Tanveer$^2$  }
\thanks{$^1$  Department of Mathematics, Ohio State University, Columbus, OH 43210 (jenny$_{-}$yeyj@math.ohio-state.edu).}
\thanks{$^2$  Department of Mathematics, Ohio State University, Columbus, OH 43210 (tanveer@math.ohio-state.edu).}

\maketitle

\today
\bigskip

\begin{abstract}This paper concerns
global existence for arbitrary nonzero surface tension of
bubbles in a Hele-Shaw cell that translate in the presence of a pressure
gradient.
When the cell width to bubble size is sufficiently large,
we show that a unique steady translating near-circular bubble symmetric
about the channel centerline exists, where the bubble translation speed in
the laboratory frame is found
as part of the solution.
We prove
global existence for symmetric sufficiently smooth initial conditions
close to this shape and show that the steady translating
bubble solution is an attractor within this class of disturbances.
In the absence of side walls, we prove stability of the steady
translating circular bubble without restriction
on symmetry of initial conditions.
These results hold for any
nonzero surface tension despite the fact that a local planar approximation
near the front of the bubble would suggest
Saffman Taylor instability.

We exploit a boundary integral approach
that is particularly suitable for analysis of nonzero viscosity ratio
between fluid inside and outside the bubble.
An important element of the proof was
the introduction of a weighted Sobolev norm that
accounts for stabilization due to advection of disturbances
from the front to the back of the bubble.
\\{\bf Keywords:} Free boundary problem,  Dissipative equations, Hele-Shaw problem, Translating bubbles, Surface tension
\\{\bf Mathematics Subject Classification:} 35K55, 35R35, 76D27
\end{abstract}

\bigskip

\section{introduction}
The displacement of a more viscous fluid by a less viscous one
in a
Hele-Shaw cell is a canonical
problem in a much wider class of Laplacian growth problems that include
dendritic crystal growth,
electrochemical growth, diffusion limited aggregation,
filtration combusion and tumor growth.
It has attracted many physicists and mathematicans.
In the recent two decades, there are many reviews about this subject
(Saffman \cite{PG2}, Bensimon {\it {et
al.}} \cite{BE1},  Homsy \cite{HO1},  Pelce \cite{PE1},  Kessler {\it
et al.} \cite{KE1},  Tanveer \cite{Tanveer3} \& \cite{Tanveer1},
Hohlov \cite{HY1}, and Howison \cite{HOW2} \& \cite{HO}).

There is a vast literature on zero surface tension problem though
the initial value problem in this case is ill-posed \cite{Howison},
\cite{FokasTanveer} and not always physically relevant
[See \cite{Tanveer1} for detailed discussion of this issue].  With surface
tension, there are rigorous local existence results for
general initial conditions
both for one and two phase problems
\cite{DR1},  \cite{ES2} using different approaches. Also there are  some
global existence and nonlinear stability
results \cite{PM}, \cite{FR} for one and two phase Hele-Shaw for
near-circular initial shapes in the absence of any forcing such as
fluid injection
or pressure gradient.
These have been generalized to
non-Newtonian one phase fluids \cite{ES1}. There are similar results
available for the two phase Stefan problem \cite{ES3}, \cite{PR},
which is mathematically close to but distinct from the
two-phase Hele-Shaw (also called Muskat problem)
being studied here. It is well recognized that
 global existence problem with surface tension
for arbitrary initial
shape is a difficult open problem\footnote{Note the "stable" problem
where
a more viscous fluid displaces a less viscous fluid is relatively simple
and will not be considered here;
there are many global results available in this case.}
though there is quite a substantial literature
involving formal asymptotic and numerical
computations (see cited reviews above).
Even the restricted problem of
stability of steadily propagating shapes such as a
semi-infinite finger \cite{Xie1}, \cite{Xie2} or a finite
translating bubble \cite{Xie2}
for nonzero surface tension remains an open problem for
rigorous analysis.
Translation causes
complications in global analysis due to a less viscous
fluid displacing a more viscous fluid --
a planar front is known to be unstable \cite{PG1} in this case.

This paper considers the motion of a bubble in a Hele-Shaw cell
subject to an external pressure gradient that causes the bubble to
translate. We scale the fluid velocity at $\infty$ in the laboratory
frame to be $1$; we choose $u_0$ so that the non-dimensional
velocity of the fluid   at ${+\infty}$ in the frame of a steady
bubble \footnote{This choice implies that the steady bubble
translates along the positive $x$-axis with non-dimensional speed $2+u_0$ in the
laboratory frame.} along the positive $x$-axis 
is $-(u_0+1)$. The analysis presented also
includes proof of existence and uniqueness of a steady bubble
solution together with determination of $u_0$. We choose the steady
bubble perimeter to be $2\pi$; this corresponds to
nondimensionalizing all length scales appropriately. The
non-dimensional half width of the Hele-Shaw cell will be denoted by
$\frac{\pi}{\beta}$.

The  two-phase Hele-Shaw problem
in the steady bubble frame is described mathematically
as follows:
$\Omega_2 (t) \subset\mathbb{R}^2$
is a simply connected bounded domain occupied by a
fluid with viscosity $\mu_2$ at time $t$, while a different
fluid of viscosity $\mu_1 > \mu_2 $\footnote{The assumption $\mu_1>\mu_2$ is not necessary in the analysis.} occupies
$\Omega_1(t) $, where $\Omega_1(t)\cup\Omega_2(t)$ constructs the strip which half width is $\frac{\pi}{\beta}$, i. e., $\{(x,y)|x\in\mathbb{R},-\frac{\pi}{\beta}<y<\frac{\pi}{\beta}\}$.
We define functions $\phi_1$  and $\phi_2$,
outside and inside $\Omega_2 $ such that
\begin{equation*}\left\{\begin{aligned}
\Delta \phi_1&=0 \mbox{  in  }{\Omega}_1,\\
\Delta \phi_2&=0 \mbox{  in  }\Omega_2,\\
\phi_1&\rightarrow -(u_0+1)x+O(1), \mbox{ as }(x,y)\rightarrow\infty,\\
\frac{\partial\phi_1}{\partial y}&\big(x,\pm\frac{\pi}{\beta}\big)=0, \mbox{ for }x\in\mathbb{R}.
\end{aligned}\right.\tag{O.1}
\end{equation*}

On the free boundary $\partial\Omega_1\cap\partial\Omega_2$ between two fluids,
we require two conditions:
\begin{equation*}\left\{\begin{aligned}
(2+u_0)x&+\phi_1-\frac{\mu_2}{\mu_1}\phi_2=\sigma\kappa,\\
\frac{\partial\phi_1}{\partial n}&=\frac{\partial\phi_2}{\partial n}=v_n,
\end{aligned}\right.\tag{O.2}
\end{equation*}
where $\sigma$ is the coefficient of
surface tension, ${\bf n}$ is the inward unit normal vector
on $\partial\Omega_1\cap\partial\Omega_2$, and $v_n$ is the normal velocity
of the interface. The first condition corresponds to jump in pressure
balanced by surface tension, while
the second is the usual kinematic condition requiring
that the normal motion of
a point on the interface equals
normal fluid velocity on either side of the interface.

The global existence analysis for arbitrary surface tension
is complicated by the far-field pressure gradient that causes
bubble translation since
a planar interface
under the same condition is susceptible to well-known
Saffman-Taylor instability. This difficulty arises both for
finite ($\beta \ne 0$) and infinite cell-width ($\beta =0$).
Locally, near the front of the bubble,
at sufficiently small scale a planar approximation would appear reasonable.
However, some formal arguments
\cite{BE1}, \cite{DE}.
supported by numerical calculations
have suggested that
stabilization occurs on a curved interface through
advection of disturbances from the front of the interface to the
sides. These conclusions are not universally accepted since
formal calculations \cite{JJX}
based on a multi-scale hypothesis suggest
that the steady state is linearly unstable
for sufficiently small surface tension.
Here we resolve this controversy rigorously in favor of stability
at least in the case of a Hele-Shaw bubble with distant sidewalls for
any nonzero surface tension.\footnote{It is to be noted that
the problem tackled here
is not equivalent to taking $O(1)$ sidewall separation and making
bubble size sufficiently small for fixed surface tension, since
if we scale down
bubble size, we must also scale down surface tension values to make
an equivalent problem. In the
small bubble limit any fixed surface tension dominates
translational effects; in our choice of length scale,
this would correspond only to
the simpler case of only sufficiently large $\sigma$.}
We have
introduced a weighted Sobolev space suitable for
controlling terms arising from bubble translation
for any nonzero surface tension $\sigma$.
We are unaware
of any previous work
for global control of small disturbances
superposed on a steadily translating curved interface in Hele-Shaw or
any other related problems.

In the present paper,
we  use a boundary integral formulation due to Hou {\it et al}
\cite{HL1}. This formulation has been widely used for numerical calculations
in a wide variety of free boundary problems involving
Laplace's equation.
Ambrose \cite{AD3} has recently
used this formulation to prove local existence for the Hele-Shaw flow of
general initial shapes \cite{AD3} without surface tension.
Given the wide use of boundary integral methods in computations,
one motivation for the present paper is to further
develop the mathematical machinery associated with this method
so as to be applicable to more general
existence problems.

Adapting the equal arc-length vortex sheet formulation of
Hou {\it et al} \cite{HL1} to the
present geometry,
the boundary curve between the two fluids of differing viscosities is
described parametrically at any time $t$ by $z=x(\alpha,t)+iy(\alpha,t)$, where
$\alpha$ is chosen so that
$z(\alpha+2
\pi,t)=z(\alpha,t)$. We introduce $\theta$ so that
$\frac{\pi}{2}+\alpha+\theta$ is the angle between the
tangent to the curve and the positive $x$-axis
as the boundary is traversed counter-clockwise
with increasing $\alpha$.
Hou {\it et al} \cite{HL2} observed that
a
choice\footnote{This choice or any other choice of
tangential speed of points on the interface has no effect on the interface
shape itself.}
of the tangent velocity $T$
is possible so that the rate of change of arc-length
$s_\alpha \equiv |z_\alpha| $
is independent of $\alpha$ and
corresponds to an equal arc-length interface
parametrization.
They also observed
that this choice
simplifies the evolution equation for $\theta$, and used it in their computational scheme. Note in this equal arc-length formulation  $z_\alpha=x_\alpha+i y_\alpha=\frac{L}{2\pi} e^{i\pi/2+i\alpha+i\theta}$, where $L$ is perimeter length of interface.
Then the unit tangent vector on the interface ${\bf t}=\big(-\sin(\alpha+\theta),\cos(\alpha+\theta)\big)$ and the unit normal vector pointing inward at bubble interface is ${\bf n}=\big(-\cos(\alpha+\theta),-\sin(\alpha+\theta)\big)$.
\begin{definition}
\label{def1.1}
Let $r\geq0$. The Sobolev
space $H^r_p $ is the set of all
$2\pi$-periodic function $f=\sum_{-\infty}^{\infty}\hat{f}(k)
e^{ik\alpha}$ such that
\begin{displaymath}
\|f\|_r=\sqrt{\sum_{k=-\infty}^{\infty}|k|^{2r}|\hat{f}(k)|^2+|\hat{f}(0)|^2}<\infty.
\end{displaymath}
\end{definition}
\begin{note}
\label{note1.2} For $f, g \in H^r_p
$, the Banach Algebra property $ \| f g \|_r \le C_r \| f
\|_r \| g \|_r $ for $r \ge 1 $ with some constant $C_r$ depending on
$r$ is easily proved and will be useful in the sequel.
Also, in what follows the $\hat{}$ symbol will reserved for
Fourier components.
\end{note}
\begin{definition}
\label{def1.3} The Hilbert transform, $\mathcal{H}$, of a function
$f \in H^0_p $ ({\it i.e.} $L_2$) with Fourier
Series $f=\sum_{-\infty}^{\infty}\hat{f}(k) e^{ik\alpha}$ is given
by
\begin{eqnarray}
\mathcal{H}[f] (\alpha)&=&\frac{1}{2
\pi}\mbox{PV}\int_{0}^{2\pi}f(\alpha')\cot{\frac{1}{2}(\alpha-\alpha')}d\alpha'\nonumber\\
&=&\sum_{k\neq 0}-i\sgn(k)\hat{f}(k) e^{ik\alpha}.\nonumber
\end{eqnarray}
\end{definition}

\begin{note}
\label{note1.4} For $f\in  H^1_p $,
the Hilbert transform commutes with differentiation.
We will denote derivative with respect to $\alpha$, either by $D_\alpha$ or
subscript $\alpha$. Also, for the sake of brevity of notation, the
time $t$ dependence will often be omitted, except where it might
cause confusion otherwise.
\end{note}
\medskip

\begin{definition}
\label{def1.7} We define the operator $\Lambda$ to be a derivative
followed by the Hilbert transform: $\Lambda=\mathcal{H}D_\alpha$.
Following Ambrose  \cite{AD3}, we also define commutator
$$ [\mathcal{H},f]g =
\mathcal{H}(fg)-f\mathcal{H}(g).$$
\end{definition}
\begin{note}\label{note1.8}
It is clear that
\begin{displaymath} \Big(\int_0^{2 \pi}\big(
f^2+f\Lambda f\big)d\alpha\Big)^{1/2}
\end{displaymath}
is  equivalent  to $H^{1/2}_p $ norm of
a real-valued $2\pi$-periodic function $f$.
Further, note the operator $\Lambda$ is
self-adjoint in $H^{1/2}_p $ Hilbert space.
\end{note}

\medskip

\begin{definition}
\label{def1.9}   We  define a linear integral operator $\mathcal{K}
[z]$, depending on $z$, as
\begin{equation}\label{1.8}
\mathcal{K}[z]f=\frac{1}{2\pi i}\int_{\alpha-\pi}^{\alpha+\pi}
 f(\alpha')\Big\{K(\alpha,\alpha'\big)-\frac{1}{2z_{\alpha}(\alpha')}\cot{\frac{1}{2}(\alpha-\alpha')}\Big\}d\alpha',
\end{equation}
where
for $\beta=0$,
\begin{equation}\label{kbeta0}{K}(\alpha,\alpha')=\frac{1}{z(\alpha)-z(\alpha')};
\end{equation}
for $\beta\neq0$,
\begin{equation}\label{kbeta}
{K}(\alpha,\alpha')=\frac{\beta}{4}\coth\Big[\frac{\beta}{4}\big(z(\alpha)-z(\alpha')\big)\Big]-\frac{\beta}{4}\tanh\Big[\frac{\beta}{4}\big(z(\alpha)-z^{\ast}(\alpha')\big)\Big].
\end{equation}
\end{definition}

\medskip

\begin{remark} For $2 \pi$-periodic functions $f$ and $z$, it is clear that the upper and lower
limits of the integral above can be replaced by $a$ and $a+2\pi$ respectively for arbitrary $a$. \end{remark}

\begin{definition}
\label{def1.10} We define a complex valued operator $\mathcal{G}[z]$,
depending on $z$, so that
\begin{equation}
\label{1.10} \mathcal{G}[z]\gamma =
z_\alpha\Big[\mathcal{H},\frac{1}{z_\alpha}\Big]\gamma +2i
z_\alpha\mathcal{K}\big[z\big]\gamma.
\end{equation}
It is also convenient to define a related
real operator $\mathcal{F} [z]$, depending on $z$, so that
\begin{equation}
\label{1.11} \mathcal{F} [z] \gamma = \Re \Big (
\frac{1}{i} \mathcal{G} [z] \gamma \Big ).
\end{equation}
\end{definition}

From the Hou {\it et al} \cite{HL2} equal arc-length formulation,  the Hele-Shaw equations (O.1)-(O.2)  reduce  to the following evolution equations for the boundary $\partial\Omega_1\cap\partial\Omega_2$:
 \begin{equation*}
\left\{\begin{aligned}
\theta_t(\alpha,t)&=\frac{2\pi}{L}U_\alpha(\alpha,t)+\frac{2\pi}{L}T(\alpha,t)\big(1+\theta_\alpha(\alpha,t)\big),\\
L_t(t)&=-\int_0^{2\pi}\big(1+\theta_\alpha(\alpha,t)\big) U(\alpha,t) d\alpha,
\end{aligned}\right.\tag{A.1}
\end{equation*}
where
$U$
is the normal interface velocity, determined from
\begin{multline}
\label{1.12} U(\alpha,t) =
\frac{2\pi}{L}\Re\Big(\frac{z_\alpha}{2\pi}\mbox{PV}\int_{\alpha-\pi}^{\alpha+\pi}
\gamma(\alpha')K(\alpha,\alpha')d\alpha'\Big)
+(u_0+1)\cos\left(\alpha+\theta(\alpha)\right)
\\
= \frac{\pi}{L} \mathcal{H} [\gamma] + \frac{\pi}{L}
\Re \left ( \mathcal{G} [z] \gamma \right
)+(u_0+1)\cos\left(\alpha+\theta(\alpha)\right),
\end{multline}
vortex sheet $\gamma$ and the tangent interface velocity are determined, respectively, by
 \begin{equation*}
 \gamma(\alpha,t)=- a_\mu\mathcal{F}[z]\gamma(\alpha,t)+\frac{L}{\pi}\big(1+\frac{\mu_2}{\mu_1+\mu_2}u_0\big)\sin(\alpha+\theta)+\frac{2\pi}{L}\sigma
\theta_{\alpha \alpha},\tag{A.2}
\end{equation*}
\begin{equation*}
T(\alpha,t)=\int_0^{\alpha}\big(1+\theta_{\alpha'}(\alpha',t)\big)U(\alpha',t)d\alpha'-\frac{\alpha}{2\pi}\int_0^{2\pi}\big(1+\theta_\alpha(\alpha,t)\big)
U(\alpha,t) d\alpha, \tag{A.3}
\end{equation*}
where $a_\mu = \frac{\mu_1 - \mu_2}{\mu_1+\mu_2} $

\medskip

For ({A.1})-(A.3), the initial conditions are
\begin{eqnarray}
\label{1.4} \theta(\alpha,0)=\theta_0(\alpha),\,\,L(0)=L_0.
\end{eqnarray}

\begin{note}\label{noteyzero} Since $\big(x_t(\alpha,t),y_t(\alpha,t)\big)=U{\bf n}+T{\bf t}$,
 (A.3)  implies that the interface evolution at $\alpha=0$ is given by
$\big(x_t(0,t),y_t(0,t)\big)=U(0,t) {\bf n}(0,t)$. In particular, this implies
\begin{equation}\label{y0}
y_t(0,t)=-U(0,t)\sin\big(\theta(0,t)\big), \mbox {\rm with ~initial
~~condition}~y (0, 0) = y_0.
\end{equation}

\end{note}

\begin{definition}
\label{def2.17} We denote the bubble area by $V$. From geometric
consideration,
\begin{eqnarray}
\label{5.5}  V = \frac{1}{2}\Im\int_0^{2\pi}z_\alpha
z^{\ast}d\alpha.
\end{eqnarray}
\end{definition}

\begin{remark}
It is well known (indeed easily seen from (O.1)) that the bubble area $V$
will remain invariant in time.
That this is also implied by the boundary integral
formulation (A.1) is not as obvious and is shown in \S 2.
\end{remark}

\begin{definition}
\label{def1.5} We introduce a family of  projections
$\{\mathcal{Q}_n\}$ such that
\begin{displaymath}
\mathcal{Q}_nf=f-\sum_{k=-n}^{n}\hat{f}(k)e^{ik\alpha}
\end{displaymath}
where $f=\sum_{-\infty}^{\infty}\hat{f}(k) e^{ik\alpha}$ and
$n\in\mathbb{Z}^{+}\cup\{0\}$. Henceforth, we will define
$\tilde\theta=\mathcal{Q}_1\theta$.
\end{definition}

\begin{definition}
\label{def1.6} We define $\dot{H}^r$ as a subspace of $H^r_p $ containing
real valued functions so
that $\phi\in\dot{H}^r$ implies $\mathcal{Q}_1\phi=\phi$. Note in
this subspace, $\|\phi\|_r=\|D^r_\alpha\phi\|_0$ for $r\ge1$.
\end{definition}

Without sidewalls, {\it i.e.} for $\beta =0$, our
main result is as follows:
\begin{theorem}\label{theo1.3}{ For any surface tension $\sigma>0$ and $r \ge 3$,
there exists $\epsilon>0$ such that
if $\| \theta_0\|_r<\epsilon$
and $|L_0-2\pi| < \epsilon <\frac{1}{2}$,
then   there
exists a unique   solution $\big(\theta,  L\big)\in C\big([0,\infty),
H^r_p \times\mathbb{R}\big)$  to the Hele-Shaw  problem
(A.1)-(A.3) with the initial condition (\ref{1.4}).
 Further, $\|\tilde\theta\|_r$ and $|\hat\theta (\pm1;t)|$  each
decay exponentially as $t \rightarrow \infty$, $|\hat\theta(0;t)|$
remains finite,  while $L$ approaches $2\sqrt{\pi V}$ exponentially
implying that
a steady translating circular bubble is
asymptotically stable for sufficiently small initial disturbances
in the $H^r_p $ space.}
\end{theorem}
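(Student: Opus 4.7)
The plan is a continuation of a local-in-time solution via an a priori estimate organized by Fourier mode. The projected remainder $\tilde\theta = \mathcal{Q}_1\theta$, supported on modes $|k|\geq 2$, absorbs the third-order parabolic dissipation coming from surface tension; the three exceptional unknowns $\hat\theta(0)$, $\hat\theta(\pm 1)$ and the length $L$ are pinned by geometric identities (curve closure and area conservation) rather than by direct dissipation.

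Linearizing (A.1)--(A.3) around the steady circle $\theta\equiv 0$, $L=2\pi$, and combining the curvature drive $\gamma \approx \frac{2\pi\sigma}{L}\theta_{\alpha\alpha}$ in (A.2) with the Hilbert transform $\frac{\pi}{L}\mathcal{H}[\gamma]$ in (\ref{1.12}) produces
\[\theta_t = -\frac{4\pi^3\sigma}{L^3}\Lambda^3\theta + \mathcal{N}[\theta,L],\]
where $\mathcal{N}$ is of strictly lower differential order because $\mathcal{K}[z]$ and $\mathcal{G}[z]$ are smoothing of positive order for smooth $z$, and because the implicit equation (A.2) for $\gamma$ can be inverted via $(I+a_\mu\mathcal{F}[z])^{-1}$, bounded on $H^r_p$ near the circle since $|a_\mu|<1$. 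This parabolic third-order structure yields local existence and uniqueness in $C([0,T^*), H^r_p\times\mathbb{R})$ by a mollifier/Galerkin scheme along the lines of Ambrose \cite{AD3}. Projecting the $\theta$-equation onto $\mathcal{Q}_1$ and pairing with $\Lambda^{2r}\tilde\theta$ yields
\[\frac{d}{dt}\|\tilde\theta\|_r^2 \leq -\frac{8\pi^3\sigma}{L^3}\|\tilde\theta\|_{r+3/2}^2 + C(\|\theta\|_r,L)\,\|\tilde\theta\|_r\,\|\tilde\theta\|_{r+3/2},\]
the remainder controlled using the Banach-algebra property (Note \ref{note1.2}), the commutator bound for $[\mathcal{H},f]g$, and smoothing estimates for $\mathcal{K}[z]$. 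Since $\tilde\theta$ is supported on $|k|\geq 2$, $\|\tilde\theta\|_{r+3/2}^2 \geq 8\|\tilde\theta\|_r^2$, and Young's inequality then gives exponential decay of $\|\tilde\theta\|_r$ once $\|\theta\|_r$ is small enough to absorb the remainder into the dissipation.

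The remaining unknowns are controlled by three identities. First, curve closure $\int_0^{2\pi} z_\alpha\,d\alpha = 0$, expanded in Fourier series, yields $\hat\theta(\pm 1) = O(\|\tilde\theta\|_r^2)$ (the implicit self-coupling through the $\hat\theta(0)\hat\theta(\pm 1)$ cross-term is absorbed while $|\hat\theta(0)|$ stays small), so $|\hat\theta(\pm 1)|$ decays at twice the rate of $\tilde\theta$. Second, area invariance $V=\mathrm{const}$, established in \S 2 from (A.1) and Definition \ref{def2.17}, combined with the near-circular expansion $V = L^2/(4\pi) + O(\|\theta\|_r^2)$, gives $|L - 2\sqrt{\pi V}| = O(\|\theta\|_r^2)$, so $L$ decays exponentially to $2\sqrt{\pi V}$. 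Third, the zero Fourier mode of (A.1) reads $\frac{d}{dt}\hat\theta(0) = \frac{1}{L}\int_0^{2\pi} T(1+\theta_\alpha)\,d\alpha$, whose right-hand side is a product of exponentially decaying quantities and is therefore integrable in $t$, so $|\hat\theta(0;t)|$ remains bounded as $t\to\infty$. A bootstrap combining these with the energy decay of $\tilde\theta$ preserves $\|\theta\|_r + |L-2\sqrt{\pi V}| < \epsilon$ globally, extending the local solution to $[0,\infty)$ with the asymptotics claimed.

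The main obstacle is the commutator and lower-order bookkeeping in the energy estimate: extracting a clean $-c\|\tilde\theta\|_{r+3/2}^2$ dissipation from the nonlocal, variable-coefficient operators $\mathcal{G}[z]$ and $\mathcal{K}[z]$ requires tracking the near-diagonal cancellation between the kernel $K(\alpha,\alpha')$ in (\ref{kbeta0}) and the cotangent counterterm in (\ref{1.8}), inverting (A.2) for $\gamma$ uniformly in $t$, and showing that every remainder factorizes as $\|\tilde\theta\|_r\cdot\|\tilde\theta\|_{r+3/2}$ so Young's inequality can absorb it subordinate to the dissipation.
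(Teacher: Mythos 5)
Your overall architecture matches the paper's (project away the neutrally stable modes $\hat\theta(0),\hat\theta(\pm1)$ via the closure constraint and the area constraint, exploit the third-order parabolic dissipation on $\tilde\theta=\mathcal{Q}_1\theta$, bootstrap). But there is a genuine gap in the central energy estimate, and it is exactly the difficulty this paper exists to resolve. When you linearize (A.1)--(A.3) about the circle, the linear part acting on $\tilde\theta$ is \emph{not} just $-\sigma c\,\Lambda^3$: it is the full operator $\mathcal{A}$ of (\ref{Adef}),
\[
\mathcal{A}[\tilde\theta]
=\sum_{k\ge 2} e^{ik\alpha}\bigl(-\sigma d(k)\hat\theta(k)+m(k)\hat\theta(k+1)\bigr)+c.c.,
\]
where $m(k)\sim (1+a_\mu)k$ comes from linearizing the translation terms $(u_0+1)\cos(\alpha+\theta)$ and $T$ in (\ref{1.12})--(A.3). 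You have absorbed this advective coupling $m(k)\hat\theta(k+1)$ into "$\mathcal{N}[\theta,L]$" and then claimed it can be bounded as $C(\|\theta\|_r,L)\|\tilde\theta\|_r\|\tilde\theta\|_{r+3/2}$ with a constant that vanishes as $\|\theta\|_r\to0$. That is false: this term is linear in $\theta$ with an $O(1)$ coefficient (it is the Saffman--Taylor destabilizing mechanism), so its contribution to the $\|\cdot\|_r$ energy estimate is $\sim\|\tilde\theta\|_{r+1/2}^2$ with a constant that does \emph{not} go to zero as $\epsilon\to0$. Young's inequality then forces a term $\sim\sigma^{-1}\|\tilde\theta\|_r^2$ on the right, and the Poincar\'e-type gain from $|k|\ge2$ only yields decay when $\sigma$ exceeds a fixed threshold. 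Your argument therefore closes only for large surface tension, whereas the theorem is for every $\sigma>0$.

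The paper's fix is the weighted Sobolev norm $\|\cdot\|_{w,r}$ of Definition \ref{transdef5.8}, with weight $w(\sigma,k)=\sigma^{K-|k|}$ on the finitely many low modes $2\le|k|\le K(\sigma)$ and $w=1$ for $|k|>K$. Lemma \ref{lemAinner} shows that in this norm the cross-coupling $m(k)\hat\theta(k+1)$ becomes subordinate to the dissipation uniformly in $\sigma$, giving $(v,-\mathcal{A}v)_{w,r}\ge \tfrac{15\sigma}{64}\|v\|_{w,r+3/2}^2$. This exploits the one-sidedness of the coupling (modes $k\to k+1$ only, the mathematical expression of advection from the front to the back of the bubble); it is precisely the "important element of the proof" the abstract advertises, and it is missing from your proposal. (A secondary difference: the paper runs a Duhamel/contraction argument in the space-time space $H^r_\sigma$ with $e^{\sigma t}$ weight rather than your direct Gr\"onwall + continuation, but that is cosmetic once the weighted inner-product estimate is in hand.) Your treatment of $\hat\theta(\pm1)$ via the closure constraint, of $L$ via area conservation, and of $\hat\theta(0)$ via integrability of the right-hand side is all in line with (B.4)--(B.6) and Propositions \ref{prop2.6} and \ref{proparea}, and would survive.
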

\begin{remark} The proof is completed at the end of \S 4  (see Note \ref{translatingnote4.3}).
\end{remark}

We also consider the problem with finite cell-width ($\beta \ne 0$).
Here, we first prove the existence of a translating steady bubble;
more precisely
we have the following theorem:

\begin{theorem} \label{theo5.6} For any  surface tension $\sigma>0$ and
$r \ge 3$,
there exist for $\epsilon>0$, $\Upsilon>0$  two balls   $O_1=\big\{\beta\in
\mathbb{R}:0\leq\beta<\Upsilon\big\}$ and  $O_2=\big\{(u,v)\in
{H}_p^r\times \mathbb{R}\big|\|u\|_r< \epsilon, |v|<\epsilon\big\}$,
so that for sufficiently small $\epsilon$ and $\Upsilon$,
$\big(\theta^{(s)}, u_0\big)^{T}:O_1\rightarrow O_2$ is the unique
real valued
map
$\big(\theta^{(s)},u_0\big)$ determining the shape
and velocity of a steady translating bubble for $\beta\in O_1$.

Furthermore,
there exists $C$ independent of $\epsilon$ and $\Upsilon$ such that
$$\|\theta^{(s)}\|_r+|u_0|+\|\gamma^{(s)}-2\sin(\cdot)\|_{r-2}\le
C\beta^2,$$
and
$\theta^{(s)} $ is an odd function implying that
the bubble shape is symmetric about the channel centerline.
\end{theorem}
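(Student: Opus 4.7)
The plan is to cast the steady-bubble problem as a fixed-point equation and apply the implicit function theorem around the known $\beta=0$ solution, extracting the quadratic $\beta$-dependence from the analytic structure of the kernel $K$. In the frame moving with the bubble, steadiness forces $(x_t,y_t)\equiv 0$, i.e.\ $U\equiv0$ (and then $T\equiv 0$ automatically from (A.3)). Normalizing the perimeter to $L=2\pi$ and inserting (A.2) to determine $\gamma$ from $(\theta,u_0)$, the steady problem reduces to finding $(\theta,u_0)\in H_p^r\times\mathbb{R}$ solving
\begin{equation*}
\mathcal{N}(\theta,u_0;\beta):=\tfrac{1}{2}\mathcal{H}[\gamma]+\tfrac{1}{2}\Re\bigl(\mathcal{G}[z]\gamma\bigr)+(u_0+1)\cos(\alpha+\theta)=0.
\end{equation*}
At $\beta=0$ the unit circle $\theta=0$, $u_0=0$, $\gamma=2\sin\alpha$ is an exact solution: direct Fourier calculation gives $\tfrac{1}{2}\mathcal{H}[2\sin\alpha]=-\cos\alpha$, while $\mathcal{G}[z]\gamma$ vanishes identically on the unit circle since there $z_\alpha[\mathcal{H},1/z_\alpha]+2iz_\alpha\mathcal{K}_0[z]\equiv 0$.

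Next, I would Taylor-expand the kernel (\ref{kbeta}). Using $\coth w=\tfrac{1}{w}+\tfrac{w}{3}+O(w^3)$ and $\tanh w=w+O(w^3)$,
\begin{equation*}
\tfrac{\beta}{4}\coth\!\bigl[\tfrac{\beta\zeta}{4}\bigr]=\tfrac{1}{\zeta}+\tfrac{\beta^2\zeta}{48}+O(\beta^4),\qquad \tfrac{\beta}{4}\tanh\!\bigl[\tfrac{\beta\zeta}{4}\bigr]=\tfrac{\beta^2\zeta}{16}+O(\beta^4),
\end{equation*}
so $K(\alpha,\alpha';\beta)=K_0(\alpha,\alpha')+\beta^2 K_2(\alpha,\alpha')+O(\beta^4)$ with $K_2$ smooth in $\alpha,\alpha'$. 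Because only even powers of $\beta$ appear and $(\theta,u_0)=(0,0)$ is an exact solution at $\beta=0$, the residual satisfies $\|\mathcal{N}(0,0;\beta)\|_{H_p^{r-2}}=O(\beta^2)$. This is the source of the quantitative $C\beta^2$ bound.

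The core step is to verify that the Fr\'echet derivative $D_{(\theta,u_0)}\mathcal{N}(0,0;0)$ is an isomorphism onto $H_p^{r-2}$ modulo rigid translations. On the unit circle the operator diagonalizes in Fourier modes: the surface tension term $\tfrac{2\pi\sigma}{L}\theta_{\alpha\alpha}$ in (A.2) inserted into $\tfrac{1}{2}\mathcal{H}[\gamma]$ contributes $\tfrac{\sigma}{2}|k|^3$ on mode $k$, which dominates the bounded contributions from $[\mathcal{H},1/z_\alpha]$, $\mathcal{K}_0[z]$ and the $(u_0+1)\cos(\alpha+\theta)$ term for all $|k|\ge 2$; on $|k|=1$ the two-dimensional translation mode is cokernel, and one dimension is killed by pairing with the free parameter $u_0$ (whose derivative produces a nonzero $\cos\alpha$ contribution), while the remaining (vertical translation) ambiguity is removed by restricting to odd $\theta$. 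Banach-algebra estimates in $H_p^r$ for $r\ge 3$ (Note~\ref{note1.2}) and commutator bounds for $[\mathcal{H},f]$ make $\mathcal{N}$ smooth in all arguments, so the implicit function theorem yields a unique branch $(\theta^{(s)}(\beta),u_0(\beta))$ for $|\beta|<\Upsilon$, and the residual estimate gives $\|\theta^{(s)}\|_r+|u_0|\le C\beta^2$; reinserting into (A.2) yields $\|\gamma^{(s)}-2\sin(\cdot)\|_{r-2}\le C\beta^2$.

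Finally, symmetry is handled by equivariance. The transformation $y\mapsto -y$ acts on the parametrization as $\alpha\mapsto-\alpha$, $\theta\mapsto -\theta(-\cdot)$, $\gamma\mapsto -\gamma(-\cdot)$, $u_0\mapsto u_0$, and leaves (\ref{kbeta}) invariant because it maps $z\to z^*$ and $z^*\to z$. Hence the subspace of odd $\theta$ is invariant under the IFT fixed-point map, and uniqueness in $O_2$ forces the constructed $\theta^{(s)}$ to be odd, which is exactly symmetry about $y=0$. The main obstacle is Step~3: carefully verifying, uniformly in $k$, that the $\sigma|k|^3$ coercivity from surface tension dominates the lower-order nonlocal and advective contributions, and that the delicate interplay between the $u_0$-term and the $|k|=1$ cokernel produces a genuine isomorphism rather than merely a semi-Fredholm map.
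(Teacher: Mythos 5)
Your overall strategy---cast the steady problem as a fixed point of the linearization at $\beta=0$ and exploit the $O(\beta^2)$ forcing from the Taylor expansion of the kernel---is the same strategy the paper implements, where the implicit function theorem is replaced by an explicit contraction map (see (\ref{stead1})--(\ref{stead2}) and Lemma \ref{lemathfrakNs}). The crucial gap is in your Step~3, where you assert that the linearized operator ``diagonalizes in Fourier modes.'' It does not: the paper's Fr\'echet derivative in (\ref{7.2}) is
$$\mathfrak{U}_{\tilde\theta^{(s)}}[0,0,0]h=\frac{\sigma}{2}\mathcal{H}(h_{\alpha\alpha})-i\sum_{k\ge 1}(1+a_\mu)\frac{k+1}{k+2}\hat{h}(k+1)e^{ik\alpha}+c.c.,$$
which is bidiagonal (lower-triangular), coupling mode $k$ of the output to mode $k+1$ of the input. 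This off-diagonal coupling is \emph{not} due to $[\mathcal{H},1/z_\alpha]$ or $\mathcal{K}_1$ (those indeed vanish on the circle, per Note \ref{noteg1}); it comes from the advective factors $\sin(\alpha+\theta)$ and $\cos(\alpha+\theta)$ that shift Fourier modes by $\pm1$ upon linearization. Consequently you cannot conclude invertibility by reading off a diagonal symbol; you must handle a genuinely non-normal, triangular system. This is exactly what the paper's Proposition \ref{prop7.5} accomplishes, by a Lax--Milgram coercivity estimate for the tail $|k|\ge K(\sigma)$ combined with an explicit finite triangular solve (\ref{19}) for $2\le |k|<K(\sigma)$. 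In addition, the surface tension weight in the \emph{steady} operator is $\frac{\sigma}{2}|k|^2$, not $\frac{\sigma}{2}|k|^3$ (the extra $|k|$ in $d(k)$ belongs to the \emph{evolution} operator $\mathcal{A}$ of (\ref{Adef})--(\ref{dkmkdef}), which carries an extra $D_\alpha$); the $|k|^2$ weight still dominates the $O(1)$ off-diagonal term, but this needs to be stated correctly since the dominance is marginal precisely because $\sigma$ enters linearly. Finally, your description of the $|k|=1$ modes as a ``two-dimensional translation cokernel'' conflates domain and range: $\hat\theta(\pm1)$ is eliminated from the \emph{domain} by the constraint (B.6), while the $k=\pm1$ component of the \emph{range} $\mathcal{Q}_0 U$ is the solvability condition that pins down $u_0$ (and, after restricting to odd $\theta$, a single real equation). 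The symmetry/equivariance argument you give, and the $\beta^2$ forcing estimate via $K=K_0+\beta^2 K_2+O(\beta^4)$, are both correct and match Proposition \ref{prop5.4new} and Lemma \ref{lemathNs3} respectively.
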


\begin{remark}
We will prove Theorem \ref{theo5.6} in \S 5.3.
Note results for steady bubble and finger
without restriction on $\beta$
but small $\sigma$ is available in  \cite{Xie1}, \cite{Xie2} and \cite{Xie3}.
Here, there is no restriction in $\sigma > 0$, but it is held fixed as
$\beta $ is made sufficiently small. Existence of at least one
steady
translating finger solution for $\sigma > 0$
has been proved earlier \cite{Su} using
different methods.
\end{remark}

For $\beta \ne 0$, we also consider the time evolution problem,
though only for initial conditions for which the bubble shape is
symmetric about the channel centerline. Symmetry implies $\theta$ is
an odd function of $\alpha$.
\begin{definition}\label{def5.9} We define unsteady perturbation
\begin{equation}
\Theta(\alpha,t)=\theta(\alpha,t)-\theta^{(s)}(\alpha).
\end{equation}
We also define
$\widetilde\Theta(\alpha,t)=\mathcal{Q}_1\Theta(\alpha,t)$.
\end{definition}

The main result for the evolution of a translating bubble with side
wall effects ($\beta\neq0$) is as follows:
\begin{theorem}\label{theo8}{ For any surface tension $\sigma>0$ and $r \ge 3$,
there exist $\epsilon,\,\Upsilon>0$ such that if $\|
\Theta(\cdot,0)\|_r<\epsilon$, $|L_0-2\pi| < \epsilon < \frac{1}{2}
$ and $0<\beta<\Upsilon$,  with
$\Theta(-\alpha,0)=-\Theta(\alpha,0)$,  then there exists a unique
solution $\big(\theta,  L\big)\in C\big([0,\infty), H^r_p
\times\mathbb{R}\big)$ with  $\theta(-\alpha,t)=-\theta(\alpha,t)$
to the Hele-Shaw  problem
 (A.1)-(A.3) with  initial  condition (\ref{1.4}). Furthermore, $\|
\Theta\|_r$
decays exponentially as $t \rightarrow \infty$,  while
$L$ approaches $2\sqrt{\pi V}$ exponentially.
Thus  the  translating steady bubble determined in Theorem \ref{theo5.6} is
asymptotically stable for sufficiently small symmetric
initial disturbances in the $H^r_p $ space.}
\end{theorem}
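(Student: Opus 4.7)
The plan is to transplant the argument used for Theorem \ref{theo1.3} in \S 4 from the steady circle to the nontrivial translating steady state $(\theta^{(s)}, u_0)$ furnished by Theorem \ref{theo5.6}, treating $\beta$ as a small parameter. I would substitute $\theta = \theta^{(s)} + \Theta$ into (A.1)--(A.3), subtract the steady version, and split the right-hand side into a linear operator $\mathcal{L}_\beta \Theta$ acting on the perturbation plus a quadratic-and-higher remainder $\mathcal{N}[\Theta, L-L^{(s)}]$. Using the bounds $\|\theta^{(s)}\|_r + |u_0| \le C\beta^2$ from Theorem \ref{theo5.6} together with the Banach algebra property (Note \ref{note1.2}), one expects $\mathcal{L}_\beta$ to decompose as $\mathcal{L}_0 + \beta^2 \mathcal{R}$, where $\mathcal{L}_0$ is (up to the advective piece) the linearization analyzed in the $\beta = 0$ proof and $\mathcal{R}$ is of lower order. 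The $\beta$-dependent kernel \eqref{kbeta} likewise differs from \eqref{kbeta0} by a smooth, lower-order contribution that can be absorbed perturbatively through the operators $\mathcal{G}[z]$ and $\mathcal{K}[z]$.

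Next I would carry out an energy estimate in the weighted Sobolev norm developed for the $\beta = 0$ case. Because the initial data are odd in $\alpha$ and both (A.1)--(A.3) and the steady profile $\theta^{(s)}$ respect the reflection symmetry (the latter by Theorem \ref{theo5.6}), the solution stays in the odd subspace for all time; this removes the neutral modes $\hat\Theta(0)$ and $\hat\Theta(\pm 1)$ and lets $\|D_\alpha^r \Theta\|_0$ control the norm. Combining the top-order dissipation from $-\sigma(2\pi/L)^3 \Lambda D_\alpha^3 \Theta$ with the stabilizing effect of the translation term $(u_0+1)\cos(\alpha+\theta^{(s)})$, the latter estimated in the weighted norm so that its apparent Saffman--Taylor contribution at the front of the bubble is converted into net dissipation by front-to-back transport of the weight, I anticipate an inequality of the form
\[
\frac{d}{dt}\|\Theta\|_{r,\mathrm{wt}}^2 + |L - L_\infty|^2 \le -c\,\|\Theta\|_{r,\mathrm{wt}}^2 + C\|\Theta\|_{r,\mathrm{wt}}^3,
\]
where $L_\infty = 2\sqrt{\pi V}$ by area conservation (Definition \ref{def2.17} and the invariance of $V$). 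The coupled $L$-equation from (A.1) is closed using the isoperimetric-type identity expressing $L - 2\sqrt{\pi V}$ quadratically in $\Theta$ up to exponentially damped contributions.

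The main obstacle is the first-order advection term from translation, which does not fit inside the surface-tension dissipation in the unweighted $H^r_p$ norm with a favorable sign at every point of the interface, together with the commutators generated by expanding $\mathcal{G}[z]$ and $\mathcal{K}[z]$ around $z^{(s)} \ne e^{i(\pi/2+\alpha)}$; these are precisely the difficulties the weighted norm of \S 4 was designed to handle, and its estimates carry over once the $\beta^2$-smallness of $\theta^{(s)}$ is exploited to absorb the new commutator terms. Finally, local existence in $C([0,T_*); H^r_p)$ in the odd-symmetric class follows from the contraction-mapping argument used in \S 4 applied to the perturbation equation; the a priori inequality above then upgrades $T_* = \infty$ once $\epsilon, \Upsilon$ are taken small enough to dominate the cubic remainder, and yields exponential decay of $\|\Theta\|_r$ and of $|L(t) - 2\sqrt{\pi V}|$, which is the content of Theorem \ref{theo8}.
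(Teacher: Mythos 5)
Your proposal identifies all of the paper's key structural ideas: perturbing about $(\theta^{(s)},\gamma^{(s)})$, splitting the right-hand side into a $\beta=0$ linearization plus an $O(\beta^2)$ linear correction plus genuinely nonlinear remainders, using the weighted Sobolev norm of \S4 to absorb the off-diagonal advective coupling into the surface-tension dissipation, and exploiting oddness to kill the neutral Fourier modes $\hat\Theta(0),\hat\Theta(\pm1)$ so that $\|\cdot\|_{w,r}$ is a genuine norm on the perturbation. The paper's decomposition $\widetilde\Theta_t=\mathcal{A}[\widetilde\Theta]+\mathcal{L}_\beta[\widetilde\Theta]+\mathcal{N}[\widetilde\Theta]$ in (\ref{eqTheta}) matches exactly your proposed $\mathcal{L}_0+\beta^2\mathcal{R}$ plus nonlinearity, with the $O(\beta^2)$ bound on $\mathcal{L}_\beta$ coming from Proposition \ref{mathcalNprop} using $\|\theta^{(s)}\|_r+|u_0|\le C\beta^2$.

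Where you diverge from the paper is in how the argument is closed. You propose local existence plus a Gr\"onwall-type a priori energy inequality plus continuation; the paper instead integrates (\ref{eqTheta}) by Duhamel to the fixed-point form (\ref{eqfixed}) and runs a single contraction in the space-time exponentially weighted Banach space $X_r\cap H_\sigma^r$, where $\|u\|_{H_\sigma^r}^2=\sup_t e^{\sigma t}\|u\|_{w,r}^2+\frac{\sigma}{4}\int_0^\infty e^{\sigma t}\|u\|_{w,r+3/2}^2\,dt$. This one shot gives global existence, uniqueness, and the $e^{-\sigma t/2}$ decay simultaneously, and the supremum and integral parts of the norm are both used to close the nonlinear estimates (the smoothing factor $\|\cdot\|_{w,r+3/2}$ in the time integral is what lets Lemma \ref{lemmathcalN} absorb the quasilinear loss of $3/2$ derivatives). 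Both routes are built on the same linear energy estimate (Lemma \ref{nonhomogeneous}), so they are mathematically equivalent, but the contraction packaging keeps the bookkeeping lighter and avoids a separate continuation argument.

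One point you should be more careful about: the $|L-L_\infty|^2$ you placed on the left of your energy inequality is not obtained by evolving $L$ --- the $L$-equation in (A.1) has no dissipation, and the paper remarks in \S2 that direct evolution makes exponential control of $L$ awkward. The paper instead replaces the $L$-evolution by the algebraic area constraint (B.4), proved equivalent in Lemma \ref{lem2.7}, so that $L$ is slaved to $\widetilde\Theta$ with $|L-2\pi|\le C\|\widetilde\Theta\|_1$ by (\ref{6.14}); exponential decay of $|L-2\sqrt{\pi V}|$ is then a corollary of the decay of $\widetilde\Theta$, not a separate energy balance. Your remark about the "isoperimetric-type identity" is aimed at the right object, but if you actually try to control $L$ dynamically you will not get a good inequality; use the constraint formulation instead. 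With that adjustment, your plan is sound.
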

\begin{remark}
This theorem is proved in \S 6 (See  Note \ref{translatingnote5.7}).
\end{remark}

We organize the paper as follows.
 In \S 2,  we introduce equations (B.1)-(B.6) equivalent to (A.1)-(A.3).
It turns out that linearization of (A.1)-(A.3) about a steady shape gives
rise to neutrally stable modes,  including $\hat{\theta}(\pm1;t)$.
It is therefore convenient to project away these Fourier modes and
introduce instead a constraint to determine $\hat{\theta}(\pm 1;t)$ for
given ${\tilde \theta}$.
Further, we find it convenient to
replace the evolution equation for $L$ in (A.1) by
an area constraint relation (B.4) since it is otherwise more
difficult to obtain exponential control on
$L$ directly.
In \S 3, we prove several preliminary lemmas
about some integral operators. In \S 4,
we prove results for near-circular initial shape in the absence of
side walls ($\beta=0$), but without any symmetry assumptions.
In  \S 5, we consider the problem of determining a steady translating bubble
with side-wall effects ($\beta \ne 0$) and
complete the proof of Theorems
\ref{theo5.6}. In \S 6, we consider
the global evolution problem for $\beta \ne 0$ for initial shapes
symmetric about the channel centerline and complete the proof of
Theorem \ref{theo8}.
Because of technical problems
in controlling $\hat{\theta}(0;t)$ for nonzero $\beta$,
we have restricted our attention
to only  symmetric initial condition for which $\hat{\theta}(0;t)=0$.

\section{Equivalent evolution equations}

\begin{definition}
\label{def2.1}
We introduce functions
\begin{equation}
\label{eqomega}
\omega_0 (\alpha) = \int_0^\alpha e^{i \alpha'} d\alpha' ~~~~~\\,~~~
\omega(\alpha)=
\int_0^{\alpha}e^{i\alpha'+i\hat{\theta}(1;t)e^{i\alpha'}+i\hat{\theta}(-1;t)
e^{-i\alpha'}+i\tilde\theta(\alpha')}d\alpha'.
\end{equation}
\end{definition}

\begin{note}\label{notez}
Given the geometric description of $\theta$ in terms of the tangent angle,
it is clear that
\begin{equation}\label{zomega}
z(\alpha,t)=\frac{L}{2\pi}e^{i\frac{\pi}{2}+i\hat{\theta}(0;t)}\omega(\alpha,t)+z(0,t).
\end{equation}
Further, from (\ref{5.5}) and (\ref{zomega}), it follows that
\begin{equation}
\label{Veq}
V= \frac{L^2}{8 \pi^2} \Im ~\int_0^{2\pi}
\left (  \omega_\alpha \omega^* \right ) d\alpha
\end{equation}
The above relation implies
equation (B.4) in the sequel.
\end{note}

For $\beta\neq0$, it is seen that
$y(0,t)$ is not decoupled from (A.1)-(A.3); thus
(\ref{y0}) has to be solved at the same time as (A.1)-(A.3). We will
show (A.1)-(A.3) and (\ref{y0}) with the initial
conditions (\ref{1.4}), $y(0,0)=y_0$ is equivalent to the following evolution
system for $\big(\tilde\theta(\alpha,t),\hat{\theta}(0;t),y(0,t)\big)\in\dot{H}^r\times\mathbb{R}^2$:
\begin{equation*}\left\{\begin{aligned}
\tilde{\theta}_t(\alpha,t)&=\frac{2\pi}{L}\mathcal{Q}_1\big(U_\alpha+T(1+\theta_\alpha)\big),\\
\frac{d\hat{\theta}(0;t)}{dt}&
=\frac{1}{L}\int_0^{2\pi}T(\alpha,t)\big(1
+\theta_\alpha(\alpha,t)\big)d\alpha
\end{aligned}\right.\tag{B.1}
\end{equation*}
\begin{equation*}
y_t(0,t)=-U(0,t)\sin\big(\theta(0,t)\big),
\tag{B.2}
\end{equation*}
where
\begin{equation}\label{alltheta}
\theta=\hat\theta(0;t)+\hat{\theta}(-1;t)e^{-i\alpha}+\hat{\theta}(1;t)e^{i\alpha}+\tilde\theta,
\end{equation}
with $\gamma(\alpha,t)$, $L(t)$, $T(\alpha,t)$ and $\hat\theta(\pm
1;t)$ determined by
\begin{equation*}
(I+a_\mu\mathcal{F}[z])\gamma=
\frac{2\pi}{L}\sigma\theta_{\alpha\alpha}
+
\frac{L}{\pi}\big(1+\frac{\mu_2}{\mu_1+\mu_2}
u_0\big)\sin\big(\alpha+\theta\big),\tag{B.3}
\end{equation*}
\begin{equation*}
L =\sqrt{\frac{8\pi^2 V}{\Im\int_0^{2\pi}
\omega_\alpha (\alpha, t) \omega^{\ast} (\alpha, t) d\alpha}},
\mbox {\rm where~} V = \frac{L_0^2}{8 \pi^2}
\Im \left \{
\int_0^{2 \pi} \omega_\alpha (\alpha, 0) \omega^* (\alpha, 0) d\alpha
\right \},
\tag{B.4}
\end{equation*}
\begin{equation*}
T=\int_0^{\alpha}(1+\theta_{\alpha'})U(\alpha')d\alpha'-\frac{\alpha}{2\pi}\int_0^{2\pi}(1+\theta_{\alpha})U(\alpha)d\alpha,\tag{B.5}
\end{equation*}
\begin{equation*}
\int_0^{2\pi}\exp\Big(i\frac\pi{2}+i\alpha+i\hat{\theta}(-1;t)e^{-i\alpha}
+i\hat{\theta}(1;t)e^{i\alpha}+i\tilde\theta(\alpha,t)\Big)d\alpha=0,
\tag{B.6}\end{equation*}
and $U$ determined by (\ref{1.12}). The initial condition is
\begin{eqnarray}
\label{translating5.1}
\tilde\theta(\alpha,0)=\mathcal{Q}_1\theta_0, \mbox{  }\hat{\theta}(0;0)=\hat{\theta}_0(0) \mbox{ and }y(0,0)=y_0.
\end{eqnarray}

\begin{definition}
\label{def2.4}
Let $r\geq 3$.
 We define  open balls :
 $$\mathcal{B}_{\epsilon}^r=\left\{u\in
 \dot{H}^r| \|u\|_r<\epsilon\right\};$$
 $$S_M=\{y\in\mathbb{R}||y|<M\},$$ for some $M$  independent of $\beta$.
\end{definition}
\begin{remark}
We will eventually choose  $\epsilon>0$ to be small enough for  Theorem \ref{theo1.3} and Theorem \ref{theo8} to
apply.
\end{remark}

For  the constraint (B.6), we have the following result:
\begin{prop}
\label{prop2.6} There exists $\epsilon_1>0$ so that   (B.6)
implicitly defines a unique $C^1$ function $G:\big\{u\in
\dot{H}^1|
\|u\|_1<\epsilon_1\big\}\rightarrow \mathbb{R}^2$ satisfying
$\big(\Re\hat\theta(1;t),\Im\hat{\theta}(1;t)\big)=G\big(\tilde\theta(t)\big)$
with $G(0)=0$ and $G_{\tilde\theta}(0)=0$.
Moreover, $G$ satisfies the following estimates  for
all $u, u_1,u_2\in\big\{u\in
\dot{H}^1|
\|u\|_1<\epsilon_1\big\}$:
\begin{eqnarray}
\label{2.7}
|G(u)|&\leq &\frac{1}{2}\|u\|_1,\\
|G(u_1)-G(u_2)|&\leq& \frac{1}{2}\|u_1-u_2\|_1.
\label{2.8}
\end{eqnarray}
Furthermore, if ${\tilde \theta}$ is odd, then the corresponding
${\hat \theta} (1; t)$ is purely imaginary.
\end{prop}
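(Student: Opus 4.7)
The plan is to apply the implicit function theorem in Banach spaces. Define $F : \mathbb{R}^2 \times \dot H^1 \to \mathbb{C}$ by
\begin{equation*}
F(p,q,u)=\int_0^{2\pi}\exp\bigl(i\tfrac{\pi}{2}+i\alpha+i(p-iq)e^{-i\alpha}+i(p+iq)e^{i\alpha}+iu(\alpha)\bigr)\,d\alpha,
\end{equation*}
so that (B.6) with $\hat\theta(1;t)=p+iq$ becomes $\Phi(u,(p,q)):=(\Re F,\Im F)=0$. The map $u\mapsto e^{iu}$ from $\dot H^1\hookrightarrow L^\infty$ into $L^2$ is $C^1$ by the Banach algebra property (Note \ref{note1.2}) combined with the Taylor expansion of the exponential, so $\Phi$ is $C^1$ from $\dot H^1\times\mathbb{R}^2$ into $\mathbb{R}^2$. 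Clearly $\Phi(0,(0,0))=0$ since $\int_0^{2\pi}e^{i\alpha}d\alpha=0$.

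Next I would verify the Jacobian with respect to $(p,q)$ at the origin is invertible. Direct computation gives $\partial_p F|_0 = -2\pi$ and $\partial_q F|_0 = 2\pi i$, so
\begin{equation*}
D_{(p,q)}\Phi(0,(0,0))=\begin{pmatrix}-2\pi & 0\\ 0 & 2\pi\end{pmatrix},
\end{equation*}
which has determinant $-4\pi^2\neq 0$. Moreover, $D_uF(0,(0,0))\cdot\delta=-2\pi\,\hat\delta(-1)$, and since every $\delta\in\dot H^1$ satisfies $\hat\delta(\pm1)=\hat\delta(0)=0$, this Fréchet derivative vanishes identically on $\dot H^1$. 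The implicit function theorem then furnishes $\epsilon_1>0$ and a unique $C^1$ map $G:\{u\in\dot H^1:\|u\|_1<\epsilon_1\}\to\mathbb{R}^2$ with $\Phi(u,G(u))=0$, $G(0)=0$, and
\begin{equation*}
G'(0)=-\bigl[D_{(p,q)}\Phi(0,(0,0))\bigr]^{-1}\,D_u\Phi(0,(0,0))=0.
\end{equation*}

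To obtain the quantitative estimates (\ref{2.7})–(\ref{2.8}), I would shrink $\epsilon_1$ if necessary. By continuity of $G'$ at $0$ and $G'(0)=0$, we may arrange $\|G'(u)\|_{\mathcal{L}(\dot H^1,\mathbb{R}^2)}\le \tfrac{1}{2}$ for all $\|u\|_1<\epsilon_1$. Then the mean-value inequality gives
\begin{equation*}
|G(u_1)-G(u_2)|\le \sup_{t\in[0,1]}\|G'(tu_1+(1-t)u_2)\|\cdot\|u_1-u_2\|_1\le\tfrac{1}{2}\|u_1-u_2\|_1,
\end{equation*}
and setting $u_2=0$ yields (\ref{2.7}).

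For the symmetry claim, I would exploit the reflection $\alpha\mapsto 2\pi-\alpha$ in the integrand of (B.6). If $\tilde\theta$ is odd and we try the Ansatz $p=0$, $\hat\theta(1)=iq$, then the exponent becomes $i\pi/2+i\psi(\alpha)$ with $\psi(\alpha)=\alpha-2q\sin\alpha+\tilde\theta(\alpha)$. Periodicity plus oddness of $\tilde\theta$ gives $\psi(2\pi-\alpha)=2\pi-\psi(\alpha)$, hence $\sin\psi(2\pi-\alpha)=-\sin\psi(\alpha)$, which forces $\Im F(0,q,\tilde\theta)\equiv 0$. Thus the real equation $\Re F(0,q,\tilde\theta)=0$ alone determines $q$, and the same implicit function argument (now in one variable) produces a solution with $p=0$. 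By the uniqueness asserted in the first part, this must coincide with $G(\tilde\theta)$, so $\Re\hat\theta(1;t)=0$. The main obstacle is bookkeeping the various factors of $i$ so that the Jacobian computation and the symmetry reduction are carried out consistently; everything else is a straightforward application of standard Banach-space calculus.
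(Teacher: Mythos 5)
Your implicit-function-theorem setup for the first part is correct and is essentially the route the paper takes (the paper delegates to Proposition 2.4 of \cite{JY}, which is the same IFT argument): the Jacobian $\partial_p F|_0 = -2\pi$, $\partial_q F|_0 = 2\pi i$ is right, $D_uF(0,(0,0))\cdot\delta = -2\pi\hat\delta(-1) = 0$ on $\dot H^1$ is right, and the mean-value/continuity argument for (\ref{2.7})--(\ref{2.8}) after shrinking $\epsilon_1$ is standard and sound.

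However, your symmetry paragraph has a genuine $\Re/\Im$ swap that leaves it logically incoherent as written. With the Ansatz $p=0$ you correctly reduce the exponent to $i\pi/2 + i\psi(\alpha)$ with $\psi(\alpha)=\alpha - 2q\sin\alpha + \tilde\theta(\alpha)$, and you correctly derive $\psi(2\pi-\alpha)=2\pi-\psi(\alpha)$, hence $\sin\psi(2\pi-\alpha)=-\sin\psi(\alpha)$. But since $e^{i\pi/2+i\psi} = ie^{i\psi} = -\sin\psi + i\cos\psi$, the antisymmetry of $\sin\psi$ forces $\Re F(0,q,\tilde\theta)\equiv 0$, not $\Im F$. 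Consequently there is no ``real equation $\Re F = 0$'' left to determine $q$ (you have just shown it is identically zero); the equation that determines $q$ is $\Im F(0,q,\tilde\theta)=\int_0^{2\pi}\cos\psi\,d\alpha=0$. Since $\partial_q \Im F|_{0} = 2\pi \ne 0$, the one-variable IFT does produce the desired $q$, and by uniqueness of $G$ the solution has $p=0$. So the flaw is precisely the bookkeeping risk you flagged; the logic survives once the two real components are relabeled.

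For comparison, the paper's proof of the oddness claim is slicker and avoids a second IFT application entirely: conjugate (B.6), use $\hat\theta(-1)=\hat\theta^*(1)$, substitute $\alpha\to-\alpha$ and use $\tilde\theta(-\alpha)=-\tilde\theta(\alpha)$, and observe the result is exactly (B.6) with $\hat\theta(1)$ replaced by $-\hat\theta^*(1)$; the local uniqueness you already have then gives $\hat\theta(1)=-\hat\theta^*(1)$ directly, i.e.\ $\hat\theta(1)$ is purely imaginary. Your Ansatz route and the paper's conjugation route both rest on the uniqueness furnished by the IFT, but the conjugation argument gets there in one line.
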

\begin{proof}
The proof of the first part appears in \cite{JY} (See Proposition 2.4).
Furthermore, if ${\tilde \theta} (-\alpha) = - {\tilde \theta} (\alpha)$,
then on complex conjugation of (B.6), replacing integration variable $\alpha\rightarrow-\alpha$ and local uniqueness
of the mapping $G$, it follows that
${\hat \theta} (1; t) =-\hat{\theta}^{\ast}(1;t)$, hence it is imaginary.
\end{proof}

\begin{note}\label{notetheta1}
Note that calculation of $\hat{\theta} (1; t)$ (and therefore
of ${\hat \theta} (-1; t) = {\hat \theta}^* (1; t)$)
from ${\tilde \theta}$
in Proposition \ref{prop2.6} allows compuation of
$$\mathcal{Q}_0 \theta = {\tilde \theta} (\alpha,t)
+ {\hat \theta} (1; t)
e^{i \alpha}
+ {\hat \theta} (-1; t)
e^{-i \alpha}
$$
and this is an odd function of $\alpha$ for odd ${\tilde \theta}$.
Also, note that
having determined $\gamma$,  $\hat{\theta}(1;t)$ and $\hat{\theta}(-1;t)$,
(\ref{1.12}) and (B.6) determine $U$ and
 $T$ needed in (B.1)-(B.2).
\end{note}

\begin{prop}\label{proparea}  Suppose for $r \ge 3$,
$\big(\theta(\alpha,t), \,L(t), \,y(0,t)\big) \in C^1\left
([0,S], H^r_p \times \mathbb{R} \times S_M\right )$ with
$|L-2\pi|<\frac{1}{2}$ is a solution to the  system
(A.1)-(A.3), (\ref{y0}) with initial
conditions (\ref{1.4}), $y(0,0)=y_0$.
Then the corresponding bubble area
$ V$ is invariant with time.
\end{prop}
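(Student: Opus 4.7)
The plan is to differentiate $V$ in time, use an integration by parts together with the kinematic description of $z_t$ to reduce matters to showing $\int_0^{2\pi} U(\alpha,t)\,d\alpha = 0$, and then obtain the latter identity as a contour-integral residue computation.

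Starting from $V=\tfrac12\Im\int_0^{2\pi} z_\alpha z^*\,d\alpha$, one differentiation together with an integration by parts (allowed by $2\pi$-periodicity of $z$) and the identity $\Im(z_t z_\alpha^*)=-\Im(z_\alpha z_t^*)$ gives
\[
\frac{dV}{dt} \;=\; \Im\int_0^{2\pi} z_\alpha z_t^* \, d\alpha.
\]
From $z_\alpha = i\tfrac{L}{2\pi}e^{i(\alpha+\theta)}$ and the relation $(x_t,y_t)=U{\bf n}+T{\bf t}$, one has $z_t=(iT-U)e^{i(\alpha+\theta)}$, so $z_\alpha z_t^* = \tfrac{L}{2\pi}(T-iU)$, and consequently
\[
\frac{dV}{dt} \;=\; -\frac{L}{2\pi}\int_0^{2\pi} U(\alpha,t)\, d\alpha.
\]
The tangential speed $T$ drops out, as expected physically. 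It therefore suffices to establish that the mean of $U$ vanishes.

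By the first form of (\ref{1.12}), $U$ splits into three pieces. The Hilbert-transform contribution $\tfrac{\pi}{L}\mathcal{H}[\gamma]$ has zero mean by definition. The contribution $(u_0+1)\cos(\alpha+\theta)$ has zero mean since $\int_0^{2\pi}e^{i(\alpha+\theta)}d\alpha = \tfrac{2\pi}{iL}\int_0^{2\pi} z_\alpha\,d\alpha = 0$. For the remaining piece, interchanging the order of integration (with care for the principal value, using the cotangent decomposition built into Definition \ref{def1.9}) reduces the task to evaluating
\[
\mathcal{I}(\alpha') \;:=\; \mathrm{PV}\!\!\int_{\alpha'-\pi}^{\alpha'+\pi}\! z_\alpha(\alpha)\,K(\alpha,\alpha')\, d\alpha
\;=\; \mathrm{PV}\!\oint_{\partial\Omega_2}\! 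K\bigl(z,\,z(\alpha')\bigr)\, dz
\]
traversed counter-clockwise around the bubble. For $\beta=0$, $K=1/(z-z(\alpha'))$ has a single simple pole of residue $1$ lying on $\partial\Omega_2$, so the principal-value contour integral equals $\pi i$. For $\beta\neq 0$, $\tfrac{\beta}{4}\coth[\tfrac{\beta}{4}(z-z(\alpha'))]$ has residue $1$ at $z=z(\alpha')+\tfrac{4\pi i n}{\beta}$ for $n\in\mathbb{Z}$; under the assumption that the bubble lies in the strip $|y|<\pi/\beta$, only the $n=0$ pole touches $\overline{\Omega_2}$, giving PV contribution $\pi i$, while $\tfrac{\beta}{4}\tanh[\tfrac{\beta}{4}(z-z^*(\alpha'))]$ has poles at $z^*(\alpha')+(2n+1)\tfrac{2\pi i}{\beta}$, all of which lie outside the strip, so that tanh contour integral vanishes. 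Hence $\mathcal{I}(\alpha')=\pi i$ in both cases, and
\[
\int_0^{2\pi} U(\alpha,t)\, d\alpha \;=\; \frac{1}{L}\,\Re\!\left(\pi i\int_0^{2\pi}\!\gamma(\alpha')\,d\alpha'\right) \;=\; 0,
\]
because $\gamma$ is real-valued. This gives $\tfrac{dV}{dt}\equiv 0$.

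The main obstacles are technical rather than conceptual: (i) justifying the Fubini/PV interchange (handled by the decomposition already present in $\mathcal{K}[z]$, which isolates and smooths out the Cauchy singularity); and (ii) for $\beta\neq 0$, verifying that the bubble actually lies in $|y|<\pi/\beta$ throughout $[0,S]$ so that the pole-location claims for coth and tanh kernels hold---this follows from $|L-2\pi|<\tfrac12$, the $H^r$ control on $\theta$, and the hypothesis $y(0,t)\in S_M$ with $M$ fixed independent of sufficiently small $\beta$.
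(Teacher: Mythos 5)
Your proof is correct and follows the same overall architecture as the paper's: differentiate the area formula, integrate by parts to obtain $\tfrac{dV}{dt}=-\tfrac{L}{2\pi}\int_0^{2\pi} U\,d\alpha$, drop the zero-mean cosine term, and swap the order of integration in the vortex-sheet contribution so that everything hinges on the inner $\alpha$-integral $\mathcal{I}(\alpha')=\mathrm{PV}\!\int z_\alpha K\,d\alpha$. Where you diverge is in how $\mathcal{I}$ is disposed of. The paper observes (for $\beta=0$) that the real part of the integrand is an exact $\alpha$-derivative, $\partial_\alpha\log|z(\alpha)-z(\alpha')|$, whose increment over a period vanishes; together with $\gamma$ being real this kills the expression without computing $\mathcal{I}$ itself. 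You instead compute $\mathcal{I}=\pi i$ directly by residue calculus on the closed contour $\partial\Omega_2$ with a pole on the contour, which buys you the full (complex) value of the inner integral, and you carry the $\beta\neq 0$ kernels ($\coth$ and $\tanh$) through explicitly by locating their poles relative to the strip — a case the paper's terse proof does not write out, although the paper's primitive argument does extend there with $\log|\sinh|$, $\log|\cosh|$ in place of $\log|z-z'|$. One small bookkeeping slip: you say "the first form of (\ref{1.12})" but describe the split of the second form ($\mathcal{H}[\gamma]$ plus $\mathcal{G}[z]\gamma$ plus cosine); in fact, your residue computation already evaluates the full PV piece of the first form, so the separate remark about the Hilbert-transform piece having zero mean is redundant rather than an independent step. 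The residue route is slightly heavier machinery but is more explicit about $\beta\neq 0$; the paper's antiderivative observation is more elementary but as written only addresses the $\beta=0$ kernel.
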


\medskip

\begin{proof}

 Taking the derivative with respect to $t$ on both sides of (\ref{5.5}),
 it is readily seen that
\begin{equation}\label{variantS}
\frac{d V}{dt}=\frac{1}{2}\Im\int_0^{2\pi}\big(z_{\alpha}z_t^{\ast}-
z_t z_\alpha^\ast\big)d\alpha
=-\frac{L}{2\pi}\int_0^{2\pi} U d\alpha.
 \end{equation}
Using (\ref{1.12}), we have
\begin{equation}
\frac{d V}{dt}=-\Re\Big(\int_0^{2\pi}\frac{z_\alpha(\alpha)}{2\pi}
\mbox{PV}\int_{\alpha-\pi}^{\alpha+\pi}\gamma(\alpha')
K(\alpha,\alpha')d\alpha'd\alpha\Big).
\end{equation}
Since
\begin{equation*}
\Re\Big(\mbox{PV}\int_0^{2\pi}
\frac{z_\alpha(\alpha)}{z(\alpha)-z(\alpha')}d\alpha\Big) =
\log\big|z(2\pi)-z(\alpha')\big|-\log\big|z(0)-z(\alpha')\big|
=0,
\end{equation*}
the Proposition follows.
\end{proof}

\begin{lemma}
\label{lem2.7} For $r\ge3$ and sufficiently small $\epsilon_1$, the following
statements {\bf (i.)} and {\bf (ii.)} are equivalent:

\noindent{\bf (i.)}
$(\theta, L, y(0,t))\in C^1\Big([0,S], H^r_p
\times\mathbb{R}\times S_M\Big)$ satisfies (A.1) and (\ref{y0}) with
initial conditions (\ref{1.4}) and $y(0, 0)=y_0$, where
$\theta$ is real-valued,
$\|\mathcal{Q}_1\theta\|_1<\epsilon_1$ and
$|L-2\pi| < \epsilon_1 <\frac{1}{2}$, while
$\gamma$, $T$ and $U$ are determined by (A.2), (A.3) and (\ref{1.12}).

\noindent{(\bf ii.)}
$\big(\tilde{\theta},\hat{\theta}(0;t),y(0,t)\big)
\in C^1\Big([0,S], \dot{H}^r
\times\mathbb{R}\times S_M\Big)$ satisfies (B.1)-(B.2),
initial conditions (\ref{translating5.1}),
with $\|\tilde\theta\|_1<\epsilon_1$,
where
$\gamma$, $T$,
$\hat{\theta}(\pm1;t)$, $L$ and $U$ are determined by (B.3)-(B.6),
(\ref{1.12}) and
$$\theta = {\tilde \theta}+{\hat \theta} (0; t) + {\hat \theta} (1; t)
e^{i \alpha} + {\hat \theta} (-1; t) e^{-i \alpha}. $$
\end{lemma}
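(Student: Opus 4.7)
The plan is to verify both directions of the equivalence by Fourier-decomposing $\theta = \hat\theta(0;t) + \tilde\theta + \hat\theta(1;t)e^{i\alpha} + \hat\theta(-1;t)e^{-i\alpha}$ and exploiting the representation (\ref{zomega}) together with area conservation.

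For the forward direction $(\mathbf{i})\Rightarrow(\mathbf{ii})$, I apply $\mathcal{Q}_1$ to the first equation of (A.1) to obtain the first equation of (B.1); taking the zeroth Fourier coefficient (and using $\int_0^{2\pi}U_\alpha\,d\alpha = 0$ by periodicity) yields the second. Equations (B.2), (B.3), (B.5) are direct restatements of (\ref{y0}), (A.2), (A.3). The closure (B.6) follows because $z$ is $2\pi$-periodic in $(\mathbf{i})$, so $\int z_\alpha\,d\alpha = 0$; via (\ref{zomega}) this reduces to $\int \omega_\alpha\,d\alpha = 0$. Proposition \ref{proparea} supplies $V(t) = V(0)$, and (\ref{Veq}), now legitimate in view of (B.6), yields (B.4).

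For the backward direction $(\mathbf{ii})\Rightarrow(\mathbf{i})$, I use Proposition \ref{prop2.6}, valid under $\|\tilde\theta\|_1 < \epsilon_1$, to recover $\hat\theta(\pm 1;t) = G(\tilde\theta)$ uniquely, assemble $\theta$, and set $L$ by (B.4); $C^1$-in-$t$ regularity follows from $G \in C^1$ and the algebraic form of (B.4). Write $R = (2\pi/L)(U_\alpha + T(1+\theta_\alpha))$. The two equations of (B.1) immediately give $\mathcal{Q}_1(\theta_t - R) = 0$ and $\widehat{(\theta_t - R)}(0) = 0$, so only the $\pm 1$ Fourier modes of $\theta_t - R$ remain. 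Differentiating (B.6) in $t$, together with $\partial_t\omega_\alpha = i(\theta_t - \hat\theta(0;t)_t)\omega_\alpha$ and $\int \omega_\alpha\,d\alpha = 0$, produces the moment identity $\int \theta_t \omega_\alpha\,d\alpha = 0$. A parallel identity $\int R \omega_\alpha\,d\alpha = 0$ follows by integration by parts using $\omega_{\alpha\alpha} = i(1+\theta_\alpha)\omega_\alpha$: one obtains $\int R\omega_\alpha\,d\alpha = (2\pi i/L)\int[T_\alpha - U(1+\theta_\alpha)]\omega_\alpha\,d\alpha$, and (B.5) forces $T_\alpha - U(1+\theta_\alpha)$ to be constant in $\alpha$, so it pulls out and hits $\int \omega_\alpha\,d\alpha = 0$. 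Subtracting gives $\int(\theta_t - R)\omega_\alpha\,d\alpha = 0$; writing $\theta_t - R = a e^{i\alpha} + \bar a e^{-i\alpha}$ (by real-valuedness) and coupling with the complex conjugate produces a $2\times 2$ linear system for $(a,\bar a)$ whose determinant equals $|N_1|^2 - |N_{-1}|^2$ with $N_k = \int\omega_\alpha e^{ik\alpha}\,d\alpha$; for near-circular shapes $N_1 = O(\epsilon)$ while $N_{-1}$ is close to $2\pi$, so the system is invertible and $a = 0$. Hence $\theta_t = R$, giving the first equation of (A.1). The second equation follows from area conservation: $V$ constant via (B.4) implies $\Im\int z_\alpha z_t^*\,d\alpha = 0$, and matching imaginary parts of $z_{\alpha t} = (L_t/L + i\theta_t)z_\alpha$ with the direct computation $z_{\alpha t} = \partial_\alpha[(iT-U)e^{i(\alpha+\theta)}]$ yields $L_t = 2\pi[T_\alpha - U(1+\theta_\alpha)] = -\int_0^{2\pi}(1+\theta_\alpha)U\,d\alpha$.

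The main obstacle lies in the $\pm 1$ Fourier mode analysis in the backward direction. Both the identity $\int R\omega_\alpha\,d\alpha = 0$, which depends delicately on the arc-length tangent prescription (B.5), and the $2\times 2$ invertibility argument, which requires quantitative control of $N_{\pm 1}$ for near-circular interfaces, are essential. The subsidiary obstacle is reconciling the algebraically-defined $L$ from (B.4) with the differential form of $L_t$ in (A.1); this is handled via the geometric identity $dV/dt = \Im\int z_\alpha z_t^*\,d\alpha$ combined with the equal arc-length structure.
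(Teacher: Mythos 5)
Your forward direction and the core of your backward direction are sound. In particular, the argument for killing the $\pm 1$ Fourier modes of $\theta_t - R$ is a genuine, self-contained replacement for the paper's appeal to Lemma 2.5 of \cite{JY}: you use $\mathcal{Q}_1(\theta_t - R) = 0$ and $\widehat{(\theta_t - R)}(0) = 0$ from (B.1), then pair the moment identity $\int_0^{2\pi}\theta_t\,\omega_\alpha\,d\alpha = 0$ (from $\partial_t$ of (B.6), using $\partial_t\omega_\alpha = i(\theta_t - \hat\theta_t(0;t))\omega_\alpha$ and $\int\omega_\alpha = 0$) against $\int_0^{2\pi} R\,\omega_\alpha\,d\alpha = 0$, which you correctly obtain from $\omega_{\alpha\alpha} = i(1+\theta_\alpha)\omega_\alpha$, integration by parts, and the fact that (B.5) forces $T_\alpha - U(1+\theta_\alpha)$ to be the constant $-\frac{1}{2\pi}\int(1+\theta_\alpha)U$. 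The resulting $2\times 2$ system with determinant $|N_1|^2 - |N_{-1}|^2$ is indeed invertible for near-circular shapes ($N_{-1}\approx 2\pi$, $N_1 = O(\epsilon_1)$), giving $\theta_t = R$. This is actually more transparent than the paper's citation of an external lemma.

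There is, however, a gap in your derivation of the $L$-equation. You invoke the identity $z_{\alpha t} = \partial_\alpha\!\left[(iT - U)e^{i(\alpha+\theta)}\right]$ and call it a "direct computation,'' but that identity is precisely the $\alpha$-derivative of the kinematic relation $z_t = (iT-U)e^{i(\alpha+\theta)}$, i.e.\ $z_t = U\mathbf{n} + T\mathbf{t}$. That relation is not among the hypotheses of (ii); it is equivalent to the full system (A.1) and (A.3), including the very $L_t$ equation you are trying to derive. So "matching imaginary parts" is circular as written. (Indeed, if you expand $z_{\alpha t} = (\tfrac{L_t}{L} + i\theta_t)z_\alpha$ and equate it to $\partial_\alpha[(iT-U)e^{i(\alpha+\theta)}]$, the imaginary parts agree automatically once $\theta_t = R$, and the real part is exactly the unknown $L_t$ equation — you get back what you assumed.) The fix is the paper's route: differentiate the constraint (B.4) in $t$ to obtain $\frac{L_t}{2\pi}\Im\!\int\omega_\alpha\omega^*\,d\alpha + \frac{L}{2\pi}\Im\!\int\omega_\alpha\omega_t^*\,d\alpha = 0$, substitute $\omega_t$ computed from the now-established $\theta_t = R$ (the paper's identity $\frac{L}{2\pi}\omega_t = (iU+T)\omega_\alpha - iU(0) + \frac{1}{2\pi}\omega\int(1+\theta_\alpha)U$), use $\int U\,d\alpha = 0$, and then divide by $\Im\!\int\omega_\alpha\omega^*\,d\alpha$, which is bounded away from zero by (\ref{Imomlbound}) for $\|\tilde\theta\|_1<\epsilon_1$. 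This yields $L_t = -\int(1+\theta_\alpha)U\,d\alpha$ without presupposing the kinematic relation.
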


\begin{proof} The first part involves essentially the same arguments as
Lemma 2.5 in \cite{JY}, except that (\ref{Veq})
is used to derive (B.4) with $V$ determined from
initial conditions (see
Proposition \ref{proparea}).
\medskip

For the second part, assume
$\big(\tilde{\theta}(\alpha,t),\hat{\theta}(0;t), y(0,t)\big)\in
C^1\Big([0,S],H^r_p \times\mathbb{R}\times S_M\Big)$ is a solution to (B.1)-(B.2)
with $\|\tilde\theta\|_1<\epsilon_1$ and
$\gamma$,  $T$, $\hat{\theta}(\pm1;t)$, $L$ and $U$ are
determined from (B.3)-(B.6),  (\ref{1.12}).
From Lemma 2.5 in \cite{JY},
$\theta=\tilde{\theta}+\hat\theta(0;t)+\hat{\theta}(1;t)
e^{i\alpha}+\hat{\theta}(-1;t)e^{-i\alpha},
$ is real valued solution to
the equation for $\theta$ in (A.1), where
$\gamma$, $T$ and $U$ are determined
by (A.2), (A.3) and (\ref{1.12})
 for $t\in[0, S]$.

As far evolution of $L$, we note that
taking time derivative of (B.4),
we have
\begin{eqnarray}\label{translating5.2}
\frac{L_t}{2\pi}\Im\int_0^{2\pi}\omega_\alpha\omega^{\ast}d\alpha
+\frac{L}{2\pi}\Im\int_0^{2\pi}\omega_\alpha\omega^{\ast}_td\alpha=0.
\end{eqnarray}
Using integration by parts, we also have
\begin{eqnarray}
\frac{L}{2\pi}\omega_t&=&\frac{Li}{2\pi}\int_0^\alpha
e^{i\zeta+i\theta(\zeta)}\theta_t(\zeta)d\zeta\nonumber\\
&=&\left(iU(\alpha)+T(\alpha)\right)\omega_\alpha-iU(0)
+\frac{1}{2\pi}\omega
\int_0^{2\pi}(1+\theta_\alpha)Ud\alpha.\nonumber
\end{eqnarray}
In Proposition \ref{proparea}, we noted
$\int_0^{2\pi}U(\alpha,t)d\alpha=0$.
Plugging the above formula into (\ref{translating5.2}), we obtain
\begin{equation}
\label{Lev}
\frac{L_t}{2 \pi} \left (
\Im \int_0^{2 \pi} \omega_\alpha \omega^* d\alpha \right )
+\frac{1}{2\pi}\Big(\Im\int_0^{2\pi}\omega_\alpha
\omega^{\ast}d\alpha\Big)\Big(\int_0^{2\pi}(1+\theta_\alpha)Ud\alpha\Big)=0.
\end{equation}
Furthermore, if $\| {\tilde \theta} \|_1 <
\epsilon$ is sufficiently small, then
using $\Im\int_0^{2\pi}e^{i\alpha}\int_0^\alpha
e^{-i\alpha'}d\alpha'd\alpha=2\pi$, by Sobolev's embedding theorem
and Proposition \ref{prop2.6}, we have
\begin{align}
\label{Imomlbound}
&\Big|\Im\int_0^{2\pi}\omega_\alpha
\omega^{\ast}d\alpha-\Im\int_0^{2\pi}e^{i\alpha}\int_0^\alpha
e^{-i\alpha'}d\alpha'd\alpha\Big|\\
\leq&\Big|\int_0^{2\pi}e^{i\alpha}\big(e^{i\theta}-1\big)
\int_0^\alpha
e^{-i\alpha'-i\theta(\alpha')}d\alpha'd\alpha+\int_0^{2\pi}e^{i\alpha}\int_0^\alpha
e^{-i\alpha'}\big(e^{-i\theta(\alpha')}-1\big)d\alpha'd\alpha\Big|
\nonumber \\
\leq &16\sqrt{2}\pi^2|\theta|_{\infty}\leq C\|\tilde\theta\|_1.
\nonumber
\end{align}
This implies that
$\Im\int_0^{2\pi}\omega_\alpha\omega^{\ast} d\alpha\neq 0$ and so (\ref{Lev})
implies
$$L_t=-\int_0^{2\pi}(1+\theta_\alpha)Ud\alpha, $$
which is evolution equation for $L$ in (A.1).
\end{proof}

\medskip

\begin{remark}
Because of the equivalence shown above, it turns out to be more
convenient to study solutions to the system (B.1)-(B.6), where $U$
is determined from (\ref{1.12}). Further, without loss of generality, we take $\hat\theta(0;0)=0$ since it only determines the origin of $\alpha$.
\end{remark}

\section{Preliminary Lemmas  }
\begin{definition}
\label{def1.11}
 We decompose $\coth$ and $\cot$ functions into the singular and regular
parts at the origin:
\begin{eqnarray}
\coth(w)&=&\frac{1}{w}+l_1(w),\nonumber\\
\cot(w)&=&\frac{1}{w}+l_2(w).\nonumber \end{eqnarray}
\end{definition}

\medskip

We decompose operator
\begin{equation}
\label{Kdecomp}
\mathcal{K}=
\mathcal{K}_1+\mathcal{K}_2,
\end{equation}
where
\begin{eqnarray*}
\mathcal{K}_1[z]f&=&\frac{1}{2\pi
i}\int_{\alpha-\pi}^{\alpha+\pi}f(\alpha')\Big\{\frac{1}{z(\alpha)-z(\alpha')}
-\frac{1}{z_{\alpha}(\alpha')}\cot{\frac{1}{2}(\alpha-\alpha')}\Big\}d\alpha',\nonumber\\
\mathcal{K}_2[z]f&=&\frac{1}{2\pi
i}\int_{\alpha-\pi}^{\alpha+\pi}f(\alpha')\Big\{\frac\beta4
l_1\big(\frac{1}4\beta({z(\alpha)-z(\alpha')})\big)-\frac{\beta}{4}\tanh\Big[\frac{\beta}{4}\big(z(\alpha)-z^{\ast}(\alpha')\big)\Big]\Big\}d\alpha'.\nonumber
\end{eqnarray*}

\begin{definition}
\label{defG1}
Related to $\mathcal{G}$ and $\mathcal{F}$,
we define operators $\mathcal{G}_1$, $\mathcal{F}_1 $ so that
\begin{equation}
\label{eqG1}
\mathcal{G}_1 [z] \gamma = z_\alpha \left [ \mathcal{H}, \frac{1}{z_\alpha}
\right ] \gamma + 2 i z_\alpha \mathcal{K}_1 [z] \gamma ,\,\,\,\,\, \mathcal{G}_2 [z] \gamma = 2 i z_\alpha \mathcal{K}_2 [z] \gamma , \mbox
~~~\mathcal{F}_1 [z] \gamma = \Re \left ( \frac{1}{i} \mathcal{G}_1 [z] \gamma
\right ).
\end{equation}
\end{definition}
\begin{note}
\label{noteg1}
It is readily checked that for any $f \in H^0_p$,
\begin{equation}
\label{eqnoteg1}
\frac{\omega_{0,\alpha}}{\pi} PV \int_0^{2 \pi}
\frac{f (\alpha') d\alpha'}{\omega_0 (\alpha) - \omega_0 (\alpha')}
= \mathcal{H} [f] (\alpha) + i {\hat f} (0),
\end{equation}
implies that
\begin{equation}
\label{eqnoteg2}
\mathcal{G}_1 [ \omega_0 ] f = i {\hat f} (0),
\end{equation}
which is imaginary for
real valued $f$.
\end{note}

\begin{definition}\label{defxi} We define operators $\Xi_e, \Xi_s, \Xi_c$
so that
\begin{eqnarray*}
\Xi_e [u](\alpha)&=&e^{iu(\alpha)}-1-iu(\alpha) ,\nonumber \\
\Xi_s [u; a] (\alpha) &=& \sin \left (u (\alpha) + \alpha+ a \right ) -
\sin (\alpha + a) - u (\alpha) \cos (\alpha+ a), \nonumber \\
\Xi_c [u; a] (\alpha) &=& \cos \left (u (\alpha) + \alpha+ a \right ) -
\cos (\alpha + a) + u (\alpha) \sin (\alpha+ a ), \nonumber
\end{eqnarray*}
for a real function $u\in H^{r}_p $ with $r\geq 1$.
\end{definition}

\medskip

In the rest of this section, we find some
estimates for integral operators and functions  in terms of
$\tilde\theta$ and $\hat\theta(0;t)$, which will be useful
later. Recall tangent angle of the
curve is
$\frac{\pi}{2}+\alpha+\theta(\alpha)=\frac{\pi}{2}+\alpha+\tilde\theta(\alpha)+\hat\theta(0;t)+\hat{\theta}(-1;t)e^{-i\alpha}+\hat{\theta}(1;t)e^{i\alpha}$,
where $\hat{\theta}(1;t)$ and $\hat{\theta}(-1;t)$ are determined through
$G(\tilde\theta)$.

\medskip

\begin{lemma}(See Lemma 3.1 in \cite{JY})
\label{lem3.1} Assume $ \| \tilde \theta \|_1 < \epsilon_1$ where
$\epsilon_1$ is small enough for Proposition  \ref{prop2.6} to apply.
Then $\omega$ determined from ${\tilde \theta} \in {\dot H}^r$ through
(\ref{eqomega}) satisfies the following estimates
for
$r \ge 1$,
 \begin{equation}
 \label{1ststate}
\|\omega_\alpha\|_r\leq
C_1(\|\tilde\theta\|_r+1)\exp\left(C_2\|\tilde\theta\|_{r-1}\right),~~~~~~~~~
\Big\|\frac{1}{\omega_\alpha}\Big\|_r\leq
C_1(\|\tilde\theta\|_r+1)\exp\left(C_2\|\tilde\theta\|_{r-1}\right),
\end{equation}
where constants $C_1$ and $C_2$, depend only on $r$, and particularly for $r=1$,
$C_2=0$.

Similarly, if $z$ determined by $\big({\tilde\theta},\hat{\theta}(0;t), L\big) \in \dot{H}^r\times\mathbb{R}^2$, then for
$r \ge 1$,
 \begin{equation}
 \label{zstate}
\|z_\alpha\|_r\leq
C_1L(\|\tilde\theta\|_r+1)\exp\left(C_2\|\tilde\theta\|_{r-1}\right),
\end{equation}
where constants $C_1$ and $C_2$, depend only on $r$, and particularly for $r=1$,
$C_2=0$.

Further, if $\omega^{(1)}, \omega^{(2)}$ correspond respectively to
${\tilde \theta}^{(1)}, {\tilde \theta}^{(2)} \in \dot{H}^{r} $,
where $\| {\tilde \theta}^{(1)} \|_1 $, $\| {\tilde \theta}^{(2)}
\|_1 < \epsilon_1$, then for $r \ge 1$,
\begin{eqnarray}
\label{3rdstate}
 \|\omega_{\alpha}^{(1)} - \omega_\alpha^{(2)} \|_{r} \leq
 C_1 \| {\tilde \theta}^{(1)} - {\tilde \theta}^{(2)}  \|_r
\exp{\left ( C_2 \left ( \|{\tilde \theta}^{(1)} \|_{r} + \| {\tilde
\theta}^{(2)} \|_r
\right )\right)},\\
\left\|\frac{1}{\omega_{\alpha}^{(1)}} -
\frac{1}{\omega_\alpha^{(2)}}\right \|_{r} \leq
 C_1 \| {\tilde \theta}^{(1)} - {\tilde \theta}^{(2)}  \|_r
\exp{\left (C_2 \left ( \|{\tilde \theta}^{(1)} \|_{r} + \| {\tilde
\theta}^{(2)} \|_r \right ) \right )},
\label{4thstate}
\end{eqnarray}
while for $r\ge2$,
\begin{eqnarray}
\label{5thstate} \|\omega_\alpha^{(1)} - \omega_\alpha^{(2)}\|_r
&\leq& C_1\left(\|\tilde\theta^{(1)}-\tilde\theta^{(2)}\|_r
+\|\tilde\theta^{(2)}\|_r\|\tilde\theta^{(1)}-\tilde\theta^{(2)}\|_{r-1}\right)
\\
&\times&
\exp\left(C_2\big(\|\tilde\theta^{(1)}\|_{r-1}
+\|\tilde\theta^{(2)}\|_{r-1}\big)\right),\nonumber\\
\label{6thstate} \left\|\frac{1}{\omega_\alpha^{(1)}} -\frac{1}{
\omega_\alpha^{(2)}}\right\|_r &\leq&
C_1\left(\|\tilde\theta^{(1)}-\tilde\theta^{(2)}\|_r
+\|\tilde\theta^{(2)}\|_r\|\tilde\theta^{(1)}-\tilde\theta^{(2)}\|_{r-1}\right)
\\
&\times&
\exp\left(C_2\big(\|\tilde\theta^{(1)}\|_{r-1}
+\|\tilde\theta^{(2)}\|_{r-1}\big)\right),
\nonumber
\end{eqnarray}
where the constants $C_1$ and $C_2$ depend only on $r$.
 \end{lemma}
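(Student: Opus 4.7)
The plan is to exploit the explicit identity $\omega_\alpha = e^{i\phi}$, where
\begin{equation*}
\phi(\alpha,t) = \alpha + \hat\theta(1;t)e^{i\alpha} + \hat\theta(-1;t)e^{-i\alpha} + \tilde\theta(\alpha)
\end{equation*}
is a \emph{real} function, since $\tilde\theta$ is real and $\hat\theta(-1;t) = \overline{\hat\theta(1;t)}$. Consequently $|\omega_\alpha|\equiv 1$, and therefore $1/\omega_\alpha = \overline{\omega_\alpha}$ has exactly the same $H^r_p$ norm as $\omega_\alpha$ itself; this immediately reduces the inverse bounds in (\ref{1ststate}), (\ref{4thstate}) and (\ref{6thstate}) to the direct bounds on $\omega_\alpha$. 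Proposition \ref{prop2.6} controls $|\hat\theta(\pm 1;t)| \le \tfrac{1}{2}\|\tilde\theta\|_1$, so that $\|\phi-\alpha\|_r \le C(1+\|\tilde\theta\|_r)$ for every $r\ge 1$. The bound (\ref{zstate}) on $z_\alpha$ then follows at once from (\ref{zomega}), since $z_\alpha = (L/2\pi)e^{i\pi/2+i\hat\theta(0;t)}\omega_\alpha$ and multiplication by a unit-modulus constant preserves Sobolev norms.

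For (\ref{1ststate}) I would treat $r=1$ and $r\ge 2$ separately in order to recover $C_2=0$ in the base case. For $r=1$ a direct computation gives
\begin{equation*}
\|\omega_\alpha\|_1^2 \le |\widehat{\omega_\alpha}(0)|^2 + \|D_\alpha\omega_\alpha\|_0^2 \le 2\pi + \|\phi_\alpha\|_0^2,
\end{equation*}
using only $|e^{i\phi}|\equiv 1$, and the bound on $\|\phi-\alpha\|_1$ closes the estimate with no exponential factor. For $r\ge 2$ I would induct on $r$ using $D_\alpha e^{i\phi} = i\phi_\alpha e^{i\phi}$ together with the Banach algebra property on $H^{r-1}_p$, yielding
\begin{equation*}
\|e^{i\phi}\|_r \le C\bigl(1+\|e^{i\phi}\|_{r-1}\|\phi\|_r\bigr).
\end{equation*}
Substituting the inductive bound $\|e^{i\phi}\|_{r-1}\le C_1(1+\|\phi\|_{r-1})\exp(C_2\|\phi\|_{r-2})$ and absorbing the extra polynomial factor via $1+x\le e^x$ produces the required form; Sobolev embedding $H^{r-1}_p\hookrightarrow L^\infty$, valid in one periodic dimension for $r\ge 2$, is used implicitly to control the pointwise size of $\phi$.

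For the difference bounds I would write
\begin{equation*}
\omega_\alpha^{(1)}-\omega_\alpha^{(2)} = i\bigl(\phi^{(1)}-\phi^{(2)}\bigr)\int_0^1 \exp\bigl(i[s\phi^{(1)}+(1-s)\phi^{(2)}]\bigr)\,ds,
\end{equation*}
converting differences of exponentials into a product. Combined with the Lipschitz estimate $|\hat\theta^{(1)}(\pm 1)-\hat\theta^{(2)}(\pm 1)| \le \tfrac{1}{2}\|\tilde\theta^{(1)}-\tilde\theta^{(2)}\|_1$ from Proposition \ref{prop2.6} and the Banach algebra property, this yields (\ref{3rdstate}). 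For the sharper inequality (\ref{5thstate}) valid on $r\ge 2$, I apply instead the asymmetric Kato--Ponce type estimate $\|fg\|_r \le C(\|f\|_r\|g\|_{r-1}+\|f\|_{r-1}\|g\|_r)$ (whose proof for $r\ge 2$ again rests on $H^{r-1}_p\hookrightarrow L^\infty$), with $f=\phi^{(1)}-\phi^{(2)}$ and $g$ equal to the integrated exponential: the first term yields the $\|\tilde\theta^{(1)}-\tilde\theta^{(2)}\|_r$ contribution, while the second, via the already proven direct bound on $\|e^{i\phi}\|_r$, produces the mixed term $\|\tilde\theta^{(2)}\|_r\|\tilde\theta^{(1)}-\tilde\theta^{(2)}\|_{r-1}$.

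The main obstacle is bookkeeping rather than a deep idea: one has to verify that the high norm $\|\tilde\theta\|_r$ appears only linearly and \emph{outside} the exponential in the direct bounds, and for the differences one must distribute derivatives in the asymmetric direction shown above, since exactly that sharp form will be needed later to close the quasilinear energy arguments driving Theorems \ref{theo1.3} and \ref{theo8}. The $r=1$ case must be handled by direct computation because Sobolev embedding fails at that regularity and one cannot afford any exponential factor there.
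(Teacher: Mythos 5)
The paper itself does not prove this lemma; it simply cites Lemma~3.1 of \cite{JY}, so there is no internal argument to compare against. Your proof is correct and essentially optimal: the key observation that $\omega_\alpha=e^{i\phi}$ with $\phi$ \emph{real} (because $\hat\theta(-1;t)=\overline{\hat\theta(1;t)}$ by Note~\ref{notetheta1}), so that $|\omega_\alpha|\equiv 1$ and $1/\omega_\alpha=\overline{\omega_\alpha}$, reduces every reciprocal estimate to the corresponding direct one at no cost. Two small repairs in the write-up: (i) since $\phi$ is not $2\pi$-periodic, $\|\phi\|_r$ is not defined — each occurrence should be $\|\phi_\alpha\|_{r-1}$ (or $\|\phi-\alpha\|_r$), and then Proposition~\ref{prop2.6} gives $\|\phi_\alpha\|_{r-1}\le 1+C\|\tilde\theta\|_r$; (ii) your difference argument naturally produces the symmetric factor $(\|\tilde\theta^{(1)}\|_r+\|\tilde\theta^{(2)}\|_r)\|\tilde\theta^{(1)}-\tilde\theta^{(2)}\|_{r-1}$ rather than the stated $\|\tilde\theta^{(2)}\|_r\|\tilde\theta^{(1)}-\tilde\theta^{(2)}\|_{r-1}$; to recover the asymmetric form of (\ref{5thstate}) and (\ref{6thstate}) write $\|\tilde\theta^{(1)}\|_r\le\|\tilde\theta^{(2)}\|_r+\|\tilde\theta^{(1)}-\tilde\theta^{(2)}\|_r$ and absorb the resulting term $\|\tilde\theta^{(1)}-\tilde\theta^{(2)}\|_{r-1}\|\tilde\theta^{(1)}-\tilde\theta^{(2)}\|_r\le(\|\tilde\theta^{(1)}\|_{r-1}+\|\tilde\theta^{(2)}\|_{r-1})\|\tilde\theta^{(1)}-\tilde\theta^{(2)}\|_r$ into the leading term via $1+x\le e^x$. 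With those two cosmetic fixes the remaining steps — the $r=1$ Plancherel computation giving $C_2=0$, the induction through $D_\alpha e^{i\phi}=i\phi_\alpha e^{i\phi}$ and the $H^{r-1}$ Banach algebra for $r\ge 2$, the convex-combination integral representation of $e^{i\phi^{(1)}}-e^{i\phi^{(2)}}$, and the Leibniz-plus-Sobolev asymmetric product estimate on $H^r$ for $r\ge 2$ — are all valid and close the proof.
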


\begin{lemma}
\label{coroXi} If $F$ is an entire function  of order one\footnote{An entire function $ f$ of order $m$  satisfies
$$|f(z)|\le e^{C|z|^m}, \mbox{ for }z\in\mathbb{C}.$$}  with $F(u)=\sum_{j=j_0}^\infty a_j u^j$ for $j_0=1$ or $2$. Then for
$u\in H^{r+1}_p $ with $r\ge 1$, $F\big(u(\alpha)\big)$ satisfies

(i) $j_0=1$:
 \begin{eqnarray*}
\left\|F\big(u(\cdot)\big)\right\|_0&\leq&
C_1 \exp\left( C_2\|u\|_{1}\right)\|u\|_1, \\
\left\|F\big(u(\cdot)\big)\right\|_{r+1}&\leq&
C_1 \exp\left( C_2\|u\|_{r}\right)\|u\|_{r+1};
\end{eqnarray*}

(ii) $j_0=2$:
 \begin{eqnarray*}
\left\|F\big(u(\cdot)\big)\right\|_0&\leq&
C_1 \exp\left( C_2\|u\|_{1}\right)\|u\|_1^2, \\
\left\|F\big(u(\cdot)\big)\right\|_{r+1}&\leq&
C_1 \exp\left( C_2\|u\|_{r}\right)\|u\|_{r+1}\|u\|_r,
\end{eqnarray*}
where the constants $C_1$ and $C_2$ depend only on $r$.

Further, if  both $u^{(1)}$ and $ u^{(2)}$ belong to $ H^{r+1}_p $,
then for $r \ge 1$,

(i) $j_0=1$:
\begin{eqnarray*}
 \left\|F\big(u^{(1)}(\cdot)\big) -F\big(u^{(2)}(\cdot)\big)\right \|_{0} &\leq&
 C_1 \| u^{(1)} -u^{(2)}  \|_1
\exp{\left [ C_2 \left ( \|u^{(1)} \|_{1} + \| u^{(2)} \|_1
\right )\right]}\\
\left\|F\big(u^{(1)}(\cdot)\big) -F\big(u^{(2)}(\cdot)\big)\right \|_{r+1} &\leq&
 C_1 \Big(\| u^{(1)} -u^{(2)}  \|_{r+1}+\| u^{(1)} -u^{(2)}
\|_{r}\|u^{(2)}\|_{r+1}\Big)\\
 &&\times
\exp{\left [ C_2 \left ( \|u^{(1)} \|_{r} + \| u^{(2)} \|_r
\right )\right]};
\end{eqnarray*}

(ii) $j_0=2$:
\begin{eqnarray*}
 \left\|F\big(u^{(1)}(\cdot)\big) -F\big(u^{(2)}(\cdot)\big)\right \|_{0} &\leq&
 C_1 \| u^{(1)} -u^{(2)}  \|_1
\Big\{\exp{\left [ C_2 \left ( \|u^{(1)} \|_{1} + \| u^{(2)} \|_1
\right )\right]}-1\Big\}\\
\left\|F\big(u^{(1)}(\cdot)\big) -F\big(u^{(2)}(\cdot)\big)\right \|_{r+1} &\leq&
 C_1 \Big(\| u^{(1)} -u^{(2)}  \|_{r+1}\|u^{(1)}\|_r+\| u^{(1)} -u^{(2)}
\|_{r}\|u^{(2)}\|_{r+1}\Big)\\
 &&\times
\exp{\left [ C_2 \left ( \|u^{(1)} \|_{r} + \| u^{(2)} \|_r
\right )\right]},
\end{eqnarray*}
where the constants $C_1$ and $C_2$ depend only on $r$.
\end{lemma}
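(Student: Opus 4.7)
The plan is to expand $F$ as a power series and estimate each monomial $u^j$ individually in $H^0$ and $H^{r+1}$, using tame bilinear estimates to keep the top-order Sobolev norm linear. The essential analytic input is that an entire function of order one satisfies $|a_j| \le M^j/j!$ for some $M = M(F)$; this follows from Cauchy's estimate $|a_j| \le e^{CR}/R^j$ optimized at $R = j/C$, together with Stirling's formula. This factorial decay will dominate every polynomial-in-$j$ growth produced by the product estimates below, so all the series that arise converge absolutely.

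For the $H^0$ bound I would use the Sobolev embedding $H^1 \hookrightarrow L^\infty$ to write $\|u^j\|_0 \le \|u\|_\infty^{j-1}\|u\|_0 \le C^j \|u\|_1^j$ and sum, obtaining $\|F(u)\|_0 \le C_1 \|u\|_1^{j_0} \exp(C_2 \|u\|_1)$, which is exactly (i) or (ii) depending on $j_0$. For the $H^{r+1}$ bound the tool is the standard tame estimate
$$\|fg\|_{r+1} \le C_r \bigl(\|f\|_{r+1}\|g\|_r + \|f\|_r\|g\|_{r+1}\bigr),\qquad r \ge 1,$$
which follows from the Moser inequality $\|fg\|_{r+1} \le C(\|f\|_{r+1}\|g\|_{L^\infty} + \|f\|_{L^\infty}\|g\|_{r+1})$ combined with $H^r \hookrightarrow L^\infty$ for $r \ge 1$. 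Iterating this on $u^j = u \cdot u^{j-1}$ gives $\|u^j\|_{r+1} \le C^j \cdot j\, \|u\|_r^{j-1}\|u\|_{r+1}$; summing against $|a_j| \le M^j/j!$ then produces $C_1 \|u\|_{r+1}\|u\|_r^{j_0-1}\exp(C_2\|u\|_r)$, matching the statement.

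For the difference estimates I would use the mean-value representation
$$F\bigl(u^{(1)}\bigr) - F\bigl(u^{(2)}\bigr) = \bigl(u^{(1)} - u^{(2)}\bigr) \int_0^1 F'(u_s)\,ds, \qquad u_s := u^{(2)} + s\bigl(u^{(1)} - u^{(2)}\bigr),$$
together with the bilinear Moser estimate applied to the product on the right. Since $F'$ is again entire of order one with lowest index shifted down by one, the norm estimates just established apply to $F'(u_s)$. To recover the particular form with $\|u^{(2)}\|_{r+1}$, I would substitute $\max_i \|u^{(i)}\|_{r+1} \le \|u^{(2)}\|_{r+1} + \|u^{(1)} - u^{(2)}\|_{r+1}$ and absorb the diagonal term. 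In case (i) the constant term $a_1$ of $F'$ supplies the clean $\|u^{(1)} - u^{(2)}\|_{r+1}$ piece, while in case (ii) one has $F'(0) = 0$, so $F'$ has lowest index one and its $H^{r+1}$ bound carries an extra factor of $\|u^{(1)}\|_r$ or $\|u^{(2)}\|_r$, which is exactly what appears in the statement. For the $H^0$ difference in case (ii), $F'(0) = 0$ analogously accounts for the $\exp(\cdots) - 1$ form, since $\bigl\|\int_0^1 F'(u_s)\,ds\bigr\|_\infty$ vanishes when $u^{(1)}$ and $u^{(2)}$ both do.

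The main obstacle I anticipate is the combinatorial bookkeeping: iterating the Moser inequality introduces polynomial-in-$j$ factors that must be absorbed by the $1/j!$ coming from the order-one hypothesis. This is the precise reason one cannot weaken the hypothesis on $F$ from order one to mere analyticity. Once that absorption is verified, no further analytic difficulty arises, and the rest of the argument is a (routine but tedious) summation.
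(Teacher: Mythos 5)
Your proof is correct, and it is a genuinely different organization of the argument from the paper's. The paper (Appendix 7.1) factors out the lowest-order monomial, writing $F(u)=uh(u)$ for $j_0=1$ or $F(u)=u^2 g(u)$ for $j_0=2$, proves the $H^0$ bound by $H^1\hookrightarrow L^\infty$, and then for the $H^{r+1}$ bound differentiates once in $\alpha$ to get $D_\alpha F(u)=u_\alpha\,D_uF(u)$ and applies the plain Banach-algebra estimate in $H^{r}$ to the single factor $D_uF(u)$, after which the factorial decay of the power-series coefficients of $D_uF$ provides the exponential. You instead bound each monomial $\|u^j\|_{r+1}$ directly via the tame Moser product inequality, obtaining $\|u^j\|_{r+1}\le C^j j\,\|u\|_r^{j-1}\|u\|_{r+1}$, and then sum against $|a_j|\le M^j/j!$. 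The two routes are analytically equivalent — in both cases the key point is that a tame estimate or a single differentiation prevents the top-order norm from multiplying, and the factorial decay from the order-one hypothesis (which you correctly derive via Cauchy estimate plus Stirling, whereas the paper invokes it silently) absorbs the polynomial-in-$j$ combinatorics. For the difference estimates the paper just says ``by the same technique'' without detail; your mean-value representation $F(u^{(1)})-F(u^{(2)})=(u^{(1)}-u^{(2)})\int_0^1 F'(u_s)\,ds$ together with the observation that $F'$ is again entire of order one with $j_0$ shifted down is a clean and correct way to deduce them from the first part, and your accounting for why the $j_0=2$ case produces the extra factor of $\|u\|_r$ (respectively the $\exp(\cdot)-1$ form in $H^0$) via $F'(0)=0$ is exactly right. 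The only bookkeeping point worth stating explicitly is absorbing the cross term $\|u^{(1)}-u^{(2)}\|_{r+1}\|u^{(1)}-u^{(2)}\|_r$ that arises after substituting $\|u^{(1)}\|_{r+1}\le\|u^{(2)}\|_{r+1}+\|u^{(1)}-u^{(2)}\|_{r+1}$; since $\|u^{(1)}-u^{(2)}\|_r\le\|u^{(1)}\|_r+\|u^{(2)}\|_r$ and $x\,e^{Cx}\le C'e^{2Cx}$ this term is absorbed into the exponential after enlarging $C_2$, but you should say so.
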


\begin{proof} The proof is fairly routine and is
is relegated to the appendix.
\end{proof}

\begin{note}\label{notecoroXi}
In particular, $\Xi_e$, $\Xi_s$ and $\Xi_c$ satisfy Lemma \ref{coroXi} with $j_0=2$. $\sin(\alpha+a+u)-\sin(\alpha+a)$ also satisfies Lemma \ref{coroXi} with $j_0=1$.
\end{note}

The following divided differences are
useful.
\begin{definition}
\label{def3.2}
For $z \in H_p^r$, we define operators $q_1$ and $q_2$ so that
\begin{displaymath}
q_1[z](\alpha,\alpha')=\frac{z(\alpha)-z(\alpha')}{
\alpha-\alpha'}=\int_0^1
D z (t\alpha+(1-t)\alpha')dt,
\end{displaymath}
\begin{eqnarray}
q_2[z](\alpha,\alpha')&=&\frac{z(\alpha)-z(\alpha')-
z_{\alpha}(\alpha)(\alpha-\alpha')}{(\alpha-\alpha')^2}
=\int_0^1(t-1)D^2 z ((1-t)\alpha+t\alpha')dt , \nonumber
\end{eqnarray}
where $D$ and $D^2 $ denote first and second derivatives with respect
to the argument.
\end{definition}

\medskip

\begin{prop}
\label{prop3.4} There exists $\epsilon_1 > 0$
so that $\|\tilde\theta\|_1\leq\epsilon_1$ implies
\begin{eqnarray}\label{translating5.4}
\big|q_1[\omega](\alpha,\alpha')\big|\geq \frac{1}{8},
\end{eqnarray}
and
\begin{eqnarray}
\label{translating5.3}\big |q _1 [z] (\alpha, \alpha')\big | \ge
\sqrt{\frac{\pi V}{24}} ~~,~~{\rm for} ~~ 0 < |\alpha
-\alpha'| \le \pi,
\end{eqnarray}
which implies that the curve $z (\alpha)$ is non-self-intersecting.
\end{prop}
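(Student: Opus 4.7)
The plan is to reduce both estimates to perturbations of the unit-circle baseline. From (\ref{zomega}) we have $z(\alpha,t) - z(0,t) = \tfrac{L}{2\pi}\, e^{i\pi/2 + i\hat\theta(0;t)}\, \omega(\alpha,t)$, so
$$|q_1[z](\alpha,\alpha')| = \frac{L}{2\pi}\,|q_1[\omega](\alpha,\alpha')|.$$
Both inequalities therefore reduce to a lower bound on $|q_1[\omega]|$ and, for the second, a lower bound on $L$ in terms of $V$ supplied by (B.4).

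To prove (\ref{translating5.4}), I would first compute the reference $\omega_0(\alpha) = -i(e^{i\alpha}-1)$, which gives
$$|q_1[\omega_0](\alpha,\alpha')| = \frac{2|\sin((\alpha-\alpha')/2)|}{|\alpha-\alpha'|} \ge \frac{2}{\pi} \quad \text{for } 0 < |\alpha-\alpha'| \le \pi.$$
Next, set $u(\alpha) = \tilde\theta(\alpha) + \hat\theta(1;t)\, e^{i\alpha} + \hat\theta(-1;t)\, e^{-i\alpha}$; since $\theta$ is real-valued, $\hat\theta(-1;t) = \hat\theta(1;t)^*$, so $u$ is real and $\omega_\alpha - \omega_{0,\alpha} = e^{i\alpha}(e^{iu}-1)$ has pointwise modulus at most $|u|$. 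Proposition \ref{prop2.6} gives $|\hat\theta(\pm 1;t)| \le \tfrac{1}{2}\|\tilde\theta\|_1$, and the one-dimensional Sobolev embedding gives $|\tilde\theta|_\infty \le C\|\tilde\theta\|_1$, so $|u|_\infty \le C\|\tilde\theta\|_1 \le C\epsilon_1$. The integral representation of $q_1$ in Definition \ref{def3.2} then yields $|q_1[\omega] - q_1[\omega_0]| \le C\epsilon_1$ uniformly in $(\alpha,\alpha')$, and choosing $\epsilon_1$ small enough that $C\epsilon_1 < 2/\pi - 1/8$ delivers (\ref{translating5.4}).

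For (\ref{translating5.3}) I would combine the above with (B.4) in the form
$$|q_1[z](\alpha,\alpha')|^2 = \frac{2V\,|q_1[\omega](\alpha,\alpha')|^2}{\Im\int_0^{2\pi}\omega_\alpha\,\omega^*\,d\alpha}.$$
Invoking the estimate (\ref{Imomlbound}) bounds the denominator above by $2\pi + C\|\tilde\theta\|_1$, and together with the perturbation bound on $|q_1[\omega]|$ just established, taking $\epsilon_1$ small enough produces a positive lower bound of the required form. Non-self-intersection is then immediate: if $z(\alpha) = z(\alpha')$ for some distinct $\alpha, \alpha'$ modulo $2\pi$, periodicity allows us to choose representatives with $0 < |\alpha-\alpha'| \le \pi$, forcing $q_1[z](\alpha,\alpha') = 0$ and contradicting the strict lower bound.

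The main technical obstacle is pinning down the explicit constant $\sqrt{\pi V/24}$: the exact circle (with $L_0 = 2\pi$) essentially saturates both $|q_1[\omega]| \ge 2/\pi$ at the antipodal pair $|\alpha-\alpha'| = \pi$ and $L^2 = 4\pi V$ simultaneously, so the bound is tight in that limit. The proof therefore requires tracking the $O(\|\tilde\theta\|_1)$ corrections in both $|q_1[\omega]|$ and $\Im\int\omega_\alpha\omega^*\,d\alpha$ with matching constants and choosing $\epsilon_1$ to absorb the residual slack into the target constant. Apart from this bookkeeping, the argument is elementary.
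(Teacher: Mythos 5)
Your overall approach is sound and essentially mirrors the paper's: reduce via the identity $|q_1[z]|=\tfrac{L}{2\pi}|q_1[\omega]|$ (from (\ref{zomega})), establish (\ref{translating5.4}) by perturbing about $\omega_0$, and obtain (\ref{translating5.3}) by pairing with the lower bound on $L$ supplied by (B.4) and (\ref{Imomlbound}). The paper handles (\ref{translating5.4}) by citing Proposition 3.3 of \cite{JY}; your self-contained argument via $|q_1[\omega_0]| = 2\,|\sin(\tfrac{\alpha-\alpha'}{2})|/|\alpha-\alpha'| \ge 2/\pi$ on $0<|\alpha-\alpha'|\le\pi$, together with the pointwise estimate $|\omega_\alpha-\omega_{0,\alpha}|\le |u|$ and the integral formula for $q_1$, is a clean substitute. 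The non-self-intersection deduction is also correct.

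One concrete point you misdiagnose. You describe matching the printed constant $\sqrt{\pi V/24}$ as a bookkeeping matter and assert the circle "saturates" it, but in fact the stated bound is \emph{violated} at the baseline circle: for the unit circle $V=\pi$, while $|q_1[z]|$ at antipodal points equals $2/\pi\approx 0.6366$, which is strictly less than $\sqrt{\pi V/24}=\pi/\sqrt{24}\approx 0.6413$. So there is no slack to absorb; tracking $O(\|\tilde\theta\|_1)$ corrections cannot close the gap. The paper's own chain does not produce this constant either: from $|q_1[\omega]|\ge\tfrac18$ and $L\ge\sqrt{8\pi V/3}$ one gets
\begin{equation*}
\frac{L}{2\pi}\,\big|q_1[\omega]\big|\;\ge\;\frac{1}{16\pi}\sqrt{\frac{8\pi V}{3}}\;=\;\sqrt{\frac{V}{96\pi}},
\end{equation*}
which is considerably smaller than $\sqrt{\pi V/24}$. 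The printed constant is evidently a typographical error. Since non-self-intersection only needs \emph{some} positive lower bound, your argument delivers the substantive content; you should simply state whatever constant your chain of inequalities actually yields (e.g.\ $\sqrt{V/(96\pi)}$) rather than attempt to reproduce $\sqrt{\pi V/24}$.
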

\begin{proof}
The first part follows from Proposition 3.3 in \cite{JY}.
Since $\Im \int_0^2\pi \omega_{0,\alpha} \omega^*_0 d\alpha = 2 \pi$,
using (\ref{Imomlbound}), we obtain
for
$C\epsilon_1\leq \pi$,
\begin{equation}
\label{ImomBound}
\pi\leq\Im\int_0^{2\pi}\omega_\alpha
\omega^{\ast}d\alpha\leq 3\pi.
\end{equation}
From (B.4), we obtain
\begin{eqnarray}\label{translating5.5}
\sqrt{\frac{8\pi V}{3}}\leq L\leq\sqrt{8\pi V}.
\end{eqnarray}

\medskip

Combining (\ref{translating5.4}) and (\ref{translating5.5}),  if
$\|\tilde\theta\|_1<\epsilon_1$, then
\begin{eqnarray*}
\big|q_1[z](\alpha,\alpha')\big|=\frac{L}{2\pi}\big|q_1[\omega](\alpha,\alpha')\big|\geq
\sqrt{\frac{\pi V}{24}}, \mbox{ for all
}0<|\alpha-\alpha'|\leq \pi.
\end{eqnarray*}

 \end{proof}

\medskip

\begin{lemma}{(See Lemma 5 in \cite{AD1})}
\label{lem3.3} Assume $z$ and $\omega$ are related through
(\ref{eqomega}) and (\ref{zomega}). Let $z_\alpha \in H^{j}_p
$ for $j \ge 0$. Then for any real $a$, $D_\alpha^j q_1,
D_{\alpha'}^j q_1 \in H^0 [a, a+2 \pi]$ in both variables $\alpha$
or $\alpha'$ and satisfy the bounds
$$ \| D_\alpha^j q_1[z] \|_{0} \leq CL \|\omega_\alpha\|_{j} ~~~~,~~~~~
\| D_{\alpha'}^j q_1[z] \|_{0} \leq CL \|\omega_\alpha\|_{j}$$
with $C$ only depending on $j$ (in particular independent of $a$).
Further if $ z_{\alpha \alpha} \in H^{j}_p
$ for $j \ge 0$, then $D_\alpha^j q_2, D_{\alpha'}^j
q_2 \in H^0 [a, a+2 \pi]$ in both variables $\alpha$ and $\alpha'$
and satisfy
$$ \| D_\alpha^j q_2 [z] \|_{0} \leq C L\|\omega_{\alpha\alpha} \|_{j}
~~~~,~~~~~ \| D_{\alpha'}^j q_2
[z] \|_{0} \leq CL
\|\omega_{\alpha \alpha} \|_{j}$$ with $C$ only depending on $j$.
\end{lemma}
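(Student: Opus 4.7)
The plan is to use the explicit parameter-integral forms in Definition \ref{def3.2}, interchange differentiation with the $t$-integral, and then convert the resulting $L^2$ bound in $\alpha$ (or $\alpha'$) into an $L^2$ bound on a derivative of $z$ by a change of variables and $2\pi$-periodicity. Differentiating under the integral sign gives
\begin{equation*}
D_\alpha^j q_1[z](\alpha,\alpha')=\int_0^1 t^j\, D^{j+1}z\bigl(t\alpha+(1-t)\alpha'\bigr)\,dt,
\end{equation*}
with $(1-t)^j$ in place of $t^j$ for $D_{\alpha'}^j q_1$, and analogous formulas for $D_\alpha^j q_2$, $D_{\alpha'}^j q_2$ which carry the extra factor $(t-1)$ from the definition of $q_2$ together with one additional derivative on $z$.

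Next, I would apply Minkowski's integral inequality to pull the $L^2$ norm in $\alpha$ (at fixed $\alpha'$) inside the $t$-integral, and then for each $t\in(0,1)$ change variables $u=t\alpha+(1-t)\alpha'$. Since $D^{j+1}z$ is $2\pi$-periodic and the image interval has length $2\pi t\le 2\pi$, one obtains
\begin{equation*}
\left(\int_a^{a+2\pi}\bigl|D^{j+1}z\bigl(t\alpha+(1-t)\alpha'\bigr)\bigr|^2\,d\alpha\right)^{1/2}\le t^{-1/2}\|D^{j+1}z\|_0,
\end{equation*}
uniformly in $\alpha'$ and in the base point $a$. Combining with the $t^j$ weight yields $\|D_\alpha^j q_1[z](\cdot,\alpha')\|_0\le (j+1/2)^{-1}\|D^{j+1}z\|_0$. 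The analogous weights are $(1-t)^{j-1/2}$ for $D_{\alpha'}^j q_1$, $(1-t)^{j+1/2}$ for $D_\alpha^j q_2$, and $t^{j-1/2}(1-t)$ for $D_{\alpha'}^j q_2$; each of these is a Beta function integral that converges for every $j\ge 0$. Finally, the identity $z_\alpha=\frac{L}{2\pi}e^{i\pi/2+i\hat\theta(0;t)}\omega_\alpha$ from (\ref{zomega}) gives $\|D^{j+1}z\|_0\le\frac{L}{2\pi}\|\omega_\alpha\|_j$ and $\|D^{j+2}z\|_0\le\frac{L}{2\pi}\|\omega_{\alpha\alpha}\|_j$, producing the stated bounds with a constant $C$ depending only on $j$.

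The main obstacle, such as it is, is the edge case $j=0$ for $q_1$: the $t$-exponent collapses to $-1/2$, which is integrable on $(0,1]$ but only barely, so no slack remains. Everything else is bookkeeping on Beta-function exponents, and uniformity in the base point $a$ comes for free once $2\pi$-periodicity is invoked to bound each one-period integral in $u$ by $\|D^{j+1}z\|_0^2$ or $\|D^{j+2}z\|_0^2$.
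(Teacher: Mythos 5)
Your proof is correct and is essentially the standard argument for divided-difference operators: differentiate under the integral, apply Minkowski's integral inequality, change variables $u = t\alpha + (1-t)\alpha'$ (resp.\ $(1-t)\alpha + t\alpha'$ for $q_2$), and use $2\pi$-periodicity of $D^{j+1}z = \frac{L}{2\pi}e^{i\pi/2 + i\hat\theta(0;t)}D^j\omega_\alpha$ to get a bound uniform in the fixed variable and the base point $a$. The paper does not reproduce a proof but cites Lemma 5 of \cite{AD1}, which uses precisely this approach; you correctly handle the $t$-versus-$(1-t)$ role swap between $q_1$ and $q_2$, and the remaining mixed cases (e.g.\ $D_\alpha^j q_1$ in $L^2_{\alpha'}$) are Beta-function integrals that converge for all $j\ge 0$, so no gap remains.
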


\medskip

\begin{lemma}
\label{lem3.5}
Let  $\omega^{(1)}, \omega^{(2)} \in H^{j+1}_p$ for
$j\geq 0$.
Suppose
$$ |q_1[\omega^{(1)}](\alpha,\alpha')|\geq \frac{1}{8}, \mbox{ for }
0<|\alpha-\alpha'|\leq \pi.$$
Then for $j=0$, there exists constant $C_1$ independent of $\alpha$ such that
\begin{equation*}
\Big(\int_{\alpha-\pi}^{\alpha+\pi}\Big|
\frac{q_2[\omega^{(2)}](\alpha,\alpha')}{q_1[\omega^{(1)}](\alpha,\alpha')}
\Big|^2d\alpha'\Big)^{\frac{1}{2}}\leq
C_1\|\omega^{(2)}_\alpha\|_{1}.
\end{equation*}
Further, for $j\ge3$,
\begin{multline}
\Big(\int_{\alpha-\pi}^{\alpha+\pi}\Big|D_\alpha^j
\frac{q_2[\omega^{(2)}](\alpha,\alpha')}{q_1[\omega^{(1)}](\alpha,\alpha')}
\Big|^2d\alpha'\Big)^{\frac{1}{2}}\\\leq
C_2\left(\|\omega^{(2)}_\alpha\|_{j+1}
+\|\omega^{(2)}_\alpha\|_{j-1}\|\omega^{(1)}_\alpha\|_{j}\right)
(\|\omega^{(1)}_\alpha\|_{j-1}^{j-1}+1), \label{5.3.2}
\end{multline}
where $C_2$  depends on $j$ alone, but not on $\alpha$.
\end{lemma}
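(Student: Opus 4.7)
My approach splits the proof into the trivial case $j=0$ and the more intricate case $j\ge 3$. For $j=0$, the standing hypothesis $|q_1[\omega^{(1)}](\alpha,\alpha')|\ge \tfrac18$ immediately yields
\[
\int_{\alpha-\pi}^{\alpha+\pi}\Big|\frac{q_2[\omega^{(2)}]}{q_1[\omega^{(1)}]}\Big|^2 d\alpha'
\le 64\int_{\alpha-\pi}^{\alpha+\pi}|q_2[\omega^{(2)}](\alpha,\alpha')|^2 d\alpha',
\]
and Lemma \ref{lem3.3} with $j=0$ controls the right-hand side by $C\|\omega^{(2)}_{\alpha\alpha}\|_0^2 \le C\|\omega^{(2)}_\alpha\|_1^2$, giving the first claim.

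For $j\ge 3$, my plan is to differentiate by the Leibniz rule followed by Faà di Bruno's formula,
\[
D_\alpha^j\frac{q_2[\omega^{(2)}]}{q_1[\omega^{(1)}]}=\sum_{k=0}^j\binom{j}{k}\bigl(D_\alpha^k q_2[\omega^{(2)}]\bigr)\sum_{\pi}\frac{C_\pi}{q_1[\omega^{(1)}]^{|\pi|+1}}\prod_{B\in\pi}D_\alpha^{|B|}q_1[\omega^{(1)}],
\]
where the inner sum runs over set-partitions $\pi$ of $\{1,\ldots,j-k\}$ with blocks $B$. The denominators are bounded pointwise by $8^{|\pi|+1}$. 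In each product of divided differences I would place exactly one factor in $L^2_{\alpha'}$ (bounded via Lemma \ref{lem3.3}) and every other factor pointwise via the elementary estimate
\[
|D_\alpha^m q_i[\omega](\alpha,\alpha')|\le \|\omega^{(m+i)}\|_{L^\infty(\mathbb{T})}\le C\|\omega_\alpha\|_{m+i-1},\quad i=1,2,
\]
which follows from differentiating under the integral in Definition \ref{def3.2} and invoking $H^1(\mathbb{T})\hookrightarrow L^\infty(\mathbb{T})$.

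The factor $\|\omega^{(2)}_\alpha\|_{j+1}$ in the target bound then corresponds to the $k=j$ Leibniz term, with $D_\alpha^j q_2[\omega^{(2)}]$ placed in $L^2_{\alpha'}$ and $1/q_1[\omega^{(1)}]$ handled pointwise; the mixed piece $\|\omega^{(2)}_\alpha\|_{j-1}\|\omega^{(1)}_\alpha\|_j$ arises from Faà di Bruno summands containing a single top-order $D_\alpha^{j-k}q_1[\omega^{(1)}]$ (with $k\le j-2$) placed in $L^2_{\alpha'}$ against a lower-order $D_\alpha^k q_2[\omega^{(2)}]$ in $L^\infty$. All remaining factors across any Faà di Bruno summand carry at most $j-2$ derivatives on $\omega^{(1)}$ and therefore admit pointwise bounds by $C\|\omega^{(1)}_\alpha\|_{j-1}$; since the total number of such factors in any summand is at most $j-1$, collecting these yields the prefactor $(\|\omega^{(1)}_\alpha\|_{j-1}^{j-1}+1)$ (with the $+1$ absorbing summands that have no $q_1$-derivative factors at all).

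The main obstacle is the Faà di Bruno bookkeeping: for every partition summand with several $D_\alpha^{m_l}q_1$ factors, one must verify that placing the factor of largest $m_l$ in $L^2_{\alpha'}$ leaves all remaining factors with at most $j-2$ derivatives, so that their $L^\infty$ bounds depend only on $\|\omega^{(1)}_\alpha\|_{j-1}$ and the resulting exponent is at most $j-1$ rather than $j$. The restriction $j\ge 3$ appears to be what allows this bookkeeping to close, ensuring both that the Sobolev embedding $H^{j-1}(\mathbb{T})\hookrightarrow L^\infty(\mathbb{T})$ has sufficient margin for the low-order pointwise bounds and that the single scale $\|\omega^{(1)}_\alpha\|_{j-1}$ dominates all intermediate Sobolev norms produced in the bound.
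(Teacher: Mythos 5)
Your overall strategy for $j\ge 3$ --- Leibniz expansion of $D_\alpha^j(q_2/q_1)$, management of the derivatives of $1/q_1$ via a quotient--rule/Fa\`{a} di Bruno bookkeeping, and placement of a single highest-order divided-difference factor in $L^2_{\alpha'}$ with the remaining factors controlled in $L^\infty$ --- is the same route the paper takes. The paper simply packages the Fa\`{a} di Bruno step by quoting the nonlinear bound $\|D_\alpha^l(1/q_1)\|_0 \le C\|q_1\|_l(1+\|q_1\|_{l-1}^{l-1})$ and then using $\|D_\alpha^l(1/q_1)\|_{L^\infty} \le C\|1/q_1\|_{l+1}$; this is exactly your partition accounting in compressed form. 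The $j=0$ case is handled in the same (trivial) way.

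There is, however, an off-by-one slip in the displayed elementary pointwise estimate. From $q_1[\omega]=\int_0^1 D\omega$ and $q_2[\omega]=\int_0^1(t-1)D^2\omega$ one gets $|D_\alpha^m q_i[\omega](\alpha,\alpha')|\le \|\omega^{(m+i)}\|_{L^\infty(\mathbb{T})}$, which is fine, but $\omega^{(m+i)}=D^{m+i-1}\omega_\alpha$, so $H^1(\mathbb{T})\hookrightarrow L^\infty(\mathbb{T})$ yields $\|\omega^{(m+i)}\|_{L^\infty}\le C\|\omega_\alpha\|_{m+i}$, \emph{not} $C\|\omega_\alpha\|_{m+i-1}$ as you wrote. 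Your subsequent claim that factors carrying at most $j-2$ derivatives of $q_1[\omega^{(1)}]$ admit pointwise bounds by $C\|\omega^{(1)}_\alpha\|_{j-1}$ is consistent only with the \emph{corrected} exponent, so this looks like a typo in the estimate rather than a flaw in your bookkeeping. With the corrected exponent, the attribution of some summands to specific pieces of the target shifts slightly (for instance the $k=j-2$ Leibniz term with $D_\alpha^{j-2}q_2$ placed in $L^\infty$ produces $\|\omega^{(2)}_\alpha\|_j\|\omega^{(1)}_\alpha\|_2$, which lands in the $\|\omega^{(2)}_\alpha\|_{j+1}(1+\|\omega^{(1)}_\alpha\|_{j-1}^{j-1})$ piece rather than the ``mixed'' piece you single out), but since $j\ge 3$ all resulting norms on $\omega^{(1)}_\alpha$ of order between $1$ and $j-1$, raised to powers summing to at most $j-1$ after extracting the single $L^2$ factor, are absorbed by $1+\|\omega^{(1)}_\alpha\|_{j-1}^{j-1}$, and the argument closes as in the paper.
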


\medskip

\begin{proof}
We note that
$$ D_\alpha^j \frac{q_2}{q_1} = \sum_{l=0}^j C_{j,l} D_\alpha^{j-l} q_2
D^{l}_\alpha \frac{1}{q_1}. $$
Using Lemma \ref{lem3.3} with $L=2\pi$
it follows that for $ l \ge 1$
$$\left\| D^l_\alpha \frac{1}{q_1} \right\|_0 \le C_1\|q_1\|_l  \left (1+\|  q_1 \|_{l-1}^{l-1} \right ) \le
C_1\|\omega_\alpha^{(1)}\|_l\left (
\| \omega^{(1)}_\alpha \|_{l-1}^{l-1}+1 \right ),$$
and
\begin{equation*}
\left\|D_\alpha^j \frac{q_2}{q_1}\right\|_0\leq C \sum_{l=1}^{j-1}
\left\| D_\alpha^{j-l} q_2 \right\|_{0} \left\| D^{l}_\alpha
\frac{1}{q_1}\right\|_\infty
+ C\left\| D_\alpha^j \frac{1}{q_1}\right \|_0 \left\| q_2 \right\|_{\infty}
+ C \left\| D_\alpha^j q_2 \right\|_0\left \| \frac{1}{q_1 } \right\|_\infty.
\end{equation*}
The lemma immediately follows from Lemma \ref{lem3.3} on using
$\| \frac{1}{q_1}\|_\infty \le C$ and $\left\| D_\alpha^{l} \frac{1}{q_1} \right\|_{\infty} \leq C\left\| \frac{1}{q_1} \right\|_{l+1} $.
\end{proof}

\begin{lemma}
\label{insertnew}
Assume $\omega^{(1)}, \omega^{(2)} \in H_p^{j+1}$ for $j \ge 0$. Assume
further that
$$\Big | q_1 [\omega^{(1)} ] (\alpha, \alpha') \Big |
\ge \frac{1}{8} ~~{\rm for} ~
0 < |\alpha - \alpha'| \le \pi .$$
Then
 for $j=0$, there exists constant $C_1$ independent of $\alpha$ such that
\begin{equation*}
\Big(\int_{0}^{2 \pi}\Big|
\frac{q_1[\omega^{(2)}](\alpha,\alpha')}{q_1[\omega^{(1)}](\alpha,\alpha')}
\Big|^2d\alpha'\Big)^{\frac{1}{2}}\\
\leq
C_1\|\omega^{(2)}_\alpha\|_{0}.
\end{equation*}
Further, for $j\ge3$,
\begin{multline}
\label{eqq1q1}
\Big(\int_{0}^{2 \pi}\Big|D_\alpha^j
\frac{q_1[\omega^{(2)}](\alpha,\alpha')}{q_1[\omega^{(1)}](\alpha,\alpha')}
\Big|^2d\alpha'\Big)^{\frac{1}{2}}\\
\leq
C_2\left(\|\omega^{(2)}_\alpha\|_{j}
+\|\omega^{(2)}_\alpha\|_{j-2}\|\omega^{(1)}_\alpha\|_{j}\right)
(1+\|\omega^{(1)}_\alpha\|_{j-1}^{j-1}),
\end{multline}
where $C_2$ depends on $j$ only.
\end{lemma}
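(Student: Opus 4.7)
My plan is to mirror the proof of Lemma \ref{lem3.5} essentially verbatim, with $q_2$ replaced by $q_1$; the key structural point is that $q_1$ is one derivative smoother than $q_2$, which exactly accounts for the shift $j+1\mapsto j$ and $j-1\mapsto j-2$ in the Sobolev indices on the right-hand side of (\ref{eqq1q1}) as compared with (\ref{5.3.2}).

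For $j=0$, the representation $q_1[\omega^{(2)}](\alpha,\alpha')=\int_0^1 D\omega^{(2)}(t\alpha+(1-t)\alpha')\,dt$ combined with Minkowski's inequality in the $t$-integral and periodicity of $\omega^{(2)}_\alpha$ yields $\bigl(\int_0^{2\pi}|q_1[\omega^{(2)}](\alpha,\alpha')|^2d\alpha'\bigr)^{1/2}\le\|\omega^{(2)}_\alpha\|_0$, and dividing by the uniform lower bound $|q_1[\omega^{(1)}]|\ge 1/8$ handles this case.

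For $j\ge 3$, the Leibniz rule gives
$$D_\alpha^j\frac{q_1[\omega^{(2)}]}{q_1[\omega^{(1)}]}=\sum_{l=0}^{j}\binom{j}{l}\,D_\alpha^{j-l}q_1[\omega^{(2)}]\cdot D_\alpha^l\!\left(\frac{1}{q_1[\omega^{(1)}]}\right),$$
and each term is treated by an $L^2_{\alpha'}$/$L^\infty_{\alpha'}$ split. The endpoint $l=j$ uses Lemma \ref{lem3.3} directly, namely $\|D_\alpha^j q_1[\omega^{(2)}]\|_0\le C\|\omega^{(2)}_\alpha\|_j$, paired with $\|1/q_1[\omega^{(1)}]\|_\infty\le 8$; this produces the first term on the right-hand side of (\ref{eqq1q1}). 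The endpoint $l=0$ uses the Sobolev bound $\|q_1[\omega^{(2)}]\|_\infty\le C\|\omega^{(2)}_\alpha\|_1\le C\|\omega^{(2)}_\alpha\|_{j-2}$ (valid precisely because $j\ge 3$) together with the estimate $\|D_\alpha^j(1/q_1[\omega^{(1)}])\|_0\le C\|\omega^{(1)}_\alpha\|_j\bigl(1+\|\omega^{(1)}_\alpha\|_{j-1}^{j-1}\bigr)$ already derived inside the proof of Lemma \ref{lem3.5}; this produces the second term. For the intermediate indices $1\le l\le j-1$ the same split works, and the resulting products are absorbed into the two endpoint bounds after using the monotonicity $\|\cdot\|_{r_1}\le\|\cdot\|_{r_2}$ for $r_1\le r_2$ and the polynomial factor $1+\|\omega^{(1)}_\alpha\|_{j-1}^{j-1}$ on the right-hand side.

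The main obstacle, as in Lemma \ref{lem3.5}, is to control $\|D_\alpha^l(1/q_1[\omega^{(1)}])\|_0$ with the correct polynomial dependence on $\|\omega^{(1)}_\alpha\|_{l-1}$: this requires a Fa\`a di Bruno-type expansion in which each monomial of derivatives of $q_1[\omega^{(1)}]$ is bounded using the Banach algebra property of $H_p^r$ and the Sobolev embedding $\|D_\alpha^k(1/q_1)\|_\infty\le C\|1/q_1\|_{k+1}$. Since that bookkeeping is already carried out inside the proof of Lemma \ref{lem3.5}, the genuinely new verification here is only that in each Leibniz term the numerator $D^{j-l}_\alpha q_1[\omega^{(2)}]$ requires one fewer derivative of $\omega^{(2)}$ than the corresponding $D^{j-l}_\alpha q_2[\omega^{(2)}]$ in Lemma \ref{lem3.5}, producing the index shift visible in (\ref{eqq1q1}).
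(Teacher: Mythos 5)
Your overall plan — mirror the proof of Lemma \ref{lem3.5} using Lemma \ref{lem3.3} for the $L^2$ bounds on $D_\alpha^{j-l}q_1[\omega^{(2)}]$ and the estimate on $D_\alpha^l(1/q_1[\omega^{(1)}])$ already established inside that proof, with the one-derivative shift $q_2\mapsto q_1$ propagating into the Sobolev indices — is exactly the route the paper takes. However, there is a genuine gap that the paper's own (terse) proof is careful to flag and which your argument passes over.

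The integral in Lemma \ref{insertnew} runs over $\alpha'\in[0,2\pi]$, but the hypothesis $\big|q_1[\omega^{(1)}](\alpha,\alpha')\big|\ge\frac18$ is only assumed for $0<|\alpha-\alpha'|\le\pi$. Every place you invoke the lower bound — ``dividing by the uniform lower bound'' in the $j=0$ case, and $\|1/q_1[\omega^{(1)}]\|_\infty\le 8$ together with $\|D_\alpha^l(1/q_1[\omega^{(1)}])\|_0$ in the Leibniz expansion — implicitly requires the bound to hold for $\alpha'$ throughout $[0,2\pi]$, i.e. for $|\alpha-\alpha'|$ up to $2\pi$. This is not harmless: since $\omega^{(1)}\in H_p^{j+1}$ is $2\pi$-periodic, $q_1[\omega^{(1)}](\alpha,\alpha')=\frac{\omega^{(1)}(\alpha)-\omega^{(1)}(\alpha')}{\alpha-\alpha'}$ actually tends to zero as $|\alpha-\alpha'|\to 2\pi$, so the lower bound genuinely fails near the far end of the integration interval and your division is not justified there. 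The missing step, which the paper makes explicit, is the observation that the \emph{ratio} $q_1[\omega^{(2)}]/q_1[\omega^{(1)}]$ is $2\pi$-periodic in $\alpha'$ (the $(\alpha-\alpha')$ factors in numerator and denominator cancel, leaving $\frac{\omega^{(2)}(\alpha)-\omega^{(2)}(\alpha')}{\omega^{(1)}(\alpha)-\omega^{(1)}(\alpha')}$). One may therefore replace the integration interval $[0,2\pi]$ by $[\alpha-\pi,\alpha+\pi]$, on which $|\alpha-\alpha'|\le\pi$ and the hypothesis applies. Once you insert this remark at the start, the rest of your argument — Minkowski for $j=0$, Leibniz with $L^2/L^\infty$ splits and Lemma \ref{lem3.3} for $j\ge3$ — goes through and coincides with what the paper intends.
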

\begin{proof}
The proof is almost identical to that of
Lemma \ref{lem3.5}. It uses
Lemma \ref{lem3.3} and the lower bound on $q_1 [\omega^{(1)}]$.
We note that integrand on the left of (\ref{eqq1q1}) is
$2 \pi$-periodic in $\alpha'$,  noting that factors of $(\alpha-\alpha')$
in $q_1[\omega^{(1)}]$ and $q_1 [\omega^{(2)}]$
cancel each other.  We are therefore free to replace the
upper and lower bound in the integral in $\alpha'$ by
$\alpha + \pi$ and $\alpha - \pi$ respectively for which
$|q_1|$ is bounded below as needed.
\end{proof}

\begin{lemma}
\label{coro3.11}
Assume $f, g \in H^{j}_p$,
for
$j\geq 0$,
with Fourier components ${\hat f} (0), {\hat g} (0) = 0 $
and $h  \in H^0_p $.
Suppose
\begin{equation}
\label{glowerb}
\big |\int_{0}^1 g(t\alpha+(1-t)\alpha')dt\big|\geq \frac{1}{8}, \mbox{ for }
0 \le |\alpha'-\alpha| \le \pi.
\end{equation}
Then  for $j=0$, there exists constant $C_1$ independent of $\alpha$ such that
\begin{eqnarray*}
\int_0^{2\pi} \Big| h(\alpha')
\frac{\int_{\alpha'}^\alpha f(\tau)d\tau}{\int_{\alpha'}^\alpha g(\tau)d\tau}
\Big | d \alpha' \leq
C_1\|h\|_0\|f\|_{0}.
\end{eqnarray*}
Further, for $j\ge3$,
\begin{eqnarray*}
\int_0^{2\pi} \Big| h(\alpha') D_\alpha^j
\frac{\int_{\alpha'}^\alpha f(\tau)d\tau}{\int_{\alpha'}^\alpha g(\tau)d\tau}
\Big | d \alpha' \leq
C_2\|h\|_0\left(\|f\|_{j}
+\|f\|_{j-2}\|g\|_{j}\right)(1+\|g\|_{j-1}^{j-1}),
\end{eqnarray*}
where $C_2$ depends on $j$ only.
\end{lemma}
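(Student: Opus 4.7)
The plan is to reduce the claim to Lemma \ref{insertnew} via the observation that the ratio of integrals appearing in the integrand is precisely a ratio of divided differences of the antiderivatives of $f$ and $g$, and then to conclude by Cauchy--Schwarz.

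First I introduce $F(\alpha)=\int_0^\alpha f(\tau)\,d\tau$ and $G(\alpha)=\int_0^\alpha g(\tau)\,d\tau$. Since $\hat f(0)=\hat g(0)=0$, both $F$ and $G$ are $2\pi$-periodic, and $F,G\in H_p^{j+1}$ with $\|F_\alpha\|_j=\|f\|_j$ and $\|G_\alpha\|_j=\|g\|_j$. The fundamental identity $\int_{\alpha'}^\alpha f(\tau)\,d\tau=F(\alpha)-F(\alpha')=(\alpha-\alpha')\,q_1[F](\alpha,\alpha')$ (and the same for $g$) shows that the $(\alpha-\alpha')$ factors cancel in the ratio, so $\frac{\int_{\alpha'}^\alpha f(\tau)\,d\tau}{\int_{\alpha'}^\alpha g(\tau)\,d\tau}=\frac{q_1[F](\alpha,\alpha')}{q_1[G](\alpha,\alpha')}$. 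Moreover, $q_1[G](\alpha,\alpha')=\int_0^1 G'(t\alpha+(1-t)\alpha')\,dt=\int_0^1 g(t\alpha+(1-t)\alpha')\,dt$, so hypothesis (\ref{glowerb}) is exactly $|q_1[G](\alpha,\alpha')|\ge 1/8$ for $|\alpha-\alpha'|\le\pi$, which is the hypothesis of Lemma \ref{insertnew} with $\omega^{(1)}=G$ and $\omega^{(2)}=F$.

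Second, I apply Cauchy--Schwarz in $\alpha'$, obtaining $\int_0^{2\pi}|h(\alpha')|\,|D_\alpha^j(q_1[F]/q_1[G])|\,d\alpha'\le C\|h\|_0\bigl(\int_0^{2\pi}|D_\alpha^j(q_1[F]/q_1[G])|^2\,d\alpha'\bigr)^{1/2}$. The $j=0$ bound in Lemma \ref{insertnew} then yields the first assertion, and the $j\ge 3$ bound, together with the identifications $\|F_\alpha\|_j=\|f\|_j$ and $\|G_\alpha\|_j=\|g\|_j$, yields the second.

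I do not anticipate any real difficulty beyond this dictionary translation. The only items worth a moment's care are the periodicity of $F$ and $G$, which allows the $L^2$-integral in $\alpha'$ to be taken over any interval of length $2\pi$ (so that one may align it with the region $|\alpha'-\alpha|\le\pi$ where the lower bound on $q_1[G]$ is available, just as in the proof of Lemma \ref{insertnew}), and the identification of Sobolev norms of $F_\alpha,G_\alpha$ with those of $f,g$, which is immediate from the Fourier characterization of $\|\cdot\|_r$.
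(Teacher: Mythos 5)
Your proof is correct and follows essentially the same path as the paper's: define antiderivatives $\omega^{(1)}=G$ and $\omega^{(2)}=F$ (periodic because $\hat f(0)=\hat g(0)=0$), recognize the ratio of integrals as $q_1[F]/q_1[G]$ so that the hypothesis becomes $|q_1[\omega^{(1)}]|\ge 1/8$, and then invoke Lemma \ref{insertnew} together with Cauchy--Schwarz. The remarks about periodicity allowing the interval of integration to be shifted, and the identification $\|F_\alpha\|_j=\|f\|_j$, are exactly the bookkeeping the paper's proof uses implicitly.
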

\begin{proof}
We define
$$\omega^{(1)} (\alpha ) = \int_0^\alpha g(s) ds, \mbox ~~
\omega^{(2)} (\alpha) = \int_0^\alpha f(s) ds .$$
Clearly, $\omega^{(1)}, \omega^{(2)} \in H^{k+1}_p$
since ${\hat g} (0)=0={\hat f} (0)$.
We note
$$
\frac{\int_{\alpha'}^\alpha f(\tau)d\tau}{\int_{\alpha'}^\alpha
g(\tau)d\tau}
= \frac{q_1 [ \omega^{(2)} ] (\alpha, \alpha') }{
q_1 [\omega^{(1)}] (\alpha, \alpha') }.
$$
Further, we note that the given condition on lower bound involving $g$
becomes
$$ \Big | q_1 [\omega^{(1)} ] (\alpha, \alpha') \Big | \ge \frac{1}{8} .$$
Using Lemma
\ref {insertnew}, the proof follows using Cauchy Schwartz inequality.
\end{proof}

\begin{lemma}
\label{insertnew2}
Assume $\omega^{(1)}, \omega^{(2)} \in H_p^{j+2}$ with $j\ge0$.
Suppose
$$\Big | q_1 [\omega^{(1)} ] (\alpha, \alpha') \Big |
\ge \frac{1}{8} ~~,~~
\Big | q_1 [\omega^{(2)} ] (\alpha, \alpha') \Big |
\ge \frac{1}{8} ~~{\rm for} ~
0 < |\alpha - \alpha'| \le \pi. $$
 Then $j=0$, for  any $a \in \mathbb{R}$,  there exists constant $C_1$ independent of $\alpha$ and $a$ such that
\begin{equation*}
\left \{ \Big(\int_{a}^{a+2\pi}
\Big |  
\frac{q_2[\omega^{(2)}](\alpha',\alpha)}{q_1[\omega^{(2)}](\alpha',\alpha)}
-\frac{q_2[\omega^{(1)}](\alpha',\alpha)}{q_1[\omega^{(1)}](\alpha',\alpha)}
\Big |^2 d\alpha' \right \}^{1/2}
\leq
C_1\|\omega^{(2)}_\alpha-\omega^{(1)}_\alpha \|_{1}
 \left (1+   \|\omega^{(1)}_\alpha\|_{1}\right \} .
\end{equation*} Further, for $j\ge3$,
\begin{multline*}
\left \{ \Big(\int_{a}^{a+2\pi}
\Big | D_\alpha^j \left (
\frac{q_2[\omega^{(2)}](\alpha',\alpha)}{q_1[\omega^{(2)}](\alpha',\alpha)}
-\frac{q_2[\omega^{(1)}](\alpha',\alpha)}{q_1[\omega^{(1)}](\alpha',\alpha)}
\right )
\Big |^2 d\alpha' \right \}^{1/2} \nonumber \\
\leq
C\left(\|\omega^{(2)}_\alpha-\omega^{(1)}_\alpha \|_{j+1}
+\|\omega^{(2)}_\alpha-\omega^{(1)}_\alpha \|_j\|\omega^{(1)}_\alpha\|_{j+1}
\right)
 \left (1+   \|\omega^{(1)}_\alpha\|_{j}^j
+ \|\omega^{(2)}_\alpha \|_{j} ^j\right \} ,
\end{multline*}
where $C$ depends on $j$ alone, but not on $a$ and $\alpha$.
\end{lemma}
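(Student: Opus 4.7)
The plan is to exploit the algebraic identity
\begin{equation*}
\frac{q_2[\omega^{(2)}]}{q_1[\omega^{(2)}]} - \frac{q_2[\omega^{(1)}]}{q_1[\omega^{(1)}]}
= \frac{q_2[\omega^{(2)} - \omega^{(1)}]}{q_1[\omega^{(2)}]}
+ \frac{q_2[\omega^{(1)}]\,q_1[\omega^{(1)} - \omega^{(2)}]}{q_1[\omega^{(1)}]\,q_1[\omega^{(2)}]},
\end{equation*}
which follows from the linearity of $q_1$ and $q_2$ in their $\omega$-argument. Writing $\tilde\omega := \omega^{(2)} - \omega^{(1)}$, each term on the right contains exactly one factor of the form $q_k[\tilde\omega]$, which supplies the small prefactor $\|\tilde\omega_\alpha\|_\bullet$, while the remaining factors involve only $\omega^{(1)}$ or $\omega^{(2)}$ and are controlled by Lemma \ref{lem3.3}, Lemma \ref{lem3.5}, and the ideas used in their proofs.

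For $j=0$: the first term is bounded in $L^2_{\alpha'}$ by Lemma \ref{lem3.5} (applied with $\omega^{(1)}\leftrightarrow\omega^{(2)}$ and $\omega^{(2)}$ replaced by $\tilde\omega$), giving $C\|\tilde\omega_\alpha\|_1$. In the second term, the hypothesis yields $\|1/q_1[\omega^{(i)}]\|_\infty\le 8$, the integral representation in Definition \ref{def3.2} gives $\|q_1[\tilde\omega]\|_\infty\le C\|\tilde\omega_\alpha\|_1$ by one-dimensional Sobolev embedding, and $q_2[\omega^{(1)}]/q_1[\omega^{(1)}]$ is bounded in $L^2_{\alpha'}$ by $C\|\omega^{(1)}_\alpha\|_1$ via Lemma \ref{lem3.5}. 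Multiplying these estimates produces the claimed $j=0$ bound.

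For $j\ge 3$: apply $D_\alpha^j$ to each of the two terms and expand by the Leibniz rule. Every factor $D_\alpha^\ell q_k[\omega]$ is estimated in $L^2_{\alpha'}$, or in $L^\infty_{\alpha'}$ via Sobolev embedding, by Lemma \ref{lem3.3}, while each $D_\alpha^\ell(1/q_1[\omega^{(i)}])$ is expanded as a sum of products of $D^{\ell_m}q_1[\omega^{(i)}]/q_1[\omega^{(i)}]^{k+1}$ and controlled exactly as in the proof of Lemma \ref{lem3.5}, producing the polynomial factor $(1+\|\omega^{(1)}_\alpha\|_j^j+\|\omega^{(2)}_\alpha\|_j^j)$. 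Since $\tilde\omega$ appears linearly, the $\tilde\omega$-dependent factor enters either as $\|\tilde\omega_\alpha\|_{j+1}$ (when all $j$ derivatives fall on the $\tilde\omega$-piece) or as $\|\tilde\omega_\alpha\|_j\|\omega^{(1)}_\alpha\|_{j+1}$ (otherwise, invoking the Banach algebra property of $H^j$ for $j\ge 3$), reproducing the two summands on the right. The main obstacle is the combinatorial bookkeeping required to check that every term of the Leibniz expansion falls under one of these two forms and that no worse polynomial in $\|\omega^{(i)}_\alpha\|_j$ appears; this is tedious but essentially a duplication of the argument in Lemma \ref{lem3.5} across the two denominators. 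Independence of $C$ from $a$ and $\alpha$ follows from the $2\pi$-periodicity of the integrand in $\alpha'$, exactly as noted in the proof of Lemma \ref{insertnew}.
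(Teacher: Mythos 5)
Your decomposition is exactly the paper's (their second term differs only by writing $q_1[\omega^{(2)}-\omega^{(1)}]$ in place of $-q_1[\omega^{(1)}-\omega^{(2)}]$), and your estimates invoke the same ingredients — Lemmas \ref{lem3.3} and \ref{lem3.5}, the lower bound on the $q_1$'s, the Banach algebra property, and the cancellation of the $\frac{1}{\alpha-\alpha'}$ singularity to justify periodicity and hence $a$-independence. This is essentially the paper's proof, with the $j=0$ case worked out in somewhat more explicit detail.
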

\begin{proof} We note from the definitions of $q_1$ and $q_2$ that
the nonperiodic term
$\frac{1}{\alpha-\alpha'}$ that
appears in
each $\frac{q_2}{q_1}$ in the integrand
cancels each other out and we are left with
integrating a
$2 \pi$-periodic function in $\alpha'$; hence
there is no dependence on $a$, and we may choose
$a = \alpha-\pi$ in the proof.
The rest of the proof is similar to
that of Lemma \ref{lem3.5}. We note that
$$ \frac{q_2 [\omega^{(2)} ] }{ q_1 [\omega^{(2)} ]
 }
- \frac{q_2 [\omega^{(1)} ] }{ q_1 [\omega^{(1)}]
}  =
\frac{q_2 [\omega^{(2)}-\omega^{(1)}]}{q_1[\omega^{(2)} ]}
-\frac{q_2 [\omega^{(1)}] q_1 [\omega^{(2)}-\omega^{(1)}]}{
q_1 [\omega^{(1)} ] q_1[\omega^{(2)} ]}
$$
and that the denominators are bounded away from zero. We use
Lemmas
\ref{lem3.3}, \ref{lem3.5} and the
Banach algebra property for $\| \cdot \|_j $ norms in $\alpha'$ for $j \ge 1$.
For $j = 0$, the result follows from
$$
\left\| \frac{q_2 [\omega^{(1)}] }{q_1 [\omega^{(1)}] }\right \|_{L^\infty}
\le
C \left\| \frac{q_2 [\omega^{(1)}] }{q_1 [\omega^{(1)}] }\right \|_{1},
$$
where the norms are taken in $\alpha' $.
\end{proof}

\begin{lemma}\label{leminsert3}
For $\| {\tilde \theta} \|_1 < \epsilon$ sufficiently
small, $\omega$ determined from ${\tilde \theta}$ through
(\ref{eqomega}), then for ${\tilde \theta}, J  \in H^r_p $ for $r \ge 3$
and any $a$,  there exists constant $C_r$ only depending on $r$ such that
$$ \left\| \frac{1}{\pi} \mbox{PV}
\int_{a}^{a+2\pi} \frac{\omega_\alpha (\alpha) J (\alpha') d\alpha'}{
\omega (\alpha) - \omega (\alpha') } \right\|_r
\le C_r \left [ \| J \|_r +  \| J \|_0 \| {\tilde \theta} \|_{r+1}
\exp \left ( C_r \| {\tilde \theta} \|_{r} \right ) \right ].
$$
\end{lemma}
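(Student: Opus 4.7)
The plan is to extract the Hilbert transform (which supplies the $\|J\|_r$ term) by comparing the kernel $\omega_\alpha(\alpha)/[\omega(\alpha)-\omega(\alpha')]$ with that of the reference map $\omega_0$ from Definition \ref{def2.1}; for the reference kernel, equation (\ref{eqnoteg1}) of Note \ref{noteg1} already identifies the integral as $\mathcal{H}[J]+i\hat{J}(0)$.

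First I would establish an algebraic decomposition. Using the definition of $q_2$, namely $\omega(\alpha)-\omega(\alpha')=\omega_\alpha(\alpha)(\alpha-\alpha')+q_2[\omega](\alpha,\alpha')(\alpha-\alpha')^2$, one finds
$$\frac{\omega_\alpha(\alpha)}{\omega(\alpha)-\omega(\alpha')}-\frac{1}{\alpha-\alpha'}=-\frac{q_2[\omega](\alpha,\alpha')}{q_1[\omega](\alpha,\alpha')};$$
subtracting the same identity for $\omega_0$ yields the \emph{regular} residual
$$R(\alpha,\alpha'):=\frac{\omega_\alpha(\alpha)}{\omega(\alpha)-\omega(\alpha')}-\frac{\omega_{0,\alpha}(\alpha)}{\omega_0(\alpha)-\omega_0(\alpha')}=\frac{q_2[\omega_0](\alpha,\alpha')}{q_1[\omega_0](\alpha,\alpha')}-\frac{q_2[\omega](\alpha,\alpha')}{q_1[\omega](\alpha,\alpha')},$$
so that
$$\frac{1}{\pi}\mbox{PV}\int_a^{a+2\pi}\frac{\omega_\alpha\,J(\alpha')}{\omega(\alpha)-\omega(\alpha')}\,d\alpha'=\mathcal{H}[J](\alpha)+i\hat{J}(0)+\mathcal{R}(\alpha),$$
where $\mathcal{R}(\alpha):=\frac{1}{\pi}\int_a^{a+2\pi}J(\alpha')R(\alpha,\alpha')\,d\alpha'$.

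Second, I would estimate $\mathcal{R}$ in $H^r_p$. Since $\|\mathcal{R}\|_r$ is equivalent to $\|D_\alpha^r\mathcal{R}\|_0+|\hat{\mathcal{R}}(0)|$ by Definition \ref{def1.1}, it suffices to bound these two scalars. Cauchy–Schwarz in $\alpha'$ gives
$$|D_\alpha^\ell \mathcal{R}(\alpha)|\le\frac{1}{\pi}\|J\|_0\,\|D_\alpha^\ell R(\alpha,\cdot)\|_{L^2([a,a+2\pi])}.$$
The lower bounds $|q_1[\omega]|\ge 1/8$ (from Proposition \ref{prop3.4}) and $|q_1[\omega_0]|\ge 1/8$ (trivial) allow Lemma \ref{insertnew2} to be applied with $\omega^{(1)}=\omega_0$, $\omega^{(2)}=\omega$: the $j=0$ case handles $\ell=0$ and the $j=r\ge 3$ case handles $\ell=r$, each with a constant independent of $\alpha$ and $a$.

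Third, I would translate these bounds into norms of $\tilde\theta$. Taking $\tilde\theta^{(2)}\equiv 0$ in Lemma \ref{lem3.1}, inequality (\ref{5thstate}) yields $\|\omega_\alpha-\omega_{0,\alpha}\|_{r+1}\le C\|\tilde\theta\|_{r+1}\exp(C\|\tilde\theta\|_r)$, and (\ref{1ststate}) yields $\|\omega_\alpha\|_r\le C(1+\|\tilde\theta\|_r)\exp(C\|\tilde\theta\|_{r-1})$, while $\omega_{0,\alpha}=e^{i\alpha}$ has unit $H^k_p$ norm in every $k$. Since $\|\tilde\theta\|_1<\epsilon$ keeps lower-order norms bounded, the polynomial-in-$\|\tilde\theta\|_r$ prefactors arising from Lemma \ref{insertnew2} are absorbed into the exponential $\exp(C_r\|\tilde\theta\|_r)$, producing $\|\mathcal{R}\|_r\le C_r\|J\|_0\|\tilde\theta\|_{r+1}\exp(C_r\|\tilde\theta\|_r)$. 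Combining with $\|\mathcal{H}[J]\|_r\le\|J\|_r$ and $|\hat{J}(0)|\le\|J\|_0$ yields the stated estimate. The main obstacle is bookkeeping rather than analysis: Lemma \ref{insertnew2} only gives bounds for $j=0$ and $j\ge 3$, so one must invoke the norm equivalence above instead of summing intermediate derivatives, and then carefully absorb polynomial factors into the exponential, both of which are routine given the smallness of $\|\tilde\theta\|_1$.
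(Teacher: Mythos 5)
Your proposal is correct and follows essentially the same route as the paper: decompose the kernel relative to $\omega_0$ via the $q_2/q_1$ identity so that the reference integral produces $\mathcal{H}[J]+i\hat J(0)$ (equation (\ref{eqnoteg1})), bound the residual by Cauchy--Schwarz together with Lemma \ref{insertnew2} and Proposition \ref{prop3.4}, and convert $\omega$-norms to $\tilde\theta$-norms through Lemma \ref{lem3.1}. Your version merely spells out the algebra and the absorption of the polynomial prefactor into the exponential, both of which the paper leaves implicit.
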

\begin{proof}
We note from (\ref{eqnoteg2}) that $J \in H^0_p$,
$$ \frac{\omega_{0_\alpha}}{\pi}  \mbox{PV}\int_{a}^{a+2\pi}
\frac{J (\alpha') d\alpha'}{\omega_0 (\alpha) - \omega_0 (\alpha') }
= \mathcal{H} [J] (\alpha) + i {\hat J} (0) $$
and we know that $\| \mathcal{H} J \|_r = \| J \|_r $.
Therefore, the integrand may be written as
$$ i {\hat J} (0) + \mathcal{H} [J] (\alpha) + \frac{1}{\pi}
\int_{\alpha-\pi}^{\alpha+\pi} d\alpha' J (\alpha')
\left \{ \frac{q_2 [\omega] (\alpha, \alpha')}{q_1 [\omega] (\alpha, \alpha')}
- \frac{q_2 [\omega_0] (\alpha, \alpha')}{q_1 [\omega_0] (\alpha, \alpha')}
\right \}.
$$
The proof follows from applying
Lemmas  \ref{insertnew2}, \ref{lem3.1}, Proposition \ref{prop3.4} and
using Cauchy Schwartz
inequality and noting
$\omega=\omega_0$, when ${\tilde \theta}=0$.
\end{proof}

\begin{lemma}\label{leminsertnew4}
Assume ${\tilde \theta} \in \dot{H}^{r+1}$, $J  \in H^r_p $
 and $\omega^{[3]} \in
H^{r+1}_p $
for $r \ge 3$. Assume $\|{\tilde \theta}\|_1 < \epsilon$
is sufficiently small and $\omega$ is determined from
${\tilde \theta} $ through (\ref{eqomega}).
Then for
any $a$, there exists constant $C_r$ only depending on $r$ such that
\begin{multline*} \left\|
\omega_\alpha
 \mbox{PV}\int_{a}^{a+2\pi} \frac{J (\alpha') q_1 [\omega^{[3]} ] (\alpha, \alpha')
d\alpha'}{
\left ( \omega (\alpha) - \omega (\alpha') \right ) q_1 [\omega] (\alpha,
\alpha') } \right\|_r\\
\le C_r \left\{\| \omega^{[3]}_\alpha \|_{r}
\left [ \| J \|_r + \| J \|_0 \| {\tilde \theta} \|_{r+1} \exp
 \left (C_r\|{\tilde \theta} \|_{r} \right )  \right ]+\| \omega^{[3]}_\alpha \|_{r+1}\exp
 \left (C_r\|{\tilde \theta} \|_{r} \right )\right\}.
\end{multline*}
\end{lemma}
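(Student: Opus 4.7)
The plan is to extract the ``diagonal value'' $\omega^{[3]}_\alpha(\alpha)/\omega_\alpha(\alpha)$ of the ratio $q_1[\omega^{[3]}]/q_1[\omega]$ so as to reduce the singular part of the integral to Lemma \ref{leminsert3}; the remainder will be a regular integral handled by Lemmas \ref{lem3.3} and \ref{lem3.5}. Applying the identity $q_1[z](\alpha,\alpha')=z_\alpha(\alpha)+(\alpha-\alpha')q_2[z](\alpha,\alpha')$ (immediate from Definition \ref{def3.2}) to both $z=\omega^{[3]}$ and $z=\omega$ gives
\begin{equation*}
\frac{q_1[\omega^{[3]}]}{q_1[\omega]}=\frac{\omega^{[3]}_\alpha(\alpha)}{\omega_\alpha(\alpha)}+\frac{(\alpha-\alpha')\bigl\{q_2[\omega^{[3]}]\omega_\alpha(\alpha)-q_2[\omega]\omega^{[3]}_\alpha(\alpha)\bigr\}}{\omega_\alpha(\alpha)\,q_1[\omega]}.
\end{equation*}
Substituting this and using $(\alpha-\alpha')/(\omega(\alpha)-\omega(\alpha'))=1/q_1[\omega]$ splits the quantity inside the norm as $I_1(\alpha)+I_2(\alpha)$ with
\begin{equation*}
I_1(\alpha)=\omega^{[3]}_\alpha(\alpha)\,\mathrm{PV}\!\!\int_a^{a+2\pi}\!\!\frac{J(\alpha')\,d\alpha'}{\omega(\alpha)-\omega(\alpha')},
\end{equation*}
\begin{equation*}
I_2(\alpha)=\omega_\alpha(\alpha)\!\!\int_a^{a+2\pi}\!\!\frac{J(\alpha')\,q_2[\omega^{[3]}]}{q_1[\omega]^2}\,d\alpha'-\omega^{[3]}_\alpha(\alpha)\!\!\int_a^{a+2\pi}\!\!\frac{J(\alpha')\,q_2[\omega]}{q_1[\omega]^2}\,d\alpha'.
\end{equation*}

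For $I_1$, I factor $I_1=(\omega^{[3]}_\alpha/\omega_\alpha)\cdot\bigl[\omega_\alpha\,\mathrm{PV}\!\int J/(\omega-\omega')\,d\alpha'\bigr]$, apply Lemma \ref{leminsert3} to the bracketed factor, and use the Banach algebra property of $\|\cdot\|_r$ together with $\|1/\omega_\alpha\|_r\le C_1(1+\|\tilde\theta\|_r)\exp(C_2\|\tilde\theta\|_{r-1})$ from Lemma \ref{lem3.1}; absorbing the polynomial prefactor $(1+\|\tilde\theta\|_r)$ into $\exp(C_r\|\tilde\theta\|_r)$ yields the first bracket $C_r\|\omega^{[3]}_\alpha\|_r[\|J\|_r+\|J\|_0\|\tilde\theta\|_{r+1}\exp(C_r\|\tilde\theta\|_r)]$ of the claimed bound. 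For $I_2$, the integrand is regular in $\alpha'$ since $|q_1[\omega]|\ge 1/8$ by Proposition \ref{prop3.4}. I estimate $\|I_2\|_r$ by differentiating up to $r$ times in $\alpha$ under the integral, expanding by Leibniz, and applying Cauchy--Schwartz in $\alpha'$, controlling the resulting $L^2_{\alpha'}$ norms of $D^j_\alpha(q_2[\omega^{[3]}]/q_1[\omega]^2)$ and $D^j_\alpha(q_2[\omega]/q_1[\omega]^2)$ by a direct extension of Lemma \ref{lem3.5} (with $q_1[\omega]$ squared in the denominator, handled by Lemma \ref{lem3.1}-type bounds on negative powers of $q_1[\omega]$). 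At top order $j=r$ the estimate of $D^r_\alpha q_2[\omega^{[3]}]$ from Lemma \ref{lem3.3} contributes $\|\omega^{[3]}_{\alpha\alpha}\|_r=\|\omega^{[3]}_\alpha\|_{r+1}$, which after the Banach-algebra product with $\omega_\alpha(\alpha)$ and with $\|J\|_0$ gives the second summand $\|\omega^{[3]}_\alpha\|_{r+1}\exp(C_r\|\tilde\theta\|_r)$; the symmetric $q_2[\omega]$ term contributes $\|\omega_{\alpha\alpha}\|_r\le C(1+\|\tilde\theta\|_{r+1})\exp(C\|\tilde\theta\|_r)$ via Lemma \ref{lem3.1}, which when multiplied by $\|\omega^{[3]}_\alpha\|_r\|J\|_0$ is absorbed into the $\|J\|_0\|\tilde\theta\|_{r+1}\exp(\cdots)$ subterm of the first bracket.

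The main technical burden is the bookkeeping of the Leibniz expansion in $I_2$: one must verify that in $D^j_\alpha(q_2[\cdot]/q_1[\omega]^2)$ at most a single derivative lands on $q_2[\omega^{[3]}]$ at top order (otherwise the regularity of $\omega^{[3]}$ would be exceeded) and that no more than $r+1$ derivatives ever fall on $\omega$, which is permissible since $\tilde\theta\in\dot H^{r+1}$ yields $\|\omega_\alpha\|_{r+1}$ control via Lemma \ref{lem3.1}. Once this careful distribution of derivatives is respected, all intermediate occurrences of $\|\tilde\theta\|_\bullet$ with $\bullet\le r$ consolidate into a single $\exp(C_r\|\tilde\theta\|_r)$ factor via the smallness hypothesis, and the stated inequality follows from combining the bounds on $I_1$ and $I_2$.
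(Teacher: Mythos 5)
Your proof is correct and coincides with the paper's approach: the split $q_1[\omega^{[3]}]/q_1[\omega]=\omega^{[3]}_\alpha/\omega_\alpha+(\alpha-\alpha')(\cdots)$ produces, after substitution, exactly the paper's decomposition, since your $I_1$ is the paper's second term $\frac{\omega^{[3]}_\alpha}{\omega_\alpha}\mathrm{PV}\!\int \frac{J\,\omega_\alpha}{\omega-\omega'}$ and the $I_2$ integrand $\frac{\omega_\alpha q_2[\omega^{[3]}]-\omega^{[3]}_\alpha q_2[\omega]}{q_1[\omega]^2}$ is identical to $-D_\alpha\bigl(q_1[\omega^{[3]}]/q_1[\omega]\bigr)$, which is the paper's first term $-D_\alpha\!\int \frac{J\,q_1[\omega^{[3]}]}{q_1[\omega]}d\alpha'$ after differentiating under the integral sign. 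The lemmas you invoke (Lemma \ref{leminsert3} plus Banach algebra for $I_1$, and Lemmas \ref{lem3.3}, \ref{lem3.5} with the straightforward extension to $q_1^2$-denominators for $I_2$) are precisely the estimates underlying the paper's cited Lemmas \ref{insertnew} and \ref{insertnew2}.
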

\begin{proof} We note that
\begin{multline*}
\omega_\alpha  \mbox{PV}
\int_{a}^{a+2\pi} \frac{J (\alpha') q_1 [\omega^{[3]} ] (\alpha, \alpha')
d\alpha'}{
\left ( \omega (\alpha) - \omega (\alpha') \right ) q_1 [\omega] (\alpha,
\alpha') }
= -D_\alpha \int_{a}^{a+2 \pi}
\frac{J (\alpha') q_1 [\omega^{[3]} ] (\alpha, \alpha') }{
 q_1 [\omega] (\alpha, \alpha')} d\alpha' \\
+ \frac{\omega_\alpha^{[3]}}{\omega_\alpha} \mbox{PV}
\int_{a}^{a+2\pi} \frac{J(\alpha') \omega_\alpha (\alpha) d\alpha'}{
\omega (\alpha)
-\omega (\alpha') }.
\end{multline*}
We rely on Lemmas \ref{insertnew} and \ref{insertnew2}, as well as
Cauchy Schwartz inequality, and Banach algebra property of
$\| \cdot\|_r$ norm for $r \ge 1$
to complete the proof.
\end{proof}

\begin{lemma}\label{lem6.2}
Suppose for $r\geq2$,  $z\in H^r_p $
corresponds to ${\tilde \theta} \in \dot{H}^{r-1}$ through
(\ref{eqomega}) and (\ref{zomega}) and
$\| {\tilde \theta} \|_1<\epsilon_1$, where $\epsilon_1$ is small enough
for Propositions  \ref{prop2.6}  and \ref{prop3.4} to apply.
Further assume $|L - 2\pi | \le \frac{1}{2} $ and
$y(0,t)\in S_M$.  Then there exists $\Upsilon>0$ such that
if  $0\leq\beta<\Upsilon$, then
$\mathcal{K}[z]:H^0_p \rightarrow
H^{r-2}_p $, and in particular, there are
positive constants $C_1$ depending on $r$ only such that
\begin{equation}
\label{6.4} \|\mathcal{K}[z]f\|_{r-2} \leq C_1 \|f\|_0(1+\beta^2)
{(1+\|\omega_\alpha\|_{r-1}^{r-2})}.
\end{equation}
Further, $\mathcal{K}[z]: H^1_p
\rightarrow H^{r-1}_p $, and
\begin{equation}
\label{6.5} \|\mathcal{K}[z]f\|_{r-1} \leq C_1\|f\|_1(1+\beta^2)
{(1+\|\omega_\alpha\|_{r-1}^{r-1})}.
\end{equation}
\end{lemma}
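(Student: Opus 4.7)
The plan is to use the decomposition $\mathcal{K} = \mathcal{K}_1 + \mathcal{K}_2$ from (\ref{Kdecomp}) and estimate each piece separately. For $\mathcal{K}_1$, which is the only nontrivial contribution when $\beta = 0$, I would expose hidden smoothness of the kernel via divided differences. For $\mathcal{K}_2$, I would exploit the prefactor $\beta$ together with the vanishing of $l_1$ and $\tanh$ at the origin to obtain the claimed $\beta^2$ scaling, and use that $y(0,t)\in S_M$ together with Lemma \ref{lem3.1} to keep the arguments of the hyperbolic functions bounded and away from their poles once $\Upsilon$ is small.

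For $\mathcal{K}_1[z]f$, using $\cot(w) = 1/w + l_2(w)$ together with the algebraic identity $z_\alpha(\alpha') - q_1[z](\alpha,\alpha') = (\alpha-\alpha')\, q_2[z](\alpha',\alpha)$, the singular parts in the integrand cancel and the kernel collapses to
$$
\frac{q_2[z](\alpha',\alpha)}{z_\alpha(\alpha')\, q_1[z](\alpha,\alpha')}
\;-\;\frac{l_2\!\left(\tfrac{1}{2}(\alpha-\alpha')\right)}{2\, z_\alpha(\alpha')},
$$
which is regular in both variables. Differentiating $\mathcal{K}_1[z]f$ up to $r-2$ or $r-1$ times in $\alpha$ under the integral sign, then applying Cauchy--Schwarz in $\alpha'$, reduces the task to bounding $(\int |D_\alpha^j(q_2/q_1)|^2 \,d\alpha')^{1/2}$ and similar integrals for the $l_2$ piece uniformly in $\alpha$; these are precisely what Lemmas \ref{lem3.3} and \ref{lem3.5} supply. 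The Banach-algebra property of $H^r_p$ for $r\ge 1$ handles products with derivatives of $1/z_\alpha$, and Lemma \ref{lem3.1} converts $\|z_\alpha\|$ bounds into $\|\omega_\alpha\|$ bounds up to the harmless factor $L/(2\pi)$, uniformly under $|L-2\pi|<\tfrac{1}{2}$. This yields the polynomial dependence $1+\|\omega_\alpha\|_{r-1}^{r-2}$ (respectively $1+\|\omega_\alpha\|_{r-1}^{r-1}$) together with the $\|f\|_0$ (respectively $\|f\|_1$) prefactor.

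For $\mathcal{K}_2[z]f$, Lemma \ref{lem3.1} and $y(0,t)\in S_M$ combine with $|L-2\pi|<\tfrac12$ to give a uniform bound on $|\Im z(\alpha)|$, so choosing $\Upsilon$ small enough that $\frac{\beta}{4}\bigl(|z(\alpha)-z(\alpha')| + |z(\alpha)-z^\ast(\alpha')|\bigr)$ stays strictly less than the distance from the origin to the nearest pole of $\coth$ and $\tanh$, uniformly in $\alpha,\alpha'$, yields a safe region of analyticity. Because $l_1(w)$ and $\tanh(w)$ are both $O(w)$ at $w=0$ and each enters with an additional external prefactor $\beta/4$, every pointwise contribution to the $\mathcal{K}_2$ kernel is $O(\beta^2)$. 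Applying the composition estimates of Lemma \ref{coroXi} (with $j_0=1$, the extra factor of $\beta$ promoting the net scaling to $\beta^2$) to $l_1$ and $\tanh$ composed with the smooth $z$-dependent arguments produces $H^{r-2}$ and $H^{r-1}$ kernel estimates in terms of $\|\omega_\alpha\|_{r-1}$. Cauchy--Schwarz in $\alpha'$ then gives the $\|f\|_0$ or $\|f\|_1$ factor, and adding to the $\mathcal{K}_1$ bound produces the stated $(1+\beta^2)$.

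The main technical obstacle lies in the $\mathcal{K}_1$ accounting: Leibniz expansion applied to $q_2[z]/q_1[z]$ generates many terms, and each derivative of $1/q_1$ contributes an extra factor that Lemma \ref{lem3.3} only controls at the cost of $\|\omega_\alpha\|^{r-2}$ or $\|\omega_\alpha\|^{r-1}$ powers. One must verify that after Cauchy--Schwarz and the Banach-algebra reduction, the final bound matches the stated polynomial order exactly. A secondary technicality is the choice of $\Upsilon$ for $\mathcal{K}_2$: it must depend only on $M$, on $V$ (through the lower bound $L \ge \sqrt{8\pi V/3}$ from (\ref{translating5.5})), and on the a priori bound on $\|\omega_\alpha\|_\infty$ coming from $\|\tilde\theta\|_1<\epsilon_1$, so that the $\tanh$ argument never approaches its pole at $i\pi/2$ as $\alpha,\alpha'$ vary.
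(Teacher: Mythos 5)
Your approach matches the paper's: the same $\mathcal{K}=\mathcal{K}_1+\mathcal{K}_2$ split, with the $\mathcal{K}_1$ bounds (which the paper simply cites from Lemma 6 of \cite{AD1}) reconstructed via the divided-difference estimates of Lemmas \ref{lem3.3} and \ref{lem3.5}, and the $\mathcal{K}_2$ bound obtained by the same smallness-of-$\Upsilon$ argument using $|L-2\pi|<\tfrac12$, $y(0,t)\in S_M$ and (\ref{z3.21})--(\ref{zstar}) to keep the $\coth$ and $\tanh$ arguments away from their poles, after which the external $\beta/4$ factor together with the simple zeros of $l_1$ and $\tanh$ at the origin gives the $O(\beta^2)$ scaling. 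One small imprecision: you invoke Lemma \ref{coroXi}, which is stated for entire functions of order one, whereas $l_1$ and $\tanh$ are meromorphic; this is harmless because the arguments are confined to a fixed compact set away from the poles, so one can replace $l_1$, $\tanh$ by entire functions agreeing there -- the paper sidesteps this by appealing informally to analyticity away from the poles.
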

\begin{proof}
We will deal with $\mathcal{K}_1$ and $\mathcal{K}_2$ separately. By
Lemma 6 in \cite{AD1}, we have
\begin{eqnarray}\label{6.6}
\|\mathcal{K}_1[z]f\|_{r-2} &\leq& {C_1}\|f\|_0
{(1+\|\omega_\alpha\|_{r-1}^{r-2})},\\
\label{6.7} \|\mathcal{K}_1[z]f\|_{r-1} &\leq &{C_1}\|f\|_1
{(1+\|\omega_\alpha\|_{r-1}^{r-1})},
\end{eqnarray}
where the positive constants $C_1$   both  depend on $r$.

Now consider $D^{r-1}_\alpha \mathcal{K}_2[z] f$, given by:
\begin{multline}\label{6.8}
\frac{1}{2\pi
i}\int_{\alpha-\pi}^{\alpha+\pi}f(\alpha')D^{r-1}_\alpha\Big\{\frac\beta4
l_1\big(\frac{1}4\beta({z(\alpha)-z(\alpha')})\big)-\frac\beta4\tanh\big[\frac\beta4\big(z(\alpha)-z^{\ast}(\alpha')\big)\big]\Big\}d\alpha'\\
=\frac{1}{2\pi
i}\int_{\alpha-\pi}^{\alpha+\pi}f(\alpha')D^{r-1}_\alpha\frac\beta4
l_1\big(\frac{1}4\beta({z(\alpha)-z(\alpha')})\big)d\alpha'\\
-\frac{1}{2\pi
i}\int_{\alpha-\pi}^{\alpha+\pi}f(\alpha')D^{r-1}_\alpha\frac\beta4\tanh\Big\{\frac\beta4\big[\big(z(\alpha)-z(\alpha')\big)+2i\big(y(\alpha',t)-y(0,t)\big)+2iy(0,t)\big]\Big\}d\alpha'.
\end{multline}

Equation (\ref{6.8}) involves upto
$r-1$ derivative of $z$.  From (\ref{translating5.5}),
\begin{equation}\label{z3.21}
\big|z(\alpha)-z(\alpha')\big|= \frac{L}{2\pi} \Big|\int_{\alpha'}^\alpha
e^{i\zeta+i\theta(\zeta)}d\zeta\Big|
\le \frac{L}{2} < 2\pi
\end{equation}
\begin{equation}\label{zstar}
z(\alpha,t)-z^{\ast}(\alpha',t)=\big(z(\alpha,t)-z(\alpha',t)\big)+2i\big(y(\alpha',t)-y(0,t)\big)+2iy(0,t).
\end{equation}
From (\ref{z3.21}), (\ref{zstar}) and $|y(0,t)|<M$,
there exists $\Upsilon>0$ small enough so that if $0\leq\beta<
 \Upsilon<1$, then
 $\Big|\beta\big(z(\alpha)-z(\alpha')\big)\Big|\leq \pi$,
and
$\Big|\beta\big[\big(z(\alpha)-z(\alpha')\big)+2i\big(y(\alpha',t)-y(0,t)\big)
+2iy(0,t)\big]\Big|< C\beta $.
Since $l_1$ and $\tanh$
analytic,  we conclude that
\begin{eqnarray}
\|\mathcal{K}_2[z]f\|_{r-1} \leq C_1\beta^2\|f\|_0
{\big(1+\|\omega_\alpha\|_{r-1}^{r-1}\big)},\label{6.9}
\end{eqnarray}
where $C_1$  depends only on  $r$.
Combining (\ref{6.6}), (\ref{6.7}) and (\ref{6.9}), we complete
the proof.
\end{proof}

\begin{note}\label{mathcalK2} Note from (\ref{eqG1}) and (\ref{6.9}), for $r\ge1$ and $|L-2\pi|<\frac{1}{2}$, by Lemma \ref{lem3.1}, it follows that
\begin{eqnarray}\label{G23.23}
\|\mathcal{G}_2[z]f\|_{r-1} \leq C_1\beta^2\|f\|_0
\exp{\big(C_2\|\tilde\theta\|_{r-1}\big)},
\end{eqnarray}
where $C_1$ and $C_2$  depend only on  $r$.
\end{note}

\begin{lemma}(See Lemma 3.8 in \cite{JY})
\label{lem3.8} If $f \in H^1_p $,
and $\omega^{(1)}$,  $\omega^{(2)}$ correspond respectively to
$\tilde{\theta}^{(1)}$ and $\tilde{\theta}^{(2)}$, each in $
\dot{H}^{1}$, with $\|\tilde\theta^{(1)}\|_1$,
$\|\tilde\theta^{(2)}\|_1<\epsilon_1$, then for sufficient small
$\epsilon_1$,
\begin{eqnarray}
\|\mathcal{K}_1[\omega^{(1)}]f-\mathcal{K}_1[\omega^{(2)}]f\|_0 &\leq&
C_1
\|f\|_0
\|\tilde\theta^{(1)}-\tilde\theta^{(2)}\|_1.\nonumber
\end{eqnarray}
Suppose  ${\tilde \theta}^{(1)}, {\tilde \theta}^{(2)} \in \dot{H}^r$. Then
for $r \ge 1$,
\begin{eqnarray*}
&&\|\mathcal{K}_1[\omega^{(1)}]f-\mathcal{K}_1[\omega^{(2)}]f\|_r \\
&\leq& C_1\exp\Big({C_2}\big(\|\tilde\theta^{(1)}\|_{r}+
\|\tilde\theta^{(2)}\|_{r}\big)\Big)
\|\tilde\theta^{(1)}-\tilde\theta^{(2)}\|_r\|f\|_1,
\end{eqnarray*}
while for
for $r\ge 3$,
\begin{eqnarray*}
&&\|\mathcal{K}_1[\omega^{(1)}]f-\mathcal{K}_1[\omega^{(2)}]f\|_r \\
&\leq& C_1\exp\Big({C_2}\big(\|\tilde\theta^{(1)}\|_{r-1}+
\|\tilde\theta^{(2)}\|_{r-1}\big)\Big)\Big(\big(\|\tilde\theta^{(1)}\|_r+\|\tilde\theta^{(2)}\|_r\big)\|\tilde\theta^{(1)}-\tilde\theta^{(2)}\|_{r-1}\\
&&\hspace{6cm}+
\|\tilde\theta^{(1)}-\tilde\theta^{(2)}\|_{r}\Big)\|f\|_1,
\end{eqnarray*}
where constants $C_1$ and $C_2$ depend on $r$ only.
\end{lemma}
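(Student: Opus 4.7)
The plan is to express the difference as a single integral against $f$ and then exploit the divided-difference machinery already developed. Writing
\begin{equation*}
\mathcal{K}_1[\omega^{(1)}]f-\mathcal{K}_1[\omega^{(2)}]f = \frac{1}{2\pi i}\int_{\alpha-\pi}^{\alpha+\pi} f(\alpha')\, N(\alpha,\alpha')\, d\alpha',
\end{equation*}
where
\begin{equation*}
N(\alpha,\alpha') = \Big(\tfrac{1}{\omega^{(1)}(\alpha)-\omega^{(1)}(\alpha')}-\tfrac{1}{\omega^{(2)}(\alpha)-\omega^{(2)}(\alpha')}\Big) - \cot\tfrac{\alpha-\alpha'}{2}\Big(\tfrac{1}{\omega_\alpha^{(1)}(\alpha')}-\tfrac{1}{\omega_\alpha^{(2)}(\alpha')}\Big),
\end{equation*}
I would invoke $\omega^{(j)}(\alpha)-\omega^{(j)}(\alpha') = (\alpha-\alpha')\, q_1[\omega^{(j)}](\alpha,\alpha')$ together with $\cot\tfrac{\alpha-\alpha'}{2}=\tfrac{2}{\alpha-\alpha'}+l_2(\tfrac{\alpha-\alpha'}{2})$. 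After adding and subtracting the common comparison term $\tfrac{2}{(\alpha-\alpha')\,\omega_\alpha^{(1)}(\alpha')\omega_\alpha^{(2)}(\alpha')}$, the kernel $N$ splits into a manifestly regular piece (through $l_2$) plus a combination of $q_1[\omega^{(1)}-\omega^{(2)}]$, $q_2[\omega^{(j)}]$, and $\tfrac{1}{q_1[\omega^{(j)}]}$ in which all denominators are bounded away from zero by Proposition \ref{prop3.4}, and whose leading $(\alpha-\alpha')^{-1}$ singularities in the two contributions cancel pointwise.

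For the $r=0$ estimate, I would bound $N$ in $L^2(\alpha')$ using Lemma \ref{lem3.3} for the divided differences together with Lemma \ref{lem3.1}, equations (\ref{3rdstate})--(\ref{4thstate}), for $1/\omega_\alpha^{(1)}-1/\omega_\alpha^{(2)}$, and conclude by Cauchy--Schwarz. For $r\ge 1$, I would differentiate $r$ times under the integral, apply Leibniz, and estimate each resulting product in $L^2(\alpha')$ by combining Lemmas \ref{lem3.3}, \ref{lem3.5}, \ref{insertnew}, and \ref{insertnew2} with the Banach algebra property of Note \ref{note1.2}; Cauchy--Schwarz in $\alpha'$ then pairs the kernel-norm against $\|f\|_{0}$. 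To promote the $f$-factor from $\|f\|_0$ to $\|f\|_1$ (as the statement demands), I would transfer one $\alpha$-derivative onto $f$ using the identity $D_\alpha K = -D_{\alpha'} K$ valid for the divided-difference kernels modulo periodic boundary terms that cancel, and then integrate by parts in $\alpha'$.

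The main obstacle is organizing the $r \ge 3$ bound so that the highest-regularity factor $\|\tilde\theta^{(1)}-\tilde\theta^{(2)}\|_r$ multiplies only $\|f\|_1$, while all cross terms in which $r$ derivatives land on $\omega^{(j)}$ (as opposed to the difference $\omega^{(1)}-\omega^{(2)}$) are absorbed into $\|\tilde\theta^{(j)}\|_r\,\|\tilde\theta^{(1)}-\tilde\theta^{(2)}\|_{r-1}$. This is accomplished by systematically invoking the sharper bounds (\ref{5thstate})--(\ref{6thstate}) of Lemma \ref{lem3.1} and the sharper bound (\ref{eqq1q1}) of Lemma \ref{insertnew} (and the analogue from Lemma \ref{insertnew2}) at every Leibniz distribution: whenever $r$ derivatives accumulate on a difference factor, use the $\|\cdot\|_r + \|\tilde\theta^{(j)}\|_r\|\cdot\|_{r-1}$ form; whenever $r$ derivatives accumulate on a $q_j[\omega^{(\ell)}]$, pair it with only $\|\tilde\theta^{(1)}-\tilde\theta^{(2)}\|_{r-1}$ from a lower-order Sobolev embedding on the remaining difference factor. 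Summing over the finite list of Leibniz distributions and noting that exponential prefactors $\exp(C_2(\|\tilde\theta^{(1)}\|_{r-1}+\|\tilde\theta^{(2)}\|_{r-1}))$ propagate uniformly through Lemma \ref{lem3.1} yields the stated estimate.
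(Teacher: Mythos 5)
Your overall strategy — rewrite the difference as an integral against $f$ with a bounded kernel, split that kernel using the divided differences $q_1, q_2$ and the regular part $l_2$ of $\cot$, then bound in $L^2(\alpha')$ and close by Cauchy--Schwarz, invoking Lemmas \ref{lem3.1}, \ref{lem3.3}, \ref{lem3.5}, \ref{insertnew}, \ref{insertnew2} and Proposition \ref{prop3.4} — is the right machinery and your claim that the $(\alpha-\alpha')^{-1}$ singularities from the Cauchy-kernel difference and from the $\cot$ correction cancel pointwise is correct. The $r=0$ estimate follows exactly as you describe, and the careful $r\ge 3$ bookkeeping (routing highest derivatives onto the difference factor via (\ref{5thstate})--(\ref{6thstate}) and (\ref{eqq1q1}), and lower-order Sobolev on the rest) is also the right idea.

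The genuine gap is in the step that promotes $\|f\|_0$ to $\|f\|_1$. The identity $D_\alpha K = -D_{\alpha'}K$ that you invoke is \emph{not} valid for the divided-difference parts of the kernel: it holds exactly for the convolution-type factors $\cot\tfrac{\alpha-\alpha'}{2}$ and $l_2\bigl(\tfrac{\alpha-\alpha'}{2}\bigr)$, but for $q_1[\omega]$ and $q_2[\omega]$ one has $D_\alpha q_j + D_{\alpha'} q_j = q_j[\omega_\alpha]$, and $q_2[\omega_\alpha]$ is \emph{not} of lower order than $D_\alpha q_2[\omega]$ — both require $\omega_\alpha\in H^2$ in $L^2(\alpha')$. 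So if you apply $D_\alpha^r$ first, convert one derivative to $-D_{\alpha'}$ and integrate by parts, the error term $(D_\alpha+D_{\alpha'})D_\alpha^{r-1}N$ still demands $\omega_\alpha^{(j)}\in H^{r+1}$, i.e. $\tilde\theta^{(j)}\in H^{r+1}$, whereas the stated estimate allows only $\tilde\theta^{(j)}\in H^r$. The saving of one derivative on $\omega$ in exchange for one derivative on $f$ cannot come from this conversion. The correct mechanism is to write the \emph{kernel itself} as a full $\alpha'$-derivative of a primitive that involves one fewer derivative of $\omega$: indeed
\begin{equation*}
\frac{1}{\omega(\alpha)-\omega(\alpha')}-\frac{1}{2\omega_\alpha(\alpha')}\cot\tfrac{\alpha-\alpha'}{2}
= -\frac{1}{\omega_{\alpha'}(\alpha')}\,D_{\alpha'}\log\frac{\omega(\alpha)-\omega(\alpha')}{\sin\tfrac{\alpha-\alpha'}{2}},
\end{equation*}
and the primitive $\log\bigl[(\omega(\alpha)-\omega(\alpha'))/\sin\tfrac{\alpha-\alpha'}{2}\bigr]=\log q_1[\omega](\alpha,\alpha')+\text{smooth}(\alpha-\alpha')$ is controlled through $q_1[\omega]$, i.e. through $\omega_\alpha$ alone (Lemma \ref{lem3.3}), whereas $q_2$ needs $\omega_{\alpha\alpha}$. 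One integrates by parts \emph{before} taking any $\alpha$-derivatives, picking up $D_{\alpha'}(f/\omega_{\alpha'})$, and only then applies $D_\alpha^r$ to the $\log$-kernel; then the leading divided-difference factor that receives $r$ derivatives is $q_1[\omega]$, not $q_2[\omega]$, which is why $\tilde\theta^{(j)}\in H^r$ suffices. For the difference of two such primitives one further splits $\Phi^{(1)}-\Phi^{(2)}$ via the ratio of $q_1$'s and uses Lemma \ref{insertnew}. Without this reformulation your Leibniz expansion cannot reach the stated regularity on $\tilde\theta$.
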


\medskip

\begin{lemma}\label{lem6.3}
Let $0\leq\beta<\Upsilon$.
Let  $f \in H^1_p $,  and $z^{(1)}$, $z^{(2)}$
correspond respectively
to $\big(\tilde\theta^{(1)}, L^{(1)}(t), \hat\theta^{(1)}(0;t)\big)$
and $\big(\tilde\theta^{(2)}, L^{(2)}(t), \hat\theta^{(2)}(0;t)\big)$
(see (\ref{zomega})).
Further,  assume $\|\tilde\theta^{(1)}\|_1$,
$\|\tilde\theta^{(2)}\|_1<\epsilon_1$,
$|L^{(1)}-2\pi|<\frac{1}{2}$, $|L^{(2)}-2\pi|<\frac{1}{2}$
and $y^{(1)}(0,t)=\Im z^{(1)}(0,t) $, $ y^{(2)}(0,t)=\Im z^{(2)}(0,t)$
belong to $ S_M$.
Then for $\epsilon_1$ and $\Upsilon$ small enough for Proposition
\ref{prop2.6} and Lemma
\ref{lem6.2} to apply, there exists
constant $C_1$ depending only on $r$ so that
\begin{equation*}
\|\mathcal{G}_2[z^{(1)}]f-\mathcal{G}_2[z^{(2)}]f\|_0 \leq C_1\beta^2
\|f\|_0\big(\|\theta^{(1)}-\theta^{(2)}\|_1
+|L^{(1)}(t)-L^{(2)}(t)|+|y^{(1)}(0,t)-y^{(2)}(0,t)|\big).
\end{equation*}

If  ${\tilde \theta}^{(1)}, {\tilde \theta}^{(2)} \in \dot{H}^r$, then
for $r \ge 1$,
\begin{multline*}
\|\mathcal{G}_2[z^{(1)}]f-\mathcal{G}_2[z^{(2)}]f\|_r  \leq
C_1\beta^2\|f\|_1\exp\Big({C_2}\big(\|\tilde\theta^{(1)}\|_{r}+
\|\tilde\theta^{(2)}\|_{r}\big)\Big)
\Big(\|\theta^{(1)}-\theta^{(2)}\|_r\\
+|L^{(1)}(t)-L^{(2)}(t)|+|y^{(1)}(0,t)-y^{(2)}(0,t)|\Big),
\end{multline*}
for constants $C_1$ and $C_2$  depending on $r$ only.

Further, if  $L^{(1)}$ and $L^{(2)}$ correspond to the same area $V$ through (B.4), then
\begin{equation}
\label{6.14}
|L^{(1)}-L^{(2)} |\leq
C
\|\tilde\theta^{(1)}-\tilde\theta^{(2)}\|_1,
\end{equation}
with $C$  depending on area $ V$ alone.
\end{lemma}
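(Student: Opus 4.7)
The plan is to decompose
\[
\mathcal{G}_2[z^{(1)}]f - \mathcal{G}_2[z^{(2)}]f = 2i\bigl(z^{(1)}_\alpha - z^{(2)}_\alpha\bigr)\mathcal{K}_2[z^{(1)}]f + 2i z^{(2)}_\alpha\bigl(\mathcal{K}_2[z^{(1)}] - \mathcal{K}_2[z^{(2)}]\bigr)f.
\]
Recall $z_\alpha = \tfrac{L}{2\pi}e^{i\pi/2+i\hat{\theta}(0;t)}\omega_\alpha$, so differences of $z_\alpha$ split into a sum of contributions involving $L^{(1)}-L^{(2)}$, $\hat{\theta}^{(1)}(0;t) - \hat{\theta}^{(2)}(0;t)$ (subsumed in $\|\theta^{(1)} - \theta^{(2)}\|_r$), and $\omega^{(1)}_\alpha - \omega^{(2)}_\alpha$. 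The first piece in the decomposition is then handled directly: Note \ref{mathcalK2} gives $\|\mathcal{K}_2[z^{(1)}]f\|_{r-1}$ bounded by a constant times $\beta^2\|f\|_0$, while Lemma \ref{lem3.1} (inequalities (\ref{3rdstate})--(\ref{5thstate})) controls $\|\omega^{(1)}_\alpha - \omega^{(2)}_\alpha\|_r$ by $\|\tilde\theta^{(1)} - \tilde\theta^{(2)}\|_r$. The Banach algebra property of $\|\cdot\|_r$ for $r\ge 1$ then finishes this piece; for the $r=0$ case I use only Sobolev embedding to bound $\|z^{(1)}_\alpha - z^{(2)}_\alpha\|_{L^\infty}$ and $\|\mathcal{K}_2[z^{(1)}]f\|_0$ separately.

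For the kernel-difference piece, I exploit the fact that $l_1(w) = w/3 + O(w^3)$ and $\tanh(w) = w + O(w^3)$ are entire of order one, each $O(w)$ near the origin. Thus
\[
\frac{\beta}{4}\bigl[l_1\!\bigl(\tfrac{\beta}{4}\Delta^{(1)}\bigr) - l_1\!\bigl(\tfrac{\beta}{4}\Delta^{(2)}\bigr)\bigr] = \tfrac{\beta^2}{16}\bigl(\Delta^{(1)} - \Delta^{(2)}\bigr)\int_0^1 l_1'\!\bigl(\tfrac{\beta}{4}[s\Delta^{(1)}+(1-s)\Delta^{(2)}]\bigr)\,ds,
\]
with $\Delta^{(j)}(\alpha,\alpha') = z^{(j)}(\alpha)-z^{(j)}(\alpha')$, and an analogous formula for $\tanh$ in terms of $z^{(j)}(\alpha)-z^{*(j)}(\alpha')$. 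By the bounds $|\Delta^{(j)}| \le L^{(j)}/2$ and $|y(0,t)|\le M$ used in Lemma \ref{lem6.2}, the choice of $\Upsilon$ keeps both arguments of $l_1'$ and $\tanh'$ in a region of analyticity bounded by a constant. Writing $\Delta^{(1)} - \Delta^{(2)} = \int_{\alpha'}^{\alpha}(z^{(1)}_\zeta - z^{(2)}_\zeta)\,d\zeta$, the difference splits into $L$-, $\hat\theta(0;t)$-, $y(0,t)$-, and $\omega_\alpha$-contributions; Lemma \ref{lem3.1} and Lemma \ref{coroXi} give the appropriate bounds. Combined with the explicit $\beta^2$ prefactor and the estimate on $z^{(2)}_\alpha$, this yields the asserted bound, with the $H^r_p$ case using repeated differentiation and the Banach algebra property together with the order-one analyticity to control all derivatives of the integrated remainder functions. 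The main technical obstacle is tracking the $\beta^2$ factor cleanly after taking up to $r$ derivatives: one must differentiate the smooth remainder $\int_0^1 l_1'(\cdots)\,ds$ (and its $\tanh'$ analogue) rather than the $\beta^2$ itself, and verify that the chain-rule expansions stay bounded uniformly via the smallness of $\beta$.

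Finally, for the area-constraint statement (\ref{6.14}), I use (B.4) in the form $L^{(j)2}\,I^{(j)} = 8\pi^2 V$, where $I^{(j)} = \Im\int_0^{2\pi}\omega^{(j)}_\alpha (\omega^{(j)})^{\ast}\,d\alpha$. Subtracting,
\[
L^{(1)} - L^{(2)} = \frac{L^{(1)2}\, I^{(1)} - L^{(2)2}\, I^{(1)}}{(L^{(1)}+L^{(2)})\,I^{(1)}} = \frac{8\pi^2 V\,(I^{(2)} - I^{(1)})}{(L^{(1)}+L^{(2)})\,I^{(1)} I^{(2)}},
\]
and the lower bound $I^{(j)} \ge \pi$ from (\ref{ImomBound}) together with $L^{(j)} \ge \sqrt{8\pi V/3}$ reduces the problem to bounding $|I^{(1)} - I^{(2)}|$. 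Splitting $\omega^{(1)}_\alpha (\omega^{(1)})^{\ast} - \omega^{(2)}_\alpha (\omega^{(2)})^{\ast}$ into $(\omega^{(1)}_\alpha - \omega^{(2)}_\alpha)(\omega^{(1)})^{\ast} + \omega^{(2)}_\alpha ((\omega^{(1)})^{\ast} - (\omega^{(2)})^{\ast})$, the Cauchy--Schwarz inequality with Lemma \ref{lem3.1} bounds this by $C\|\tilde\theta^{(1)} - \tilde\theta^{(2)}\|_1$, giving (\ref{6.14}) with constant depending only on $V$.
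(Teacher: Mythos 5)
Your approach matches the paper's (terse) proof sketch: both rely on the smoothness of $l_1$ and $\tanh$ near the origin (after the argument is kept in a fixed disk by the choice of $\Upsilon$), combined with (\ref{zomega}) and Lemma \ref{lem3.1} for the first two statements, and on (B.4) with the lower bound (\ref{ImomBound}) for the third. Your product-rule decomposition of $\mathcal{G}_2[z^{(1)}]f-\mathcal{G}_2[z^{(2)}]f$, the fundamental-theorem-of-calculus representation of the kernel difference that exposes the $\beta^2$ prefactor explicitly, and the algebraic manipulation for (\ref{6.14}) are all sound. One small correction: $l_1(w)=\coth w - 1/w$ is \emph{not} entire (it has poles at $w=k\pi i$, $k\neq 0$), and likewise $\tanh$ has poles at $w=(k+\tfrac12)\pi i$; both are merely meromorphic with Taylor series of the form $w/3+O(w^3)$ (resp.\ $w+O(w^3)$) near the origin. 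This does not damage your argument, since you explicitly restrict the arguments to a disk of radius $O(\Upsilon)$ where the functions are analytic with bounded derivatives of all orders, but you should not invoke Lemma \ref{coroXi} (which requires entire order-one functions acting on a \emph{single-variable} $H^{r+1}_p$ argument) for these two-variable kernel integrands; the relevant control comes from Lemma \ref{lem3.1} and direct differentiation of the smooth integrated remainder, not from Lemma \ref{coroXi}.
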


\begin{proof} Note Definition \ref{defG1}.
The first part of the proof uses the regularity of
functions $l_1$ and $\tanh$ away from the poles and uses
(\ref{zomega}) and Lemma \ref{lem3.1};
the second part uses
(B.4) and Lemma \ref{lem3.1}, taking into account the implied lower bound
in (\ref{ImomBound}) for $\| {\tilde \theta} \|_1 < \epsilon_1$. See
\cite{thesis} for more details.
\end{proof}

\begin{lemma}{(See Lemma 8 in \cite{AD1})}
\label{lem3.9}
For  $\psi\in H^r_p $ with $r\geq 1$, the
operator $[\mathcal{H},\psi]$ is bounded from
$H^0_p $ to
$H^{r-1}_p $. And we have
\begin{eqnarray*}
\|[\mathcal{H},\psi]f\|_{r-1} \leq C \|f\|_0 \|\psi\|_r,
\end{eqnarray*}
 where $C$ depends on $r$.
\end{lemma}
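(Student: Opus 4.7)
The cleanest route is to work entirely in Fourier space, where a sign cancellation inside the commutator makes the gain of one derivative transparent. Expanding $\psi=\sum_n\hat\psi(n)e^{in\alpha}$ and $f=\sum_l\hat f(l)e^{il\alpha}$, a direct computation gives
\[
\widehat{[\mathcal{H},\psi]f}(k) \;=\; -i\sum_l\bigl(\sgn(k)-\sgn(l)\bigr)\hat\psi(k-l)\,\hat f(l).
\]
The key observation is that $\sgn(k)-\sgn(l)$ is nonzero only when $k,l$ have strictly opposite signs or when exactly one of them vanishes; in every such case $|k-l|\ge|k|$. This single structural input is what drives the whole estimate.

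Using this inequality together with $|\sgn(k)-\sgn(l)|\le 2$, for $k\ne 0$ one has
\[
|k|^{r-1}\bigl|\widehat{[\mathcal{H},\psi]f}(k)\bigr| \;\le\; 2\sum_l |k-l|^{r-1}|\hat\psi(k-l)|\,|\hat f(l)|,
\]
which is twice the discrete convolution $(a*b)_k$ of $a_m:=|m|^{r-1}|\hat\psi(m)|$ with $b_l:=|\hat f(l)|$. Young's inequality $\|a*b\|_{\ell^2}\le\|a\|_{\ell^1}\|b\|_{\ell^2}$ then reduces matters to bounding $\|a\|_{\ell^1}$, which follows by Cauchy-Schwarz: $\sum_{m\ne 0}|m|^{r-1}|\hat\psi(m)|\le\bigl(\sum_{m\ne 0}|m|^{-2}\bigr)^{1/2}\|\psi\|_r\le C\|\psi\|_r$, and the $m=0$ term contributes at most $|\hat\psi(0)|\le\|\psi\|_r$. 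The $k=0$ Fourier mode is handled separately and directly by Cauchy-Schwarz, giving $\bigl|\widehat{[\mathcal{H},\psi]f}(0)\bigr|\le\|\psi\|_0\|f\|_0\le\|\psi\|_r\|f\|_0$.

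The hypothesis $r\ge 1$ enters in exactly two places: it ensures the exponent $r-1$ is nonnegative, making the bound $|k|^{r-1}\le|k-l|^{r-1}$ legitimate on the support of the sum; and it guarantees convergence of $\sum_{m\ne 0}|m|^{-2}$ after the Cauchy-Schwarz split at index $s=r$. I expect no serious obstacle — there are no singular integrals to estimate, and the only delicate bookkeeping is isolating the zero modes ($k=0$ as a separate case, and the $m=0$ contribution to $\|a\|_{\ell^1}$). An alternative more in the spirit of the surrounding lemmas would be to write $[\mathcal{H},\psi]f$ as the principal value integral of $[\psi(\alpha')-\psi(\alpha)]\cot\tfrac{\alpha-\alpha'}{2}f(\alpha')/(2\pi)$, factor $\psi(\alpha')-\psi(\alpha)=\int_\alpha^{\alpha'}\psi'(\tau)\,d\tau$ to cancel the singularity of the $\cot$, and proceed by induction on $r$; however, the Fourier route is cleaner and avoids all singular-integral bookkeeping.
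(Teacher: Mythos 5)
Your proof is correct and is essentially the standard argument for this commutator estimate (and, to my knowledge, the one Ambrose gives for the cited Lemma 8 in [AD1]): Fourier-side computation yielding the factor $\sgn(k)-\sgn(l)$, the structural observation that this factor forces $|k-l|\ge|k|$, and then Young's convolution inequality with a Cauchy--Schwarz estimate on $\sum_m|m|^{r-1}|\hat\psi(m)|$. One small streamlining worth noting: the $m=0$ ($l=k$) term is automatically absent from the convolution, since $\sgn(k)-\sgn(k)=0$, so you never actually need to interpret $|0|^{r-1}$ or handle that summand separately in the $\ell^1$ bound; your treatment of it is harmless but superfluous. The separate handling of the $k=0$ Fourier mode is necessary given Definition \ref{def1.1} and you do it correctly.
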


\medskip

\begin{lemma}{(See Lemma 3.10 in \cite{JY})}
\label{lem3.10}
 For $r>\frac{1}{2}$
and  $\psi \in H^r_p $, the operator
 $[\mathcal{H},\psi]$ is bounded from $H^1_p $
to $H^r_p $, and
 \begin{eqnarray*}
 \|[\mathcal{H},\psi]f\|_r\leq C \|f\|_1\|\psi\|_r,
 \end{eqnarray*}
 where $C$ depends on $r$.
 \end{lemma}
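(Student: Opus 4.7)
The plan is to combine the explicit kernel representation of the commutator with a derivative-passing identity, then induct on $r$. Writing
\begin{equation*}
[\mathcal{H},\psi]f(\alpha)=\frac{1}{2\pi}\mbox{PV}\int_0^{2\pi}\big(\psi(\alpha')-\psi(\alpha)\big)f(\alpha')\cot\tfrac{1}{2}(\alpha-\alpha')\,d\alpha',
\end{equation*}
one first observes, via the divided difference $\psi(\alpha')-\psi(\alpha)=(\alpha'-\alpha)q_1[\psi](\alpha,\alpha')$ and the fact that $(\alpha-\alpha')\cot\tfrac{1}{2}(\alpha-\alpha')$, suitably periodized, is a real-analytic bounded function, that the principal value reduces to an ordinary integral against a smooth kernel times $q_1[\psi]$. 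This reveals a one-derivative gain, as $q_1[\psi]$ carries the regularity of $\psi'$ rather than $\psi$.

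The inductive step rests on the Leibniz identity
\begin{equation*}
D_\alpha[\mathcal{H},\psi]f=[\mathcal{H},\psi]D_\alpha f+[\mathcal{H},\psi']f.
\end{equation*}
For the base case $r=1$, Lemma \ref{lem3.9} (the case $r=1$ there) furnishes $\|[\mathcal{H},\psi]f\|_0\le C\|f\|_0\|\psi\|_1$ and $\|[\mathcal{H},\psi]f'\|_0\le C\|f'\|_0\|\psi\|_1$, while the remaining piece is controlled directly by expanding $[\mathcal{H},\psi']f=\mathcal{H}(\psi'f)-\psi'\mathcal{H}f$ and using $L^2$-boundedness of $\mathcal{H}$ together with the Sobolev embedding $H^1_p\hookrightarrow L^\infty$:
\begin{equation*}
\|[\mathcal{H},\psi']f\|_0\le\|\psi'\|_0\big(\|f\|_\infty+\|\mathcal{H}f\|_\infty\big)\le C\|\psi\|_1\|f\|_1.
\end{equation*}
For integer $r\ge 2$, I would bound $\|[\mathcal{H},\psi]D_\alpha f\|_{r-1}\le C\|f\|_1\|\psi\|_r$ via Lemma \ref{lem3.9} applied with $D_\alpha f\in H^0_p$, and then invoke the induction hypothesis at level $r-1$ applied to $\psi'\in H^{r-1}_p$ to obtain $\|[\mathcal{H},\psi']f\|_{r-1}\le C\|f\|_1\|\psi'\|_{r-1}\le C\|f\|_1\|\psi\|_r$, closing the induction. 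For non-integer $r>1/2$, one interpolates between consecutive integer endpoints.

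The main obstacle is the asymmetric nature of the bound $\|f\|_1\|\psi\|_r$: a naive Banach algebra argument only delivers $\|f\|_r\|\psi\|_r$, which would be too demanding on $f$. The asymmetry stems from a one-derivative smoothing of the commutator, encoded on the Fourier side by
\begin{equation*}
\widehat{[\mathcal{H},\psi]f}(k)=-i\sum_m\hat\psi(m)\hat f(k-m)\big(\sgn(k)-\sgn(k-m)\big),
\end{equation*}
where the weight $\sgn(k)-\sgn(k-m)$ vanishes unless $k$ and $k-m$ have opposite signs, forcing $|m|\ge|k|$. This allows $|k|^r$ to be absorbed into $|m|^r$ inside the convolution, so that the $H^r$ weight falls on $\hat\psi$ rather than on $\hat f$. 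The detailed bookkeeping required to deploy this gain uniformly across integer and fractional $r>1/2$ is carried out in Lemma 3.10 of \cite{JY}.
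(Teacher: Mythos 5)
The paper proves this lemma by citing Lemma 3.10 of \cite{JY}, so there is no in-paper argument to compare against; I evaluate your proposal on its own. Your induction via the Leibniz identity $D_\alpha[\mathcal{H},\psi]f=[\mathcal{H},\psi]D_\alpha f+[\mathcal{H},\psi']f$ and Lemma~\ref{lem3.9} is sound for integer $r\ge 1$, but it has two gaps. First, the induction is anchored at $r=1$, so the range $1/2<r<1$ is never reached: there is no lower integer endpoint in $(1/2,1)$ to interpolate from. Second, even for non-integer $r>1$ the interpolation step is not automatic: $(\psi,f)\mapsto[\mathcal{H},\psi]f$ is bilinear, and you are varying the $\psi$-space and the codomain simultaneously while holding $f\in H^1_p$ fixed, which requires a genuine bilinear (Calder\'on--Lions) interpolation theorem rather than scalar Riesz--Thorin; you assert this without comment. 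The kernel/divided-difference observation in your opening paragraph is also not actually used in the argument.

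The Fourier identity you record at the end is, in fact, a complete and more elementary proof on its own and makes the induction superfluous. Since $\sgn(k)-\sgn(k-m)$ is bounded by $2$ and vanishes unless $|m|\ge|k|$ (for $k\ne 0$), one has
\begin{equation*}
|k|^r\big|\widehat{[\mathcal{H},\psi]f}(k)\big|\le 2\sum_{m}|m|^r|\hat\psi(m)|\,|\hat f(k-m)|,
\end{equation*}
and Young's inequality $\|a\ast b\|_{\ell^2}\le\|a\|_{\ell^2}\|b\|_{\ell^1}$ with $a(m)=|m|^r|\hat\psi(m)|$ and $b(k)=|\hat f(k)|$ controls the weighted part of the $H^r_p$ norm by $C\|\psi\|_r\|\hat f\|_{\ell^1}\le C\|\psi\|_r\|f\|_1$, using $\|\hat f\|_{\ell^1}\le C_s\|f\|_s$ for any $s>1/2$ by Cauchy--Schwarz. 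The $k=0$ Fourier coefficient is bounded directly by $\|\psi\|_0\|f\|_0$. This argument is self-contained, uniform in all $r\ge 0$, and avoids both the low-$r$ gap and the interpolation machinery; you should promote it from a heuristic to the actual proof.
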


 \begin{lemma}
\label{lem3.11}  Assume $0\leq\beta<\Upsilon$,
$f \in H^1_p $ and let $z^{(1)}$ and $z^{(2)}$
correspond respectively to
$\big(\tilde\theta^{(1)}, L^{(1)}(t), \hat\theta^{(1)}(0;t)\big)$
and $\big(\tilde\theta^{(2)}, L^{(2)}(t), \hat\theta^{(2)}(0;t)\big)$
(see (\ref{zomega})). Further,  assume $\|\tilde\theta^{(1)}\|_1$,
$\|\tilde\theta^{(2)}\|_1<\epsilon_1$,  $|L^{(1)}-2\pi|<\frac{1}{2}$,
$|L^{(2)}-2\pi|<\frac{1}{2}$ and $y^{(1)}(0,t)=\Im z^{(1)}(0,t) $,
$ y^{(2)}(0,t)=\Im z^{(2)}(0,t)$ belong to $ S_M$.
Then for sufficient small $\epsilon_1$ and $\Upsilon$
so that Proposition \ref{prop2.6} and
and Lemmas
\ref{lem6.2} and \ref{lem6.3} apply, there exists constants $C_1$ so that
\begin{multline*}
\|\mathcal{G}[z^{(1)}]f-\mathcal{G}[z^{(2)}]f\|_0 \leq C_1
\|f\|_0
\Big\{\|\tilde\theta^{(1)}-\tilde\theta^{(2)}\|_1
+\beta^2 \Big[|L^{(1)}(t)-L^{(2)}(t)|+\|\theta^{(1)}-\theta^{(2)}\|_1\\
+|y^{(1)}(0,t)-y^{(2)}(0,t)|\Big]\Big\},
\end{multline*}

\medskip

Furthermore, if ${\tilde \theta}^{(1)}, {\tilde \theta}^{(2)} \in \dot{H}^r$,
then for $r \ge 1$,
\begin{multline*}
\|\mathcal{G}[z^{(1)}]f-\mathcal{G}[z^{(2)}]f\|_r  \leq
C_1\|f\|_1\exp\Big({C_2}\big(\|\tilde\theta^{(1)}\|_{r}+
\|\tilde\theta^{(2)}\|_{r}\big)\Big)
\Big\{\|\tilde\theta^{(1)}-\tilde\theta^{(2)}\|_r\\
+\beta^2\Big[|L^{(1)}(t)-L^{(2)}(t)|+\|\theta^{(1)}-\theta^{(2)}\|_r+|y^{(1)}(0,t)-y^{(2)}(0,t)|\Big]\Big\},
\end{multline*}
where the constants $C_1$ and $C_2$  depend on $r$.\end{lemma}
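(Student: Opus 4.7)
The plan is to exploit the decomposition $\mathcal{G}=\mathcal{G}_1+\mathcal{G}_2$ from Definition \ref{defG1}, estimating the two pieces separately, since $\mathcal{G}_2$ is precisely the part controlled by Lemma \ref{lem6.3} while $\mathcal{G}_1$ turns out to depend only on $\omega$ (hence only on ${\tilde\theta}$). For the $\mathcal{G}_2$ piece I would simply invoke Lemma \ref{lem6.3} directly, which supplies both the $\beta^2$ prefactor and the dependence on $|L^{(1)}-L^{(2)}|$, $\|\theta^{(1)}-\theta^{(2)}\|_r$ and $|y^{(1)}(0,t)-y^{(2)}(0,t)|$ appearing in the bracketed term of the claim.

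For $\mathcal{G}_1$ the first observation is the following cancellation. Writing $z_\alpha=C\,\omega_\alpha$ with $C=\tfrac{L}{2\pi}e^{i\pi/2+i{\hat\theta}(0;t)}$, one checks
\begin{equation*}
z_\alpha\Big[\mathcal{H},\frac{1}{z_\alpha}\Big]f=\omega_\alpha\Big[\mathcal{H},\frac{1}{\omega_\alpha}\Big]f,\qquad z_\alpha\mathcal{K}_1[z]f=\omega_\alpha\mathcal{K}_1[\omega]f,
\end{equation*}
because the constant $C$ cancels out of each factor in $\mathcal{K}_1$ and each side of the commutator. Consequently $\mathcal{G}_1[z]=\mathcal{G}_1[\omega]$, and the only freedom left in the difference $\mathcal{G}_1[z^{(1)}]f-\mathcal{G}_1[z^{(2)}]f$ is through $\omega^{(1)}-\omega^{(2)}$, which by Lemma \ref{lem3.1} is controlled by ${\tilde\theta}^{(1)}-{\tilde\theta}^{(2)}$ alone. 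This is why no $L$, $\hat\theta(0;t)$, or $y(0,t)$ differences enter the estimate for the $\mathcal{G}_1$ part; this is also the most important structural point of the proof.

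Next I would split the $\mathcal{G}_1$ difference into its commutator piece and its $\mathcal{K}_1$ piece, and in each case insert the standard telescoping identity $A_1B_1-A_2B_2=(A_1-A_2)B_1+A_2(B_1-B_2)$. The commutator part
\begin{equation*}
\omega^{(1)}_\alpha\Bigl[\mathcal{H},\tfrac{1}{\omega^{(1)}_\alpha}\Bigr]f-\omega^{(2)}_\alpha\Bigl[\mathcal{H},\tfrac{1}{\omega^{(2)}_\alpha}\Bigr]f=(\omega^{(1)}_\alpha-\omega^{(2)}_\alpha)\Bigl[\mathcal{H},\tfrac{1}{\omega^{(1)}_\alpha}\Bigr]f+\omega^{(2)}_\alpha\Bigl[\mathcal{H},\tfrac{1}{\omega^{(1)}_\alpha}-\tfrac{1}{\omega^{(2)}_\alpha}\Bigr]f
\end{equation*}
is then bounded by Lemmas \ref{lem3.9} and \ref{lem3.10} combined with the Banach algebra property of Note \ref{note1.2}, using Lemma \ref{lem3.1} to control $\|\omega_\alpha\|_r$, $\|1/\omega_\alpha\|_r$ and the differences $\|\omega^{(1)}_\alpha-\omega^{(2)}_\alpha\|_r$, $\|1/\omega^{(1)}_\alpha-1/\omega^{(2)}_\alpha\|_r$ by $\|{\tilde\theta}^{(1)}-{\tilde\theta}^{(2)}\|_r$ up to an exponential factor in $\|{\tilde\theta}^{(j)}\|_r$. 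The analogous telescoping of the $\mathcal{K}_1$ part uses Lemma \ref{lem6.2} to bound $\|\mathcal{K}_1[\omega^{(1)}]f\|_r$ and Lemma \ref{lem3.8} for the Lipschitz estimate on $\mathcal{K}_1[\omega^{(1)}]f-\mathcal{K}_1[\omega^{(2)}]f$, again combined with the Banach algebra property.

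Adding the two contributions and combining with the Lemma \ref{lem6.3} estimate for $\mathcal{G}_2$ produces the claimed bounds in both the $r=0$ and $r\ge 1$ cases (at $r=0$ the Banach algebra step forces a $\|\cdot\|_1$ on the ${\tilde\theta}$-differences, as in Lemma \ref{lem6.3}; for $r\ge 1$ the norm of the difference matches $r$, but $\|f\|_1$ appears because Lemma \ref{lem3.10} needs $f\in H^1_p$). The main obstacle I anticipate is bookkeeping: keeping the exponential factors $\exp(C_2\|{\tilde\theta}^{(j)}\|_r)$ uniform and making sure the $r=0$ endpoint estimate for the commutator and for $\mathcal{K}_1$ are handled without $H^1$-algebra---these follow, for the commutator, from Lemma \ref{lem3.9} with $r=1$ which gives an $H^0_p$ bound, and for $\mathcal{K}_1$ from the base case of Lemma \ref{lem3.8}.
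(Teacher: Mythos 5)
Your proposal is correct and follows essentially the same route as the paper's own (terse) proof, which simply cites Lemmas \ref{lem3.8}, \ref{lem6.3} and \ref{lem3.10} together with relation (\ref{1.10}). You usefully make explicit the cancellation $\mathcal{G}_1[z]=\mathcal{G}_1[\omega]$ that lets the constant prefactor $\tfrac{L}{2\pi}e^{i\pi/2+i\hat\theta(0;t)}$ and the translation $z(0,t)$ drop out of $\mathcal{G}_1$, and you correctly invoke Lemma \ref{lem3.9} at the $r=0$ endpoint where Lemma \ref{lem3.10} alone would produce $\|f\|_1$ rather than $\|f\|_0$ — a detail the paper's one-line proof leaves implicit.
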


\begin{proof} The proof follows from
Lemmas \ref{lem3.8}, \ref{lem6.3} and \ref{lem3.10}, once we
note the relation (\ref{1.10}).
\end{proof}

\begin{prop}
\label{propgamma}
Assume $0 \le \beta < \Upsilon$, $z$
corresponds to $\big(\tilde\theta, L(t), \hat\theta(0;t)\big)$
through (\ref{eqomega}), (\ref{zomega})
for $r\ge3$ with $\tilde\theta\in\dot{H}^{r}$.  Further assume
$\|\tilde \theta\|_1<\epsilon_1$, $|L-2\pi|<\frac{1}{2}$ and
$y(0,t)=\Im z(0,t)$ belongs to $S_M$, and $|u_0|<1$.
Then for sufficiently small
$\epsilon_1$ and $\Upsilon$
(so that Proposition \ref{prop2.6} and
Lemmas
\ref{lem6.2} and \ref{lem6.3} apply),
there
exists unique solution $ \gamma \in \{u\in H^{r-2}_p
|\hat{u}(0)=0\}$ satisfying  (B.3).
For constants $C_0$ and $C$,  solution $\gamma$ satisfy estimates
\begin{eqnarray*}
\| \gamma \|_0  &\leq&
C_0\big(\sigma \|\tilde \theta \|_2+1 \big),\nonumber \\
\| \gamma \|_{1} &\le& C \big ( \sigma \| {\tilde \theta } \|_{3}
+ 1+\|\tilde\theta\|_{0}  \big).
\end{eqnarray*}

Let  $z^{(1)}$ and $z^{(2)}$
correspond respectively to
$\big(\tilde\theta^{(1)}, L^{(1)}(t), \hat\theta^{(1)}(0;t)\big)$
and $\big(\tilde\theta^{(2)}, L^{(2)}(t), \\
\hat\theta^{(2)}(0;t)\big)$
(see (\ref{zomega})). Further assume  $\|\tilde\theta^{(1)}\|<\epsilon_1$,
$\|\tilde\theta^{(2)}\|<\epsilon_1$, $|L^{(1)}-2\pi|<\frac{1}{2}$,
$|L^{(2)}-2\pi|<\frac{1}{2}$ and $y^{(1)}(0,t)=\Im z^{(1)}(0,t) $,
$ y^{(2)}(0,t)=\Im z^{(2)}(0,t)$ belong to $ S_M$.
Then for sufficient small $\epsilon_1$ and $\Upsilon$, the corresponding
$\gamma^{(1)}$ and $\gamma^{(2)}$ determined from (B.3)
satisfies
\begin{equation}
\label{eq1prop319}
\|\gamma^{(1)}-\gamma^{(2)}\|_{0}\leq  C\Big(\| {\theta}^{(1)}-
{ \theta}^{(2)} \|_{2}+|L^{(1)}(t)-L^{(2)}(t)|+
\beta^2|y^{(1)}(0,t)-y^{(2)}(0,t)| \Big).
\end{equation}
Further, if  ${\tilde \theta}^{(1)}, {\tilde \theta}^{(2)} \in
\dot{H}^r$,
then the corresponding $(\gamma^{(1)}, U^{(1)}, T^{(1)})$ and
$(\gamma^{(2)}, U^{(2)}, T^{(2)})$ determined from (B.3),
(\ref{1.12}) and (B.5) satisfy
\begin{multline}
\label{eq2prop319}
\|\gamma^{(1)}-\gamma^{(2)}\|_{r-2}\leq  C_1\exp
\big(C_2(\|\tilde\theta^{(1)}\|_r+\|\tilde\theta^{(2)}\|_r)\big)
\Big(\|  \theta^{(1)}-
{\theta}^{(2)} \|_{r}\\+|L^{(1)}(t)-L^{(2)}(t)|+
\beta^2|y^{(1)}(0,t)-y^{(2)}(0,t)|
\Big),
\end{multline}
\begin{multline}\label{U3.32}
\|U^{(1)}-U^{(2)}\|_{r-2}\leq
C_1\exp\big(C_2(\|\tilde\theta^{(1)}\|_r+\|\tilde\theta^{(2)}\|_r)\big)
\Big(\| {\ \theta}^{(1)}-
{\theta}^{(2)} \|_{r}\\+|L^{(1)}(t)-L^{(2)}(t)|
+\beta^2|y^{(1)}(0,t)-y^{(2)}(0,t)| \Big),
\end{multline}
\begin{multline}\label{T3.33}
\|T^{(1)}-T^{(2)}\|_{r-1}\leq   C_1
\exp\big(C_2(\|\tilde\theta^{(1)}\|_r+\|\tilde\theta^{(2)}\|_r)\big)
\Big(\| {\ \theta}^{(1)}-
{\theta}^{(2)} \|_{r}\\+|L^{(1)}(t)-L^{(2)}(t)|
+\beta^2|y^{(1)}(0,t)-y^{(2)}(0,t)|\Big),
\end{multline}
where $C_1$ and $C_2$ depend on  $r$ only.
\end{prop}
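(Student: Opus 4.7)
The plan is to treat (B.3) as a fixed-point equation $\gamma = F - a_\mu\mathcal{F}[z]\gamma$ with right-hand side
$$F = \frac{2\pi\sigma}{L}\theta_{\alpha\alpha} + \frac{L}{\pi}\Big(1+\frac{\mu_2}{\mu_1+\mu_2}u_0\Big)\sin(\alpha+\theta),$$
and solve it by a Neumann/contraction argument on the zero-mean subspace of $H^0_p$. First observe that $\hat F(0)=0$: the $\theta_{\alpha\alpha}$ term integrates to zero by periodicity, while $\int_0^{2\pi}\sin(\alpha+\theta)\,d\alpha = \Im\int_0^{2\pi}e^{i(\alpha+\theta)}\,d\alpha = e^{i\hat\theta(0;t)}\cdot 0 = 0$ thanks to the closure constraint (B.6) together with (\ref{alltheta}). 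I bound $\|F\|_{r-2}$ using Lemma \ref{coroXi} on $\sin(\alpha+\theta)$ written as $\sin\alpha\cos\theta+\cos\alpha\sin\theta$, the Banach algebra property, Lemma \ref{lem3.1}, and the estimate $|\hat\theta(\pm 1)|\le C\|\tilde\theta\|_1$ from Proposition \ref{prop2.6}.

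The crux is invertibility of $I+a_\mu\mathcal{F}[z]$ on the zero-mean subspace. In the circular limit $z=\omega_0$, Note \ref{noteg1} gives $\mathcal{G}_1[\omega_0]\gamma = i\hat\gamma(0)$, hence $\mathcal{F}[\omega_0]\gamma=0$ for $\hat\gamma(0)=0$. Applying Lemma \ref{lem3.11} with $z^{(2)}=\omega_0$ and using (\ref{G23.23}) for $\mathcal{G}_2$, one obtains $\|\mathcal{F}[z]\gamma\|_0\le C(\|\tilde\theta\|_1+\beta^2)\|\gamma\|_0$ on the zero-mean subspace. Since $|a_\mu|<1$, sufficiently small $\epsilon_1$ and $\Upsilon$ make $I+a_\mu\mathcal{F}[z]$ a small perturbation of the identity, hence boundedly invertible there with operator norm at most $2$, giving existence and uniqueness of a zero-mean $\gamma\in H^0_p$ with $\|\gamma\|_0\le 2\|F\|_0$. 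That the solution is automatically mean-zero follows because integrating (B.3) against $1$ and using the circular-case identity $\int \mathcal{F}[\omega_0]\gamma\,d\alpha = 2\pi\hat\gamma(0)$ plus small corrections forces $(1+a_\mu+O(\epsilon))\hat\gamma(0)=0$. I then bootstrap to $H^{r-2}_p$ using the smoothing properties of $\mathcal{F}[z]$: Lemma \ref{lem3.9} with Lemma \ref{lem3.1} yields $[\mathcal{H},1/z_\alpha]\gamma\in H^{r-1}_p$, Lemma \ref{lem6.2} gives $\mathcal{K}[z]\gamma\in H^{r-2}_p$, and Note \ref{mathcalK2} handles $\mathcal{G}_2$; combined with Banach algebra and the identity $\gamma = F - a_\mu\mathcal{F}[z]\gamma$ this closes the bootstrap. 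The $\|\gamma\|_0$ and $\|\gamma\|_1$ bounds then follow from applying the inverse to $F$ and estimating $\|F\|_0$, $\|F\|_1$ as above.

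For (\ref{eq1prop319}) and (\ref{eq2prop319}), I subtract the two equations to obtain
$$(I+a_\mu\mathcal{F}[z^{(1)}])(\gamma^{(1)}-\gamma^{(2)}) = (F^{(1)}-F^{(2)}) - a_\mu\bigl(\mathcal{F}[z^{(1)}]-\mathcal{F}[z^{(2)}]\bigr)\gamma^{(2)},$$
apply the bounded inverse from the existence step, and estimate each piece on the right. The $F^{(1)}-F^{(2)}$ term is controlled in $H^0_p$ (resp.\ $H^{r-2}_p$) by $\|\theta^{(1)}-\theta^{(2)}\|_2$ (resp.\ $\|\theta^{(1)}-\theta^{(2)}\|_r$) and $|L^{(1)}-L^{(2)}|$, using Lemma \ref{coroXi} on $\sin(\alpha+\theta^{(1)})-\sin(\alpha+\theta^{(2)})$ and Banach algebra. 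The operator difference $(\mathcal{F}[z^{(1)}]-\mathcal{F}[z^{(2)}])\gamma^{(2)}$ is controlled by Lemma \ref{lem3.11}, contributing $\|\tilde\theta^{(1)}-\tilde\theta^{(2)}\|_{1}$ (or $\|\tilde\theta^{(1)}-\tilde\theta^{(2)}\|_r$), $|L^{(1)}-L^{(2)}|$, and $\beta^2|y^{(1)}(0,t)-y^{(2)}(0,t)|$ terms, with $\|\gamma^{(2)}\|_1$ absorbed by the already-established bound.

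Finally, (\ref{U3.32}) and (\ref{T3.33}) are read off from the definitions (\ref{1.12}) and (B.5). The $\gamma$-dependent pieces of $U$ are handled by the $\gamma$ difference estimate combined with Lemmas \ref{lem3.1} and \ref{lem3.11}; the $(u_0+1)\cos(\alpha+\theta)$ difference is handled by Lemma \ref{coroXi}. For $T$, integration in $\alpha$ gains one derivative of regularity over $(1+\theta_\alpha)U$, upgrading the $H^{r-2}_p$ control of $U$ to $H^{r-1}_p$ control of $T$. The main obstacle throughout is the uniform invertibility of $I+a_\mu\mathcal{F}[z]$ on the zero-mean subspace together with self-consistency of the zero-mean constraint; this is precisely where the circular-case identity of Note \ref{noteg1} and the $\beta^2$-smallness of $\mathcal{G}_2$ are indispensable.
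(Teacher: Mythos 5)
Your proposal follows essentially the same route as the paper's proof: you observe (via Note \ref{noteg1}) that $\mathcal{F}_1[\omega_0]$ vanishes on the zero-mean subspace, so that $\mathcal{F}[z]$ is a small perturbation there, invert $I+a_\mu\mathcal{F}[z]$ by a Neumann/smallness argument (the paper writes this as inverting $I+a_\mu(\mathcal{F}[z]-\mathcal{F}_1[\omega_0])$), bootstrap to $H^{r-2}_p$ using the smoothing of the commutator and $\mathcal{K}$ terms, and obtain the Lipschitz statements by subtracting the two equations and invoking Lemma \ref{lem3.11}. The only cosmetic difference is that you make explicit the mean-zero self-consistency (which the paper treats as implicit in restricting to the subspace $\{\hat u(0)=0\}$); the substance of the argument is the same.
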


\begin{proof}
Since
$\mathcal{F}_1 [\omega_0] \gamma =\hat{\gamma}(0)=0$
(see \ref{eqG1} and Note \ref{noteg1}), (B.3) implies
\begin{equation}
\label{gammainv}
\left [I + a_\mu \left ( \mathcal{F} [z] -\mathcal{F}_1 [\omega_0]
\right ) \right ] \gamma = \frac{2\pi\sigma}{L}
\theta_{\alpha \alpha}+\frac{L}{\pi}\big(1+\frac{\mu_2 u_0}{\mu_1+\mu_2}\big)
\sin\big(\alpha+\theta(\alpha)\big).
\end{equation}
Therefore, if $\tilde\theta \in \dot{H}^2$, then by Notes \ref{noteg1} and \ref{mathcalK2},
Lemma  \ref{lem3.11} (note that Lemma \ref{lem3.11}
still holds for $\mathcal{G}_1$.)
imply
\begin{multline}\label{fgamma}
 \| \mathcal{F} [z] \gamma -\mathcal{F}_1 [\omega_0 ] \gamma \|_0\leq \|\mathcal{G}_2[z]\gamma\|_0+
 \|\mathcal{G}_1[z]\gamma-\mathcal{G}_1[\omega_0]\gamma\|_0\\
\le C_1\big(\| {\tilde \theta} \|_1+C_2\beta^2\big) \|\gamma \|_0,
\end{multline}
So, for sufficiently small $\epsilon_1$ and $\Upsilon>0$, if $\|\tilde
\theta\|_1 \leq \epsilon_1$ and $0\leq\beta<\Upsilon$, then
$$ \left [1 + a_\mu \left ( \mathcal{F} [z] -\mathcal{F}_1
[\omega_0]  \right ) \right ]^{-1} $$ exists and is bounded independent
of any parameters. Therefore, it follows from (\ref{gammainv}) that
$$ \| \gamma \|_0
\le C_0( \sigma \|\tilde \theta \|_2+ 1).$$
Further, by Note \ref{mathcalK2} and Lemma \ref{lem3.11} again, we have
$$ \| \mathcal{F} [ z]\gamma  - \mathcal{F}_1 [\omega_0] \gamma \|_{r-2}
\le C_1\big(\exp(C_2\|\tilde\theta\|_{r-2}) \| {\tilde \theta} \|_{r-2}
+\beta^2\exp(C_2\|\tilde\theta\|_{r-2})\big)
\| \gamma \|_1,  $$ where $C_1$ and $C_2$ depend only on $r$. Therefore,
for $r \ge 3$, it follows from  (B.3) that
\begin{multline}\label{gamma1} \| \gamma \|_{r-2} \le
C \sigma \| {\tilde \theta } \|_{r}+C
\big(1+\|\tilde\theta\|_{r-3}\big)\\
+ C_1\big(\exp(C_2\|\tilde\theta\|_{r-2}) \| {\tilde \theta} \|_{r-2}
+\beta^2\exp(C_2\|\tilde\theta\|_{r-2})\big)
\| \gamma \|_1
\end{multline} which $C$, $C_1$ and $C_2$ depend on  $r$, which implies
for sufficiently small $\epsilon_1$ and $\Upsilon$ that
\begin{equation}\label{gammar-2}
 \| \gamma \|_{1} \le C \sigma \| {\tilde \theta } \|_{3}+
C \big(1+\|\tilde\theta\|_{0}\big).
 \end{equation}
From  (B.3), we obtain
\begin{multline*}
 \| \gamma^{(1)} - \gamma^{(2)} \|_{r-2}  \le
C\Big(\frac{|L^{(1)}-L^{(2)}|}{L^{(1)} L^{(2)}}+
\frac{1}{L^{(2)}} \big(\| \tilde\theta^{(1)}- \tilde\theta^{(2)} \|_{r}
+|\hat{\theta}^{(1)}(0;t)-\hat{\theta}^{(2)}(0;t)|\big)\\
+\left\|\mathcal{F}[z^{(1)}]\gamma^{(1)}-\mathcal{F}[z^{(2)}]\gamma^{(2)}
\right\|_{r-2},
\end{multline*}
and using Lemma \ref{lem3.11}, we have
\begin{multline}
\label{mathcalF} \| \mathcal{F} [z^{(1)} ] \gamma^{(1)} -
\mathcal{F} [z^{(2)} ] \gamma^{(2)} \|_{r-2} \le  \left \|
\mathcal{F} [z^{(1)}] (\gamma^{(1)}-\gamma^{(2)})
- \mathcal{F}_1 [\omega_0 ] (\gamma^{(1)}- \gamma^{(2)}) \right  \|_{r-2} \\
+ \left\| \mathcal{F} [z^{(1)} ] \gamma^{(2)} - \mathcal{F}
[z^{(2)} ]\gamma^{(2)}
\right\|_{r-2} \\
\le C_1\exp\big(C_2\|\tilde\theta^{(1)}\|_r\big)
\Big( \| {\tilde \theta}^{(1)} \|_{r-2}
+\beta^2\exp(C_2\|\tilde\theta^{(1)}\|_{r-2})\Big)\| \gamma^{(1)}-
\gamma^{(2)} \|_1\\ + \left\| \mathcal{F} [z^{(1)} ]
\gamma^{(2)} - \mathcal{F}
[z^{(2)} ]\gamma^{(2)}
\right\|_{r-2}
\end{multline}
with $C_1$ and $C_2$ depending on $r$.
Hence by Lemma \ref{lem3.11}  again, the
fourth  and fifth statements in the proposition follow.

From (\ref{1.12}), it follows that
\begin{multline*}
 \| U^{(1)} - U^{(2)} \|_{r-2}  =
\Big\|\frac{\pi}{L^{(1)}}\mathcal{H}[\gamma^{(1)}]-
\frac{\pi}{L^{(2)}}\mathcal{H}[\gamma^{(2)}]\Big\|_{r-2}
 +\Big\|\frac{\pi}{L^{(1)}}\mathcal{G}[z^{(1)}]\gamma^{(1)}
-\frac{\pi}{L^{(2)}}\mathcal{G}[z^{(2)}]\gamma^{(2)}\Big\|_{r-2}\\
+(|u_0|+1)\Big\|\cos\big(\alpha+\theta^{(1)}(\alpha)\big)-
\cos\big(\alpha+\theta^{(2)} (\alpha)\big)\Big\|_{r-2},
\end{multline*}
by Lemmas \ref{lem3.1} and \ref{lem3.11}, it is easy to obtain (\ref{U3.32}).

Also from (B.5), we have
\begin{equation*}
 \| T^{(1)} - T^{(2)} \|_{r-1}  \leq
\Big\|(1+\theta_{1,\alpha})U^{(1)}-(1+\theta_{2,\alpha})U^{(2)}\Big\|_{r-2},
\end{equation*}
by (\ref{U3.32}), we get (\ref{T3.33}).
\end{proof}

\section{Global existence for near-circular translating bubble
without side-walls ($\beta =0$)}

In this section, we consider bubble solutions in the absence of side walls
($\beta=0$) for near-circular initial shapes.
It is readily checked that a time-independent
solution that satisfies (B.1), (B.3)-(B.6)
is $\theta = 0$,
$\gamma = 2 \sin \alpha$, $u_0=0$, $V=\pi$\footnote{This is consistent,
as it must be, with our choice length scale
$L=L^{(s)} = 2\pi$ as the perimeter length of a steady bubble.}
this describes
a steady circular bubble translating
along the positive $x$-axis in the laboratory frame with speed
$2+u_0 = 2$.
The uniqueness of this steady state, at least locally in the neighborhood
of this solution, is established in a more general
context
in the steady state analysis of \S 5  for $\beta \ge 0$. Note in
that case steady bubbles are not circular
and move along the positive $x$-axis in the lab frame with speed
$2 + u_0 (\beta)$.

However, if we overlook the equation for ${\hat \theta}_t (0; t)$
which only affects parametrization $\alpha$ of the boundary,
the remaining equations in (B.1), (B.3)-(B.6)
are seen to be satisfied even for
$\theta = \theta^{(s)} \equiv {\hat \theta} (0; t)$,  $
\gamma = \gamma^{(s)} \equiv 2 \sin \left (\alpha + {\hat \theta} (0; t)
\right )$, with  $u_0 = 0$ and $V=\pi$.
Geometrically, this still
corresponds to the same translating steady circular bubble, despite
the time dependence of
${\hat \theta} (0; t)$ does not affect the circular
shape and
the normal speed $U=0$ at the interface, as it must be
in the frame of the
steady bubble.

In studying the time evolution of near-circular interface, it
 turns out to be more convenient to use the time-dependent
$\gamma^{(s)}$ and define
a perturbed vortex sheet strength
$
\Gamma (\alpha, t) \equiv \gamma (\alpha, t) - \gamma^{(s)}(\alpha,t)$.

Using (B.3) and the property $\mathcal{G} [\omega_0] \gamma^{(s)} =0$ (see
Note \ref{noteg1}), it follows that
\begin{multline}
\label{eqGamma}
(I+a_\mu\mathcal{F}[\omega])\Gamma=-a_\mu
\left [\mathcal{F}[\omega]\gamma^{(s)}
- \mathcal{F} [\omega_0 ] \gamma^{(s)} \right ]
+\frac{2\pi-L}{L}\sigma\theta_{\alpha\alpha}+\sigma\theta_{\alpha\alpha}\\
+\frac{L-2\pi}{\pi}\sin\big(\alpha+\theta\big)+2\Big(\sin\big(\alpha+\theta)
-\sin\big(\alpha+\hat\theta(0;t)\big)\Big).
\end{multline}
Further,
from expression for $\gamma^{(s)}$ and property
$\mathcal{G}_1 [\omega_0] \gamma^{(s)} =0$ (see Note \ref{noteg1} ),
the normal velocity $U$ in (\ref{1.12}) for $\beta=0$
may be re-expressed as
\begin{equation}
\label{eqUnew}
U  = \frac{\pi}{L} \mathcal{H} [ \Gamma ]
+ \Re \left [ \frac{\pi}{L} \mathcal{G} [\omega] - \frac{1}{2}
\mathcal{G} [\omega_0] \gamma^{(s)} \right ]
+ \cos (\alpha + \theta ) - \cos (\alpha + {\hat \theta} (0; t).
\end{equation}

\begin{prop}
\label{translatingprop6.1}
If  $\tilde\theta\in\dot{H}^r$ with
$\|\tilde\theta\|_1<\epsilon_1$  and $|\hat\theta(0;t)|<\infty$,
then for sufficiently small $\epsilon_1$,
there
exists a unique solution $ \Gamma \in \{u\in H^{r-2}_p
|\hat{u}(0)=0\}$ for
$r\ge 3$ satisfying (\ref{eqGamma}).
This solution $\Gamma$ satisfies the estimates
\begin{eqnarray}
\| \Gamma \|_0  &\leq&  C\|
\tilde\theta \|_2,\label{Gamma1} \\
\|\Gamma\|_{r-2}&\leq &C_1\exp(C_2\|\tilde\theta\|_{r-2})
\|\tilde\theta\|_r, \label{Gamma2} \\
\left\| \Gamma - \sigma\frac{2 \pi}{L} \theta_{\alpha \alpha} \right\|_{r-2}
&\leq & C_1 \exp \left ( C_2 \| {\tilde \theta} \|_{r-2} \right )
\| {\tilde \theta} \|_{r-1}, \label{Gamma3}
\end{eqnarray}
where  $C_1$ and  $C_2$ depend only on $r$.

Let $\Gamma^{(1)} $  and $\Gamma^{(2)} $ correspond to $({\tilde
\theta}^{(1)}, \hat\theta^{(1)}(0;t) )$ and $({\tilde \theta}^{(2)} , \hat\theta^{(2)}(0;t) ) $ respectively. Assume $\|\tilde\theta^{(1)}\|_1<\epsilon_1$ and $\|\tilde\theta^{(2)}\|_1<\epsilon_1$. If $\tilde\theta^{(1)},\tilde\theta^{(2)}\in\dot{H}^r$ with $r\ge3$,   then for sufficient small $\epsilon_1$,
\begin{equation}\label{gamma4.6}
 \| \Gamma^{(1)} - \Gamma^{(2)} \|_{r-2} \le C_1\exp\big(C_2(\|\tilde\theta^{(1)}\|_r+\|\tilde\theta^{(2)}\|_r)\big)
\Big(\big \| {\tilde \theta}^{(1)} - {\tilde \theta}^{(2)}\big \|_r+\big|\hat{\theta}^{(1)}(0;t)-\hat{\theta}^{(2)}(0;t)\big|\Big),
\end{equation}
where  $C_1$ and $C_2$ depend on $r$ alone.
\end{prop}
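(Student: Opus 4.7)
The plan is to follow the strategy of Proposition \ref{propgamma}, exploiting the $\beta=0$ simplifications that $\mathcal{F}=\mathcal{F}_1$ and that every term on the right of (\ref{eqGamma}) has been arranged to vanish when $\tilde\theta\equiv 0$. Since $\mathcal{F}_1[\omega_0]\Gamma=0$ whenever $\hat\Gamma(0)=0$ (Note \ref{noteg1}), I would first rewrite (\ref{eqGamma}) as
\[
\bigl[I+a_\mu\bigl(\mathcal{F}[\omega]-\mathcal{F}_1[\omega_0]\bigr)\bigr]\Gamma = \mathcal{R},
\]
where $\mathcal{R}$ denotes the right-hand side. Lemma \ref{lem3.11} with $\omega^{(2)}=\omega_0$ (i.e.\ $\tilde\theta^{(2)}=0$) gives $\|(\mathcal{F}[\omega]-\mathcal{F}_1[\omega_0])\Gamma\|_0\le C\|\tilde\theta\|_1\|\Gamma\|_0$, so for sufficiently small $\epsilon_1$ the operator on the left is invertible on the zero-mean subspace via Neumann series, with inverse bounded independently of $\tilde\theta$.

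For (\ref{Gamma1}) I would estimate each piece of $\mathcal{R}$ in $H^0_p$: $\|(\mathcal{F}[\omega]-\mathcal{F}_1[\omega_0])\gamma^{(s)}\|_0\le C\|\tilde\theta\|_1\|\gamma^{(s)}\|_1\le C\|\tilde\theta\|_1$ by Lemma \ref{lem3.11}; $|L-2\pi|\le C\|\tilde\theta\|_1$ from (\ref{6.14}) (applied with $\tilde\theta^{(2)}=0$); the surface-tension contribution is bounded by $C\sigma\|\tilde\theta\|_2$; and $\|\sin(\alpha+\theta)-\sin(\alpha+\hat\theta(0;t))\|_0\le C\|\mathcal{Q}_0\theta\|_0\le C\|\tilde\theta\|_1$ via Proposition \ref{prop2.6}, using $\mathcal{Q}_0\theta=\tilde\theta+\hat\theta(1;t)e^{i\alpha}+\hat\theta(-1;t)e^{-i\alpha}$. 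For (\ref{Gamma2}) I would bootstrap: writing $\Gamma=\mathcal{R}-a_\mu(\mathcal{F}[\omega]-\mathcal{F}_1[\omega_0])\Gamma$ and applying the higher-norm part of Lemma \ref{lem3.11},
\[
\|\Gamma\|_{r-2} \le C\|\mathcal{R}\|_{r-2} + C_1\exp(C_2\|\tilde\theta\|_{r-2})\|\tilde\theta\|_{r-2}\|\Gamma\|_1,
\]
where the $\|\Gamma\|_1$ bound is obtained by running the same argument first with $r=3$. For (\ref{Gamma3}) I would combine the two $\sigma\theta_{\alpha\alpha}$ terms in $\mathcal{R}$ into $\sigma\frac{2\pi}{L}\theta_{\alpha\alpha}$; the residual $\Gamma-\sigma\frac{2\pi}{L}\theta_{\alpha\alpha}$ then absorbs only the $O(\|\tilde\theta\|_{r-1})$ non-surface-tension pieces of $\mathcal{R}$ together with $-a_\mu(\mathcal{F}[\omega]-\mathcal{F}_1[\omega_0])\Gamma$, whose $\|\cdot\|_{r-2}$ norm is controlled by $C_1\exp(C_2\|\tilde\theta\|_{r-2})\|\tilde\theta\|_{r-2}\|\Gamma\|_1$ and hence, using the preceding bound on $\|\Gamma\|_1$, by $C_1\exp(C_2\|\tilde\theta\|_{r-2})\|\tilde\theta\|_{r-1}$ for $r\ge 3$.

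For the Lipschitz estimate (\ref{gamma4.6}), subtracting the two copies of the equation gives
\[
\bigl[I+a_\mu(\mathcal{F}[\omega^{(1)}]-\mathcal{F}_1[\omega_0])\bigr]\bigl(\Gamma^{(1)}-\Gamma^{(2)}\bigr) = -a_\mu\bigl(\mathcal{F}[\omega^{(1)}]-\mathcal{F}[\omega^{(2)}]\bigr)\Gamma^{(2)} + \bigl(\mathcal{R}^{(1)}-\mathcal{R}^{(2)}\bigr),
\]
which I invert as in the first paragraph. The main obstacle is the bookkeeping for $\mathcal{R}^{(1)}-\mathcal{R}^{(2)}$: it contains a $\gamma^{(s,1)}-\gamma^{(s,2)}$ piece (controlled by $|\hat\theta^{(1)}(0;t)-\hat\theta^{(2)}(0;t)|$ via Lipschitz continuity of $\sin$), an $L^{(1)}-L^{(2)}$ piece (handled by (\ref{6.14})), and the two $\sin(\alpha+\theta)-\sin(\alpha+\hat\theta(0;t))$ differences which must be expanded in both arguments. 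The operator differences $(\mathcal{F}[\omega^{(1)}]-\mathcal{F}[\omega^{(2)}])$ applied to $\Gamma^{(2)}$ and to $\gamma^{(s,2)}$ are controlled by the Lipschitz part of Lemma \ref{lem3.11}, using the $\|\Gamma^{(2)}\|_1$ bound already established. Assembling these pieces with the uniform inverse bound yields (\ref{gamma4.6}).
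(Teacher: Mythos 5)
Your proposal is correct and follows essentially the same path as the paper's: invert $I + a_\mu\bigl(\mathcal{F}[\omega] - \mathcal{F}_1[\omega_0]\bigr)$ by Neumann series at the lowest norm level, bound each piece of the source, and bootstrap to higher norms, with the Lipschitz bound obtained by subtracting the two equations and absorbing the $\|\Gamma^{(1)}-\Gamma^{(2)}\|_1$ contribution. The paper reaches the same conclusion more economically by citing the Lipschitz estimates (\ref{eq1prop319})--(\ref{eq2prop319}) of Proposition \ref{propgamma} with $\beta=0$, $\gamma^{(2)}=\gamma^{(s)}$, $\tilde\theta^{(2)}=0$, $L^{(2)}=2\pi$, and then rewrites (\ref{eqGamma}) to isolate $\Gamma - \sigma\frac{2\pi}{L}\theta_{\alpha\alpha}$ for (\ref{Gamma3}), exactly as you did.
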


\begin{proof}
In statements (\ref{eq1prop319}) and (\ref{eq2prop319})
in Proposition \ref{propgamma}, we take $\beta=0$,
$\gamma^{(2)} = \gamma$, ${\tilde \theta}^{(1)} =
{\tilde \theta}$, $L^{(1)} = L$,
$$\gamma^{(2)} = \gamma^{(s)} =
2 \sin \left (\alpha + {\hat \theta} (0; t) \right ), ~~{\tilde \theta}^{(2)}
= 0, ~~L^{(2)} = 2 \pi
$$
and use Lemma  \ref{lem3.11} to obtain statements
(\ref{Gamma1}) and (\ref{Gamma2}).

(\ref{eqGamma}) can be written as
\begin{equation*}
\Gamma-\sigma \frac{2\pi}L \theta_{\alpha\alpha}=-a_\mu
\left [\mathcal{F}[\omega]\gamma
- \mathcal{F} [\omega_0 ] \gamma\right ]
+\frac{L-2\pi}{\pi}\sin\big(\alpha+\theta\big)+2\Big(\sin\big(\alpha+\theta)
-\sin\big(\alpha+\hat\theta(0;t)\big)\Big).
\end{equation*}
Hence, by Lemma \ref{lem3.11} with $\beta=0$, Lemmas \ref{lem6.3} and \ref{coroXi} (see Note \ref{notecoroXi}), we obtain (\ref{Gamma3}).

The statement (\ref{gamma4.6}) follows in a similar
manner from (\ref{eq2prop319}).

\end{proof}

\medskip

When there is no side wall effect ($\beta=0$),
it is readily checked from (B.1), (B.3)-(B.6)
that $y(0,t)$\footnote{We ignored
in all cases $x(0, t) = \Re z (0, t)$ which
does not affect the evolution of the shape function $\theta$.}
does not affect the evolution of
${\tilde \theta}$ or ${\hat \theta} (0; t)$.
So, in this section we will ignore (B.2) all together, since translations
do not affect the shape and if
necessary, $y(0, t)$ can be calculated from
(B.2) at the end.

The main result in this section is the following proposition:
\begin{prop}\label{translationprop5.2}{ For $\sigma>0$,
there exists $\epsilon>0$ such that for
$r\geq3$, if $\|\mathcal{Q}_1\theta_0\|_r<\epsilon$,
 then there
exists a unique solution
$\big(\tilde\theta,\hat\theta(0;t)\big)\in C\big([0,\infty),\dot{H}^r\times
\mathbb{R}\big)$  to the Hele-Shaw  problem
(B.1), (B.3)-(B.6) satisfying initial conditions
(\ref{translating5.1}).
Further, $\|\tilde\theta\|_r$, $|\hat\theta (\pm1;t)|$ and $|L-2\pi|$  each
decay exponentially as $t \rightarrow \infty$, $|\hat\theta(0;t)|$
remains finite. Thus the circular translating steady bubble is
asymptotically stable for sufficiently small initial disturbances in the
$H^r_p $ space.}
\end{prop}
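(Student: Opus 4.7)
The plan is to reduce (B.1), (B.3)--(B.6) to a single scalar evolution equation for $\tilde\theta\in\dot{H}^r$ whose principal part is strongly dissipative, and then close a standard energy argument together with a continuation principle. By Proposition \ref{translatingprop6.1} we may write $\Gamma = \sigma\,\frac{2\pi}{L}\theta_{\alpha\alpha}+\mathcal{R}_1$ with $\|\mathcal{R}_1\|_{r-2}\le C\exp(C\|\tilde\theta\|_{r-2})\|\tilde\theta\|_{r-1}$. Substituting this into (\ref{eqUnew}) and then into the first equation of (B.1), and using $\mathcal{H}D_\alpha^{3}=-\Lambda^{3}$, one obtains
\begin{equation*}
\tilde\theta_t \;=\; -\,c_L\sigma\,\Lambda^{3}\tilde\theta
\;+\; \mathcal{Q}_1\,\mathcal{N}\bigl(\tilde\theta,\hat\theta(0;t),L\bigr),
\qquad c_L=\frac{(2\pi)^{2}}{L^{3}},
\end{equation*}
where $\mathcal{N}$ involves at most two $\alpha$-derivatives of $\tilde\theta$ and vanishes at $\tilde\theta=0$. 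The auxiliary quantities $\hat\theta(\pm1;t)$ and $L$ are slaved to $\tilde\theta$: (B.6) gives $\hat\theta(\pm1;t)=G(\tilde\theta)$ with $|G(\tilde\theta)|\le\tfrac12\|\tilde\theta\|_1$ by Proposition \ref{prop2.6}, while (B.4) together with the lower bound (\ref{ImomBound}) defines $L$ smoothly in terms of $\tilde\theta$ with $|L-2\pi|\le C\|\tilde\theta\|_1$.

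Local existence in $C([0,T];\dot{H}^r\times\mathbb{R})$ for the coupled pair $(\tilde\theta,\hat\theta(0;t))$ I would set up by a Picard contraction on the mild form $\tilde\theta(t) = e^{-c\sigma t\Lambda^{3}}\tilde\theta_0 + \int_0^t e^{-c\sigma(t-\tau)\Lambda^{3}}\mathcal{Q}_1\mathcal{N}\,d\tau$ for $\tilde\theta$ together with direct integration of the scalar ODE in (B.1) for $\hat\theta(0;t)$, exploiting the smoothing of the $\Lambda^{3}$ semigroup and the Lipschitz estimates (\ref{gamma4.6}), (\ref{U3.32}), (\ref{T3.33}) on $\Gamma$, $U$, $T$ from Proposition \ref{propgamma}. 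Equation (B.2) for $y(0,t)$ decouples at $\beta=0$ and can be solved a posteriori.

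The core step is the $H^r$ energy estimate. Taking the $L^{2}$ inner product of the evolution equation with $D_\alpha^{2r}\tilde\theta$, the leading term contributes $-c_L\sigma\|\tilde\theta\|_{r+3/2}^{2}$, while the remainder is bounded, using Note \ref{note1.2}, Lemmas \ref{lem3.1}, \ref{lem3.11} and the $\Gamma,U,T$ bounds, by $C\|\tilde\theta\|_r\|\tilde\theta\|_{r+3/2}^{2} + C\|\tilde\theta\|_r\|\tilde\theta\|_{r+3/2}$. For $\|\tilde\theta\|_r$ sufficiently small the first piece is absorbed into the dissipation, and Cauchy--Schwarz together with the Poincar\'e inequality $\|\tilde\theta\|_{r+3/2}\ge 2^{3/2}\|\tilde\theta\|_r$ (valid on $\dot{H}^r$ because $\mathcal{Q}_1$ kills modes $|k|\le1$) yields
\begin{equation*}
\frac{d}{dt}\|\tilde\theta\|_r^{2} \;\le\; -\,\kappa\sigma\,\|\tilde\theta\|_r^{2}
\end{equation*}
for some fixed $\kappa>0$, giving exponential decay of $\|\tilde\theta\|_r$. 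The slaving relations then transfer exponential decay to $|\hat\theta(\pm1;t)|$ and $|L-2\pi|$, and the continuation principle upgrades the local solution to a global one since $\|\tilde\theta\|_r$ cannot leave the small ball where the estimates hold. Finally, the right-hand side of the evolution equation for $\hat\theta(0;t)$ is pointwise $O(\|\tilde\theta\|_r^{2})$ by Proposition \ref{propgamma}, hence integrable in time by the exponential decay, producing a finite limit for $\hat\theta(0;t)$.

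The main obstacle is the translational advection $\cos(\alpha+\theta)-\cos(\alpha+\hat\theta(0;t))$ in (\ref{eqUnew}) together with the non-dissipative piece of $\mathcal{F}[\omega]\gamma^{(s)}-\mathcal{F}[\omega_0]\gamma^{(s)}$: after $\alpha$-differentiation these contribute an order-one, non-dissipative perturbation to $\tilde\theta_t$ that, for a planar interface, would produce Saffman--Taylor instability. The verification that on the closed curve the projection $\mathcal{Q}_1$, combined with the (at least) linear vanishing of these terms in the perturbation, makes them a genuine lower-order perturbation of $-c\sigma\Lambda^{3}$ --- so that the energy estimate closes uniformly in $\sigma>0$ --- is the central technical point that the actual proof must drive through carefully.
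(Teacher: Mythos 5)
Your proposal correctly identifies the critical obstacle --- the first-order, non-dissipative advection coming from $\cos(\alpha+\theta)-\cos(\alpha+\hat\theta(0;t))$ and $\mathcal{F}[\omega]\gamma^{(s)}-\mathcal{F}[\omega_0]\gamma^{(s)}$ --- but the energy estimate you write down does not actually resolve it, and as stated it is false. When the evolution equation is linearized, that advection produces the off-diagonal coupling $m(k)\hat\theta(k+1;t)$ with $m(k)\sim k$ (the second summand in $\mathcal{A}$ in (\ref{Adef}) of the paper). This term is \emph{linear} in $\tilde\theta$, so it does not vanish to second order: pairing it with $D_\alpha^{2r}\tilde\theta$ gives a contribution of size $\sum_k k^{2r+1}|\hat\theta(k)||\hat\theta(k+1)|\lesssim \|\tilde\theta\|_{r+1/2}^2\lesssim\|\tilde\theta\|_{r+3/2}^2$ \emph{with no small prefactor $\|\tilde\theta\|_r$}. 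Your claimed remainder bound $C\|\tilde\theta\|_r\|\tilde\theta\|_{r+3/2}^2+C\|\tilde\theta\|_r\|\tilde\theta\|_{r+3/2}$ therefore misses this piece, and the inequality $\frac{d}{dt}\|\tilde\theta\|_r^2\le-\kappa\sigma\|\tilde\theta\|_r^2$ cannot be obtained for small $\sigma$ this way: in the unweighted $H^r$ norm the dissipation gives only $-c\sigma\|\tilde\theta\|_{r+3/2}^2$, which is overwhelmed by the order-one advection contribution once $\sigma$ is small. Smallness of $\|\tilde\theta\|_r$ does not help because the offending term is linear.

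The missing idea is precisely the weighted Sobolev norm that the paper introduces in Definition \ref{transdef5.8} with weights (\ref{defwsk}): for $\sigma<1$ one suppresses the low-frequency Fourier modes by $w(\sigma,k)=\sigma^{K-|k|}$ up to $|k|=K\ge\sqrt{1+6/\sigma}$. This makes the quantity $\delta=\sup_k m(k)w(k,\sigma)/\bigl(\sigma d^{1/2}(k)d^{1/2}(k+1)w(k+1,\sigma)\bigr)$ at most $3/8$, which is exactly what Lemma \ref{lemAinner} needs to conclude that the full linear operator $\mathcal{A}$ (dissipation \emph{plus} advection) satisfies $(v,-\mathcal{A}[v])_{w,r}\ge\frac{15\sigma}{64}\|v\|_{w,r+3/2}^2$. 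Once this coercivity in the weighted norm is in hand, the paper closes the argument by a Duhamel/contraction-mapping scheme for $e^{t\mathcal{A}}$ in $H_\sigma^r$ (Lemmas \ref{nonhomogeneous}, \ref{NN0bounds}, \ref{lemcontract}), with the genuinely small, quadratic pieces sitting in $\mathcal{A}_N$ and $\mathfrak{N}$. Your approach of putting the advection into the ``nonlinear'' remainder and running an unweighted $H^r$ energy estimate cannot be made to close uniformly in $\sigma>0$; you would need either the weight or some equivalent mechanism that converts the advective transfer of energy from low to high wavenumber into an exploitable negative-definite structure.
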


\begin{note}\label{translatingnote4.3} Proof of Proposition \ref{translationprop5.2} is given at the end of \S 4. Note also
Proposition \ref{translationprop5.2} and Lemma \ref{lem2.7} imply  Theorem \ref{theo1.3}.
\end{note}

\subsection{{\it A priori} estimates}

Before we consider  global solutions
to the system (B.1), (B.3)-(B.6)
for initial condition (\ref{translating5.1}). First some
additional estimates are needed for the terms that arise in
the evolution equations.

\begin{definition}\label{translatingdef4.4}
We define operator $\mathfrak{W}$ so that
\begin{equation*}
\mathfrak{W}[f](\alpha)=\frac{1}{2\pi
}\int_{0}^{2\pi}\gamma^{(s)}(\alpha')\frac{\int_{\alpha'}^{\alpha}\mathcal{Q}_0\big(f(\zeta)\omega_{0_\zeta}(\zeta)\big)d\zeta}{\omega_0(\alpha)
-\omega_0(\alpha')}d\alpha'.
\end{equation*}
\end{definition}

\begin{lemma}
\label{lemmaW}
For $f \in H_p^{k}$, there exists constant $C_1$ only dependent on $k$ so that
$$\| \mathfrak{W} [f] \|_k \le C_1 \| f \|_{k} $$
\end{lemma}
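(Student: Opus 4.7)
The approach is to recognize $\mathfrak{W}[f]$ as a Cauchy-type integral against the reference curve $\omega_0$, whose action on $H_p^k$ is controlled via the identity (\ref{eqnoteg1}) relating it to the Hilbert transform.

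First I would set $H(\alpha)=\int_0^\alpha \mathcal{Q}_0\big(f(\zeta)e^{i\zeta}\big)\,d\zeta$. Because $\mathcal{Q}_0(fe^{i\cdot})$ has zero mean on $[0,2\pi]$, the function $H$ is $2\pi$-periodic; and since $\mathfrak{W}[f]$ depends on $H$ only through differences $H(\alpha)-H(\alpha')$, I may normalize so that $\hat H(0)=0$. The Fourier relation $\hat H(n)=\hat{H'}(n)/(in)$ for $n\ne 0$ then gives $\|H\|_k\le \|H'\|_{k-1}\le C\|f\|_{k-1}\le C\|f\|_k$ for $k\ge 1$, and $\|H\|_1\le C\|f\|_0$ at the bottom end.

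The pointwise integrand $\gamma^{(s)}(\alpha')\,[H(\alpha)-H(\alpha')]/[\omega_0(\alpha)-\omega_0(\alpha')]$ is regular on the diagonal, so I would split it into a difference of two principal-value integrals and apply (\ref{eqnoteg1}) (with $\omega_{0,\alpha}=e^{i\alpha}$) to each piece, yielding the closed form
\begin{equation*}
\mathfrak{W}[f](\alpha)=\tfrac{e^{-i\alpha}}{2}\Big\{H(\alpha)\big(\mathcal{H}[\gamma^{(s)}](\alpha)+i\hat{\gamma}^{(s)}(0)\big)-\big(\mathcal{H}[\gamma^{(s)}H](\alpha)+i\widehat{\gamma^{(s)}H}(0)\big)\Big\}.
\end{equation*}
For $k\ge 1$, the $H_p^k$-boundedness of $\mathcal{H}$, the uniform smoothness of $\gamma^{(s)}=2\sin(\cdot+\hat\theta(0;t))$, together with the Banach algebra property (Note \ref{note1.2}), then give $\|\mathfrak{W}[f]\|_k\le C\|H\|_k\le C\|f\|_k$.

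For the endpoint $k=0$, where Banach algebra fails, I would instead work directly from the defining integral by writing the kernel as $q_1[H](\alpha,\alpha')/q_1[\omega_0](\alpha,\alpha')$. The elementary lower bound $|q_1[\omega_0]|=|2\sin((\alpha-\alpha')/2)/(\alpha-\alpha')|\ge 2/\pi$ on $|\alpha-\alpha'|\le \pi$ allows me to invoke the $j=0$ case of Lemma \ref{insertnew} with $\omega^{(1)}=\omega_0$, $\omega^{(2)}=H\in H_p^1$ to obtain $\|q_1[H]/q_1[\omega_0]\|_{L^2(d\alpha')}\le C\|H_\alpha\|_0\le C\|f\|_0$ uniformly in $\alpha$; Cauchy--Schwarz in $\alpha'$ against the bounded $\gamma^{(s)}$ and integration in $\alpha$ then yield $\|\mathfrak{W}[f]\|_0\le C\|f\|_0$, which in turn dominates $|\widehat{\mathfrak{W}[f]}(0)|$ and closes the full $H_p^k$ estimate. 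The main obstacle is precisely this low-regularity endpoint where the algebra-based argument breaks down and one must fall back on a direct kernel bound; a minor technical point is justifying the splitting into two PV integrals, which is legitimized by applying (\ref{eqnoteg1}) to each piece rather than by any absolute integrability at $\alpha=\alpha'$.
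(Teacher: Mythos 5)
Your proof is correct, but it follows a genuinely different route from the paper's. The paper dispatches this lemma in one line: it substitutes $g(\alpha)=\omega_{0,\alpha}(\alpha)=e^{i\alpha}$, $h=\gamma^{(s)}$, and $\mathcal{Q}_0(f\omega_{0,\alpha})$ for $f$ directly into Lemma~\ref{coro3.11} (itself built from the divided-difference machinery of Lemmas~\ref{lem3.3} and~\ref{insertnew}), noting the lower bound $|q_1[\omega_0]|\ge 2/\pi>1/8$, and reads off the $L^\infty$ bounds on $D_\alpha^j\mathfrak{W}[f]$. You instead exploit the fact that the reference curve $\omega_0$ is so explicit that identity~(\ref{eqnoteg1}) turns $\mathfrak{W}[f]$ into a closed-form combination of $H$, $\gamma^{(s)}$, and their Hilbert transforms, after which boundedness of $\mathcal{H}$ and the Banach algebra property of Note~\ref{note1.2} give the bound for $k\ge1$ immediately; you retreat to the divided-difference kernel bound (Lemma~\ref{insertnew} with $j=0$) only at the endpoint $k=0$, where the algebra property is unavailable. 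Your route buys something concrete: Lemma~\ref{coro3.11} as stated treats only $j=0$ and $j\ge 3$, so the paper's one-line citation leaves the intermediate cases $j=1,2$ implicit, whereas your closed-form argument covers all $k\ge 1$ uniformly. The closed form is also more transparent (and in fact reappears implicitly in the paper's own decomposition, e.g., in the definition of $\Gamma_L$ in~(\ref{LGamma})). The paper's route is shorter because Lemma~\ref{coro3.11} already packages the divided-difference estimates, but it is less explicit about the low-order cases. One small cosmetic remark: since $\gamma^{(s)}=2\sin(\alpha+\hat\theta(0;t))$ is a pure mode-one function, $\hat\gamma^{(s)}(0)=0$, so that term in your closed form vanishes identically.
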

\begin{proof}
We take
$\omega_{0_\alpha} $ and
$Q_0 \left [f \omega_{0_\alpha} \right ] $
to be
$g(\alpha)$ and $f(\alpha)$ in Lemma
\ref{coro3.11} respectively and define $ h =\gamma^{(s)}$.
Note that for this choice, the condition
${\hat g} (0) = 0
={\hat f} (0)$ as well as the
lower bound constraint on
$g = \omega_{0,\alpha} = e^{i \alpha}$ is satisified.
The proof follows since $\| . \|_{L^\infty}$ bounds in $\alpha$
on $ D_\alpha^j \mathfrak{W} [f] $ imply
$\| . \|_j $ bounds in the Lemma statement.
\end{proof}

From (\ref{eqGamma}), after some algebraic manipulation, it follows that
\begin{equation}
\label{gamma4.3}
\Gamma(\alpha,t)=\frac{2\pi}{L}\sigma\theta_{\alpha\alpha}+\Gamma_L(\alpha,t)
+{N}_1(\alpha,t)+{N}_2(\alpha,t)+{N}_3(\alpha,t),
\end{equation}
where
\begin{multline}\label{LGamma}
\Gamma_L(\alpha,t)=2\mathcal{Q}_0\theta(\alpha,t)\cos\big(\alpha+\hat\theta(0;t)\big)+\frac{L-2\pi}{\pi}\sin\big(\alpha+\hat\theta(0;t)\big)\\
-a_\mu\Re\Big(\frac{\partial}{\partial\alpha}\big\{\mathfrak{W}[\mathcal{Q}_0\theta](\alpha)\big\}\Big),
\end{multline}
\begin{multline}
\label{N14.4}
N_1
=
a_\mu\Re\Big(-\frac{1}{i}
\mathcal{G}[\omega]\Gamma+\frac{1}{i}
\mathcal{G}[\omega_0]\Gamma\Big)
+\frac{L-2\pi}{\pi}\big(\sin(\alpha+\theta)-\sin(\alpha+\hat\theta(0; t))\big)\\
+a_\mu\Re \Big(
i(e^{i\mathcal{Q}_0\theta}-1)
\Big\{ \frac{\omega_{0_\alpha}}{\omega_\alpha}
\left [ \mathcal{G}[\omega]\gamma^{(s)} - \mathcal{G} [\omega_0] \gamma^{(s)}
\right ]
-2 \left (
\frac{\omega_{0_\alpha}}{\omega_\alpha} - 1 \right ) \cos \left (
\alpha + {\hat \theta} (0; t) \right )
\Big\}\Big) \\
+2 \Xi_{s} \left [ Q_0 \theta ; {\hat \theta} (0; t) \right ]
\end{multline}
\begin{equation}\label{N24.5}
N_2=-2a_\mu\Re\Big(\frac{1}{i}\frac{\partial}{\partial\alpha}\big\{\mathfrak{W}[\Xi_e[\mathcal{Q}_0\theta]](\alpha)\big\}\Big),
\end{equation}
and
\begin{multline}
\label{N34.6}
N_3 =
\Re
\left (
\frac{a_\mu \omega_{0_\alpha}}{i \pi \omega_\alpha}
\int_{\alpha-\pi}^{\alpha+\pi}
\gamma^{(s)} (\alpha') \frac{q_1 [\omega-\omega_0] (\alpha, \alpha')}{
q_1 [\omega_0] (\alpha, \alpha') } \left [
\frac{q_2 [\omega] (\alpha, \alpha')}{q_1 [\omega] (\alpha, \alpha')}
-
\frac{q_2 [\omega_0] (\alpha, \alpha')}{q_1 [\omega_0] (\alpha, \alpha')}
\right ] \right ) \\
+
\Re
\left (
\frac{a_\mu \omega_{0_\alpha} }{i \pi}
\left [ \frac{1}{\omega_\alpha} - \frac{1}{\omega_{0_\alpha}} \right ]
\int_{\alpha-\pi}^{\alpha+\pi} d\alpha'
\frac{\gamma^{(s)} (\alpha') q_1 [\omega-\omega_0] (\alpha, \alpha')
\omega_{0_\alpha} (\alpha) }{
q_1 [\omega_0 ] (\alpha, \alpha')
\left [\omega (\alpha) -\omega_0 (\alpha') \right ] } \right ).
\end{multline}

Further, from (\ref{1.12}) it follows that normal velocity
\begin{equation}\label{translating6.6}
U(\alpha,t)=\frac{2\pi^2}{L^2}\sigma\mathcal{H}(\theta_{\alpha\alpha})(\alpha)+U_L(\alpha,t)+\frac{1}{2}\mathcal{H}\Big(
N_1(\cdot)+N_2(\cdot)+N_3(\cdot)\Big)(\alpha)+N_4(\alpha),
\end{equation}
where
\begin{multline*}U_L(\alpha,t)
=\frac{1}{2}\mathcal{H}[\Gamma_L](\alpha,t)+\frac{L-2\pi}{L}\cos\big(\alpha+\hat\theta(0;t)\big)
-\mathcal{Q}_0\theta\sin\big(\alpha+\hat\theta(0;t)\big)\\
-\Re\Big(\frac{1}{i}\frac{\partial}{\partial\alpha}\big(\mathfrak{W}[\mathcal{Q}_0\theta](\alpha)\big)\Big),
\end{multline*}
and
\begin{align}\label{eqN4}
N_4 (\alpha)
=&
\Re\Big(\frac{\pi}{L}\mathcal{G}[\omega]\Gamma
-\frac{1}{2}\mathcal{G}[\omega_0]\Gamma
\Big)
+\frac{2\pi-L}{L}\Re\Big(\frac{1}{2}
\left [ \mathcal{G}[\omega]\gamma^{(s)} - \mathcal{G} [\omega_0] \gamma^{(s)}
\right ] \Big)  \\
&+\Re\left((e^{i\mathcal{Q}_0\theta}-1)
\left\{\frac{\omega_{0_\alpha}}{2\omega_\alpha}
\big(\mathcal{G}[\omega]\gamma^{(s)} -
\mathcal{G} [\omega_0] \gamma^{(s)} \big) -
\left ( \frac{\omega_{0_\alpha}}{\omega_\alpha} -1 \right )
\cos\big(\alpha+\hat\theta(0;t)\big)\right\}\right) \nonumber\\
&-
\Re\Big(\frac{\omega_{0_\alpha}}{2\pi
}\mbox{PV}\int_{\alpha-\pi}^{\alpha+\pi}\gamma^{(s)}(\alpha')
\frac{q_1[\omega-\omega_0](\alpha,\alpha')}{q_1[\omega_0](\alpha,\alpha')}
\Big(\frac{1}{\omega(\alpha)-\omega(\alpha')}
-\frac{1}{\omega_0(\alpha)-\omega_0(\alpha')}
\Big)d\alpha'\Big) \nonumber\\
&+\frac{2\pi-L}{2L}\mathcal{H}[\Gamma
-\frac{2\pi}{L}\sigma\theta_{\alpha\alpha}]
+\Re\Big(\frac{\partial}{\partial\alpha}
\big(\mathfrak{W}[\Xi_e[\mathcal{Q}_0\theta]](\alpha)\big)\Big)
+\Xi_c \left [ Q_0 \theta; {\hat \theta} (0; t) \right ]
\nonumber \end{align}

Using (\ref{translating6.6}) and (B.5), from (B.1)
we obtain
\begin{equation}
\label{RHSC1}
{\tilde \theta}_t =
\frac{2\pi}{L}\mathcal{Q}_1\big(U_\alpha+T(1+\theta_\alpha)\big)\\
=\mathcal{A} [{\tilde \theta} ] (\alpha,t)  + \mathcal{A}_N [ {\tilde \theta},
{\hat \theta} (0; \cdot), L ](\alpha,t) +
\mathfrak{N}\big[\tilde\theta,\hat{\theta}(0;\cdot)\big](\alpha,t),
\end{equation}
where the operators $\mathcal{A}$ and $\mathcal{A}_N$
acting on
real valued functions  ${\tilde \theta} \in {\dot H}^r $ for $r \ge 3$
are defined by
\begin{equation}
\label{Adef}
\mathcal{A} [ {\tilde \theta} ] (\alpha,t) =
\sum_{k=2}^\infty e^{i k \alpha} \left ( -\sigma d(k) {\hat \theta} (k; t)
+ m (k) {\hat \theta} (k+1; t) \right )
+ c.c.,
\end{equation}
\begin{multline}
\label{AdefN} \mathcal{A}_N [ {\tilde \theta}, {\hat \theta}
(0;\cdot), L] (\alpha,t) = \sum_{k=2}^\infty e^{i k \alpha} \left \{
\left ( \frac{-8 \pi^3}{L^3} +1 \right ) \sigma ~d(k)~{\hat \theta}
(k; t) + e^{-i {\hat \theta} (0; t) } \left (\frac{2\pi}{L} - 1
\right )
m (k) {\hat \theta} (k+1; t) \right \} \\
+ \left ( e^{-i {\hat \theta} (0; t)} - 1 \right )
\sum_{k=2}^\infty e^{ik\alpha}
m(k) {\hat \theta} (k+1; t)
+ c.c. ,
\end{multline}
where $c.c.$ indicates complex conjugate of explicitly shown terms on
the right side in each of (\ref{Adef}), (\ref{AdefN})
\footnote{Note that
while $L$ is shown as an independent argument of
$\mathcal{A}_N $, in the evolution equation (\ref{RHSC1}), itself,
$L$
is
determined from ${\tilde \theta}$ through (B.4) and (\ref{eqomega}).}
and
\begin{equation}
\label{dkmkdef}
d(k) = \frac{1}{2} k (k^2-1) ~~,~~m(k) = (1+a_\mu)
\frac{(k^2-1) (k+1)}{k (k+2)} ,
\end{equation}
and
\begin{equation}
\label{eqmathfrakN}
\mathfrak{N}
\big[\tilde\theta,\hat{\theta}(0;\cdot)\big](\alpha,t)=
\frac{2\pi}{L}\mathcal{Q}_1\Big\{\Big(\frac{1}{2}\mathcal{H}\Big(N_1(\cdot)+N_2(\cdot)+N_3(\cdot)\Big)(\alpha)
+N_4(\alpha)\Big)_\alpha+N_5(\alpha)\Big\},
\end{equation}
where
\begin{eqnarray}
\label{eqN5}
N_5(\alpha)&=&\int_0^\alpha
\Big[\frac{1}{2}\mathcal{H}\Big(N_1(\cdot)+N_2(\cdot)+N_3(\cdot)\Big)(\alpha')
+N_4(\alpha')\Big]d\alpha'\\
&& -\frac{\alpha}{2\pi}\int_0^{2\pi}
\Big[\frac{1}{2}\mathcal{H}\Big(N_1(\cdot)+N_2(\cdot)+N_3(\cdot)\Big)(\alpha)
+N_4(\alpha)\Big]d\alpha\nonumber\\
&&+\int_0^\alpha\theta_\alpha(\alpha')
U(\alpha')d\alpha'-\frac{\alpha}{2\pi}\int_0^{2\pi}\theta_\alpha(\alpha)U(\alpha)d\alpha
\nonumber\\
&&+\Big(\int_0^\alpha\theta_\alpha(\alpha')U(\alpha')d\alpha'-\frac{\alpha}{2\pi}\int_0^{2\pi}\theta_\alpha(\alpha)U(\alpha)d\alpha\Big)
\theta_\alpha(\alpha) .\nonumber
\end{eqnarray}
It is straightforward to check from (\ref{dkmkdef})
that for any $k \ge 2$,
\begin{equation}
\label{dkmkbound}
\frac{3}{8} k^3 \le d(k) \le \frac{1}{2} k^3 ~~,~~\frac{9}{16}
(1+a_\mu) k \le m (k) \le (1+a_\mu) k.
\end{equation}

After some calculation, we also find from
(B.1) that
\begin{equation}\label{translating6.9}
{\hat \theta}_t (0; t) =
\frac{1}{L}\int_0^{2\pi}T(\alpha,t)\Big(1+\theta_\alpha(\alpha,t)\big)
\Big)d\alpha = \mathfrak{N}_0 [ {\tilde \theta}, {\hat \theta} (0;
\cdot) ](t),
\end{equation}
where the functional $\mathfrak{N}_0$  of real valued $\big (
{\tilde \theta} (\alpha, t), {\hat \theta} (0; t) \big ) $ is
defined by\footnote{Note that the Fourier component ${\hat \theta}
(1; t)$ appearing in the summation is being determined indirectly
from ${\tilde \theta}$ through (B.6) (see Proposition
\ref{prop2.6}).}
\begin{multline}
\label{eqmathfrakN0}
\mathfrak{N}_0 \big[\tilde\theta,\hat{\theta}(0;\cdot)\big](t)
=\int_0^{2\pi}\int_0^\alpha\Big(\big(\frac{2\pi^2}{L^3}
-\frac{1}{4\pi}\big)\sigma\mathcal{H}(\theta_{\alpha\alpha})(\alpha')
+\big(\frac{1}{L}-\frac{1}{2\pi}\big)U_L(\alpha')\Big)d\alpha'd\alpha\\
-\pi\big(\frac{1}{L}-\frac{1}{2\pi}\big)\int_0^{2\pi}U_L(\alpha)d\alpha
+\frac{1}{L}\int_0^{2\pi}N_5(\alpha)d\alpha+B_0\big[\tilde\theta,\hat\theta(0;\cdot)\big](t),
\end{multline}
with the functional $B_0$ defined by
\begin{equation}
\label{eqB0} B_0 \big[{\tilde \theta},\hat\theta(0;\cdot)\big ](t)
=\sum_{k=1}^\infty \Big( \frac{\sigma k}{2} {\hat \theta}(k;t)
-e^{-i\hat\theta(0;t)} (1+a_\mu) \frac{k+1}{k(k+2)}
 \hat{\theta}(k+1;t)\Big)
+ c.c..
\end{equation}

With respect to the functional $B_0 [{\tilde \theta} (\alpha, t),
{\hat \theta} (0; t) ]$, the following statement readily follows.
\begin{lemma}
\label{lemB0} With ${\tilde \theta} \in {\dot H}_1 $ and $\| {\tilde
\theta} \|_2 < \epsilon$ sufficiently small, then
$$ \Big|
B_0 \big[{\tilde \theta} (\alpha, t) ,{\hat \theta} (0; t)\big ]
\Big | \le C \| {\tilde \theta} \|_2.
$$
Further $B_0^{(1)} $ and $B_0^{(2)} $ correspond to respectively to
$\big({\tilde \theta}^{(1)}, {\hat \theta}^{(1)} (0;t)\big )$ and
and $\big({\tilde \theta}^{(2)}, {\hat \theta}^{(2)} (0;t)\big )$,
then
\begin{equation*}
\big| B_0^{(1)} - B_0^{(2)} \big| \le C \left \{ \|{\tilde
\theta}^{(1)} (\cdot, t) - {\tilde \theta}^{(2)} (\cdot, t) \|_2 +
|{\hat \theta}^{(1)} (0; t) - {\hat \theta}^{(2)} (0; t) | \|
{\tilde \theta}^{(1)}(\cdot,t) \|_1 \right \}.
\end{equation*}
\end{lemma}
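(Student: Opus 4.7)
\textit{Proof proposal for Lemma \ref{lemB0}.} The plan is to treat $B_0$ as a convergent series of Fourier coefficients and bound it by applying Cauchy--Schwarz in the Fourier side, with the $k=\pm 1$ contributions handled separately through Proposition \ref{prop2.6}. Write
$$B_0 = S_1 - (1+a_\mu) e^{-i\hat\theta(0;t)} S_2 + \text{c.c.},\quad S_1 = \sum_{k=1}^\infty \tfrac{\sigma k}{2}\hat\theta(k;t),\quad S_2 = \sum_{k=1}^\infty \tfrac{k+1}{k(k+2)}\hat\theta(k+1;t).$$
For $S_1$, split off $k=1$ and estimate
$$|S_1| \le \tfrac{\sigma}{2}|\hat\theta(1;t)| + \tfrac{\sigma}{2}\Bigl(\sum_{k\ge 2} k^{-2}\Bigr)^{1/2}\Bigl(\sum_{k\ge 2} k^4 |\hat\theta(k;t)|^2\Bigr)^{1/2}.$$
The sum on the right is bounded by $C\|\tilde\theta\|_2$, and Proposition \ref{prop2.6} gives $|\hat\theta(1;t)| = |G(\tilde\theta)|\le \tfrac{1}{2}\|\tilde\theta\|_1\le \tfrac{1}{2}\|\tilde\theta\|_2$. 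For $S_2$, use $\tfrac{k+1}{k(k+2)}\le\tfrac{1}{k}$ and Cauchy--Schwarz:
$$|S_2| \le \Bigl(\sum_{k\ge 1} k^{-2}\Bigr)^{1/2}\Bigl(\sum_{k\ge 1} |\hat\theta(k+1;t)|^2\Bigr)^{1/2} \le C\|\tilde\theta\|_0 \le C\|\tilde\theta\|_2,$$
which combined with $|e^{-i\hat\theta(0;t)}|=1$ yields the first bound in the lemma.

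For the Lipschitz estimate, subtract and use the add-and-subtract trick on the second sum:
$$e^{-i\hat\theta^{(1)}(0;t)}\hat\theta^{(1)}(k+1;t) - e^{-i\hat\theta^{(2)}(0;t)}\hat\theta^{(2)}(k+1;t)$$
$$= e^{-i\hat\theta^{(2)}(0;t)}\bigl(\hat\theta^{(1)}(k+1;t)-\hat\theta^{(2)}(k+1;t)\bigr) + \bigl(e^{-i\hat\theta^{(1)}(0;t)} - e^{-i\hat\theta^{(2)}(0;t)}\bigr)\hat\theta^{(1)}(k+1;t).$$
Use $|e^{-ia}-e^{-ib}|\le|a-b|$ on the second piece. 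The first piece contributes $C\|\tilde\theta^{(1)}-\tilde\theta^{(2)}\|_2$ after Cauchy--Schwarz just as in the a priori bound. The second piece contributes at most $C|\hat\theta^{(1)}(0;t)-\hat\theta^{(2)}(0;t)|\cdot\|\tilde\theta^{(1)}\|_1$ after summing against the weights $\tfrac{k+1}{k(k+2)}$. The analogous treatment of the $S_1$-type difference gives $C\|\tilde\theta^{(1)}-\tilde\theta^{(2)}\|_2$, where the $k=1$ mode difference is controlled by Proposition \ref{prop2.6} through
$|\hat\theta^{(1)}(1;t) - \hat\theta^{(2)}(1;t)| = |G(\tilde\theta^{(1)}) - G(\tilde\theta^{(2)})| \le \tfrac{1}{2}\|\tilde\theta^{(1)}-\tilde\theta^{(2)}\|_1$.

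The proof is essentially a Fourier-side computation and I do not anticipate a real obstacle. The only mild subtlety is that $\hat\theta(\pm 1;t)$ are not free Fourier coefficients of $\tilde\theta$ but are implicitly determined through the constraint (B.6); this is dispatched cleanly by the $C^1$ bound on $G$ from Proposition \ref{prop2.6}. Note also that the asymmetric appearance of $\|\tilde\theta^{(1)}\|_1$ (as opposed to $\|\tilde\theta^{(2)}\|_1$) in the lemma statement is a matter of which side we expand the exponential difference around, and either choice works.
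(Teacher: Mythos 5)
Your proof is correct and matches the approach the paper intends: the paper's own proof is a one-liner ("follows easily from the expression (eqB0) and Proposition \ref{prop2.6}"), and your write-up simply supplies the details — split off $\hat\theta(1;t)$ and control it through $G$, apply Cauchy--Schwarz on the Fourier side for the tail, and use the add-and-subtract decomposition with $|e^{-ia}-e^{-ib}|\le|a-b|$ for the Lipschitz bound. The observation about the asymmetric $\|\tilde\theta^{(1)}\|_1$ factor is accurate.
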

\begin{proof} The proof follows easily from the expression (\ref{eqB0}),
and Proposition \ref{prop2.6} relating ${\hat \theta} (1; t) $ to
${\tilde \theta}$.
\end{proof}

We have the following estimates for the nonlinear terms
$N_j$, $j=1,\cdots,5$:
 \begin{lemma}
\label{N1lem4.6}
If for $r\ge 3$, ${\tilde \theta} \in \dot{H}^r $ and
$\|\tilde\theta\|_1 <\epsilon_1$,
then for sufficiently small $\epsilon_1$,
$N_j$, $j=1,\cdots,5$,
defined by (\ref{N14.4}),
(\ref{N24.5}), (\ref{N34.6}), (\ref{translating6.6}), (\ref{eqN4})
and (\ref{eqN5}) satisfy
\begin{eqnarray}
\| N_j \|_{r-1}  &\leq&  C_1\exp(C_2\|\tilde\theta\|_{r-1}) \|
\tilde\theta \|_{r-1}\|\tilde\theta\|_r,\label{N14.10}
\end{eqnarray}
where $C_1$ and  $C_2$ depend only on $r$.
Further let $N_j^{(1)}$ and $N_j^{(2)}$ correspond to
$\big({\tilde \theta}^{(1)},\hat\theta^{(1)}(0;t)\big)$ and $\big({\tilde \theta}^{(2)},\hat\theta^{(2)}(0;t)\big)$ respectively,
each in $\dot{H}^r\times\mathbb{R}$ with $\| {\tilde \theta}^{(1)} \|_1$ and
$\| {\tilde \theta}^{(2)} \|_1 $ $ < \epsilon_1$.
Then for sufficiently small $\epsilon_1$,
\begin{multline}\label{N14.8}
\left\|N^{(1)}_j-N_j^{(2)} \right\|_{r-1}
\le C_1\exp{\Big(C_2\big(\| {\tilde
\theta}^{(1)}\|_{r-1} +
\|{\tilde \theta}^{(2)}\|_{r-1}\big)\Big)}\Big\{\big(\| {\tilde
\theta}^{(1)}\|_{r-1} + \|{\tilde \theta}^{(2)}\|_{r-1}\big) \\
\times\Big(\big\| {\tilde
\theta}^{(1)} - {\tilde \theta}^{(2)}\big \|_r
+\left|\hat\theta^{(1)}(0;t)-\hat\theta^{(2)}(0;t)\right| \Big )
+\big(\| {\tilde
\theta}^{(1)}\|_r + \|{\tilde \theta}^{(2)}\|_r\big) \big\| {\tilde
\theta}^{(1)} - {\tilde \theta}^{(2)}\big \|_{r-1}\Big\},
\end{multline}
 \end{lemma}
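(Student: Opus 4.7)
The plan is to decompose each $N_j$ term by term using the explicit formulas (\ref{N14.4})--(\ref{eqN5}), estimate each summand with the preliminary lemmas of \S 3, the $\Gamma$-bounds of Proposition \ref{translatingprop6.1}, and the Banach algebra property of $H^{r-1}_p$ (Note \ref{note1.2}). The key observation is that every summand contains at least one of the following vanishing-at-zero factors, each of which supplies one power of $\tilde\theta$: an $\Xi_e$, $\Xi_s$, or $\Xi_c$-type term (Lemma \ref{coroXi} case $j_0=2$); a factor $L-2\pi$, bounded by $C\|\tilde\theta\|_1$ via (\ref{6.14}) with $\tilde\theta^{(2)}\equiv 0$; an operator difference $\mathcal{G}[\omega]-\mathcal{G}[\omega_0]$ (Lemma \ref{lem3.11}); a divided-difference such as $q_2[\omega]/q_1[\omega]-q_2[\omega_0]/q_1[\omega_0]$ (Lemma \ref{insertnew2}); a factor $1/\omega_\alpha-1/\omega_{0_\alpha}$ (estimate (\ref{4thstate})); or the perturbation $\Gamma$ itself (bounds (\ref{Gamma1})--(\ref{Gamma3})). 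Pairing this small factor with the remaining pieces, bounded either by Sobolev algebra or by the $H^{r-1}_p$-bound of the relevant lemma, yields the required $\|\tilde\theta\|_{r-1}\|\tilde\theta\|_r$ structure.

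Carrying this out in detail: $N_1$ splits into the operator-difference piece (Lemma \ref{lem3.11} with $f=\Gamma$, using $\|\Gamma\|_1\le C\|\tilde\theta\|_r$ from (\ref{Gamma2}) since $r\ge 3$); the $(L-2\pi)$-piece combined with Lemma \ref{coroXi} for the sine-difference; the $(e^{i\mathcal{Q}_0\theta}-1)\cdot\{\cdots\}$-piece using Lemma \ref{coroXi} on $\Xi_e$ together with Lemmas \ref{lem3.1} and \ref{lem3.11} on the braced factor; and the $\Xi_s$-piece handled by Lemma \ref{coroXi} directly. $N_2$ uses Lemma \ref{lemmaW} applied to $\mathfrak{W}[\Xi_e[\mathcal{Q}_0\theta]]$ after one $\alpha$-derivative, followed by Lemma \ref{coroXi} (case $j_0=2$). $N_3$ relies on Lemmas \ref{insertnew} and \ref{insertnew2} combined with (\ref{4thstate}), the factor $q_1[\omega-\omega_0]$ supplying one $\tilde\theta$ and the $q_2/q_1$-difference supplying the second. $N_4$ decomposes exactly as $N_1$, except for the additional piece $\frac{2\pi-L}{2L}\mathcal{H}[\Gamma-\frac{2\pi}{L}\sigma\theta_{\alpha\alpha}]$, handled by $|L-2\pi|\le C\|\tilde\theta\|_1$ together with (\ref{Gamma3}). $N_5$ inherits from $N_1,\ldots,N_4$ for its integrated pieces, while the extra $\theta_\alpha U$-products are controlled by Banach algebra using the formula (\ref{translating6.6}) for $U$. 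For the Lipschitz-type bound (\ref{N14.8}), I would repeat the same decomposition applied to differences, using the Lipschitz statements of the same lemmas, namely (\ref{3rdstate})--(\ref{6thstate}) of Lemma \ref{lem3.1}, the second parts of Lemmas \ref{coroXi}, \ref{lem3.11}, and \ref{insertnew2}, and (\ref{gamma4.6}) of Proposition \ref{translatingprop6.1}, together with the product-difference identity $f^{(1)}g^{(1)}-f^{(2)}g^{(2)}=f^{(1)}(g^{(1)}-g^{(2)})+(f^{(1)}-f^{(2)})g^{(2)}$; distributing the difference through products in this way reproduces the asymmetric pairing on the right-hand side of (\ref{N14.8}), while $\hat\theta(0;\cdot)$-differences enter linearly through $\gamma^{(s)}$, $\sin(\alpha+\hat\theta(0;t))$, $\cos(\alpha+\hat\theta(0;t))$, $\Xi_s$, and $\Xi_c$.

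The most delicate step I expect is $N_3$ and the analogous PV-integral pieces of $N_4$. Here one must estimate in $H^{r-1}_p$ a product of the form $\gamma^{(s)}\cdot(q_1[\omega-\omega_0]/q_1[\omega_0])\cdot(q_2[\omega]/q_1[\omega]-q_2[\omega_0]/q_1[\omega_0])$ integrated against the singular kernel, and then its Lipschitz counterpart, without losing a derivative. This requires the high-$j$ statements of Lemmas \ref{insertnew} and \ref{insertnew2} (valid for $j\ge 3$, compatible with $r-1\ge 2$), combined with Cauchy--Schwarz and the lower bounds on $|q_1[\omega]|$ from Proposition \ref{prop3.4} to keep every denominator bounded away from zero. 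In the Lipschitz version, the $\omega^{(1)}-\omega^{(2)}$ difference must be distributed across all three factors in the product, and it is precisely this distribution that produces the asymmetric pairing between $\|\tilde\theta^{(1)}-\tilde\theta^{(2)}\|_r$ and $\|\tilde\theta^{(1)}-\tilde\theta^{(2)}\|_{r-1}$ with the $\|\tilde\theta^{(i)}\|$-factors in (\ref{N14.8}); verifying that no derivative is lost in any branch of this splitting is the main bookkeeping challenge.
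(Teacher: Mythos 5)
Your overall strategy — identify a vanishing-at-zero factor in each summand, pair it with the remainder, and invoke the \S3 toolbox together with Proposition \ref{translatingprop6.1} for $\Gamma$ and (\ref{6.14}) for $L-2\pi$, with the Lipschitz bound obtained by redistributing differences through products — is exactly the paper's approach, which itself consists of little more than a list of the lemmas used per $N_j$. Two caveats, one substantive and one cosmetic.

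The substantive one concerns the PV-integral pieces, which you correctly identify as the delicate step. For the second term of $N_3$ and the analogous PV term in $N_4$, the paper cites Lemma \ref{leminsertnew4} explicitly, and this lemma is not just a convenience: its proof begins with the identity
\begin{equation*}
\omega_\alpha\,\mathrm{PV}\!\int\frac{J(\alpha')\,q_1[\omega^{[3]}]\,d\alpha'}{(\omega(\alpha)-\omega(\alpha'))\,q_1[\omega]}
= -D_\alpha\!\int\frac{J(\alpha')\,q_1[\omega^{[3]}]}{q_1[\omega]}\,d\alpha'
+\frac{\omega^{[3]}_\alpha}{\omega_\alpha}\,\mathrm{PV}\!\int\frac{J(\alpha')\,\omega_\alpha(\alpha)\,d\alpha'}{\omega(\alpha)-\omega(\alpha')},
\end{equation*}
which trades the product of the singular kernel with the $q_1[\omega-\omega_0]/q_1[\omega_0]$ factor for a total derivative of a \emph{non}singular integral plus a Hilbert-transform-like term carrying the $q_1$-factor outside. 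Your plan to apply Lemmas \ref{insertnew}, \ref{insertnew2} and Cauchy--Schwarz directly, without this integration-by-parts step, would differentiate a PV integral whose integrand has a genuine $1/(\alpha-\alpha')$ singularity and would lose a derivative, so it cannot by itself deliver the $\|\tilde\theta\|_{r-1}\|\tilde\theta\|_r$ pairing in (\ref{N14.10}). Since the ingredients you list are precisely those used \emph{inside} the proof of Lemma \ref{leminsertnew4}, the fix is simply to cite it (or reproduce its identity); but without naming it, the plan as written does not close for $N_3$ and the PV piece of $N_4$.

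Cosmetically, the factor $e^{i\mathcal{Q}_0\theta}-1$ is not $\Xi_e$ (that is $e^{iu}-1-iu$, the $j_0=2$ case); it is the $j_0=1$ case of Lemma \ref{coroXi}. This costs nothing because its braced companion (the $\mathcal{G}[\omega]-\mathcal{G}[\omega_0]$ or $\omega_{0_\alpha}/\omega_\alpha-1$ pieces) supplies the second power of $\tilde\theta$, but the identification should be corrected. Similarly, $N_4$ does not decompose ``exactly as $N_1$'': besides the extra $\frac{2\pi-L}{2L}\mathcal{H}[\Gamma-\frac{2\pi}{L}\sigma\theta_{\alpha\alpha}]$ piece you flag, it contains a $\mathfrak{W}[\Xi_e[\mathcal{Q}_0\theta]]$ term (handled as in $N_2$ via Lemma \ref{lemmaW}), the PV integral just discussed, and a $\Xi_c$ rather than $\Xi_s$ term. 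None of these require new ideas, but the bookkeeping is heavier than ``$N_1$ plus one extra term'' suggests.
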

 \begin{proof}
For estimating $N_1$ we use Lemmas \ref{coroXi} (see Note \ref{notecoroXi}), \ref{lem3.11}, \ref{lem6.3}
(in particular (\ref{6.14}) for $L^{(1)}=L$, $L^{(2)}=2\pi$, the latter
corresponding to ${\tilde \theta}=0$)
and
Proposition \ref{translatingprop6.1}.
For $N_2$, we use Lemmas \ref{coroXi} (see Note \ref{notecoroXi}) and \ref{lemmaW}.
For $N_3$, we use Lemmas \ref{lem3.1}, \ref{insertnew}, \ref{insertnew2} and
\ref{leminsertnew4}
together with Cauchy-Schwartz inequality to get the desired bound.

For (\ref{LGamma}), by Lemmas \ref{lem6.3} and \ref{lemmaW}, we have
\begin{equation}
\label{estimateGammaL}
\|\Gamma_L\|_{r-3}\le C\|\tilde\theta\|_{r-3}.
\end{equation}
For $N_4$ we rely on (\ref{estimateGammaL}), Lemmas \ref{coroXi} (see Note \ref{notecoroXi}), \ref{lem6.3} (equation (\ref{6.14})
in particular), \ref{lem3.11}, \ref{lemmaW} and Proposition
\ref{translatingprop6.1}.
$N_5$ uses bounds similar to $N_j$ for $j=1,\cdots,4$ as well
as bounds on $U$ (In Proposition \ref{propgamma}, we choose $U^{(1)}=U$, $U^{(2)}=0$, $\tilde\theta^{(1)}=\tilde\theta$, $\tilde\theta^{(2)}=0$, and $L^{(1)}=L$, $L^{(2)}=0$ in (\ref{U3.32}) to get the bound of $U$.).
\end{proof}

\begin{corollary}
\label{mathfrakNcor}
If for $r\ge 3$, ${\tilde \theta} \in \dot{H}^r $ and
$\|\tilde\theta\|_1 <\epsilon_1$,
then for sufficiently small $\epsilon_1$,
the function $\mathfrak{N} $,
and the functional $\mathfrak{N}_0 $, defined in
(\ref{eqmathfrakN}) and
(\ref{eqmathfrakN0})
satisfy the following estimates
\begin{eqnarray}
\| \mathfrak{N} \|_{r-1}
&\leq&  C_1\exp( C_2\|\tilde\theta\|_r)
\| \tilde\theta \|_r \|\tilde\theta\|_{r+1},
\label{N14.10.new.1} \\
|\mathfrak{N}_0 |&\leq&C_1\exp(C_2\|\tilde\theta(\cdot,t)\|_3)
\|\tilde\theta(\cdot,t)\|_3^2 + C_1 \| {\tilde \theta}(\cdot,t) \|_2
.\nonumber
\end{eqnarray}
where $C_1$ and  $C_2$ depend only on $r$.
Further, let $\left (\mathfrak{N}^{(1)}, \mathfrak{N}_0^{(1)} \right ) $
and $\left (\mathfrak{N}^{(2)}, \mathfrak{N}_0^{(2)} \right ) $
correspond to
$\big({\tilde \theta}^{(1)},\hat\theta^{(1)}(0;t)\big)$ and $\big({\tilde \theta}^{(2)},\hat\theta^{(2)}(0;t)\big)$ respectively,
each in $\dot{H}^r\times\mathbb{R}$ with $\| {\tilde \theta}^{(1)} \|_1$ and
$\| {\tilde \theta}^{(2)} \|_1 $ $ < \epsilon_1$.
Then for sufficiently small $\epsilon_1$,
\begin{multline}\label{N14.8.new}
\left\|\mathfrak{N}^{(1)}-\mathfrak{N}^{(2)} \right\|_{r-1}
\le C_1\exp{\Big(C_2\big(\| {\tilde
\theta}^{(1)}\|_r +
\|{\tilde \theta}^{(2)}\|_r\big)\Big)}\Big\{\big(\| {\tilde
\theta}^{(1)}\|_r + \|{\tilde \theta}^{(2)}\|_r\big) \\
\times\Big(\big\| {\tilde
\theta}^{(1)} - {\tilde \theta}^{(2)}\big \|_{r+1}
+\left|\hat\theta^{(1)}(0;t)-\hat\theta^{(2)}(0;t)\right| \Big )
+\big(\| {\tilde
\theta}^{(1)}\|_{r+1} + \|{\tilde \theta}^{(2)}\|_{r+1}\big) \big\| {\tilde
\theta}^{(1)} - {\tilde \theta}^{(2)}\big \|_r\Big\},
\end{multline}
\begin{multline}\label{N14.8.new2}
\Big | \mathfrak{N}_0^{(1)}-\mathfrak{N}_0^{(2)} \Big | \le
C_1\exp{\Big(C_2\big(\| {\tilde \theta}^{(1)}\|_{3}+ \|{\tilde
\theta}^{(2)}\|_{3} \big) \Big)}  \Big\{\big\| {\tilde \theta}^{(1)}
- {\tilde \theta}^{(2)}\big \|_{3} +\| {\tilde
\theta}^{(1)}\|_{3}|\hat\theta^{(1)}(0;t)-\hat\theta^{(2)}(0;t)|
\Big\},
\end{multline}
where $C_1$ and $C_2$ depend on $r$.
\end{corollary}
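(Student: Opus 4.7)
The plan is to derive the corollary by plugging the decompositions (\ref{eqmathfrakN}), (\ref{eqmathfrakN0}) into the bounds already furnished by Lemma \ref{N1lem4.6} and Lemma \ref{lemB0}, paying attention only to the bookkeeping of regularity indices and of the linear versus quadratic character of each summand. First, I would observe that $\mathfrak{N}$ is obtained from $N_1,\ldots,N_4$ by applying $\partial_\alpha$ (and $\mathcal{Q}_1$, $\mathcal{H}$, and multiplication by $\tfrac{2\pi}{L}$, all bounded on $H^s_p$ for $s\ge 0$), together with $N_5$ which needs no further derivative. Since $D_\alpha:H^{r}_p\to H^{r-1}_p$ is bounded and $\mathcal{Q}_1, \mathcal{H}$ commute with derivatives, it suffices to bound each $N_j$ in $H^{r}_p$. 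Applying (\ref{N14.10}) with $r$ replaced by $r+1$ yields $\|N_j\|_r\le C_1\exp(C_2\|\tilde\theta\|_r)\|\tilde\theta\|_r\|\tilde\theta\|_{r+1}$, and summing gives (\ref{N14.10.new.1}).

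For the bound on $\mathfrak{N}_0$ I would split (\ref{eqmathfrakN0}) into four pieces. The $B_0$ piece is controlled linearly by $C\|\tilde\theta\|_2$ via Lemma \ref{lemB0}, producing the non-quadratic summand in the statement. The remaining three pieces are quadratic because each carries a small prefactor: $\tfrac{2\pi^2}{L^3}-\tfrac{1}{4\pi}$ and $\tfrac{1}{L}-\tfrac{1}{2\pi}$ are both $O(|L-2\pi|)$, and by (\ref{6.14}) this is $O(\|\tilde\theta\|_1)$; meanwhile, integration against $\mathcal{H}(\theta_{\alpha\alpha})$ and $U_L$ in the $\alpha,\alpha'$ integrals is controlled by $\|\tilde\theta\|_2$ and by the bound (\ref{estimateGammaL}) together with Cauchy--Schwarz. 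The $N_5$ contribution, integrated once over $[0,2\pi]$, is bounded directly by (\ref{N14.10}) with $r=3$ and gives another quadratic term. Collecting these estimates yields the stated $C_1\exp(C_2\|\tilde\theta\|_3)\|\tilde\theta\|_3^2+C_1\|\tilde\theta\|_2$ bound.

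The Lipschitz bound (\ref{N14.8.new}) for $\mathfrak{N}^{(1)}-\mathfrak{N}^{(2)}$ follows from the difference estimate (\ref{N14.8}) applied with $r$ raised by one index and then absorbing the extra derivative as before; the dependence on $|\hat\theta^{(1)}(0;t)-\hat\theta^{(2)}(0;t)|$ is already present in (\ref{N14.8}) and passes through unchanged. Similarly for (\ref{N14.8.new2}), I would combine the difference bound from Lemma \ref{lemB0} (which supplies the $\|\tilde\theta^{(1)}\|_3\,|\hat\theta^{(1)}(0;t)-\hat\theta^{(2)}(0;t)|$ term) with the differences of the three quadratic pieces, using (\ref{N14.8}) at $r=3$ for $N_5$, the Lipschitz estimates on $L$ from (\ref{6.14}), and on $\Gamma$ from (\ref{gamma4.6}) to differentiate the $\sigma\mathcal{H}(\theta_{\alpha\alpha})$ and $U_L$ pieces; each of these produces a factor $\|\tilde\theta^{(1)}-\tilde\theta^{(2)}\|_3$ times a small prefactor.

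The only mildly subtle point, and the step I expect to require the most care, is verifying that after differentiating in $\alpha$ no loss of regularity beyond $r+1$ occurs: the operator $D_\alpha$ acts on terms in $N_4$ that contain factors of the form $\mathcal{G}[\omega]\Gamma$ where $\Gamma$ in turn involves $\theta_{\alpha\alpha}$; using (\ref{Gamma3}) in Proposition \ref{translatingprop6.1} one isolates the $\sigma\tfrac{2\pi}{L}\theta_{\alpha\alpha}$ piece, whose derivative costs one more index, which is precisely why the right side of (\ref{N14.10.new.1}) carries $\|\tilde\theta\|_{r+1}$ rather than $\|\tilde\theta\|_r$. Once this accounting is made consistent across all $N_j$, the remainder of the proof is a straightforward application of the Banach algebra property and the already-established lemmas.
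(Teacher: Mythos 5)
Your proposal is correct and follows the paper's (very terse, one-sentence) proof, which simply cites Lemmas \ref{lemB0} and \ref{N1lem4.6} and the expressions (\ref{eqmathfrakN}), (\ref{eqmathfrakN0}): your bookkeeping---shifting $r\mapsto r+1$ in (\ref{N14.10}) and (\ref{N14.8}) to absorb the outer $\alpha$-derivative on $\tfrac12\mathcal{H}(N_1+N_2+N_3)+N_4$ while keeping $N_5$ at index $r-1$, splitting $\mathfrak{N}_0$ into the linear $B_0$ piece and the quadratic pieces carrying the small prefactors $\tfrac{2\pi^2}{L^3}-\tfrac{1}{4\pi}$ and $\tfrac{1}{L}-\tfrac{1}{2\pi}$ via (\ref{6.14}), and passing the $|\hat\theta^{(1)}(0;t)-\hat\theta^{(2)}(0;t)|$ dependence straight through from (\ref{N14.8}) and Lemma \ref{lemB0}---is exactly the intended argument. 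The only slight imprecision is in your closing remark: the extra index $\|\tilde\theta\|_{r+1}$ appears simply because Lemma \ref{N1lem4.6} already costs one extra derivative ($\|N_j\|_{r-1}\le C\|\tilde\theta\|_{r-1}\|\tilde\theta\|_r$, so $\|N_j\|_r$ pushes both indices up), not because $D_\alpha$ hits $\mathcal{G}[\omega]\Gamma$ in $N_4$; in fact $\mathcal{G}[\omega]$ (being the commutator $z_\alpha[\mathcal{H},1/z_\alpha]$) is smoothing, so by Lemma \ref{lem3.10}/\ref{lem3.11} only $\|\Gamma\|_1$ enters there, and the $\sigma\tfrac{2\pi}{L}\theta_{\alpha\alpha}$ part of $\Gamma$ is explicitly subtracted off in the $\tfrac{2\pi-L}{2L}\mathcal{H}[\Gamma-\tfrac{2\pi}{L}\sigma\theta_{\alpha\alpha}]$ term of $N_4$, precisely so that (\ref{Gamma3}) can be used on what remains.
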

\begin{proof}
On using Lemmas \ref{lemB0} and \ref{N1lem4.6}, the the proof
follows from the expressions of $\mathfrak{N} $ and $\mathfrak{N}_0
$ in terms of $N_1,\cdots, N_5$.
\end{proof}

\subsection{Weighted Sobolev Space and Estimates}

For any surface tension $\sigma$, we choose the integer $K$ by
\begin{enumerate}
\item if $\sigma\geq1$, then $K=2$;
\item if $0<\sigma<1$, then
$K\geq \sqrt{1+\frac{6}{\sigma}}$.
\end{enumerate}
We define the weight $w(\sigma, k)$ so that
\begin{equation}
\label{defwsk}
w(\sigma, k) = \sigma^{K-|k|} ~~{\rm for} ~2 \le |k| \le K (\sigma),
w (\sigma, k) = 1 ~~{\rm for }~|k| > K (\sigma).
\end{equation}

\medskip

\begin{definition}\label{transdef5.8}
 Let $r\geq 0$. We define a family of weighted Sobolev norm in $\dot{H}^r$ by
 \begin{equation}
\label{defnormwr}
 \|u\|_{w,r}^2=\sum_{k=2}^{\infty} w^2 (\sigma, k) |k|^{2r} |\hat{u}(k)|^2
 + \sum_{k=-2}^{-\infty} w^2 (\sigma, k) |k|^{2r} |\hat{u}(k)|^2,
\end{equation}
and the corresponding inner-product:
\begin{equation}
\label{definnerprod}
(v, u)_{w,r}
 =\sum_{k=2}^{\infty} w^2 (\sigma, k) |k|^{2r} {\hat v}^* (k) \hat{u}(k)
 +\sum_{k=-2}^{-\infty} w^2 (\sigma, k) |k|^{2r} {\hat v}^* (k) \hat{u}(k).
\end{equation}
\end{definition}

 \medskip

\begin{note}
If $u$ and $v$ are real valued, be in $\dot{H}^r$, the inner-product reduces to
\begin{equation}
\label{defrealinnerprod}
(v, u)_{w,r}
 =2 \Re \left [
\sum_{k=2}^{\infty} w^2 (\sigma, k) |k|^{2r} {\hat v}^* (k)
\hat{u}(k) \right ].
\end{equation}
\end{note}

 \begin{remark}
 It is clear that for any fixed $\sigma > 0$,
the two norms
 $\|\cdot \|_{w,r}$ and $\|\cdot \|_r$ are equivalent.
 \end{remark}

The following two lemmas involve useful inner product estimates
involving $\mathcal{A} $ and $\mathcal{A}_N $:
\begin{lemma}
\label{lemAinner}
For any $ r \ge 0$ and $v \in {\dot H}^{r+3/2} $,
$$ \left ( v,  -\mathcal{A} [{\tilde \theta} ] \right )_{w,r}
\ge \frac{15 \sigma}{64} \| v \|_{w, r+3/2}^2.
$$
\end{lemma}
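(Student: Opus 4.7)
The plan is to reduce the inequality to a Fourier-space tridiagonal matrix estimate and then apply a weighted Young's inequality whose parameters are calibrated against the weights $w(\sigma,k)$ of Definition~\ref{transdef5.8}. (I read the statement as $(v,-\mathcal{A}[v])_{w,r}$; otherwise the inequality is linear in $v$ on the left and quadratic on the right, so the symbol $\tilde\theta$ appears to be a typo for $v$.)

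First I would expand $(v,-\mathcal{A}[v])_{w,r}$ in Fourier coefficients. Since $v$ is real and $\mathcal{A}[v]$ has Fourier coefficient $-\sigma d(k)\hat v(k)+m(k)\hat v(k+1)$ at each $k\ge 2$ with the conjugate structure for $k\le -2$, formula (\ref{defrealinnerprod}) yields
\begin{equation*}
(v,-\mathcal{A}[v])_{w,r} \;=\; 2\sigma\sum_{k\ge 2}A_k\,d(k)\,|\hat v(k)|^2 \;-\; 2\Re\sum_{k\ge 2}A_k\,m(k)\,\overline{\hat v(k)}\,\hat v(k+1),
\end{equation*}
where $A_k:=w^2(\sigma,k)k^{2r}$, while $\|v\|_{w,r+3/2}^2 = 2\sum_{k\ge 2}A_k k^3|\hat v(k)|^2$. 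The lemma is thus equivalent to the positivity of a weighted tridiagonal quadratic form with diagonal $A_k\bigl(2\sigma d(k)-\tfrac{15\sigma}{32}k^3\bigr)$ and sub/super-diagonal $-A_k m(k)$.

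Setting $y_k:=\sqrt{A_k}\,\hat v(k)$ and $\rho_k:=w(\sigma,k)k^r/[w(\sigma,k+1)(k+1)^r]$, the off-diagonal term is at most $2\sum m(k)\rho_k|y_k||y_{k+1}|$. I would apply $2ab\le\delta_k a^2+\delta_k^{-1}b^2$ and shift indices, reducing the lemma to showing that for every $k\ge 2$,
\begin{equation*}
m(k)\rho_k\delta_k + m(k-1)\rho_{k-1}\delta_{k-1}^{-1} \;\le\; 2\sigma d(k)-\tfrac{15\sigma}{32}k^3 \;=\; \sigma\!\left(\tfrac{17}{32}k^3-k\right),
\end{equation*}
with the convention $m(1)\rho_1=0$. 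I would use $d(k)\ge \tfrac38 k^3$ from (\ref{dkmkbound}), whence the right-hand side is at least $\tfrac{9\sigma}{32}k^3$, and $m(k)\le(1+a_\mu)k\le 2k$ from the same display.

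I would then verify this pointwise inequality by splitting into the three regimes dictated by the weight in (\ref{defwsk}). For $2\le k\le K-1$ the weight ratio is $\rho_k\le\sigma(k/(k+1))^r\le\sigma$, so $m(k)\rho_k\le 2\sigma k$ and a choice $\delta_k$ of order unity makes both terms on the left of size $\sigma k\ll\tfrac{9\sigma}{32}k^3$; the tightest case is $k=2$ with margin $\tfrac{9\sigma}{4}$. For $k\ge K+1$ all weights are unity so $\rho_k\le 1$, the inequality becomes of the form $\sim\!4k\le \tfrac{17\sigma}{32}k^3-\sigma k$, and the calibration $K\ge\sqrt{1+6/\sigma}$ (equivalent to $\sigma(K^2-1)\ge 6$, hence $2\sigma d(K)\ge 6K$) is precisely what is needed to absorb the cross term starting at $k=K+1$. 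The transition $k=K$ couples a $\rho_K\le 1$ off-diagonal with a $\rho_{K-1}\le\sigma$ off-diagonal and is handled by the same lower bound on $K$.

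The main obstacle is engineering the choice of $\delta_k$ so that the single numerical constant $\tfrac{15}{64}$ is achieved uniformly across all three regimes, in particular across the boundary $k=K$ where the ratio $w(\sigma,k)/w(\sigma,k+1)$ jumps from $\sigma$ to $1$. The number $\tfrac{15}{64}$ stems precisely from pairing $d(k)\ge\tfrac{3}{8}k^3$ against the optimal Young's loss (the factor $2$ from the real part reduces $\tfrac{3\sigma}{8}-\tfrac{15\sigma}{64}\cdot 2=\tfrac{9}{32}\sigma k^3$ of dissipation to spend on cross terms), while the quantitative definition of $K(\sigma)$ is tuned so that the high-frequency cross term remains absorbable at this exact constant.
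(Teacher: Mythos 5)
Your Fourier-space reduction is correct: with $A_k=w^2(\sigma,k)k^{2r}$, the claim is equivalent to the pointwise inequality
$$m(k)\rho_k\delta_k + m(k-1)\rho_{k-1}\delta_{k-1}^{-1}\;\le\; 2\sigma d(k)-\tfrac{15\sigma}{32}k^3,\qquad k\ge 2,$$
and you rightly read $\tilde\theta$ as a typo for $v$. The gap is in the verification step. You propose $\delta_k$ "of order unity", but that choice breaks down at the weight transition. At $k=K$ (the first mode with $\rho_K\le 1$ rather than $\rho_K\le\sigma$), the right side is $\sigma\bigl(\tfrac{17}{32}K^3-K\bigr)$, which with $\sigma(K^2-1)\approx 6$ is only about $3K$ for small $\sigma$; against this one must accommodate $m(K)\rho_K\delta_K\approx 2K\delta_K$ together with the inherited $m(K-1)\rho_{K-1}\delta_{K-1}^{-1}$. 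Once $\delta_K,\delta_{K+1}$ are tightened to make $k=K,K+1$ work, the $\delta_{k}^{-1}$ forcing term grows from one index to the next and the recursion collapses within a couple of steps; your stated margin claims (e.g.\ ``margin $\tfrac{9\sigma}{4}$ at $k=2$'', where in fact the margin is zero at $r=0$, and ``the calibration $K\ge\sqrt{1+6/\sigma}$ is precisely what is needed'') do not survive a numerical check.

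The fix — and essentially what the paper does — is to take $\delta_k=\sqrt{d(k)/d(k+1)}$ rather than $\delta_k\sim 1$. Then $m(k)\rho_k\delta_k\le\delta\sigma d(k)$ and $m(k-1)\rho_{k-1}\delta_{k-1}^{-1}\le\delta\sigma d(k)$, where
$$\delta\;=\;\sup_{k\ge 2}\frac{m(k)\,w(\sigma,k)}{\sigma\,\sqrt{d(k)d(k+1)}\,w(\sigma,k+1)},$$
so the pointwise inequality reduces to $(2-2\delta)d(k)\ge\tfrac{15}{32}k^3$, which is exactly achieved at $\delta=\tfrac38$ and $d(k)\ge\tfrac38 k^3$. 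The remaining task is the single sup computation $\delta\le\tfrac38$ (the tight case being $k=2$, where the ratio is $\tfrac{3(1+a_\mu)}{16}$). The paper organizes this as one Cauchy--Schwarz over the whole cross-sum with this $\delta$, which removes the need for a mode-by-mode bookkeeping; your mode-by-mode Young approach is equivalent only if $\delta_k$ is calibrated to the $d$-ratios, which you do not do, and your regime checks do not close as written.
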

\begin{proof}
It is convenient to define
$$ \delta= \sup_{k \ge 2} \frac{m(k) w(k, \sigma)}{
\sigma d^{1/2} (k) d^{1/2} (k+1) w(k+1,\sigma) }  .
$$
Since $(1+a_\mu) \le 2$, it is not difficult to conclude from
the explicit expressions of $d(k)$ and $m(k)$ that
in all cases, $\delta\le \frac{3}{8}$.
Then, it follows from Cauchy Schwartz inequality that
$$ \sum_{k=2}^\infty
k^{2r} w^{2k} (\sigma, k) m(k)
\Re \left\{( {\hat v}^* (k) {\hat v} (k+1) \right \}
\le \frac{3}{8} \sigma \sum_{k=2}^\infty k^{2r} w^{2k} (\sigma, k) d(k)
| {\hat v} (k)|^2.
$$
It follows that
$$ \left ( v, - \mathcal{A} [v ] \right )_{w,r}
\ge \frac{5 \sigma}{8} \sum_{k=2}^\infty k^{2r} w^{2k} (\sigma, k)
d(k)
| {\hat v} (k)|^2
\ge \frac{15 \sigma}{64} \| v \|_{w,r+3/2}^2 .
$$
\end{proof}

With respect to the operator $\mathcal{A}_N $, we have the following estimate:
\begin{lemma}
\label{ANnorm}
For $r \ge 3$, assume real $f, f_1, f_2
\in {\dot H}^{r}$ and
$a, a_1, a_2,L, L_1, L_2 $ are real numbers satisfying
constraint $|L- 2 \pi| \le \frac12$, $|L_j - 2 \pi| \le \frac12$ for $j=1,2$.
Then there exists constant $C_r$ only dependent on $r$ so that
\begin{equation*}
\| \mathcal{A}_N [ f , a,  L] \|_{w,r-3/2}
\le C_r \sigma \left ( |L-2\pi | \| f \|_{w, r+3/2}
+ |a |
\| f\|_{w, r-1/2} \right),
\end{equation*}
\begin{multline*}
\| \mathcal{A}_N [ f_1 , a_1,  L_1 ] -
\mathcal{A}_N [ f_2 ,a_2, L_2 ] \|_{w,r-3/2}
\le C_r \sigma \left ( |L_1 - L_2 | \| f_2 \|_{w, r+3/2}
+ |a_1-a_2 |
\| f_2 \|_{w, r-1/2} \right .\\
\left. + |L_1-2\pi| \| f_1 - f_2 \|_{w,r+3/2}
+ |a_1| \| f_1 - f_2 \|_{w,r-1/2} \right ).
\end{multline*}
\end{lemma}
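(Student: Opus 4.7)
The plan is to split $\mathcal{A}_N[f,a,L]$ into the three pieces visible in the defining formula (\ref{AdefN}) and estimate each contribution to $\|\cdot\|_{w,r-3/2}$ separately. The first piece $\sum_k (1-8\pi^3/L^3)\sigma d(k)\hat f(k)e^{ik\alpha}+c.c.$ is the easy one: using $|1-8\pi^3/L^3|\le C|L-2\pi|$ (valid since $|L-2\pi|\le\tfrac12$ keeps $L$ bounded away from $0$) together with $d(k)\le k^3/2$ from (\ref{dkmkbound}), I pull these scalar factors out of the sum defining $\|\cdot\|_{w,r-3/2}$ and read off the bound $C\sigma|L-2\pi|\,\|f\|_{w,r+3/2}$.

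The remaining two pieces both have the mode-shift structure $c\cdot m(k)\hat f(k+1)e^{ik\alpha}$, with scalar prefactors bounded respectively by $C|L-2\pi|$ and $|a|$, via $|2\pi/L-1|\le C|L-2\pi|$ and $|e^{-ia}-1|\le|a|$. The key auxiliary estimate I will establish is that the pure shift operator $f\mapsto \sum m(k)\hat f(k+1)e^{ik\alpha}+c.c.$ has operator norm proportional to $\sigma$ when viewed between appropriate weighted spaces. I split the sum at $k=K(\sigma)$: for $2\le k<K$ the weight identity $w(\sigma,k)=\sigma\,w(\sigma,k+1)$ pulls out a $\sigma$ directly; for $k\ge K$ both weights equal $1$, but the defining property $\sigma K^2\ge 6$ forces $m(k)\le 2k\le \tfrac13 \sigma k^3$, producing the required $\sigma$ factor by trading it against two more derivatives. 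Combining the two regimes yields the claimed bound with the scalar prefactor and the appropriate weighted norm of $f$.

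For the Lipschitz estimate, I use the natural telescope
\begin{equation*}
\mathcal{A}_N[f_1,a_1,L_1]-\mathcal{A}_N[f_2,a_2,L_2]=\mathcal{A}_N[f_1-f_2,a_1,L_1]+\bigl(\mathcal{A}_N[f_2,a_1,L_1]-\mathcal{A}_N[f_2,a_2,L_2]\bigr).
\end{equation*}
The first summand is handled by applying the already-established static bound directly to $f_1-f_2$. For the second summand, the affine-in-coefficients structure of (\ref{AdefN}) means the difference reduces to shift-type operators acting on the fixed input $f_2$, multiplied by coefficient differences controlled by $|1/L_1^3-1/L_2^3|\le C|L_1-L_2|$, $|1/L_1-1/L_2|\le C|L_1-L_2|$ and $|e^{-ia_1}-e^{-ia_2}|\le|a_1-a_2|$. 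Term-by-term application of the auxiliary shift-operator bound then produces the claimed Lipschitz estimate.

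The main obstacle is the careful handling of the weight transition at $k=K(\sigma)$. For $k<K$ the extraction of the $\sigma$ factor is pure algebra on weight ratios, but for $k\ge K$ it relies crucially on the specific choice $K\ge\sqrt{1+6/\sigma}$, which is exactly the threshold at which the advective coefficient $m(k)$ becomes dominated by the parabolic dissipation $\sigma d(k)$ in the sense used in Lemma \ref{lemAinner}. Writing the bookkeeping uniformly in $\sigma$ across the two regimes is routine but must be executed carefully; this is the step where the design of the weight $w(\sigma,\cdot)$ pays off, and the same mechanism will be needed again when these estimates are fed into the energy argument driving the long-time decay.
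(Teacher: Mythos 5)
Your decomposition into three pieces and the handling of the scalar prefactors $|1-8\pi^3/L^3|\le C|L-2\pi|$, $|2\pi/L-1|\le C|L-2\pi|$, $|e^{-ia}-1|\le|a|$ is exactly the paper's route, as is pulling out a $\sigma$ via the weight ratio $w(\sigma,k)/w(\sigma,k+1)=\sigma$ for $2\le k<K$. Your treatment of the $k\ge K$ regime is where the comparison gets interesting, and you should look at it more carefully.

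You correctly observe that for $k\ge K$ the weights are both $1$, and you use $\sigma K^2\ge 6$ to write $m(k)\le 2k\le\tfrac13\sigma k^3$, trading two derivatives. Carried through, this gives the shift operator bound $\|Sf\|_{w,r-3/2}\le C\sigma\|f\|_{w,r+3/2}$, and hence your estimate for the third piece is $C\sigma|a|\,\|f\|_{w,r+3/2}$ — \emph{not} the $C\sigma|a|\,\|f\|_{w,r-1/2}$ claimed in the lemma. You paper over this with the phrase ``the appropriate weighted norm of $f$'', but the two are not the same: $\|f\|_{w,r+3/2}\ge\|f\|_{w,r-1/2}$, so your bound is strictly weaker than what is asserted. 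In fact the claim $\|Sf\|_{w,r-3/2}\le C\sigma\|f\|_{w,r-1/2}$ with $C$ independent of $\sigma$ cannot hold: for $k\ge K$ one has $k^{r-3/2}m(k)w(\sigma,k)/\big((k+1)^{r-1/2}w(\sigma,k+1)\big)\approx m(k)/(k+1)\approx 1+a_\mu$, which is $O(1)$, so the required factor of $\sigma$ is simply not there in that regime. The paper's own proof exhibits the same tension — the quantity $\sup_{k\ge 2}\frac{m^2(k)w^2(k,\sigma)}{\sigma^2(k+1)^2w^2(k+1,\sigma)}$ appearing there equals $O(\sigma^{-2})$ when $\sigma<1$, and after multiplying by the external $\sigma^2$ the $\sigma$ prefactor on the $|a|$ term disappears, so what the paper's computation actually establishes is $\|\mathcal{A}_N\|_{w,r-3/2}\le C\big(\sigma|L-2\pi|\,\|f\|_{w,r+3/2}+|a|\,\|f\|_{w,r-1/2}\big)$. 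Either that bound or yours (with $\sigma|a|\,\|f\|_{w,r+3/2}$) suffices for the downstream $H_\sigma^{r-3}$ estimate in Lemma \ref{NN0bounds}, since there one trades $\|\cdot\|_{w,r-1/2}\le\|\cdot\|_{w,r+3/2}$ anyway; but you should explicitly state which norm you actually obtain rather than leaving it vague, and you should flag that the literal statement of the lemma (a $\sigma$-independent $C_r$ together with $\sigma|a|\,\|f\|_{w,r-1/2}$) is not achievable.

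The telescope you propose for the Lipschitz estimate is the natural one and matches the intent of the paper's proof. Just carry the same correction through: the term $|a_1|\,\|f_1-f_2\|_{w,r-1/2}$ will become $|a_1|\,\|f_1-f_2\|_{w,r+3/2}$ under your shift-operator bound, and similarly for the $|a_1-a_2|\,\|f_2\|_{w,r-1/2}$ term.
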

\begin{proof}
From the definition of $\mathcal{A}_N$, it follows that
\begin{multline*}
\| \mathcal{A}_N [ f, a, L] \|^2_{w, r-3/2} \le
2 \Big | 1 - \frac{8\pi^3}{L^3} \Big |^2
\sum_{k=2}^\infty \sigma^2
k^{2r-3} d^2 (k) w^2 (k, \sigma) |{\hat f}(k) |^2 \\
+ 2\left ( \Big | 2 \sin \frac{a}{2} \Big |^2
+ \Big | \frac{2\pi}{L} - 1 \Big |^2 \right )
\sum_{k=2}^\infty  k^{2r-3} m^2 (k) w^2(k,\sigma)|{\hat f}(k+1) |^2\\
\leq C_r\sigma^2 \Big( | L-2\pi|^2\|f\|_{r+3/2}^2
+ (|a|^2+|L-2\pi|^2)\sup_{k\ge 2}\frac{m^2(k)w^2(k,\sigma)}{\sigma^2(k+1)^2w^2(k+1,\sigma)}
\|f\|_{r-1/2}^2\Big)\\
\leq C_r \Big( | L-2\pi|^2\|f\|_{r+3/2}^2
+ |a|^2
\|f\|_{r-1/2}^2\Big).\\
\end{multline*}
Therefore, from bounds on $d(k)$ and $m(k)$, it follows that
\begin{multline*}
\| \mathcal{A}_N [ f_1-f_2, a_1, L_1] \|_{w, r-3/2} \le
C_r \sigma \left ( |L_1-2\pi| \| f_1 - f_2 \|_{w,r+3/2}
+ |a_1| \| f_1-f_2 \|_{w, r-1/2} \right ) .
\end{multline*}
Further, since
\begin{multline*}
\mathcal{A}_N [ f ,a_1, L_1]-
\mathcal{A}_N [ f , a_2, L_2]
= \sigma \left ( \frac{8 \pi^3}{L_2^3} - \frac{8 \pi^3}{L_1^3} \right )
\sum_{k=2}^\infty e^{ik\alpha} d(k) {\hat f}(k)  \\
+ \left \{ \left ( \frac{2 \pi}{L_1} - \frac{2 \pi}{L_2} \right )
+\frac{2\pi}{L_2} \left ( e^{i a_1 } -e^{i a_2} \right ) \right \}
\sum_{k=2}^\infty e^{ik\alpha}m(k) {\hat f}(k+1),
\end{multline*}
the results follow from the definition of $ \| \cdot \|_{w, r} $ on
using the restriction on $L_1, L_2$.
\end{proof}

\subsection{Linear Evolution and space-time estimates}

\begin{definition}
\label{defHsigmar}
For $r \ge 3$, we define the space of real valued
functions
$$H^r_{\sigma}  \equiv
C\big([0,\infty),\dot{H}^r\big) \cap
L^2\big([0,\infty),\dot{H}^{r+3/2}\big),$$ equipped with the norm
$\| \cdot \|_{H_\sigma^r}$ defined by
$$\|u\|^2_{H_\sigma^r}=\sup_{t\ge0} e^{t\sigma} \| u(\cdot,t)\|_{w,r}^2
+\frac{\sigma}{4}
\int_0^\infty e^{\sigma t} \| u(\cdot,t)\|_{w,r+3/2}^2 dt.$$
\end{definition}

We now consider linear evolution equation
\begin{equation}\label{eqnv}
v_t (\alpha,t)-\mathcal{A} [v] (\alpha,t)
= f(\alpha,t) ~~{\rm with} ~~
v(\cdot ,  0) = v_0 \in {\dot H}^r,
\end{equation}
where $f \in H_\sigma^{r-3}$.

\medskip

\begin{lemma}(A priori linear energy estimates)
\label{nonhomogeneous}
Suppose $r \ge 3$, $f \in H_\sigma^{r-3} $ and
Then a solution $v (., t) \in {\dot H}^r$ to (\ref{eqnv})
will satisfy
the following energy inequality for any $t$:
\begin{equation*}
e^{\sigma t} \| v (\cdot, t) \|^2_{w,r} + \frac{\sigma}{4}
\int_0^t e^{\sigma \tau} \| v (\cdot, \tau) \|^2_{w+3/2, r} d\tau
\le \| v_0 \|_{w,r}^2 + \frac{8}{3 \sigma^2}
\| f \|^2_{H_\sigma^{r-3}},
\end{equation*}
and thus
\begin{equation*}
\| v \|^2_{H_\sigma^r} \le \| v_0 \|_{w,r}^2 + \frac{8}{3 \sigma^2}
\| f \|^2_{H_\sigma^{r-3}}
\end{equation*}
\end{lemma}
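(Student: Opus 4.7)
The plan is a weighted energy estimate, followed by careful balancing of the dissipative term from Lemma~\ref{lemAinner} against the forcing. First I would take the inner product of the evolution equation (\ref{eqnv}) with $v$ in the space $\dot{H}^r$ equipped with $(\cdot,\cdot)_{w,r}$. This gives
\begin{equation*}
\tfrac{1}{2}\tfrac{d}{dt}\|v(\cdot,t)\|_{w,r}^{2}
= (v,\mathcal{A}[v])_{w,r} + (v,f)_{w,r}.
\end{equation*}
Multiplying through by $2e^{\sigma t}$ and using $\tfrac{d}{dt}(e^{\sigma t}\|v\|_{w,r}^{2})=\sigma e^{\sigma t}\|v\|_{w,r}^{2}+2e^{\sigma t}(\cdot)$, I obtain
\begin{equation*}
\tfrac{d}{dt}\!\left(e^{\sigma t}\|v\|_{w,r}^{2}\right)
= \sigma e^{\sigma t}\|v\|_{w,r}^{2} + 2e^{\sigma t}(v,\mathcal{A}[v])_{w,r}
+ 2e^{\sigma t}(v,f)_{w,r}.
\end{equation*}

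Next I would invoke Lemma~\ref{lemAinner} to replace $2(v,\mathcal{A}[v])_{w,r}$ by $-\tfrac{15\sigma}{32}\|v\|_{w,r+3/2}^{2}$. To absorb the linear term $\sigma e^{\sigma t}\|v\|_{w,r}^{2}$, I would exploit the fact that $v\in\dot{H}^r$ has Fourier support only in $|k|\ge 2$, so that from the definition (\ref{defnormwr}) one has $\|v\|_{w,r+3/2}^{2}\ge 8\|v\|_{w,r}^{2}$; thus $\sigma\|v\|_{w,r}^{2}\le \tfrac{\sigma}{8}\|v\|_{w,r+3/2}^{2}$. The forcing term is the place where the shifted norms come in: by Cauchy--Schwartz applied after splitting the weight factor $w^2(\sigma,k)|k|^{2r}$ as $(w(\sigma,k)|k|^{r+3/2})(w(\sigma,k)|k|^{r-3/2})$, I get
\begin{equation*}
2|(v,f)_{w,r}| \;\le\; 2\|v\|_{w,r+3/2}\,\|f\|_{w,r-3/2}
\;\le\; \delta\,\|v\|_{w,r+3/2}^{2} + \tfrac{1}{\delta}\|f\|_{w,r-3/2}^{2},
\end{equation*}
with Young's inequality, choosing $\delta$ of order $\sigma$ so that the $\|v\|_{w,r+3/2}^2$ piece is small enough to leave a coercive remainder of at least $\tfrac{\sigma}{4}\|v\|_{w,r+3/2}^{2}$ on the left-hand side.

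Combining these estimates yields the differential inequality
\begin{equation*}
\tfrac{d}{dt}\!\left(e^{\sigma t}\|v\|_{w,r}^{2}\right) + \tfrac{\sigma}{4}e^{\sigma t}\|v\|_{w,r+3/2}^{2}
\;\le\; \tfrac{C}{\sigma}\,e^{\sigma t}\|f(\cdot,t)\|_{w,r-3/2}^{2}
\end{equation*}
for an absolute constant $C$ (whose precise value $\tfrac{32}{3}$ comes from tuning $\delta$). Integrating on $[0,t]$ gives
\begin{equation*}
e^{\sigma t}\|v(\cdot,t)\|_{w,r}^{2} + \tfrac{\sigma}{4}\!\int_{0}^{t}\! e^{\sigma \tau}\|v(\cdot,\tau)\|_{w,r+3/2}^{2}\,d\tau
\;\le\; \|v_0\|_{w,r}^{2} + \tfrac{C}{\sigma}\!\int_{0}^{\infty}\! e^{\sigma\tau}\|f\|_{w,r-3/2}^{2}\,d\tau,
\end{equation*}
and finally I would use the defining formula of the $H_\sigma^{r-3}$ norm to bound $\int_{0}^{\infty} e^{\sigma \tau}\|f\|_{w,r-3/2}^{2}\,d\tau \le \tfrac{4}{\sigma}\|f\|_{H_\sigma^{r-3}}^{2}$, producing the constant $\tfrac{8}{3\sigma^2}$ (or a comparable one) in front of $\|f\|_{H_\sigma^{r-3}}^{2}$. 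Taking the supremum in $t$ together with the integral term on the left gives the stated bound for $\|v\|_{H_\sigma^{r}}^{2}$.

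The key structural point---and the mildest obstacle---is the two-step balancing: one must spend exactly enough of the coercivity from Lemma~\ref{lemAinner} to both (i) soak up the growth term $\sigma e^{\sigma t}\|v\|_{w,r}^{2}$ produced by the exponential multiplier, and (ii) dominate the forcing via Young's inequality in the shifted norm $\|\cdot\|_{w,r-3/2}$, while still retaining a positive multiple of $\|v\|_{w,r+3/2}^{2}$ (at least $\tfrac{\sigma}{4}$) on the left. Since the weighted norms on $\dot{H}^r$ collapse to standard Sobolev norms truncated away from $|k|\le 1$, both the spectral gap $\|v\|_{w,r+3/2}^{2}\ge 8\|v\|_{w,r}^{2}$ and the Cauchy--Schwartz/Young splitting go through cleanly, so this amounts to a careful but routine bookkeeping exercise.
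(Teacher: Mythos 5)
Your proposal is correct and follows essentially the same route as the paper: take the $(\cdot,\cdot)_{w,r}$ inner product with $v$, apply Lemma \ref{lemAinner} for coercivity, use the spectral gap $\|v\|^2_{w,r+3/2}\ge 8\|v\|^2_{w,r}$ (valid because $|k|\ge 2$) to soak up the $\sigma\|v\|^2_{w,r}$ term produced by the exponential multiplier, and then split the forcing via Cauchy--Schwartz/Young, integrate, and invoke the definition of $\|\cdot\|_{H_\sigma^{r-3}}$. The only cosmetic difference is that you multiply by $e^{\sigma t}$ before applying the coercivity lemma while the paper does so implicitly at the integration step, which changes nothing.
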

\begin{proof}
Taking the $(\cdot, \cdot)_{w,r}$ inner-product on both sides of (\ref{eqnv}) with
$v$, we obtain
\begin{equation}
\label{eqvf}
\frac{d}{dt} \| v \|_{w,r}^2 -
2 \left ( v (\cdot, t) , \mathcal{A} [ v ] \right )_{w,r}
= 2 \left ( v (\cdot, t) , f(\cdot,t)\right )_{w,r}.
\end{equation}
>From Lemma \ref{lemAinner}, this implies
\begin{eqnarray*}
\frac{d}{dt} \| v \|_{w,r}^2 + \frac{15 \sigma}{32}
\| v (., t) \|_{w, r+3/2}^2
\le
2 \| v (., t) \|_{w, r+3/2} \| f (., t) \|_{w,r-3/2}   .
\end{eqnarray*}
Noting that
$$|k|^{r+3/2} \ge 2^{1/2} |k|^{r+1} \ge 2 |k|^{r+1/2}
\ge 2^{3/2} |k|^r ~~{\rm for}~~
k \ge 2$$
implies that
$$\| v \|_{w,r+3/2} \ge 2^{1/2} \| v \|_{w,r+1} \ge
2 \| v \|_{w, r+1/2} \ge 2^{3/2} \| w \|_r. $$
It follows that
on using Cauchy Schwartz inequality,
\begin{equation*}
\frac{d}{dt} \| v \|_{w,r}^2 + \sigma \| v \|_{w,r}^2 +
\frac{\sigma}{4} \| v (., t) \|_{w,r+3/2}^2
\le \frac{32}{3\sigma}
\| f (., t) \|^2_{w,r-3/2}.
\end{equation*}
Integration gives the desired energy inequality.
Noting that this is true for any $t$, and using the definition of
$\| \cdot\|_{H^r_\sigma}$, we obtain
the given bounds on $\| v \|_{H_\sigma^r}$.
\end{proof}

\begin{remark}
Proof of existence of a solution to the linear equation
(\ref{eqnv}) for given real valued
$f \in { H}^{r-3}_\sigma $
and the initial condition $v_0
\in {\dot H}^r$, satisfying the given conditions follows in
a standard manner. Note  that we can introduce a sequence
of Galerkin approximants $v_N (\alpha, t)$ containing a
finite number of Fourier modes. This will satisfy
the energy bounds
in Lemma \ref{nonhomogeneous}, independent of $N$.
These approximants clearly solve linear ODEs
for which the unique solutions exist globally. In the Hilbert
space $L^2 \left ([0, S], { H}^{r+3/2} \right )$, there exists
a subsequence of $v_N \rightarrow v $ weakly. Therefore for
almost all $t \in [0, S]$, this subsequence denoted again
by $v_N (\cdot, t) \rightarrow v (\cdot, t)$ strongly in $\dot{H}^r$. From
the energy bound, the limit $v(\cdot, t)$ is bounded in $\dot{H}^{r}$
for any $t \in [0, S]$, and $v \in L^2 ([0, S], H^{r+3/2} )$.
It is also easy to check that
the limiting solution satisfies  (\ref{eqnv}) in a classical sense
for sufficiently large $r$.
This proves existence of a global classical solution for
any $t$ noting that $r \ge 3$ since $r$ is arbitrary.
The uniqueness of this solution
follows from the energy bound itself.
\end{remark}

\begin{definition}
\label{defS1S2}
It is convenient to define a linear operator $e^{t \mathcal{A}} $ so that
$$v = e^{t \mathcal{A}} v_0  $$
is the unique solution $v (\alpha, t) \in {\dot H}^r $ satisfying
(\ref{eqnv}) for $f=0$, with the initial condition $v (\alpha, 0) = v_0$.
\end{definition}

\begin{note}
\label{noteduhammel}
It is easily seen that $e^{t \mathcal{A}} $ is a semi-group.
Further, using Duhammel principle, the solution $v (\alpha, t) \in {\dot H}^r$
satisfying (\ref{eqnv}) for $v_0=0$ may be expressed as
\begin{equation}
\label{duhammel}
v (\alpha, t) = \int_0^t e^{(t-\tau) \mathcal{A}} f (\alpha, \tau) d\tau.
\end{equation}
\end{note}

\begin{remark}
The energy bounds in Lemma
\ref{nonhomogeneous}
imply that
\begin{equation}
\label{S0S1bound}
\| e^{t \mathcal{A}} v_0 \|_{H_\sigma^r}
\le \| v_0 \|_{w,r} , \mbox ~~
\left\| \int_0^t e^{(t-\tau) \mathcal{A} } f (\cdot, \tau) d\tau
\right\|_{H_\sigma^r}
\le \frac{2\sqrt{2}}{\sqrt{3} \sigma} \| f \|_{H_\sigma^{r-3}} .
\end{equation}
\end{remark}

\subsection{Nonlinear evolution, contraction map and proof of Proposition
\ref{translationprop5.2} }

We express the evolution equation (\ref{RHSC1}) in the integral
form:
\begin{equation}
\label{evoltiltheta}
{\tilde \theta} (\alpha,t)
= e^{t \mathcal{A}} {\tilde \theta}_0 +
\int_0^t d\tau e^{(t-\tau) \mathcal{A}}
\left \{\mathfrak{N} [ {\tilde \theta} (\cdot, \tau), {\hat \theta} (0; \tau)]+
\mathcal{A}_N [{\tilde \theta} (\cdot, \tau),{\hat \theta} (0;\tau),
L(\tau) ] \right\}\equiv \mathcal{S}_1\big[{\tilde \theta}, {\hat \theta}(0;\cdot)\big] (\alpha, t)  ,
\end{equation}
\begin{equation}
\label{evolhat}
{\hat \theta} (0; t)  = \int_0^t \mathfrak{N}_0\Big[ \mathcal{S}_1(\alpha,\cdot),
{\hat \theta} (0;\cdot)\Big] (\tau)d\tau \equiv \mathcal{S}_2 [{\tilde \theta}, {\hat \theta}(0;\cdot )] ( t)   ,
\end{equation}
where $L=L(t)$ is  determined in terms of
${\tilde \theta} (., t)$
through (B.4) and (\ref{eqomega}).

Equations (\ref{evoltiltheta}) and (\ref{evolhat}) will be the basis
of a contraction mapping theorem for $({\tilde \theta} , {\hat
\theta} (0;t) )$ in an small  ball in the space
$$\mathcal{D} \equiv
H_{\sigma}^r \times \mathbf{C} [ 0, \infty ) $$ equipped with the
norm $\| \cdot\|_{\mathcal{D}} $ so that
\begin{equation}
\label{defBnorm} \left\| \big ( {\tilde \theta} , {\hat \theta}
(0;\cdot) \big ) \right\|_{ \mathcal{D}} = \| {\tilde \theta}
\|_{H^r_\sigma} + \big | {\hat \theta} (0;\cdot) \big |_\infty.
\end{equation}

First, we define a mapping in $\mathcal{D}$ by
$$\mathcal{S}\big[\tilde\theta,\hat\theta(0;\cdot)\big]\equiv
\left(\begin{aligned}
\mathcal{S}_1\big[\tilde\theta,\hat\theta(0;\cdot)\big](\alpha,t)\\
\mathcal{S}_2\big[\tilde\theta,\hat\theta(0;\cdot)\big](t)\end{aligned}\right).
$$

Secondly, we estimate the nonlinear terms in the space $H_\sigma^r$.
\begin{lemma}\label{NN0bounds} For $r \ge 3$ and $\sigma>0$, assume
$ \big ( {\tilde \theta} (\alpha, t), {\hat \theta} (0;t ) \big )$
satisfy the condition
\begin{equation}
\label{ballcond} \| {\tilde \theta} \|_{H_\sigma^r} \le \epsilon
~~~,~~~ | {\hat \theta} (0;\cdot) |_\infty \le \epsilon .
\end{equation}
Then for $\mathcal{A}_N [{\tilde \theta},{\hat \theta} (0;\cdot),
L(\cdot) ] (\alpha,t)$, $\mathfrak{N} [ {\tilde \theta}, {\hat
\theta} (0;\cdot) ] (\alpha, t)$ and scalar $\mathfrak{N}_0 [
{\tilde \theta}, {\hat \theta} (0;\cdot)](t)$, determined from
(\ref{AdefN}), (\ref{eqmathfrakN}) and (\ref{eqmathfrakN0})
respectively,\footnote{Note that $\Gamma$ and $L$ appearing in the
expressions are determined in terms of ${\hat \theta}$ and ${\hat
\theta} (0;t)$ through (\ref{gamma4.3}) and (B.4) on using
(\ref{eqomega}) and (\ref{zomega}).}
\begin{equation*}
\big\| \mathcal{A}_N [{\tilde \theta},{\hat \theta} (0;\cdot), L ]
+\mathfrak{N} [ {\tilde \theta}, {\hat \theta} (0;\cdot) ]
\big\|_{H_\sigma^{r-3}} \le c_1\| {\tilde \theta} \|_{H_\sigma^r}
\left ( \| {\tilde \theta} \|_{H_\sigma^r} + | {\hat \theta}
(0;\cdot)|_\infty \right ) ,
\end{equation*}
\begin{equation*}
\Big | \int_0^t  \mathfrak{N}_0 [ {\tilde \theta} , {\hat \theta}
(0;\cdot)](\tau)d\tau \Big |_\infty \le c_2 \| {\tilde \theta}
\|_{H_\sigma^3} .
\end{equation*}
Further, if both $ \left ( {\tilde \theta}^{(1)} (\alpha, t), {\hat
\theta}^{(1)} (0;t) \right )$ and $ \left ( {\tilde \theta}^{(2)}
(\alpha, t), {\hat \theta}^{(2)} (0;t) \right )$ satisfy
(\ref{ballcond}), then the corresponding $\left
(\mathcal{A}_N^{(1)}, \mathfrak{N}^{(1)}, \mathfrak{N}_0^{(1)}
\right )$ and $\left ( \mathcal{A}_N^{(2)},\mathfrak{N}^{(2)},
\mathfrak{N}_0^{(2)} \right )$ satisfy
\begin{equation*}
\|\mathcal{A}_N^{(1)}-\mathcal{A}_N^{(2)}\|_{H_\sigma^{r-3}}+\|
\mathfrak{N}^{(1)} - \mathfrak{N}^{(2)} \|_{H_\sigma^{r-3}} \le
c_3\epsilon \left ( \| {\tilde \theta}^{(1)}-{\tilde \theta}^{(2)}
\|_{H_\sigma^r} + \big| {\hat \theta}^{(1)} (0;\cdot) -{\hat
\theta}^{(2)} (0;\cdot)\big |_\infty \right ),
\end{equation*}
\begin{multline*}
\Big | \int_0^t \big ( \mathfrak{N}_0^{(1)} -\mathfrak{N}_0^{(2)}
\big )d\tau \Big |_\infty \le c_4 \left ( \| {\tilde \theta}^{(1)} -
{\tilde \theta}^{(2)} \|_{H_\sigma^3} +\epsilon \big | {\hat
\theta}^{(1)} (0; \cdot) - {\hat \theta}^{(2)} (0; \cdot) \big
|_\infty \right ).
\end{multline*}
\end{lemma}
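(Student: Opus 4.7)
The plan is to reduce every $H^{r-3}_\sigma$ bound in the statement to the pointwise-in-$t$ estimates already provided by Lemma \ref{ANnorm} and Corollary \ref{mathfrakNcor}, and then assemble them using the two pieces of $\|\tilde\theta\|_{H^r_\sigma}$ from Definition \ref{defHsigmar}: the sup-in-$t$ piece yields $\|\tilde\theta(\cdot,t)\|_{w,r}\le C e^{-\sigma t/2}\|\tilde\theta\|_{H^r_\sigma}$, while the weighted $L^2$-in-$t$ piece gives $\int_0^\infty e^{\sigma\tau}\|\tilde\theta\|_{w,r+3/2}^2\,d\tau\le (4/\sigma)\|\tilde\theta\|_{H^r_\sigma}^2$. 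The task for each term reduces to arranging the pointwise bound so that one factor is extracted at the sup level and the remaining higher-regularity factor is absorbed into the weighted $L^2$-in-$t$ integral.

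For $\mathcal{A}_N$, I would apply Lemma \ref{ANnorm} at level $r-3/2$, giving $\|\mathcal{A}_N\|_{w,r-3/2}\le C_r\sigma(|L(t)-2\pi|\|\tilde\theta\|_{w,r+3/2}+|\hat\theta(0;t)|\|\tilde\theta\|_{w,r-1/2})$. Using (\ref{6.14}) with $L^{(2)}$ corresponding to $\tilde\theta^{(2)}\equiv 0$ (so $L^{(2)}=2\pi$), together with the Sobolev embedding of $\|\tilde\theta\|_1$ into the sup-in-$t$ piece, gives $|L(t)-2\pi|\le Ce^{-\sigma t/2}\|\tilde\theta\|_{H^r_\sigma}$; the hypothesis $|\hat\theta(0;\cdot)|_\infty\le\epsilon$ handles the other factor. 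Squaring, multiplying by $e^{\sigma t}$, and integrating produces the $L^2$-in-$t$ piece of the desired bound, while the sup-in-$t$ piece follows via $\|\cdot\|_{w,r-3}\le\|\cdot\|_{w,r-3/2}$ by the same calculation.

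For $\mathfrak{N}$, Corollary \ref{mathfrakNcor} supplies $\|\mathfrak{N}(\cdot,t)\|_{r-1}\le C\exp(C\|\tilde\theta\|_r)\|\tilde\theta\|_r\|\tilde\theta\|_{r+1}$. For the $L^2$-in-$t$ contribution I would pull out $e^{\sigma t}\|\tilde\theta\|_r^2\le C\|\tilde\theta\|_{H^r_\sigma}^2$ from the sup piece, leaving $\int_0^\infty\|\tilde\theta\|_{r+1}^2\,dt\le\int_0^\infty e^{\sigma t}\|\tilde\theta\|_{w,r+3/2}^2\,dt$, which is bounded by $(C/\sigma)\|\tilde\theta\|_{H^r_\sigma}^2$. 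The main technical obstacle is the sup-in-$t$ piece of $\|\mathfrak{N}\|_{H^{r-3}_\sigma}$, because $\|\tilde\theta\|_{r+1}$ is not controlled pointwise in $t$ by $\|\tilde\theta\|_{H^r_\sigma}$; I would treat this either by invoking Corollary \ref{mathfrakNcor} at the lower index (replacing $r$ by $r-1$, valid for $r\ge 4$) to obtain $\|\mathfrak{N}\|_{r-2}\le C\|\tilde\theta\|_{r-1}\|\tilde\theta\|_r$, or, in the borderline case $r=3$, by re-examining the $N_j$ expressions in \S 4.1 at reduced regularity (using Lemma \ref{N1lem4.6} and the observation that each $N_j$ carries one derivative of $\theta$ at most, so that $\|\mathfrak{N}\|_0$ admits a $\|\tilde\theta\|_3^2$ bound directly).

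For $\int_0^t\mathfrak{N}_0\,d\tau$, Corollary \ref{mathfrakNcor} furnishes $|\mathfrak{N}_0(\tau)|\le C(\|\tilde\theta\|_3^2+\|\tilde\theta\|_2)$; using $\|\tilde\theta(\cdot,\tau)\|_s\le Ce^{-\sigma\tau/2}\|\tilde\theta\|_{H^3_\sigma}$ for $s\le 3$, the quadratic term integrates to $(C/\sigma)\|\tilde\theta\|_{H^3_\sigma}^2$ and is absorbed into the linear piece under the smallness $\|\tilde\theta\|_{H^r_\sigma}\le\epsilon$, yielding the claimed bound $c_2\|\tilde\theta\|_{H^3_\sigma}$. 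The Lipschitz estimates follow by the same decomposition applied to the difference statements in Lemma \ref{ANnorm} and Corollary \ref{mathfrakNcor}, invoking (\ref{6.14}) and Proposition \ref{prop2.6} to express differences of $L(t)$ and of $\hat\theta(\pm 1;t)$ in terms of $\|\tilde\theta^{(1)}-\tilde\theta^{(2)}\|_1$ and hence in terms of the desired difference in the $H^r_\sigma$ norm.
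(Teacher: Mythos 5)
Your proposal takes essentially the same route as the paper's proof: apply the pointwise-in-$t$ estimates from Lemma \ref{ANnorm} and Corollary \ref{mathfrakNcor} and then assemble the two pieces of the $\|\cdot\|_{H^r_\sigma}$ norm. Your explicit handling of the $\mathfrak{N}$ sup-in-$t$ piece by lowering the index (applying the estimate at a reduced regularity level so that the output $\|\mathfrak{N}\|_{r-3}$ is controlled by $\|\tilde\theta\|_{r-1}\|\tilde\theta\|_{r-2}$, and falling back on Lemma \ref{N1lem4.6} directly at the borderline $r=3$) is exactly what makes the argument close; the paper does this implicitly when it asserts the sup-piece bound $Ce^{\sigma t/2}\big(\|\tilde\theta\|_{w,r}\|\tilde\theta\|_1+\|\tilde\theta\|_{w,r-1}\|\tilde\theta\|_{w,r-2}+|\hat\theta(0;\cdot)|_\infty\|\tilde\theta\|_{w,r-2}\big)$.

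There is one small slip in the $\mathcal{A}_N$ part. You wrote that the sup-in-$t$ piece follows ``via $\|\cdot\|_{w,r-3}\le\|\cdot\|_{w,r-3/2}$ by the same calculation.'' But applying Lemma \ref{ANnorm} at the stated index $r$ gives $e^{\sigma t/2}\|\mathcal{A}_N\|_{w,r-3/2}\le C\sigma\, e^{\sigma t/2}\big(|L-2\pi|\,\|\tilde\theta\|_{w,r+3/2}+|\hat\theta(0;t)|\,\|\tilde\theta\|_{w,r-1/2}\big)$, and $e^{\sigma t/2}\|\tilde\theta\|_{w,r+3/2}$ is not controlled by the sup piece of $\|\tilde\theta\|_{H^r_\sigma}$ — only the $L^2$-in-$t$ piece sees that regularity level, so merely lowering the norm on the left does not repair the right-hand side. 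The fix is exactly the index-lowering you already employ for $\mathfrak{N}$: apply Lemma \ref{ANnorm} with its index replaced by $r-3/2$, which gives $\|\mathcal{A}_N\|_{w,r-3}\le C\sigma\big(|L-2\pi|\,\|\tilde\theta\|_{w,r}+|\hat\theta(0;t)|\,\|\tilde\theta\|_{w,r-2}\big)$; now each factor is controlled either by the sup piece of $\|\tilde\theta\|_{H^r_\sigma}$ (since $|L-2\pi|\le C\|\tilde\theta\|_1\le Ce^{-\sigma t/2}\|\tilde\theta\|_{H^r_\sigma}$ via (\ref{6.14}) and $e^{\sigma t/2}\|\tilde\theta\|_{w,r}\le\|\tilde\theta\|_{H^r_\sigma}$) or by the $|\hat\theta(0;\cdot)|_\infty\le\epsilon$ hypothesis. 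With this one-line repair the proof coincides with the paper's.
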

\begin{proof} We note the bounds for $\mathcal{A}_N$, $\mathfrak{N}$ and $\mathfrak{N}_0$ in Lemma \ref{ANnorm} and
Corollary \ref{mathfrakNcor}. It follows from the equivalence of $
\| \cdot\|_r$ and $ \| \cdot \|_{w,r}$ norms and the definition of $
\| . \|_{H_\sigma^r} $ norm that
\begin{multline*}
e^{\sigma t/2}
\big\| \mathcal{A}_N [{\tilde \theta},{\hat \theta} (0;\cdot), L ]
+\mathfrak{N} [ {\tilde \theta}, {\hat \theta} (0;\cdot) ]
\big\|_{w,r-3} \\
\le Ce^{\sigma t/2}\big(\|\tilde\theta\|_{w,r}\|\tilde\theta\|_1+\|\tilde\theta\|_{w,r-1}\|\tilde\theta\|_{w,r-2}+|\hat\theta(0;\cdot)|_\infty\|\tilde\theta\|_{w,r-2}\big)\\
\le C\| {\tilde \theta} \|_{H_\sigma^r} \left ( \|
{\tilde \theta} \|_{H_\sigma^r} + | {\hat \theta} (0;\cdot)|_\infty
\right ).
\end{multline*} Further, it follows that
\begin{multline*}
\int_0^\infty e^{\sigma t} \|  \mathcal{A}_N [{\tilde \theta},{\hat
\theta} (0;t), L ] +\mathfrak{N} [{\tilde \theta} (\cdot, t), {\hat
\theta} (0; t) ]
\|^2_{w,r-3/2} dt \\
\le C \sup_{t} \left [ e^{\sigma t} \| {\tilde \theta} (\cdot, t)
\|^2_{w,r} +|\hat\theta(0;t)|^2 \right ]
\int_0^\infty e^{\sigma t} \| {\tilde \theta} \|^2_{w,r+3/2} dt \\
\le C \| {\tilde \theta} \|_{H_\sigma^r}^2 \left ( \| {\tilde
\theta} \|_{H_\sigma^r}^2 + | {\hat \theta} (0;\cdot)|^2_\infty
\right ).
\end{multline*}
Therefore the bounds for $\| \mathcal{A}_N+\mathfrak{N}
\|_{H_\sigma^{r-3}} $ follows. For $\mathfrak{N}_0$, we use
Corollary \ref{mathfrakNcor} again to note
\begin{equation*}
\Big|\int_0^t  \mathfrak{N}_0 [{\tilde \theta}, {\hat \theta} (0;.)
](\tau)d\tau\Big|_\infty  \le C \int_0^\infty \big(\| {\tilde
\theta} (\cdot, \tau) \|_{w,3}^2+ \| {\tilde \theta} (\cdot, \tau)
\|_{w,2}\big) d\tau  \le c_2\| {\tilde \theta} \|_{H_\sigma^3}.
\end{equation*}
The statements for the differences of $\mathfrak{N}$,
$\mathfrak{N}_0$ for different $({\tilde \theta}, {\hat \theta} (0;
t))$ follow from parallel statements in Lemma \ref{ANnorm} and
Corollary \ref{mathfrakNcor}.
\end{proof}

We have the following contraction properties in a ball
$$\mathcal{V}_\epsilon\equiv\big\{(u,v)\in\mathcal{D}|\|u\|_{H^r_\sigma}\le\epsilon,|v|_\infty\le\epsilon\big\}.$$
\begin{lemma}
\label{lemcontract} Let $\sigma>0$, $r \ge 3$. Assume  $\left (
{\tilde \theta} , {\hat \theta} (0; t) \right ) \in
\mathcal{V}_\epsilon$ and $c_1$, $c_2$,  $c_3$, $c_4$ are as defined
in Lemma \ref{NN0bounds}. If  for sufficiently small $\epsilon$,  $
\| {\tilde \theta}_0 \|_{w,r} <
\min\{\frac{\epsilon}{2},\frac{\epsilon}{2c_1}\} $ and ${\hat
\theta} (0; 0) =0$, then
$$ \mathcal{S} \big[{\tilde \theta},
{\hat \theta} (0;\cdot) \big ] \in \mathcal{V}_\epsilon.
$$
Further, if each of
$\left ( {\tilde \theta}^{(1)} , {\hat \theta}^{(1)} (0;t) \right ) $
and $\left ( {\tilde \theta}^{(2)} , {\hat \theta}^{(2)} (0;t)\right ) $
 belongs to $\mathcal{V}_\epsilon$, then 
there exists $c_5$ depending on $c_1, \cdots, c_4$, such that
\begin{multline*}
\Big\| \mathcal{S} \big[ {\tilde \theta}^{(1)}, {\hat \theta}^{(1)}
(0;\cdot) \big ] -\mathcal{S} \big[{\tilde \theta}^{(2)}, {\hat
\theta}^{(2)} (0;\cdot) \big]\Big\|_{\mathcal{D}}  \le c_5
\epsilon\Big\| \big ( {\tilde \theta}^{(1)}-{\tilde \theta}^{(2)},
 {\hat \theta}^{(1)} (0;\cdot) -{\hat \theta}^{(2)} (0;\cdot) \big)
\Big\|_{\mathcal{D}}.
\end{multline*}
\end{lemma}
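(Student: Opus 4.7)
The strategy is the standard Picard fixed-point argument on the coupled integral equations (\ref{evoltiltheta})--(\ref{evolhat}), built directly on the linear semigroup estimates (\ref{S0S1bound}) and the nonlinear bounds of Lemma \ref{NN0bounds}. First I verify the self-mapping claim $\mathcal{S}(\mathcal{V}_\epsilon) \subset \mathcal{V}_\epsilon$. For the first component, applying (\ref{S0S1bound}) to (\ref{evoltiltheta}) gives
$$\|\mathcal{S}_1[\tilde\theta, \hat\theta(0;\cdot)]\|_{H_\sigma^r} \le \|\tilde\theta_0\|_{w,r} + \frac{2\sqrt 2}{\sqrt 3\,\sigma}\bigl\|\mathcal{A}_N + \mathfrak{N}\bigr\|_{H_\sigma^{r-3}},$$
and the first bound in Lemma \ref{NN0bounds} supplies $\|\mathcal{A}_N + \mathfrak{N}\|_{H_\sigma^{r-3}} \le 2c_1\epsilon^2$. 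Under the hypothesis $\|\tilde\theta_0\|_{w,r} < \min\{\epsilon/2,\epsilon/(2c_1)\}$, choosing $\epsilon$ small enough that $\tfrac{4\sqrt 2\,c_1}{\sqrt 3\,\sigma}\epsilon < 1/2$ yields $\|\mathcal{S}_1\|_{H_\sigma^r} \le \epsilon$. For the second component, the $\mathfrak{N}_0$ bound of Lemma \ref{NN0bounds} (applied with $\mathcal{S}_1$ as first argument, in line with the definition of $\mathcal{S}_2$) gives $|\mathcal{S}_2|_\infty \le c_2 \|\mathcal{S}_1\|_{H_\sigma^3} \le c_2 \|\mathcal{S}_1\|_{H_\sigma^r}$, and a further smallness restriction on $\|\tilde\theta_0\|_{w,r}$ and on $\epsilon$ forces $|\mathcal{S}_2|_\infty \le \epsilon$.

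Next I turn to the contraction. Subtracting the two copies of (\ref{evoltiltheta})--(\ref{evolhat}), applying (\ref{S0S1bound}) to the difference, and invoking the Lipschitz halves of Lemma \ref{NN0bounds} gives
$$\|\mathcal{S}_1^{(1)} - \mathcal{S}_1^{(2)}\|_{H_\sigma^r} \le \frac{2\sqrt 2\, c_3\epsilon}{\sqrt 3\,\sigma}\,\bigl\|\bigl(\tilde\theta^{(1)} - \tilde\theta^{(2)},\,\hat\theta^{(1)}(0;\cdot) - \hat\theta^{(2)}(0;\cdot)\bigr)\bigr\|_{\mathcal{D}},$$
and feeding this into the $\mathfrak{N}_0$ Lipschitz estimate yields
$$|\mathcal{S}_2^{(1)} - \mathcal{S}_2^{(2)}|_\infty \le c_4\bigl(\|\mathcal{S}_1^{(1)} - \mathcal{S}_1^{(2)}\|_{H_\sigma^3} + \epsilon\,|\hat\theta^{(1)}(0;\cdot) - \hat\theta^{(2)}(0;\cdot)|_\infty\bigr).$$
Adding the two inequalities in the product norm produces the announced Lipschitz constant of the form $c_5\epsilon$, where $c_5$ depends only on $c_1,\ldots,c_4$ and $\sigma$, and shrinking $\epsilon$ so that $c_5\epsilon < 1$ completes the contraction property.

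\textbf{Expected main obstacle.} The tightest point is the $\mathcal{S}_2$ coordinate: the $\mathfrak{N}_0$ bound in Lemma \ref{NN0bounds} is only \emph{linear} in $\|\tilde\theta\|_{H_\sigma^3}$ (it inherits the linear tail from $B_0$), so the crude estimate $|\mathcal{S}_2|_\infty \le c_2\epsilon$ does not automatically sit inside the $\epsilon$-ball unless $c_2\le 1$. The clean remedy is to read ``for sufficiently small $\epsilon$'' in the hypothesis as also imposing $c_2\|\tilde\theta_0\|_{w,r}\le \epsilon/2$ (equivalently, tightening the $\min$ in the assumption to include $\epsilon/(2c_2)$), and to propagate the sharper bound $\|\mathcal{S}_1\|_{H_\sigma^r}\le \|\tilde\theta_0\|_{w,r}+O(\epsilon^2)$ into the $\mathcal{S}_2$ estimate rather than the looser $\|\mathcal{S}_1\|_{H_\sigma^r}\le\epsilon$. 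With this rescaling of constants in place, everything else is a routine combination of (\ref{S0S1bound}) and Lemma \ref{NN0bounds}.
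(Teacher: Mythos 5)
Your proof follows essentially the same route as the paper's own: apply (\ref{S0S1bound}) together with Lemma \ref{NN0bounds} to close the $\mathcal{S}_1$ ball (the paper introduces $c_6=\frac{2\sqrt 2}{\sqrt 3\,\sigma}c_1$ and requires $c_6\epsilon<\tfrac14$), propagate the sharpened $\|\mathcal{S}_1\|_{H_\sigma^3}\le\|\tilde\theta_0\|_{w,r}+2c_6\epsilon^2$ bound into the $\mathfrak{N}_0$ estimate to close the $\mathcal{S}_2$ coordinate, and subtract using the Lipschitz halves of Lemma \ref{NN0bounds} to get the $c_5\epsilon$ contraction constant. Your observation about the $\mathcal{S}_2$ coordinate is also apt: the paper's proof does exactly the refinement you describe, and closing $|\mathcal{S}_2|_\infty\le\epsilon$ there requires $c_2\|\tilde\theta_0\|_{w,r}\le\epsilon/2$, which the stated hypothesis $\|\tilde\theta_0\|_{w,r}<\min\{\tfrac{\epsilon}{2},\tfrac{\epsilon}{2c_1}\}$ only guarantees when $c_2\le\max\{1,c_1\}$, so the $\min$ should indeed also carry $\tfrac{\epsilon}{2c_2}$ as you propose.
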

\begin{proof}
Define $c_6=\frac{2\sqrt{2}}{\sqrt{3}\sigma}c_1$. By (\ref{S0S1bound}) and Lemma \ref{NN0bounds}, we have
\begin{equation*}
\Big\|\mathcal{S}_1\big[\tilde\theta,\hat\theta(0;\cdot)\big]\Big\|_{H^r_\sigma}\leq
\|\tilde\theta_0\|_{w,r}+c_6\big(\|\tilde\theta\|_{H^r_\sigma}^2+\|\tilde\theta\|_{H^r_\sigma}|\hat\theta(0;\cdot)|_\infty\big)\le\epsilon,
\end{equation*}
if $c_6\epsilon<\frac{1}{4}$. We also
have
\begin{equation*}
\Big|\mathcal{S}_2\big[\tilde\theta,\hat\theta(0;\cdot)\big]\Big|_{\infty}\leq
c_2\Big\|\mathcal{S}_1\big[\tilde\theta,\hat\theta(0;\cdot)\big]\Big\|_{H^3_\sigma}\leq
c_2\Big(\|\tilde\theta_0\|_{w,r}+c_6\|\tilde\theta\|_{H^r_\sigma}^2+c_6\|\tilde\theta\|_{H^r_\sigma}|\hat\theta(0;\cdot)|_\infty\Big)
\le\epsilon,
\end{equation*}
if $c_2c_6\epsilon<\frac14$.

The statements for the differences of $\mathcal{S}$,
 for different $({\tilde \theta}, {\hat \theta} (0;
t))$ follows from parallel statements in Lemma \ref{NN0bounds}.

\end{proof}

\begin{note} Constants $c_1, c_2, c_3, c_4$ and $c_5$ depend on $\sigma$.
\end{note}

\noindent{\bf Proof of Proposition \ref{translationprop5.2}:} If
$c_5 \epsilon < 1$, then it is clear that the right sides of
(\ref{evoltiltheta}) and (\ref{evolhat}) define a contraction map in
a small ball $\mathcal{V}_\epsilon$ in the Banach space
$\mathcal{D}$. Therefore, there exists a unique solution
$\big({\tilde \theta}, {\hat \theta} (0;t) \big) $ satisfying
equations (\ref{evoltiltheta}) and (\ref{evolhat}), hence (B.1). The
local uniqueness of solutions (see Appendix \S \ref{A7.2}) implies that this is
the only solution. The $e^{-\sigma t/2 }$ exponential decay of
${\tilde \theta}$ and hence of $\theta$ implies that the steady
circle is approached exponentially in time. The constraint condition
(B.4) implies that $L-2\pi$ decays exponentially.

\begin{note} It is easy to show that 
given any $j$,  $\tilde\theta(\cdot,t)\in
\dot{H}^{r+3j/2}$ for $t\ge j$ in the following manner. 
Since $(\tilde\theta,\hat\theta(0;t))\in
\mathcal{V}_\epsilon$ for some $r\ge3$, there exists $t_0\in[0,1]$
such that $\|\tilde\theta(\cdot,t_0)\|_{r+3/2}<\epsilon$. 
So we can restart clock at $t=t_0$ and prove global solution in
$H_{\sigma}^{r+3/2} $.
It follows that there exists $t_1\in(t_0,t_0+1]$ so that 
$\|\tilde\theta(\cdot,t_1)\|_{r+3}<\epsilon$. By
bootstrapping, we obtain $\tilde\theta(\cdot,t)\in
\dot{H}^{r+3j/2}$ for $t\ge j$.

Indeed more can be shown to be true. The contraction argument in
Proposition \ref{translationprop5.2} can be carried out for
arbitrary sized initial condition over small sized time interval.
Through bootstrapping and using Sobolev embedding theorem, we can
conclude that the solution is in   
$C^{\infty}$ in space for $t \in (0,S]$.
The property of smoothing of initial conditions 
is similar to other dissipative equations like Navier-Stokes.
\end{note}

\section{Steady translating bubble in the channel with sidewalls
($\beta > 0$)}

For a steadily traveling bubble solution,
in the frame of an appropriately moving bubble,
we have to require the normal interface speed $U=0$. This would imply
(A.1) is automatically satisfied for a time-independent
$\theta^{(s)} (\alpha) $ and $L=L^{(s)} = 2 \pi$,
where $z(\alpha)=z^{(s)}(\alpha)$ describes the geometry shape of the steady bubble and $ \gamma (\alpha, t) = \gamma^{(s)}(\alpha)$ is determined in terms of
$\theta$ through (A.2).

Earlier, we have shown  that
 for the bubble with the invariant area,
\begin{equation}\label{7.1}
\int_0^{2\pi} U(\alpha)d\alpha=0.
\end{equation}
\medskip
Further, there is no loss of generality in the steady problem to choose
${\hat \theta}^{(s)} (0) =0$  since this corresponds to a choice of
origin for $\alpha$, and make $\alpha=0$ correspond to $y^{(s)}(0)=0$.
Thus, from (\ref{1.12}), the steady bubble problem reduces to
\begin{equation*}
\mathfrak{U} \left [{\tilde \theta}^{(s)}, u_0, \beta \right ]
\equiv
\mathcal{Q}_0\Big(\frac{1}{2}\mathcal{H}(\gamma^{(s)})+\frac{1}{2}\Re\big(\mathcal{G}[z^{(s)}]\gamma^{(s)}\big)
+(u_0+1)\cos\big(\alpha+\theta^{(s)}(\alpha)\big)\Big)=0
\tag{C.1},
\end{equation*}
with vortex sheet strength $\gamma^{(s)}$ and $\hat{\theta}^{(s)}(\pm1)$
 determined by
\begin{equation*}
\left(I+a_\mu\mathcal{F}[z^{(s)}]\right)\gamma^{(s)}=2\big(1+\frac{\mu_2}{\mu_1+\mu_2}u_0\big)\sin\big(\alpha+\theta^{(s)}(\alpha)\big)+\sigma\theta_{\alpha\alpha}^{(s)},\tag{C.2}
\end{equation*}
\begin{equation*}
\int_0^{2\pi}\exp\Big(i\alpha+i\big(\hat{\theta}^{(s)}(-1)e^{-i\alpha}
+\hat{\theta}^{(s)}(1)e^{i\alpha}+\tilde\theta^{(s)}(\alpha)\big)\Big)d\alpha=0,
\tag{C.3}\end{equation*} where
$\theta^{(s)}=\tilde\theta^{(s)}+\hat{\theta}^{(s)}(1)e^{i\alpha}+\hat{\theta}^{(s)}(-1)e^{-i\alpha}$.
Hence  we seek solutions  $(\tilde\theta^{(s)},u_0,\beta)\in
\dot{H}^r\times(-1,1)\times(-\Upsilon,\Upsilon)$  \footnote{We choose small $\epsilon$ and $\Upsilon$ such that Proposition \ref{prop2.6} can be  applied in (C.3) and  Proposition \ref{propgamma} can also be applied in (C.2).}.

Recently, \cite{Xie1}, \cite{Xie2} and \cite{Xie3} obtained selection results for steady finger for small  non-zero surface tension.

\begin{remark} For $r\ge3$, by
Propositions \ref{prop2.6} and \ref{propgamma}, we know that
$\|\mathfrak{U}\|_{r-2}\leq C$ with $C$ depending on $\Upsilon$ and the diameter
of $\mathcal{B}^r_\epsilon$. Hence, $\mathfrak{U}$ maps an open set of
$H^{r}_p\times\mathbb{R}^2$ into the space $H^{r-2}_p$.
\end{remark}

\begin{note}\label{note7.3}
We know that $\mathfrak{U}[0,0,0](\alpha)=0$ with the corresponding vortex sheet strength
$\gamma^{(s)}(\alpha)=2\sin{\alpha}$ and $\hat{\theta}^{(s)}(\pm 1)=0$. We also
see that $\frac{\partial \mathfrak{U}}{\partial u_0}[0,0,0](\alpha)=\frac{\mu_1}{\mu_1+\mu_2}\cos\alpha$ and the
Fr$\acute{e}$chet derivative $\mathfrak{U}_{\tilde\theta^{(s)}}[0,0,0]h$ 
(see Appendix \S \ref{FrechU}) for $h\in
\dot{H}^r$ is given by:
\begin{equation}\label{7.2}
\mathfrak{U}_{\tilde\theta^{(s)}}[0,0,0]h=\frac{\sigma}{2}\mathcal{H}(h_{\alpha\alpha})
-i\sum_{k=1}^\infty(1+a_\mu)\frac{k+1}{k+2}\hat{h}(k+1)e^{ik\alpha}+c.c..
\end{equation}
\end{note}

It is convenient to recast the steady state problem in a contraction mapping
problem using smallness of $\beta$ and the knowledge that
$\left ( {\tilde \theta}^{(s)} , \gamma^{(s)} \right ) = \left (0,
2 \sin \alpha \right )$ is the steady state solution for $\beta=0$.
We rewrite $\mathfrak{U} = 0 $ as
\begin{equation}
\label{stead1}
\mathfrak{U}_{{\tilde \theta}^{(s)}} [0, 0, 0] {\tilde \theta}^{(s)}
+
\mathfrak{U}_{u_0} [0, 0, 0] u_0 + \frac{\beta^2}{2}
\mathfrak{U}_{\beta\beta}
[ 0, 0, 0]
= \mathfrak{N}^{(s)} \left [ {\tilde \theta}^{(s)}, u_0, \beta \right ],
\end{equation}
where
\begin{multline}
\label{stead2}
\mathfrak{N}^{(s)} \left [ {\tilde \theta}^{(s)}, u_0, \beta \right ]
=  -\mathfrak{U} [ {\tilde \theta}^{(s)}, u_0, \beta ] +
\mathfrak{U}_{{\tilde\theta}^{(s)}} [0, 0, 0] {\tilde \theta}^{(s)}
+
\mathfrak{U}_{u_0} [0, 0, 0] u_0 + \frac{\beta^2}{2}
\mathfrak{U}_{\beta\beta}
[ 0, 0, 0] \\
=\mathfrak{A}[\tilde\theta^{(s)}](\alpha)+\mathfrak{B}[\tilde\theta^{(s)},u_0]+\mathfrak{C}[\tilde\theta^{(s)},u_0,\beta]
\end{multline}
with
$$\mathfrak{A}[\tilde\theta^{(s)}](\alpha)=\mathfrak{U}\big[\tilde\theta^{(s)},0,0\big](\alpha)-\mathfrak{U}_{\tilde\theta^{(s)}}[0,0,0]\tilde\theta^{(s)}(\alpha),$$
$$\mathfrak{B}[\tilde\theta^{(s)},u_0]=\mathfrak{U}\big[\tilde\theta^{(s)},u_0,0\big]-\mathfrak{U}\big[\tilde\theta^{(s)},0,0\big]-\mathfrak{U}_{u_0}[0,0,0]u_0,$$
$$\mathfrak{C}[\tilde\theta^{(s)},u_0,\beta]=\mathfrak{U}\big[\tilde\theta^{(s)},u_0,\beta\big]-\mathfrak{U}\big[\tilde\theta^{(s)},u_0,0\big]-\frac{\beta^2}{2}
\mathfrak{U}_{\beta\beta}
[ 0, 0, 0].$$
It will be shown that $\mathfrak{N}^{(s)}$
is either nonlinear
in $\left({\tilde \theta}^{(s)}, u_0 \right )$ or at least $O(\beta^4)$.

\begin{lemma}\label{lemathNs1}
For any $r \ge 3$, let $\| {\tilde \theta}^{(s)} \|_r $ and $u_0$
sufficiently small, then there exists $C$ independent of  $u_0$ and
${\tilde \theta}^{(s)}$ so that
$$\big\|\mathfrak{A}[\tilde\theta^{(s)}]\big\|_{r-1}\leq C\|\tilde\theta^{(s)}\|_{r-1}\|\tilde\theta^{(s)}\|_r.$$
Further, let $\mathfrak{A}^{(1)}$ and $\mathfrak{A}^{(2)}$ correspond to $\tilde\theta^{(s)}_1$ and $\tilde\theta^{(s)}_2$ respectively, each in $\dot H^r$. Then there exists $C$ independent of $\beta$, $u_0$ and
${\tilde \theta}^{(s)}$ so that
$$\big\|\mathfrak{A}^{(1)}-\mathfrak{A}^{(2)}\big\|_{r-1}\leq C\big(\|\tilde\theta^{(s)}_1\|_{r-1}\|\tilde\theta^{(s)}_1-\tilde\theta^{(s)}_2\|_r+\|\tilde\theta^{(s)}_1\|_{r}\|\tilde\theta^{(s)}_1-\tilde\theta^{(s)}_2\|_{r-1}\big).$$
\end{lemma}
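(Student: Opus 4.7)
The plan is to recognize $\mathfrak{A}[\tilde\theta^{(s)}]$ as exactly the quadratic-and-higher Taylor remainder of $\mathfrak{U}[\,\cdot\,,0,0]$ at the origin: since $\mathfrak{U}[0,0,0]=0$ by Note \ref{note7.3}, the definition of $\mathfrak{A}$ subtracts off the linear part in $\tilde\theta^{(s)}$. To exhibit the quadratic structure, I split $\mathfrak{U}[\tilde\theta^{(s)},0,0]$ (remembering $\beta=0$, so $\mathcal{G}=\mathcal{G}_1$) into the three contributions
\begin{equation*}
\mathcal{Q}_0\!\left(\tfrac{1}{2}\mathcal{H}[\gamma^{(s)}]\right), \quad
\mathcal{Q}_0\!\left(\tfrac{1}{2}\Re\,\mathcal{G}[z^{(s)}]\gamma^{(s)}\right), \quad
\mathcal{Q}_0\!\left(\cos(\alpha+\theta^{(s)})\right),
\end{equation*}
isolate the part of each that is linear in $\tilde\theta^{(s)}$ at the origin (these pieces together reproduce the Fr\'echet derivative (\ref{7.2})), and estimate the three quadratic residuals separately.

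For the $\cos$ term I would write $\cos(\alpha+\theta^{(s)})=\cos\alpha-\sin\alpha\cdot\theta^{(s)}+\Xi_c[\theta^{(s)};0]$ and invoke Lemma \ref{coroXi}(ii) with $j_0=2$ (applied at index $r-2$) to bound $\|\Xi_c[\theta^{(s)};0]\|_{r-1}$ by $C\exp(C\|\theta^{(s)}\|_{r-1})\|\theta^{(s)}\|_{r-1}\|\theta^{(s)}\|_r$; Proposition \ref{prop2.6} then converts $\|\theta^{(s)}\|$ bounds into $\|\tilde\theta^{(s)}\|$ bounds since $\hat\theta^{(s)}(\pm 1)$ depend quadratically on $\tilde\theta^{(s)}$. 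The $\gamma^{(s)}$ contribution is controlled using the steady analog of Proposition \ref{translatingprop6.1}: after subtracting the highest-derivative linear response $\sigma\theta^{(s)}_{\alpha\alpha}$ coming from (\ref{Gamma3}) and the compact linear contribution stemming from the Fr\'echet derivative of $(I+a_\mu\mathcal{F}[z^{(s)}])^{-1}$ at $\omega_0$, the quadratic remainder admits the required $\|\tilde\theta^{(s)}\|_{r-1}\|\tilde\theta^{(s)}\|_r$ bound via the Lipschitz estimate (\ref{gamma4.6}). Finally, the $\mathcal{G}[z^{(s)}]\gamma^{(s)}$ piece is handled via Lemma \ref{lem3.11} applied with $z^{(1)}=z^{(s)}$, $z^{(2)}=\omega_0$, $f=\gamma^{(s)}$, using $\mathcal{G}[\omega_0](2\sin\alpha)=0$ (from Note \ref{noteg1}) to extract both the linear contribution and the quadratic residual.

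For the difference bound, the same decomposition applied to $\mathfrak{A}^{(1)}-\mathfrak{A}^{(2)}$ calls for the Lipschitz counterparts of the above: the difference half of Lemma \ref{coroXi}(ii) for $\Xi_c$, the difference bound (\ref{gamma4.6}) for $\gamma^{(s)}_1-\gamma^{(s)}_2$, and Lemma \ref{lem3.11} for the $\mathcal{G}$ difference; each naturally produces the asymmetric sum $\|\tilde\theta^{(s)}_1\|_{r-1}\|\tilde\theta^{(s)}_1-\tilde\theta^{(s)}_2\|_r+\|\tilde\theta^{(s)}_1\|_r\|\tilde\theta^{(s)}_1-\tilde\theta^{(s)}_2\|_{r-1}$ appearing in the lemma statement. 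The main obstacle will be the careful bookkeeping required to verify that the three linear pieces from the splitting, once added and projected by $\mathcal{Q}_0$, reproduce exactly the Fr\'echet derivative (\ref{7.2})---in particular, the linear contribution from inverting $I+a_\mu\mathcal{F}[z^{(s)}]$ applied to $2\sin(\alpha+\theta^{(s)})$ must match the second sum in (\ref{7.2}) when combined with the $\mathcal{G}$ and $\cos$ linearizations; once this cancellation is recorded, the quadratic bounds on each residual follow from the cited technical lemmas essentially by inspection.
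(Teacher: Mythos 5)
Your general strategy --- recognizing $\mathfrak{A}$ as the Taylor remainder of $\mathfrak{U}[\cdot,0,0]$ past the linear term, splitting into the $\mathcal{H}[\gamma^{(s)}]$, $\mathcal{G}[z^{(s)}]\gamma^{(s)}$, and $\cos(\alpha+\theta^{(s)})$ contributions, and estimating the quadratic residuals separately --- is the right picture, and your treatment of the $\cos$ term via Lemma \ref{coroXi}(ii) is fine. But the paper does not rederive this decomposition. It simply observes that $\mathfrak{A}[\tilde\theta^{(s)}]$, after specializing to $L=2\pi$, $\hat\theta(0;\cdot)=0$, $u_0=0$, $\beta=0$, is precisely the nonlinear residual of $U$ already made explicit as $\tfrac12\mathcal{H}(N_1+N_2+N_3)+N_4$ in (\ref{translating6.6}), and then quotes Lemma \ref{N1lem4.6}, which already contains both the $\|N_j\|_{r-1}\le C_1\exp(C_2\|\tilde\theta\|_{r-1})\|\tilde\theta\|_{r-1}\|\tilde\theta\|_r$ bound and the matching asymmetric Lipschitz bound. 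So the paper's proof is a one-line appeal to \S4, not a fresh expansion.

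The concrete gap in your plan is the claim that, after subtracting the top-order linear response of $\gamma^{(s)}$ via (\ref{Gamma3}) and the linear contribution of $(I+a_\mu\mathcal{F}[z^{(s)}])^{-1}$, ``the quadratic remainder admits the required bound via the Lipschitz estimate (\ref{gamma4.6}).'' Neither estimate can produce what the lemma asserts. Both (\ref{Gamma3}) and (\ref{gamma4.6}) are $\|\cdot\|_{r-2}$ statements, and (\ref{Gamma3}) is moreover \emph{linear} in $\|\tilde\theta\|_{r-1}$, because it only subtracts the single highest-derivative term $\sigma\theta_{\alpha\alpha}$, not the full Fr\'echet derivative of $\gamma^{(s)}$ (which includes the lower-order linear pieces collected in $\Gamma_L$ of (\ref{LGamma})). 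Since $\mathcal{H}$ is an isometry on $H^s_p$, feeding this into $\tfrac12\mathcal{H}[\cdot]$ gives you at best a linear $H^{r-2}$ bound, not the quadratic $H^{r-1}$ bound $\|\tilde\theta^{(s)}\|_{r-1}\|\tilde\theta^{(s)}\|_r$ you need. The gain of one derivative is a genuine smoothing property of the purely nonlinear pieces, and it emerges only from the explicit cancellations built into the definitions (\ref{N14.4})--(\ref{N34.6}) of $N_1,N_2,N_3$ (the $q_2/q_1$ differences in $N_3$, the commutator and $\mathfrak{W}$ structure in $N_1,N_2$, and the machinery of Lemmas \ref{insertnew}--\ref{leminsertnew4}). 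Your proposal would in effect have to reproduce that entire decomposition to close the gap; as written, it substitutes estimates that are strictly weaker than what the statement requires.
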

\begin{proof}
We identify $\mathfrak{A}[\tilde\theta^{(s)}]$ as the nonlinear part of normal velocity $U$ for $\beta=0$ in \ref{translating6.6}).
By Lemma \ref{N1lem4.6}, the statements of the Lemma follow.
\end{proof}

\begin{lemma}\label{lemathNs2}
For any $r \ge 3$, let $\| {\tilde \theta}^{(s)} \|_r $ and $u_0$
sufficiently small, then there exists $C$ independent of $u_0$ and
${\tilde \theta}^{(s)}$ so that
$$\big\|\mathfrak{B}[\tilde\theta^{(s)},u_0]\big\|_{r}\leq C|u_0|\|\tilde\theta^{(s)}\|_{r}.$$
Further, let $\mathfrak{B}^{(1)}$ and $\mathfrak{B}^{(2)}$ correspond to $(\tilde\theta^{(s)}_1,u_0^{(1)})$ and $(\tilde\theta^{(s)}_2,u_0^{(2)})$ respectively, each in $\dot H^r$. Then there exists $C$ independent of $\beta$, $u_0$ and
${\tilde \theta}^{(s)}$ so that
$$\big\|\mathfrak{B}^{(1)}-\mathfrak{B}^{(2)}\big\|_{r}\leq C\big(|u_0^{(1)}|\|\tilde\theta^{(s)}_1-\tilde\theta^{(s)}_2\|_r+\|\tilde\theta^{(s)}_1\|_{r}|u_0^{(1)}-u_0^{(2)}|\big).$$
\end{lemma}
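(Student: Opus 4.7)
The plan is to exploit the fact that at $\beta=0$, both (C.1) and the source term in (C.2) depend affinely on $u_0$ for fixed $\tilde\theta^{(s)}$. First I would split the vortex sheet strength as
\begin{equation*}
\gamma^{(s)}\big[\tilde\theta^{(s)}, u_0, 0\big] = \gamma_0^{(s)}\big[\tilde\theta^{(s)}\big] + u_0\,\gamma_1^{(s)}\big[\tilde\theta^{(s)}\big],
\end{equation*}
where $\gamma_0^{(s)}$ solves (C.2) with $u_0=0$ and $\gamma_1^{(s)}$ satisfies $(I+a_\mu\mathcal{F}[z^{(s)}])\gamma_1^{(s)} = \tfrac{2\mu_2}{\mu_1+\mu_2}\sin(\alpha+\theta^{(s)})$. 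The invertibility argument of Proposition \ref{propgamma} applies unchanged, and because the source is now $H^r$ (no curvature term), $\gamma_1^{(s)} \in \{u\in H^r_p \mid \hat u(0)=0\}$ with $\|\gamma_1^{(s)}\|_r$ uniformly bounded for $\tilde\theta^{(s)}$ in a small ball.

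Substituting the decomposition into (C.1) yields the affine identity
\begin{equation*}
\mathfrak{U}\big[\tilde\theta^{(s)}, u_0, 0\big] - \mathfrak{U}\big[\tilde\theta^{(s)}, 0, 0\big] = u_0\,R\big[\tilde\theta^{(s)}\big],\qquad R\big[\tilde\theta^{(s)}\big] = \mathcal{Q}_0\Big(\tfrac{1}{2}\mathcal{H}[\gamma_1^{(s)}] + \tfrac{1}{2}\Re\big(\mathcal{G}[z^{(s)}]\gamma_1^{(s)}\big) + \cos(\alpha+\theta^{(s)})\Big).
\end{equation*}
Since $\hat{\gamma}_1^{(s)}(0)=0$ and $\mathcal{G}[\omega_0]\gamma = i\hat\gamma(0)$ by Note \ref{noteg1}, evaluating at $\tilde\theta^{(s)}=0$ gives $R[0] = \tfrac{\mu_1}{\mu_1+\mu_2}\cos\alpha = \mathfrak{U}_{u_0}[0,0,0]$, matching Note \ref{note7.3}. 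Therefore $\mathfrak{B}[\tilde\theta^{(s)}, u_0] = u_0\big(R[\tilde\theta^{(s)}] - R[0]\big)$, and the whole estimate reduces to proving $\|R[\tilde\theta^{(s)}] - R[0]\|_r \le C\|\tilde\theta^{(s)}\|_r$.

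The three contributions to $R - R[0]$ are handled independently: the Hilbert transform piece uses the $H^r$-isometry of $\mathcal{H}$ combined with a Proposition~\ref{propgamma}-style difference bound on $\gamma_1^{(s)}[\tilde\theta^{(s)}]-\gamma_1^{(s)}[0]$; the $\mathcal{G}[z^{(s)}]\gamma_1^{(s)}$ piece is split as $(\mathcal{G}[z^{(s)}]-\mathcal{G}[\omega_0])\gamma_1^{(s)} + \mathcal{G}[\omega_0](\gamma_1^{(s)}-\gamma_1^{(s)}[0])$, with the first term bounded by Lemma \ref{lem3.11} (at $\beta=0$) and the second vanishing because $\hat\gamma(0)=0$ for both arguments; the cosine difference is bounded by Lemma \ref{coroXi} via Note \ref{notecoroXi}. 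The Lipschitz statement follows from the decomposition
\begin{equation*}
\mathfrak{B}^{(1)} - \mathfrak{B}^{(2)} = u_0^{(1)}\big(R[\tilde\theta_1^{(s)}] - R[\tilde\theta_2^{(s)}]\big) + \big(u_0^{(1)}-u_0^{(2)}\big)\big(R[\tilde\theta_2^{(s)}] - R[0]\big)
\end{equation*}
together with the Lipschitz forms of the same three estimates.

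The main obstacle I anticipate is establishing the full $H^r$ (rather than $H^{r-2}$) bounds and Lipschitz dependence for $\gamma_1^{(s)}$ on $\tilde\theta^{(s)}$: Proposition \ref{propgamma} as stated loses two derivatives because of the $\sigma\theta_{\alpha\alpha}$ source in (B.3), but here that term is absent, so repeating its inversion argument with the smoother source $\sin(\alpha+\theta^{(s)})$ should recover the full $H^r$ regularity. Verifying that no derivative is secretly lost in the commutator manipulations behind Lemma \ref{lem3.11} when applied to the smoother source is the step that would need the most careful bookkeeping.
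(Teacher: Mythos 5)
Your proposal is correct, and its skeleton matches the paper's: both isolate the effect of turning on $u_0$ in (C.2), express $\mathfrak{B}$ through the change in vortex-sheet strength wrapped in the smoothing operator $\mathcal{G}[z^{(s)}]-\mathcal{G}[\omega_0]$, and then invoke Lemmas \ref{lem3.1}, \ref{lem3.11}, \ref{coroXi}. The genuine difference is in how the missing derivatives are recovered. You make the exact $u_0$-linearity of (C.2) explicit by factoring $\gamma^{(s)} - \gamma_0^{(s)} = u_0\,\gamma_1^{(s)}$, check $R[0]=\mathfrak{U}_{u_0}[0,0,0]$ against Note \ref{note7.3}, and then aim at a direct $H^r$ bound on $\gamma_1^{(s)}[\tilde\theta^{(s)}]-\gamma_1^{(s)}[0]$ by bootstrapping the fixed-point equation with the smoother source $\sin(\alpha+\theta^{(s)})$ (you correctly flag this as the step needing care, and it does go through since $\mathcal{F}[z^{(s)}]-\mathcal{F}_1[\omega_0]: H^1\to H^r$ with norm $\lesssim \|\tilde\theta^{(s)}\|_r$). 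The paper instead keeps only the low-regularity bound $\|\gamma^{(u_0)}-\gamma^{(u_0)}_0\|_1\le C|u_0|$ (which, by the very linearity you exploit, equals $|u_0|\,\|\gamma_1^{(s)}\|_1$) and then re-substitutes the fixed-point relation (\ref{mathfrakB2}) back under the Hilbert transform in $\mathfrak{B}$, so that $\mathcal{H}$ acts only on quantities already produced by $\mathcal{G}[z^{(s)}]-\mathcal{G}[\omega_0]$ and the $r-1$ extra derivatives come entirely from the smoothing operator. Your route buys a cleaner conceptual statement ($\mathfrak{B}=u_0(R[\tilde\theta^{(s)}]-R[0])$, with the $u_0$ factor coming out automatically) at the cost of an extra regularity bootstrap for $\gamma_1^{(s)}$; the paper's route needs no $H^r$ estimate on the vortex sheet at all, at the cost of a somewhat bulkier algebraic rearrangement of $\mathfrak{B}$. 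Both deliver the stated bound and its Lipschitz form.
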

\begin{proof}
Let $\gamma^{(u_0)}$  correspond to $(\tilde\theta^{(s)},u_0,0)$, while $\gamma^{(u_0)}$ corresponds to $(\tilde\theta^{(s)},0,0)$. Then by (\ref{1.12}), we obtain
\begin{multline}\label{mathfrakB1}
\mathfrak{B}[\tilde\theta^{(s)},u_0]=\frac{1}{2}\mathcal{H}[\gamma^{(u_0)}-\gamma^{(u_0)}_0]+\frac{1}{2}\Re\big(\mathcal{G}[z^{(s)}](\gamma^{(u_0)}-\gamma^{(u_0)}_0)\big)\\+u_0\left(\cos\big(\alpha+\tilde\theta^{(s)}(\alpha)\big)-\frac{\mu_1}{\mu_1+\mu_2}\cos\alpha\right).
\end{multline}
For (C.2) and the relation between $\mathcal{F}$ and $\mathcal{G}$, we also have
\begin{multline}\label{mathfrakB2}
\gamma^{(u_0)}-\gamma^{(u_0)}_0=-a_\mu\Re\big(\frac1i\mathcal{G}[z^{(s)}](\gamma^{(u_0)}-\gamma^{(u_0)}_0)-\frac1i\mathcal{G}[\omega_0](\gamma^{(u_0)}-\gamma^{(u_0)}_0)\big)\\+2u_0\frac{\mu_2}{\mu_1+\mu_2}\sin\big(\alpha+\tilde\theta^{(s)}\big).
\end{multline}
By Lemma \ref{lem3.11} (for $\beta=0$ and $L^{(1)}=L^{(2)}=2\pi$), from (\ref{mathfrakB2}), we have
$$\|\gamma^{(u_0)}-\gamma^{(u_0)}_0\|_1\leq C (\|\tilde\theta^{(s)}\|_r\|\gamma^{(u_0)}-\gamma^{(u_0)}_0\|_1+|u_0|\big).$$
Hence for sufficient small $\|\tilde\theta^{(s)}\|_r$, we have
\begin{equation}\label{gammau0}
\|\gamma^{(u_0)}-\gamma^{(u_0)}_0\|_1\leq C|u_0|.
\end{equation}
Plugging (\ref{mathfrakB2}) into (\ref{mathfrakB1}), we have
\begin{multline*}
\mathfrak{B}[\tilde\theta^{(s)},u_0]=\frac12\mathcal{H}\left[a_\mu\Re\big(\frac1i\mathcal{G}[z^{(s)}](\gamma^{(u_0)}-\gamma^{(u_0)}_0)-\frac1i\mathcal{G}[\omega_0](\gamma^{(u_0)}-\gamma^{(u_0)}_0)\big)\right]\\
+\frac12\Re\big(\mathcal{G}[z^{(s)}](\gamma^{(u_0)}-\gamma^{(u_0)}_0)-\mathcal{G}[\omega_0](\gamma^{(u_0)}-\gamma^{(u_0)}_0)\big)\\+u_0\left(\Big(\cos\big(\alpha+\tilde\theta^{(s)}(\alpha)\big)-\cos\alpha\Big)+\frac{\mu_2}{\mu_1+\mu_2}\mathcal{H}\Big[\sin\big(\eta+\tilde\theta^{(s)}\big)-\sin(\eta)\Big](\alpha)\right).
\end{multline*}
Hence, by Lemmas \ref{lem3.1}, \ref{lem3.11} (for $\beta=0$ and $L^{(1)}=L^{(2)}=2\pi$) and (\ref{gammau0}), we have the first statement.

For the difference term, by Lemmas \ref{lem3.1}, \ref{lem3.11} (for $\beta=0$ and $L^{(1)}=L^{(2)}=2\pi$) and Proposition \ref{propgamma}.
\end{proof}

\begin{lemma}\label{lemathNs3}
For any $r \ge 3$, assume $\| {\tilde \theta}^{(s)} \|_r $, $u_0$ and $\beta$
are sufficiently small. Then there exists $C$ independent of $\beta$, $u_0$ and
${\tilde \theta}^{(s)}$ so that
$$\big\|\mathfrak{C}[\tilde\theta^{(s)},u_0]\big\|_{r}\leq C\big(\beta^2|u_0|+\beta^2\|\tilde\theta^{(s)}\|_{r}+\beta^4\big).$$
Further, suppose $\mathfrak{C}^{(1)}$ and $\mathfrak{C}^{(2)}$ correspond to $(\tilde\theta^{(s)}_1,u_0^{(1)},\beta)$ and $(\tilde\theta^{(s)}_2,u_0^{(2)},\beta)$ respectively, each in $\dot H^r$. Then there exists $C$ independent of $\beta$, $u_0$ and
${\tilde \theta}^{(s)}$ so that
$$\big\|\mathfrak{C}^{(1)}-\mathfrak{C}^{(2)}\big\|_{r}\leq C\beta^2\big(\|\tilde\theta^{(s)}_1-\tilde\theta^{(s)}_2\|_r+|u_0^{(1)}-u_0^{(2)}|\big).$$
\end{lemma}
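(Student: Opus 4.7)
The strategic observation is that the $\beta$-dependence of $\mathfrak{U}$ enters \emph{only} through the operator $\mathcal{K}_2$ (and hence $\mathcal{G}_2$, $\mathcal{F}_2$), and that the two defining kernels $\tfrac{\beta}{4}l_1(\tfrac{\beta w}{4})$ and $\tfrac{\beta}{4}\tanh(\tfrac{\beta w}{4})$ are \emph{even} analytic functions of $\beta$ starting at $O(\beta^2)$ (so in particular no $\beta^3$ term appears). The plan is to use this parity to isolate the $O(\beta^4)$ tail from the $O(\beta^2)$ leading part of the expansion, and then to absorb the $(\tilde\theta^{(s)},u_0)$-dependence of the leading part into a Lipschitz perturbation.

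First I would set $\mathcal{E}(\beta):=\mathfrak{U}[\tilde\theta^{(s)},u_0,\beta]-\mathfrak{U}[\tilde\theta^{(s)},u_0,0]$ and derive a workable formula. Subtracting the copies of (C.2) at $\beta$ and at $\beta=0$ (with $(\tilde\theta^{(s)},u_0)$ fixed) yields
$$(I+a_\mu\mathcal{F}_1[z^{(s)}])\bigl(\gamma^{(s)}(\beta)-\gamma^{(s)}(0)\bigr)=-a_\mu\mathcal{F}_2[z^{(s)}]\gamma^{(s)}(\beta),$$
and Note \ref{mathcalK2} together with the uniform invertibility of $I+a_\mu\mathcal{F}_1[z^{(s)}]$ established in the proof of Proposition \ref{propgamma} give $\|\gamma^{(s)}(\beta)-\gamma^{(s)}(0)\|_{r-2}\le C\beta^2$. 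Substituting back into (C.1), decomposing $\mathcal{G}=\mathcal{G}_1+\mathcal{G}_2$, and invoking Lemmas \ref{lem6.2}, \ref{lem6.3} and \ref{lem3.11} yield $\|\mathcal{E}(\beta)\|_r\le C\beta^2$, with $C$ uniform in $(\tilde\theta^{(s)},u_0)$ in the relevant small ball. The same estimates, applied to differences in $(\tilde\theta^{(s)},u_0)$, show that at fixed $\beta$ the map $(\tilde\theta^{(s)},u_0)\mapsto\mathcal{E}(\beta)$ is Lipschitz with constant of order $\beta^2$.

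I now decompose
$$\mathfrak{C}=\Bigl[\mathcal{E}(\beta)-\mathcal{E}(\beta)|_{(0,0)}\Bigr]+\Bigl[\mathcal{E}(\beta)|_{(0,0)}-\tfrac{\beta^2}{2}\mathfrak{U}_{\beta\beta}[0,0,0]\Bigr],$$
where $|_{(0,0)}$ denotes evaluation at $(\tilde\theta^{(s)},u_0)=(0,0)$. The first bracket is bounded by $C\beta^2(\|\tilde\theta^{(s)}\|_r+|u_0|)$ thanks to the Lipschitz property just established. For the second bracket, which is the Taylor remainder in $\beta$ of the $(\tilde\theta^{(s)},u_0)$-independent quantity $\mathfrak{U}[0,0,\beta]$, I would expand
$$\tfrac{\beta}{4}l_1\bigl(\tfrac{\beta w}{4}\bigr)=\tfrac{\beta^2 w}{48}+\beta^4 R_1(\beta w),\qquad\tfrac{\beta}{4}\tanh\bigl(\tfrac{\beta w}{4}\bigr)=\tfrac{\beta^2 w}{16}+\beta^4 R_2(\beta w),$$
where $R_1,R_2$ are entire and uniformly bounded on $|\beta w|\le\pi$ by (\ref{z3.21}), (\ref{zstar}) and $y(0,t)\in S_M$ for $\beta<\Upsilon$. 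The $\beta^2$ pieces reproduce $\tfrac{\beta^2}{2}\mathfrak{U}_{\beta\beta}[0,0,0]$ by the very definition of the second Fr\'echet derivative, while the $R_j$-parts define operators bounded from $H^0_p$ to $H^{r-1}_p$ by the same argument as in Lemma \ref{lem6.2} but carrying an extra factor $\beta^2$, so the second bracket is $O(\beta^4)$. The Lipschitz statement follows from the same decomposition: the $\mathfrak{U}_{\beta\beta}[0,0,0]$ subtraction is $(\tilde\theta^{(s)},u_0)$-independent and cancels in $\mathfrak{C}^{(1)}-\mathfrak{C}^{(2)}$, while everything else inherits the $O(\beta^2)$ Lipschitz constant of $\mathcal{E}$ through Lemmas \ref{lem3.11}, \ref{lem6.3} and Proposition \ref{propgamma}.

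The step I expect to be most delicate is the $O(\beta^4)$ bound for the second bracket: one must verify that the remainders $R_1,R_2$ produce operators bounded by $C\beta^2$ on the relevant Sobolev spaces, so that combined with the explicit $\beta^2$ prefactor coming from the kernel expansion one gets the net $\beta^4$. This amounts to re-running the proof of Lemma \ref{lem6.2} on the remainder kernels in place of the full $\mathcal{K}_2$, using the Banach-algebra property and analyticity of $R_1,R_2$ on the uniform disk guaranteed by (\ref{z3.21})--(\ref{zstar}). Once this bookkeeping is done, the remaining bounds are routine applications of Lemmas \ref{lem3.1}, \ref{lem6.2}, \ref{lem6.3}, \ref{lem3.11} and Proposition \ref{propgamma}.
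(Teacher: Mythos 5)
Your proof takes essentially the same route as the paper's: the $\beta$-dependence is isolated in $\mathcal{K}_2$, the two copies of (C.2) are subtracted to control $\gamma^{(s)}(\beta)-\gamma^{(s)}(0)$, the $O(\beta^2)$ smoothing of $\mathcal{K}_2$ (Note \ref{mathcalK2}) is used, and the leading $\beta^2$ coefficient is extracted by Taylor expansion in $\beta$ at $(\tilde\theta^{(s)},u_0)=(0,0)$. Your decomposition into $[\mathcal{E}(\beta)-\mathcal{E}(\beta)|_{(0,0)}] + [\mathcal{E}(\beta)|_{(0,0)}-\tfrac{\beta^2}{2}\mathfrak{U}_{\beta\beta}[0,0,0]]$ is an abstract repackaging of the concrete estimates (\ref{gammau1}), (\ref{gammas0}), (\ref{mathcalG2}), (\ref{mathcalG21}) the paper derives, and your parity argument for the $O(\beta^4)$ tail is the conceptual version of the paper's explicit computation $\bigl(\mathcal{G}_2[i\omega_0](2\sin\eta)\bigr)_{\beta\beta}\big|_{\beta=0}=\tfrac{e^{i\alpha}}{3}$.

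One point needs fixing. You write $\|\gamma^{(s)}(\beta)-\gamma^{(s)}(0)\|_{r-2}\le C\beta^2$ and then conclude $\|\mathcal{E}(\beta)\|_r\le C\beta^2$. But $\mathcal{E}(\beta)$ contains $\tfrac{1}{2}\mathcal{H}\bigl[\gamma^{(s)}(\beta)-\gamma^{(s)}(0)\bigr]$, and the Hilbert transform gains no derivatives, so an $H^{r-2}$ bound on the vortex-sheet difference only yields $\|\mathcal{E}(\beta)\|_{r-2}\le C\beta^2$. The bound you actually need, $\|\gamma^{(s)}(\beta)-\gamma^{(s)}(0)\|_r\le C\beta^2$, does hold — the right side of your subtracted equation is $-a_\mu\mathcal{F}_2[z^{(s)}]\gamma^{(s)}(\beta)$, and Note \ref{mathcalK2} maps $H^0_p$ into $H^{r}_p$ with a $\beta^2$ prefactor, which is precisely the extra smoothing the paper uses to obtain (\ref{gammau1}) in the $H^r$ norm — but this must be invoked explicitly rather than relying on the generic $H^{r-2}$ regularity of Proposition \ref{propgamma}. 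A minor notational slip: the kernel remainders are of the form $\beta^4 w^3\,\widetilde R_j(\beta w)$ rather than $\beta^4 R_j(\beta w)$; since $w$ ranges over a bounded set, this does not affect the $O(\beta^4)$ conclusion.
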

\begin{proof}
Suppose $\gamma^{(s)}_0$ satisfying (C.2)   corresponds to $(\tilde\theta^{(s)},u_0,0)$. Then for (\ref{1.12}),
\begin{multline}\label{mathfrakC1}
\mathfrak{C}[\tilde\theta^{(s)},u_0,\beta]=\frac{1}{2}\mathcal{H}[\gamma^{(s)}-\gamma^{(s)}_0]+\frac{1}{2}\Re\big(\mathcal{G}_1[z^{(s)}](\gamma^{(s)}-\gamma^{(s)}_0)\big)+\frac{1}{2}\Re\big(\mathcal{G}_2[z^{(s)}]\gamma^{(s)}\big)-\frac{\beta^2}{6}(1+a_\mu)\cos\alpha\\
=\frac{1}{2}\mathcal{H}\left[\gamma^{(s)}-\gamma^{(s)}_0+a_\mu\frac{\beta^2}{12}\sin\eta\right](\alpha)+\frac12\Re\big(\mathcal{G}[z^{(s)}](\gamma^{(s)}-\gamma^{(s)}_0)-\mathcal{G}_1[\omega_0](\gamma^{(s)}-\gamma^{(s)}_0)\big)\\+\frac12\Re\big(\mathcal{G}_2[z^{(s)}]\gamma^{(s)}_0-\mathcal{G}_2[i\omega_0]\gamma^{(s)}_0\big)+\frac12 \Re\big(\mathcal{G}_2[i\omega_0]\big(\gamma^{(s)}_0-2\sin\eta)(\alpha)\big)\\
+\frac12\Re\big(\mathcal{G}_2[i\omega_0](2\sin\eta)(\alpha)-\frac{\beta^2}{6}\cos\alpha\big).
\end{multline}
For (C.2), we also have
\begin{equation}\label{mathfrakC2}
\gamma^{(s)}-\gamma^{(s)}_0=-a_\mu\Re\big(\frac1i\mathcal{G}_1[z^{(s)}](\gamma^{(s)}-\gamma^{(s)}_0)-\frac1i\mathcal{G}_1[\omega_0](\gamma^{(s)}-\gamma^{(s)}_0)\big)-a_\mu\Re\big(\frac1i\mathcal{G}_2[z^{(s)}]\gamma^{(s)}\big).
\end{equation} Proposition \ref{propgamma} gives us
$$\|\gamma^{(s)}\|_1\leq C,\,\,\|\gamma_0^{(s)}\|_1\leq C.$$
By Lemma \ref{lem3.11} (for $\beta=0$ and $L^{(1)}=L^{(2)}=2\pi$), Note \ref{mathcalK2} and (\ref{mathfrakC2}), we have
$$\|\gamma^{(s)}-\gamma^{(s)}_0\|_r\leq C (\|\tilde\theta^{(s)}\|_r\|\gamma^{(s)}-\gamma^{(s)}_0\|_1+\beta^2\big).$$
Hence for sufficient small $\|\tilde\theta^{(s)}\|_r$, we have
\begin{equation}\label{gammau1}
\|\gamma^{(s)}-\gamma^{(s)}_0\|_r\leq C\beta^2.
\end{equation}
(\ref{mathfrakC2}) can be rewritten as
\begin{multline}\label{gammas1}
\gamma^{(s)}-\gamma^{(s)}_0+a_\mu\frac{\beta^2}{6}\sin\alpha=-a_\mu\Re\big(\frac1i\mathcal{G}[z^{(s)}](\gamma^{(s)}-\gamma^{(s)}_0)-\frac1i\mathcal{G}_1[\omega_0](\gamma^{(s)}-\gamma^{(s)}_0)\big)\\-a_\mu\Re\big(\frac1i\mathcal{G}_2[z^{(s)}]\gamma^{(s)}_0-\frac1i\mathcal{G}_2[i\omega_0]\gamma^{(s)}_0\big)-a_\mu\Re\big(\frac1i\mathcal{G}_2[i\omega_0]\big(\gamma^{(s)}_0-2\sin\eta)(\alpha)\big)\\
-a_\mu\Re\big(\frac1i\mathcal{G}_2[i\omega_0](2\sin\eta)(\alpha)-\frac{\beta^2}{6}\sin\alpha\big).
\end{multline}
We see from (C.2) that
\begin{multline*}
\gamma^{(s)}_0-2\sin\alpha=-a_\mu\Re\big(\frac1i\mathcal{G}_1[z^{(s)}]\gamma^{(s)}_0-\frac1i\mathcal{G}_1[\omega_0]\gamma^{(s)}_0\big)\\+2\Big(\sin(\alpha+\tilde\theta^{(s)}\big)-2\sin\alpha\Big)+2u_0\frac{\mu_2}{\mu_1+\mu_2}\sin(\alpha+\tilde\theta^{(s)}\big)+\sigma\tilde\theta^{(s)}_{\alpha\alpha}.
\end{multline*}
Hence by Lemmas \ref{lem3.1} and \ref{lem3.11} (for $\beta=0$ and $L^{(1)}=L^{(2)}=2\pi$), we have from above
\begin{equation}\label{gammas0}
\|\gamma^{(s)}_0-2\sin(\cdot)\|_1\leq C\big(\|\tilde\theta^{(s)}\|_r+|u_0|\big).
\end{equation}
We know the first derivative of $\mathcal{G}_2[i\omega_0](2\sin\eta)(\alpha)$ with respect to $\beta$ at $\beta=0$ is equal to 0. On calculation,
\begin{equation*}
\Big(\mathcal{G}_2[i\omega_0](2\sin\eta)(\alpha)\Big)_{\beta\beta}\Big|_{\beta=0}=\frac{e^{i\alpha}}{3}.
\end{equation*}
Hence for sufficiently small $\beta$, by Taylor expansion, we have
\begin{equation}\label{mathcalG2}
\left\|\mathcal{G}_2[i\omega_0](2\sin\eta)(\alpha)-\frac{\beta^2}{6}e^{i\alpha}\right\|_r\le C\beta^4.
\end{equation}
By Lemmas \ref{lem6.3}, \ref{lem3.11} (for $\beta=0$ and $L^{(1)}=L^{(2)}=2\pi$), Note \ref{mathcalK2}, (\ref{gammas0}) and (\ref{mathcalG2}), from (\ref{gammas1}) we get
\begin{equation}\label{mathcalG21}
\left\|\gamma^{(s)}-\gamma^{(s)}_0+a_\mu\frac{\beta^2}{6}\sin(\cdot)\right\|_r\leq C\big(\beta^2\|\tilde\theta^{(s)}\|_r+\beta^2u_0+\beta^4\big).
\end{equation}
Hence, by Lemma \ref{lem3.11}, (\ref{gammas0}), (\ref{mathcalG2}) and (\ref{mathcalG21}), the first statement is obtained.

The proof for the second statement follows similarly.
\end{proof}

Hence we have
\begin{lemma}
\label{lemathfrakNs}
For any $r \ge 3$, assume $\| {\tilde \theta} \|_r $, $u_0$ and $\beta$
are sufficiently small. Then there exists $C$ independent of $\beta$, $u_0$ and
${\tilde \theta}$ so that
\begin{equation}
\| \mathfrak{N}^{(s)} \|_{r-1}
\le C \left [ |u_0| \| {\tilde \theta}\|_r +|u_0|\beta^2+ \beta^4 + \beta^2\|\tilde\theta\|_r+
\| {\tilde \theta} \|_r \| {\tilde \theta} \|_{r-1} \right ].
\end{equation}
Further, suppose $\mathfrak{N}^{(s)}_1$ and $\mathfrak{N}^{(s)}_2$ correspond to $(\tilde\theta^{(s)}_1,u_0^{(1)},\beta)$ and $(\tilde\theta^{(s)}_2,u_0^{(2)},\beta)$ respectively, each in $\dot H^r$. Then there exists $C$ independent of $\beta$, $u_0$ and
${\tilde \theta}^{(s)}$ so that
\begin{multline*}
\big\|\mathfrak{N}^{(s)}_1-\mathfrak{N}^{(s)}_2\big\|_{r-1}\leq C\Big(\beta^2\big(\|\tilde\theta^{(s)}_1-\tilde\theta^{(s)}_2\|_r+|u_0^{(1)}-u_0^{(2)}|\big)
+\|\tilde\theta^{(s)}_1\|_{r-1}\|\tilde\theta^{(s)}_1-\tilde\theta^{(s)}_2\|_r\\+\|\tilde\theta^{(s)}_1\|_{r}\|\tilde\theta^{(s)}_1-\tilde\theta^{(s)}_2\|_{r-1}+|u_0^{(1)}|\|\tilde\theta^{(s)}_1-\tilde\theta^{(s)}_2\|_r+\|\tilde\theta^{(s)}_1\|_{r}|u_0^{(1)}-u_0^{(2)}|\Big).
\end{multline*}
\end{lemma}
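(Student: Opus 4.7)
The plan is to observe that the lemma is an immediate consequence of the decomposition $\mathfrak{N}^{(s)} = \mathfrak{A} + \mathfrak{B} + \mathfrak{C}$ given in (\ref{stead2}), combined with the triangle inequality in $H^{r-1}_p$ and the three component estimates established in Lemmas \ref{lemathNs1}, \ref{lemathNs2}, and \ref{lemathNs3}. Since $r-1 \le r$ and the Sobolev norms are monotone in the index, the bounds in $H^r_p$ for $\mathfrak{B}$ and $\mathfrak{C}$ automatically yield bounds in $H^{r-1}_p$.

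For the first (size) estimate, I would apply the triangle inequality
\[
\|\mathfrak{N}^{(s)}\|_{r-1} \le \|\mathfrak{A}[\tilde\theta^{(s)}]\|_{r-1} + \|\mathfrak{B}[\tilde\theta^{(s)},u_0]\|_{r-1} + \|\mathfrak{C}[\tilde\theta^{(s)},u_0,\beta]\|_{r-1},
\]
and substitute the three bounds: $\|\mathfrak{A}\|_{r-1} \le C\|\tilde\theta^{(s)}\|_{r-1}\|\tilde\theta^{(s)}\|_r$ from Lemma \ref{lemathNs1}, $\|\mathfrak{B}\|_{r-1} \le \|\mathfrak{B}\|_r \le C|u_0|\|\tilde\theta^{(s)}\|_r$ from Lemma \ref{lemathNs2}, and $\|\mathfrak{C}\|_{r-1} \le \|\mathfrak{C}\|_r \le C(\beta^2|u_0| + \beta^2\|\tilde\theta^{(s)}\|_r + \beta^4)$ from Lemma \ref{lemathNs3}. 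Collecting these terms produces exactly the stated estimate.

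For the second (Lipschitz) estimate I would proceed identically: write
\[
\mathfrak{N}^{(s)}_1 - \mathfrak{N}^{(s)}_2 = (\mathfrak{A}^{(1)} - \mathfrak{A}^{(2)}) + (\mathfrak{B}^{(1)} - \mathfrak{B}^{(2)}) + (\mathfrak{C}^{(1)} - \mathfrak{C}^{(2)}),
\]
take $\|\cdot\|_{r-1}$, and insert the three difference bounds from the same trio of lemmas, again using $\|\cdot\|_{r-1} \le \|\cdot\|_r$ for the $\mathfrak{B}$ and $\mathfrak{C}$ parts. The resulting sum matches the stated right-hand side term by term.

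There is no real obstacle here; the work has already been done in separating $\mathfrak{N}^{(s)}$ into the three pieces that isolate, respectively, the purely nonlinear dependence on $\tilde\theta^{(s)}$ (the piece $\mathfrak{A}$), the cross dependence on $u_0$ and $\tilde\theta^{(s)}$ (the piece $\mathfrak{B}$), and the $\beta$-dependence beyond the leading order (the piece $\mathfrak{C}$, which is shown to be at least $O(\beta^2)$ times smallness or $O(\beta^4)$). The only point one should make explicit is that the specific structure of the bounds — in particular the factor $\beta^2$ in $\mathfrak{C}$ and its difference, and the quadratic/bilinear nature of $\mathfrak{A}$ and $\mathfrak{B}$ — is exactly what is needed later to close the contraction argument for the steady problem in \S 5.3, since every term on the right-hand side is either quadratically small in $(\tilde\theta^{(s)}, u_0)$ or carries a $\beta^2$ smallness factor.
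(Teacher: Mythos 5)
Your proposal is correct and is essentially identical to the paper's proof, which states simply that the two estimates follow by combining Lemmas \ref{lemathNs1}, \ref{lemathNs2}, and \ref{lemathNs3} via the decomposition $\mathfrak{N}^{(s)} = \mathfrak{A} + \mathfrak{B} + \mathfrak{C}$ from (\ref{stead2}). You have filled in the routine triangle-inequality and $\|\cdot\|_{r-1}\le\|\cdot\|_r$ details that the paper leaves implicit.
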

\begin{proof}
Combining Lemmas \ref{lemathNs1}, \ref{lemathNs2} and \ref{lemathNs3}, the two statements are obtained.
\end{proof}
\begin{definition}\label{def7.4}We  define the linear operator $A$ on $u\in \dot{H}^r$ by
\begin{equation}\label{7.3}
Au=-\frac{\sigma}2
u_{\alpha\alpha}-\sum_{k=2}^\infty(1+a_\mu)\frac{k+1}{k+2}\hat{u}(k+1)e^{ik\alpha}-\sum_{k=-2}^{-\infty}(1+a_\mu)\frac{k-1}{k-2}\hat{u}(k-1)e^{ik\alpha}.
\end{equation}
\end{definition}
\begin{prop}\label{prop7.5} For $r\ge3$,
the linear operator $A: \dot{H}^r\rightarrow \dot{H}^{r-2}$,
 is invertible. Further, $\|A^{-1}f\|_r\leq C_r\|f\|_{r-2}$, for any $f\in\dot{H}^{r-2}$.
\end{prop}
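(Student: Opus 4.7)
The plan is to establish invertibility of $A$ by the Fredholm alternative after isolating the principal part. Write $A = A_0 + B$ where $A_0 u = -\tfrac{\sigma}{2} u_{\alpha\alpha}$ and $B$ collects the two Fourier-shift sums. In Fourier space $A_0$ acts diagonally on modes $|k|\ge 2$ by multiplication by $\sigma k^2/2$, so $A_0:\dot{H}^r\to \dot{H}^{r-2}$ is an isomorphism with $\|A_0^{-1}f\|_r\le (2/\sigma)\|f\|_{r-2}$ trivially. The Fourier symbol of $B$ is bounded by $(1+a_\mu)\le 2$ and $B$ merely shifts the Fourier index by one; a direct calculation shows $B:\dot{H}^r\to\dot{H}^r$ is bounded.

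Consequently $A_0^{-1}B$ factors as $B:\dot{H}^r\to\dot{H}^r$ followed by $A_0^{-1}:\dot{H}^r\to\dot{H}^{r+2}$, and since the embedding $\dot{H}^{r+2}\hookrightarrow\dot{H}^r$ on the torus is compact by Rellich, $A_0^{-1}B$ is a compact operator on $\dot{H}^r$. Writing $A=A_0(I+A_0^{-1}B)$ therefore displays $A:\dot{H}^r\to\dot{H}^{r-2}$ as a Fredholm operator of index zero, and invertibility reduces to injectivity.

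To verify injectivity, suppose $Au=0$. The $k$-th Fourier component of the equation for $k\ge 2$ reads
\[\frac{\sigma k^2}{2}\hat u(k)=(1+a_\mu)\frac{k+1}{k+2}\hat u(k+1),\]
which gives $\hat u(k)=c_k\hat u(k+1)$ with $|c_k|\le C/k^2$. Iterating,
\[\hat u(k)=\Bigl(\prod_{l=k}^{N-1}c_l\Bigr)\hat u(N),\qquad N>k,\]
where the product tends to $0$ as $N\to\infty$; combined with $\hat u(N)\to 0$ (from $u\in L^2$), this forces $\hat u(k)=0$ for every $k\ge 2$. A symmetric recursion handles $k\le -2$, and the restriction to $\dot{H}^r$ eliminates the modes $|k|\le 1$. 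Hence $u\equiv 0$, $A$ is bijective, and the open mapping theorem delivers the desired bound $\|A^{-1}f\|_r\le C_r\|f\|_{r-2}$.

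The main (and really only) obstacle is the injectivity step; it rests on the dissipative dominance of the surface-tension diagonal $\sigma k^2/2$ over the bounded off-diagonal coupling, so that each ratio $c_l=O(1/l^2)$ and the iterated products collapse to zero. An alternative route would solve the upper-triangular recursion directly via $\hat u(k)=\sum_{j\ge 0}M_{k,j}\hat f(k+j)$ and bound by Schur's test applied to the weighted kernel $k^r M_{k,j}(k+j)^{-(r-2)}$, but the Fredholm approach is much more economical.
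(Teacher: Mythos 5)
Your proof is correct and takes a genuinely different route from the paper. The paper proceeds constructively: it picks an integer $K$ with $\sigma K^2 \ge 8$, applies the Lax--Milgram lemma to the bilinear form $E_K$ associated with the restriction $A_K=\mathcal{Q}_K A$ on the high-frequency tail space $Z_r$ (coercivity comes from the dominance of $\sigma k^2/2$ for $|k|\ge K$), and then inverts an explicit upper-triangular $2(K-2)\times 2(K-2)$ system for the remaining modes $2\le|k|\le K-1$; combining the two pieces gives the quantitative bound $\|A^{-1}f\|_r\le C_r\|f\|_{r-2}$ directly with an explicit $\sigma$-dependence. You instead write $A=A_0(I+A_0^{-1}B)$ with $A_0=-\tfrac{\sigma}{2}D_\alpha^2$; since $B$ is a bounded shift operator on $\dot H^r$ and $A_0^{-1}$ gains two derivatives, $A_0^{-1}B$ is compact by Rellich, so $A$ is Fredholm of index zero and invertibility reduces to the kernel, which you kill via the explicit recursion $\hat u(k)=c_k\hat u(k+1)$ with $|c_k|=O(1/k^2)$ and the fact that $\hat u(N)\to 0$. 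Both arguments are valid. The trade-off is the usual one: your Fredholm approach is shorter and more conceptual, but the constant $C_r$ comes out of the open mapping theorem and is therefore non-constructive, whereas the paper's Lax--Milgram and explicit-system route, while more laborious, tracks the $\sigma$-dependence explicitly (which is useful given the paper's emphasis on arbitrary nonzero surface tension). One small stylistic point: for the injectivity step it suffices to observe that the partial products $\prod_{l=k}^{N-1}|c_l|$ stay bounded in $N$ (since $|c_l|<1$ eventually), so that $|\hat u(k)|\le C\,|\hat u(N)|\to 0$; you do not actually need the product itself to tend to zero, though of course it does.
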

\begin{proof}
For any surface tension $\sigma$, there exists the integer $K>2$
such that $n^2 \geq\frac{8}{\sigma}$ for any $|n|\geq K$.  Let us
define a family of  the spaces $Z_r:=\big\{u\in \dot{H}^r|
\mathcal{Q}_Ku=u\big\}$ with $r\geq 0$. We define the linear
operator $A_K:=\mathcal{Q}_KA$, which maps from $Z_r$ to $Z_{r-2}$.
The corresponding bilinear mapping $ E_K: Z_1\times Z_1 \rightarrow
\mathbb{R}$ is defined  by\begin{align*}
E_K[u,v]=2\Re\left(\sum_{k=K}^\infty\big[\frac{\sigma}{2}k^2\hat{u}(k)-(1+a_\mu)\frac{k+1}{k+2}\hat{u}(k+1)\big]\hat{v}(-k)\right)
\end{align*} for any $u,v\in Z_1$.

\medskip

It is easy to see that  there exist $a>0$  such that
$$\big|E_K[u,v]\big|\leq a \|u\|_{1}\|v\|_{1},$$
and \begin{eqnarray*} E_K[u,u]&\geq&\frac{\sigma}{2}
\|u\|_{1}^2-3\Big|\sum_{k=K}^\infty \hat{u}(k+1)\hat{u}(-k)
+\sum_{k=-K}^{-\infty}\hat{u}(k-1)\hat{u}(-k)\Big|\nonumber\\
&\geq& \frac{\sigma}2 \|u\|_{1}^2-2\|u\|_0^2\geq
\frac{\sigma}{4}\|u\|_1^2,
\end{eqnarray*}
the last inequality is the reason that for $|n|\geq K$, we have
$\frac{\sigma}{4}n^2\geq 2$.

\medskip

Hence by Lax-Milgram theorem, we see
that for any $f\in \dot{H}^{r-2}$, there exists only one $u_K\in Z_1$ such
that $E_K[u_K,v]=(\mathcal{Q}_Kf,v)_{L^2}$ for any $v\in Z_1$ and so
$A_Ku_K=\mathcal{Q}_Kf$ for some $u_K\in Z_1$. We also have
\begin{multline}\label{Ainverse}
\|\mathcal{Q}_Kf\|_r^2
\ge2\sum_{k=K}^\infty\frac{\sigma^2}{4}k^{2r}|\hat{u}_K(k)|^2-4\sum_{k=K}^\infty k^{2r-2}\frac{(k+1)^2}{(k+2)^2}|\hat{u}_K(k+1)|^2\\
\geq\frac{\sigma^2}{4}\|u_K\|_{r}^2-2\|u_K\|_{r-2}^2\ge \frac{\sigma^2}{8}\|u_K\|_{r}^2,
\end{multline}
for $\frac{\sigma}{4}n^2\geq 2$.

 Let us consider the linear operator $A$. It  can be written as
 \begin{equation*}
 Au=\sum_{k=2}^{K-1}\big(\frac{\sigma}{2}k^2\hat{u}(k)-(1+a_\mu)\frac{k+1}{k+2}\hat{u}(k+1)\big)e^{ik\alpha}
+A_KQ_ku+c.c.
\end{equation*}
for $u\in \dot{H}^{r}$.

\medskip

For any $f\in \dot{H}^{r-2}$, there exists only one solution $u_K\in
Z_{r}$ such that $A_Ku_K=\mathcal{Q}_Kf$. Then using $u_K$, we
consider the following finite linear equation system for
$\left ( b_{K-1}, b_{K-2} .. b_2, b_{-2}, \cdots,b_{-K+1} \right )^T$
\begin{multline}
\label{19}
{\begin{pmatrix} \frac{\sigma}{2} (K-1)^2 & 0 &0 &\cdots\cr
                -\frac{K-1}{K} (1+a_\mu) &  \frac{\sigma}{2} (K-2)^2 &0 &\cdots\cr
                  0  & - (1+a_\mu)\frac{K-2}{K-1} &
                      \frac{\sigma}{2} (K-3)^2 &\cdots\cr
. & . & . & \cdots\cr
. & . & . & \cdots\cr
. & . & . & \cdots\cr
               \end{pmatrix}}
{\begin{pmatrix} b_{K-1} \cr
                 b_{K-2} \cr
                 b_{K-3}      \cr
                 .\cr
                 .       \cr
                 b_{-K+1}       \cr
                \end{pmatrix}} \\
=
{\begin{pmatrix} {\hat f} (K-1) + (1+a_\mu) \frac{K}{K+1} {\hat u}_K (K)  \cr
                 {\hat f} (K-2)
                 \cr
                . \cr
                . \cr
{\hat f} (-K+1) + (1+a_\mu) \frac{-K}{-K+1} {\hat u}_K (-K)  \cr
                \end{pmatrix}}  .
\end{multline}
It is easy  to from the triangle structure see that there exists only one solution
$(b_{K-1},\cdots, b_2,b_{-2},\cdots,b_{-K+1})$. Then we choose
$u=\sum_{k=2}^{K-1}b_ke^{ik\alpha}+\sum_{k=-2}^{-K+1}b_k
e^{ik\alpha}+u_K$ and $Au=f$. Since $u_K\in
H^r_p $, we induce $u\in
H^r_p $.

\medskip

 Hence, for any $f\in \dot{H}^{r-2}$,
there is only one $u=A^{-1}f\in \dot{H}^r$. By (\ref{Ainverse}), $\|A^{-1}f\|_r\le C_r\|f\|_{r-2}$.
\end{proof}

\begin{prop}\label{prop5.4new}
For any  surface tension $\sigma>0$,
$r \ge 3$,  and sufficiently small $\epsilon$,
there exists  a neighborhood $O$ of (0,0) and  a ball  $\mathcal{B}_\epsilon^r\subset\dot{H}^r$ such that  $\tilde\theta^{(s)}:O\rightarrow \mathcal{B}_\epsilon^r$ with
$\mathcal{Q}_1\mathfrak{U}\big[\tilde\theta^{(s)}(u_0,\beta),u_0,\beta\big]=0$. Further, $\tilde\theta^{(s)}(u_0,\beta;\alpha)$ is odd with respect to $\alpha$ for any $(u_0,\beta)\in O$.
\end{prop}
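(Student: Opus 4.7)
The plan is to recast the constraint $\mathcal{Q}_1\mathfrak{U}[\tilde\theta^{(s)},u_0,\beta]=0$ as a fixed-point equation for $\tilde\theta^{(s)}$ in $\mathcal{B}_\epsilon^r$ and apply the contraction mapping principle. The key first observation is that $\mathfrak{U}_{u_0}[0,0,0]=\frac{\mu_1}{\mu_1+\mu_2}\cos\alpha$ from Note \ref{note7.3} lies entirely in the $k=\pm1$ Fourier modes and is therefore annihilated by $\mathcal{Q}_1$. Applying $\mathcal{Q}_1$ to (\ref{stead1}) eliminates the first-order $u_0$ contribution, leaving
\begin{equation*}
\mathcal{Q}_1\mathfrak{U}_{\tilde\theta^{(s)}}[0,0,0]\tilde\theta^{(s)}=\mathcal{Q}_1\mathfrak{N}^{(s)}[\tilde\theta^{(s)},u_0,\beta]-\tfrac{\beta^2}{2}\mathcal{Q}_1\mathfrak{U}_{\beta\beta}[0,0,0].
\end{equation*}

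A short Fourier calculation using the expression for $\mathfrak{U}_{\tilde\theta^{(s)}}[0,0,0]$ given in (\ref{7.2}) shows that, on real-valued $h\in\dot{H}^r$, $\mathcal{H}\circ\mathcal{Q}_1\mathfrak{U}_{\tilde\theta^{(s)}}[0,0,0]h=Ah$, with $A$ as in Definition \ref{def7.4}. Since $\mathcal{H}$ preserves Sobolev norms and $A^{-1}:\dot{H}^{r-2}\to\dot{H}^r$ is bounded by Proposition \ref{prop7.5}, I would apply $\mathcal{H}$ to both sides and define
\begin{equation*}
\Phi(\tilde\theta)=A^{-1}\Big[\mathcal{H}\mathcal{Q}_1\mathfrak{N}^{(s)}[\tilde\theta,u_0,\beta]-\tfrac{\beta^2}{2}\mathcal{H}\mathcal{Q}_1\mathfrak{U}_{\beta\beta}[0,0,0]\Big].
\end{equation*}
Combining Lemma \ref{lemathfrakNs} with $\|A^{-1}f\|_r\le C_r\|f\|_{r-2}$ yields $\|\Phi(\tilde\theta)\|_r\le C(\epsilon^2+\epsilon|u_0|+\epsilon\beta^2+|u_0|\beta^2+\beta^4+\beta^2)$ and, via the difference bound in Lemma \ref{lemathfrakNs}, $\|\Phi(\tilde\theta_1)-\Phi(\tilde\theta_2)\|_r\le C(\epsilon+|u_0|+\beta^2)\|\tilde\theta_1-\tilde\theta_2\|_r$. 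Fixing $\epsilon$ first small, then shrinking $O$ so that $|u_0|$ and $\beta^2$ are small compared to $\epsilon$, $\Phi$ becomes a self-map of $\mathcal{B}_\epsilon^r$ and a strict contraction, producing the desired $\tilde\theta^{(s)}(u_0,\beta)$ by the Banach fixed-point theorem.

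For the oddness claim, I would work inside the closed subspace $\dot{H}^r_{\mathrm{odd}}\subset\dot{H}^r$ of odd real-valued functions. Three ingredients combine: (i) the Fourier representation of $A$ in Definition \ref{def7.4} maps $\dot{H}^r_{\mathrm{odd}}$ bijectively onto $\dot{H}^{r-2}_{\mathrm{odd}}$, since for odd real $u$ the coefficients $\hat u(k)$ are purely imaginary with $\hat u(-k)=-\hat u(k)$, a structure preserved by $\partial_\alpha^2$ and by the index shifts $k\leftrightarrow k\pm1$ that appear in $A$; (ii) the Hilbert transform interchanges the odd and even subspaces while $\mathcal{Q}_1$ respects the parity decomposition; (iii) when $\tilde\theta^{(s)}$ is odd, Proposition \ref{prop2.6} forces $\hat\theta^{(s)}(\pm1)$ to be purely imaginary, so $\theta^{(s)}$ remains odd and the curve satisfies $z^{(s)}(-\alpha)=\overline{z^{(s)}(\alpha)}$ (using the convention $y^{(s)}(0)=0$ made just before (C.1)). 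The Hele-Shaw system (C.1)--(C.3) is invariant under $\alpha\mapsto-\alpha$ combined with $z\mapsto\bar z$, so $\gamma^{(s)}$ from (C.2) is odd and $\mathfrak{U}[\tilde\theta^{(s)},u_0,\beta]$ is even in $\alpha$. Hence $\Phi$ stabilizes $\dot{H}^r_{\mathrm{odd}}\cap\mathcal{B}_\epsilon^r$, and by uniqueness the fixed point lies in this subspace.

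The main technical obstacle I anticipate is carrying out point (iii) rigorously, that is, verifying that the implicit $\gamma^{(s)}$ solving (C.2) and each of $\mathfrak{A},\mathfrak{B},\mathfrak{C}$ in (\ref{stead2}) inherit the correct parity when $\tilde\theta^{(s)}$ is odd. This amounts to tracking the transformation of the kernels in (\ref{kbeta0})--(\ref{kbeta}) and of the commutator $z_\alpha[\mathcal{H},1/z_\alpha]$ under $(\alpha,\alpha')\mapsto(-\alpha,-\alpha')$ combined with $z\mapsto\bar z$. The $\tanh$ term in (\ref{kbeta}), which couples $z(\alpha)$ to $z^\ast(\alpha')$ through the implicit $y^{(s)}(0)=0$ normalization, is the place where care is required; no new ideas beyond parity bookkeeping are needed, but it is the one step that genuinely uses the symmetric geometry of the channel.
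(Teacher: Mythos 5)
Your proposal is correct and follows essentially the same contraction-mapping strategy as the paper: after projecting away the $k=\pm1$ modes (which annihilates $\mathfrak{U}_{u_0}[0,0,0]$, and in fact also $\mathfrak{U}_{\beta\beta}[0,0,0]\propto\cos\alpha$, so the extra $\beta^2$ term you carry is actually zero), you invert the linearization via $A^{-1}\mathcal{H}\mathcal{Q}_1$ — your explicit Fourier identification $\mathcal{H}\mathcal{Q}_1\mathfrak{U}_{\tilde\theta^{(s)}}[0,0,0]=A$ is exactly what makes the paper's $A^{-1}$ step work — and then invoke Lemma \ref{lemathfrakNs} and Proposition \ref{prop7.5} to get a self-map and contraction on $\mathcal{B}_\epsilon^r$ once $|u_0|$ and $\beta$ are small. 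The only stylistic difference is in the oddness step: you argue that $\Phi$ preserves the closed subspace $\dot{H}^r_{\mathrm{odd}}$ (via parity of $A$, $\mathcal{H}$, $\mathcal{Q}_1$, the kernels and Proposition \ref{prop2.6}) and conclude by uniqueness of the fixed point, whereas the paper defines the reflected data $\eta$, $v$, $\xi$, shows these solve the same equations, and then invokes uniqueness — these are two packagings of the identical symmetry argument and buy the same thing.
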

\begin{proof}  We define the operator $\mathcal{T}$ by
$$\mathcal{T}\tilde\theta^{(s)}\equiv A^{-1}\mathcal{Q}_1\mathfrak{N}^{(s)}[\tilde\theta^{(s)},u_0,\beta].$$
By Lemma \ref{lemathfrakNs} and Proposition \ref{prop7.5}, for sufficient small $\epsilon$, there exists a neighborhood $O$ of $(0,0)$, such that the operator $\mathcal{T}$ is the contraction map in the ball $\mathcal{B}_\epsilon^r$ for $(u_0,\beta)\in O$.

Hence, by contraction mapping theorem,  there exist
open sets $ O\subset\mathbb{R}^2$  such that $\tilde\theta^{(s)}=\mathcal{T}\tilde\theta^{(s)}$ in the ball  $ \mathcal{B}_\epsilon^r\subset
\dot{H}^r$ for $(u_0,\beta)\in O$. By (\ref{stead1}), we have
$$\mathcal{Q}_1\mathfrak{U}\big[\tilde\theta^{(s)}(u_0,\beta;\alpha),u_0,\beta\big]=0.$$

For any $(u_0,\beta)\in  O$,   we define $\eta(\alpha)=-\tilde\theta^{(s)}(u_0,\beta;-\alpha)-\hat{\theta}^{(s)}(-1)e^{i\alpha}-\hat{\theta}^{(s)}(1)e^{-i\alpha}$, $v(\alpha)=-(z^{(s)}(-\alpha))^{\ast}$, and $\xi(\alpha)=-\gamma^{(s)}(-\alpha)$. Then it is easy to check that
 \begin{multline*}
\Re\Big(\frac{z^{(s)}_\alpha(-\alpha)}{2\pi i}\mbox{PV}\int_0^{2\pi}\gamma^{(s)}(\alpha')
K(-\alpha,\alpha')d\alpha'\Big)\\
=-\Re\Big(\frac{v_\alpha(\alpha)}{2\pi i}\mbox{PV}\int_0^{2\pi}\xi(\alpha')
\Big\{\frac{\beta}{4}\coth\Big[\frac{\beta}{4}\big(v(\alpha)-v(\alpha')\big)\Big]-\frac{\beta}{4}\tanh\Big[\frac{\beta}{4}\big(v(\alpha)-v^{\ast}(\alpha')\big)\Big]\Big\}d\alpha'\Big).
\end{multline*}
Hence, $\mathcal{Q}_1\mathfrak{U}[\mathcal{Q}_1\eta(\alpha),u_0,\beta]=\mathcal{Q}_1\mathfrak{U}[\tilde\theta^{(s)}(\alpha),u_0,\beta]=0$ with $\xi(\alpha)$ satisfying (C.2).

Also by uniqueness, we have $\tilde\theta^{(s)}(u_0,\beta;\alpha)=\mathcal{Q}_1\eta(\alpha)\equiv-\tilde\theta^{(s)}(u_0,\beta;-\alpha)$.
\end{proof}

\begin{note}
Note that the $\tilde\theta^{(s)}$ of Proposition \ref{prop5.4new}
is not the steady state since we only required $\mathcal{Q}_1U=0$
instead of $\mathcal{Q}_0 U=U=0$. Here $u_0$ is arbitrary.  The
additional condition $(\mathcal{Q}_0-\mathcal{Q}_1)U=0$ can be
satisfied by constraining $u_0$ appropriately. The usefulness of
this Proposition is to show any steady solution $\theta^{(s)}$ that
actually satisfies $\mathcal{Q}_0U=0$ must be an odd function since
this is true for any sufficient small $u_0$.
\end{note}
\begin{definition} \label{def5.5}
We define  a family of  Banach spaces $\{X_r\}_{r\geq0}$   by
\begin{align*}
X_r&=\big\{u\in \dot{H}^{r}|
 u(-\alpha)=-u(\alpha)\big\},\\
Y_r&=\big\{u\in
H^{r}_p  \big| \mathcal{Q}_0 u=u,
u(-\alpha)=u(\alpha)\big\}.
\end{align*}
\end{definition}
\begin{remark}
 Proposition \ref{prop5.4new} shows us that the shape of the steady bubble must be symmetric with the center of the channel. Also $\mathfrak{U}:X_r\times\mathbb{R}^2\rightarrow Y_{r-2}$.
 Hence it is reasonable to consider the  solution $(\tilde\theta^{(s)}, u_0, \beta)$ to (C.1)-(C.3) in the space $X_r\times \mathbb{R}^2$.
 \end{remark}

\noindent{\bf Proof of Theorem \ref{theo5.6}:} Let $f=\mathfrak{N}^{(s)}[\tilde\theta^{(s)},u_0,\beta]-\frac{\beta^2}2\mathfrak{U}_{\beta\beta}[0,0,0]$ and $g=A^{-1}\mathcal{H}(\mathcal{Q}_1f)$.
Actually it is easy to check that $f(-\alpha)=f(\alpha)$ and $g(-\alpha)=-g(\alpha)$ for $\tilde\theta^{(s)}\in X_r$.

We define
  an operator $\mathfrak{T}$ in $X_r\times\mathbb{R}$ by
$$\mathfrak{T}[\tilde\theta^{(s)},u_0]=\Big(A^{-1}\mathcal{H}(\mathcal{Q}_1f),2\hat{f}(1)+\frac{\mu_1+\mu_2}{\mu_1}\big(\frac{4}{3}+\frac{4}{3}a_\mu\big)i\hat{g}(2)\Big)^{T}.$$
By  Lemma \ref{lemathfrakNs} and Proposition \ref{prop7.5}, for sufficient small $\epsilon$, there exist an open set  $ O_1\subset\mathbb{R}$ and  a ball $ O_2\subset
X_r\times\mathbb{R}$ such that $\mathfrak{T}$ is the contraction map in the ball $O_2$ for any $\beta\in O_1$.
Hence, by contraction mapping theorem, we have $(\tilde\theta^{(s)},u_0)^{T}=\mathfrak{T}[\tilde\theta^{(s)},u_0]$ for any $\beta\in O_1$. By (\ref{stead1}), we have
$$\mathcal{Q}_0\mathfrak{U}\big[\tilde\theta^{(s)}(\beta;\alpha),u_0(\beta),\beta\big]=0.$$
By Lemma \ref{lemathfrakNs} and Proposition \ref{prop7.5}, for sufficiently small $\epsilon$ and $\Upsilon$, there exists $C$ independent of $\epsilon$ and $\Upsilon$, such that
\begin{equation}\label{thetasbeta}
\|\tilde\theta^{(s)}\|_r+|u_0|\le C\beta^2.\end{equation}

We deduce from (C.2) that
\begin{align*}
\gamma^{(s)}(\alpha)=&2\sin\alpha-a_\mu\Re\Big(\frac{z^{(s)}_{\alpha}}{\pi
i} \mbox{PV}\int_{\alpha-\pi}^{\alpha+\pi} \frac{\gamma^{(s)}(\alpha')}{
z^{(s)}(\alpha)-z^{(s)}(\alpha')}d\alpha'-\frac{e^{i\alpha}}{\pi
i} \mbox{PV}\int_{\alpha-\pi}^{\alpha+\pi}\frac{\gamma^{(s)}(\alpha')}{
\int_{\alpha'}^\alpha e^{i\zeta}d\zeta}d\alpha'\Big) \nonumber\\
&-a_\mu\Re\Big(\frac{\omega_{s_\alpha}}{2\pi i}\sum_{n=1}^\infty
\frac{2B_{2n}}{(2n)!}(-1)^n\beta^{2n}\int_0^{2\pi}\gamma^{(s)}(\alpha')\big(z^{(s)}(\alpha)-z^{(s)}(\alpha')
\big)^{2n-1}d\alpha'\Big)\nonumber\\
&+2\Big(\sin\big(\alpha+\theta^{(s)}(\alpha)\big)-\sin\alpha\Big)+\sigma\theta_{{\alpha\alpha}}^{(s)}
+\frac{2\mu_2}{\mu_1+\mu_2}u_0\sin\big(\alpha+\theta^{(s)}(\alpha)\big),
\end{align*}
where $B_n$ is $n$th Bernoulli number.
By (\ref{thetasbeta}) and Lemma \ref{lem3.11}, we have
\begin{eqnarray*}
\|\gamma^{(s)}-2\sin\alpha\|_{r-2}\leq C\beta^2,
\end{eqnarray*}
where $C$ depends on  $\epsilon$ and $\Upsilon$.

\begin{remark}
Since we consider the steady solution in $\dot{H}^r$ for $r\ge3$,
 where $r$ is arbitrary, by uniqueness shown in Theorem \ref{theo5.6}, the steady solution is in $H^{\infty}$, and hence in $C^\infty$.
 The result is consistent with analyticity results for arbitrary channel width in the small $\sigma$ limit \cite{Xie3}.
\end{remark}

\begin{note}
Actually for $\mu_2=0$, by formal expansion in correspondence to earlier calculation using conformal mapping \cite{Tanveer2}, we have
\begin{eqnarray*}
\theta^{(s)}(\alpha)&=&\beta^4\big(\frac{1}{54\sigma}\sin{(3\alpha)}+\frac{1}{72\sigma^2}\sin{(2\alpha)}\big)+O(\beta^6),\nonumber\\
u_0&=&-\frac{\beta^2}{6}+\beta^4\big(\frac7{180}+\frac1{216\sigma^2}\big)+O(\beta^6),\nonumber\\
\gamma^{(s)}(\alpha)&=&2\sin\alpha-\frac{\beta^2}6\sin\alpha+\beta^4\Big(\big(-\frac{19}{120}+\frac{1}{72\sigma^2}\big)\sin(3\alpha)+\big(\frac{1}{72}+\frac{7}{216\sigma^2}\big)
\sin\alpha\\&&\hspace{4cm}+\frac{1}{54\sigma}\sin(4\alpha)
-\frac{1}{54\sigma}\sin(2\alpha)\Big) +O(\beta^6).\nonumber
\end{eqnarray*}
For steady states, two fluid flows  can  be related to one fluid flow by transform variables \cite{tanveer5}.
\end{note}

\section{Evolution of symmetric bubble with sidewalls ($\beta > 0$) }

\begin{lemma}
\label{symmetry}
If initial conditions satisfy the symmetry condition
\begin{equation*}
\theta_0 (-\alpha) = -\theta_0 (\alpha) ~~,~~y_0 = 0,
\end{equation*}
then the corresponding solution $\left ( \theta (\alpha, t), L(t),
y(0, t) \right ) $ in $H^r_p \times \mathbf{C}^1 \times \mathbf{C}^1 $
to (A.1)
and (\ref{y0}) satisfy symmetry condition
for all time, {\it i.e.} $\theta (-\alpha, t) = -\theta (\alpha, t)$ and
$y(0, t) = 0$.
The corresponding vortex sheet strength $\gamma (\alpha, t)$, determined
from (A.2) also obeys the symmetry condition
$\gamma (-\alpha, t) = - \gamma (\alpha, t)$  and the bubble shape
is symmetric about the channel centerline ($x$-axis).
\end{lemma}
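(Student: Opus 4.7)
My plan is to establish uniqueness of reflected solutions and invoke the uniqueness theory already developed. Define the reflected triple
\begin{equation*}
\theta^\sharp(\alpha,t) := -\theta(-\alpha,t), \quad L^\sharp(t) := L(t), \quad y^\sharp(0,t) := -y(0,t),
\end{equation*}
and set $\gamma^\sharp(\alpha,t) := -\gamma(-\alpha,t)$. Let $z^\sharp$ be determined from $(\theta^\sharp, L^\sharp, y^\sharp)$ via the geometric relation (\ref{zomega}). The first step is to verify, by a direct computation from $z_\alpha = \tfrac{L}{2\pi} e^{i\pi/2 + i\alpha + i\theta}$, that
\begin{equation*}
\frac{d}{d\alpha}\overline{z(-\alpha,t)} \;=\; -\overline{z_\alpha(-\alpha,t)} \;=\; \frac{L}{2\pi} e^{i\pi/2 + i\alpha + i\theta^\sharp(\alpha,t)} \;=\; z^\sharp_\alpha(\alpha,t),
\end{equation*}
and then, matching at $\alpha=0$ using $y^\sharp(0,t) = -y(0,t)$ (and fixing $x^\sharp(0,t) = x(0,t)$, which does not affect $\theta$), to conclude $z^\sharp(\alpha,t) = \overline{z(-\alpha,t)}$.

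The second step is to track the reflection $\alpha \mapsto -\alpha$ through the integral operators. Using $\overline{\coth w} = \coth \bar{w}$ and $\overline{\tanh w} = \tanh \bar{w}$ in (\ref{kbeta}), one obtains $K[z^\sharp](\alpha,\alpha') = \overline{K[z](-\alpha,-\alpha')}$. Combined with $1/z^\sharp_\alpha(\alpha') = -\overline{1/z_\alpha(-\alpha')}$ and the substitution $\alpha' \to -\alpha'$ in the principal-value integral of Definition \ref{def1.9}, this yields $\mathcal{K}[z^\sharp](-f(-\cdot))(\alpha) = \overline{\mathcal{K}[z]f(-\alpha)}$; likewise, the reflection identity $\mathcal{H}[f(-\cdot)](\alpha) = -\mathcal{H}[f](-\alpha)$ gives the analogous transformation for $[\mathcal{H},\cdot]$. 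Assembling these into $\mathcal{G}$ and $\mathcal{F}$ (Definition \ref{def1.10}), one checks that (A.2), satisfied by $(\gamma,\theta,L)$, is also satisfied by $(\gamma^\sharp, \theta^\sharp, L^\sharp)$. From (\ref{1.12}) it then follows that $U^\sharp(\alpha,t) = U(-\alpha,t)$, and from (A.3) that $T^\sharp(\alpha,t) = -T(-\alpha,t)$.

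The third step is to substitute into the evolution equations. The transformation rules for $U$ and $T$ give
\begin{equation*}
\theta^\sharp_t(\alpha,t) = \tfrac{2\pi}{L}U^\sharp_\alpha + \tfrac{2\pi}{L} T^\sharp(1 + \theta^\sharp_\alpha), \quad L^\sharp_t = -\int_0^{2\pi}(1+\theta^\sharp_\alpha)U^\sharp \, d\alpha, \quad y^\sharp_t(0,t) = -U^\sharp(0,t)\sin\theta^\sharp(0,t),
\end{equation*}
so $(\theta^\sharp, L^\sharp, y^\sharp)$ solves (A.1), (\ref{y0}). Since by hypothesis $\theta^\sharp(\alpha,0) = -\theta_0(-\alpha) = \theta_0(\alpha)$ and $y^\sharp(0,0) = 0 = y_0$, both triples share the same initial data, and uniqueness of the solution in $H^r_p \times \mathbb{R} \times S_M$ (established via the contraction mapping arguments of Sections 4 and 6, together with Lemma \ref{lem2.7} and Proposition \ref{propgamma}) forces $\theta^\sharp \equiv \theta$, $y^\sharp \equiv y$, yielding the claimed symmetries for $\theta$ and $y(0,t)=0$; the symmetry of $\gamma$ then follows from the uniqueness clause of Proposition \ref{propgamma} applied to (A.2). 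The same mechanism was already employed in the steady case in Proposition \ref{prop5.4new}, so the main work here is simply the careful bookkeeping of the reflection symmetries of $\mathcal{K}[z]$ and $\mathcal{H}$; this transcription of symmetry of operators is the only substantive obstacle, and it is a routine (if tedious) verification.
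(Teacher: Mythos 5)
Your proposal follows essentially the same route as the paper's proof: both show that the reflected tuple $\bigl(-\theta(-\alpha,t),\,-\gamma(-\alpha,t),\,L(t),\,-y(0,t)\bigr)$ solves (A.1)--(A.3), (\ref{y0}) with the same initial data and then invoke the local uniqueness theorem (Appendix \S\ref{A7.2}) to conclude. The paper simply declares the reflection-invariance of the system to be ``readily checked,'' whereas you carry out the bookkeeping through $\mathcal{K}$, $\mathcal{H}$, $\mathcal{G}$, $U$ and $T$ explicitly; your computations (including the signs in $\mathcal{K}[z^\sharp](-f(-\cdot))(\alpha)=\overline{\mathcal{K}[z]f(-\alpha)}$ and $T^\sharp(\alpha)=-T(-\alpha)$) are correct.
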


\begin{proof}
If $\theta_0$ is odd and $y(0, 0)=0$,
it follows from
(\ref{eqomega}) and (\ref{zomega}), that $z^* (\alpha, 0) = z(-\alpha, 0)$
and we have a symmetric bubble to start with. The corresponding vortex
sheet strength determined from (A.2) $\gamma (\alpha, t)$
is easily to be odd.
Again, it is readily checked that
that if $\left ( \theta (\alpha, t), \gamma (\alpha, t),
L(t),  y(0, t) \right )$ solve
(A.1)-(A.3) and (\ref{y0}), then so does
$\left ( -\theta (-\alpha, t), -\gamma (-\alpha, t),
L(t),  -y(0, t) \right )$. Since the initial condition is symmetric,
it follows from local uniqueness of solution
(see Appendix \S \ref{A7.2} ) that symmetry is preserved in time. From the geometric
relation
$$ z (\alpha, t) = \frac{i L(t)}{2\pi} \int_0^\alpha
e^{i \alpha + i \theta (\alpha, t) } d\alpha + z(0, t),$$
symmetry about the $x$ ($Re ~z$) axis follows.
\end{proof}

\begin{remark}
\label{remsteadyshape}
Symmetry implies ${\hat \theta} (0; t) =0 = y(0, t)$.
and so the evolution equation for ${\hat \theta} (0; t)$ in
(B.1) and $y(0,t)$ in (\ref{y0}) can be ignored. For the symmetry bubble, we also have
$$K(\alpha,\alpha')=\frac{\beta}{2}\coth\Big[\frac{\beta}{2}\big(z(\alpha)-z(\alpha')\big)\Big].$$
Proposition \ref{prop5.4new}
 implies that the steady bubble solution
$\left (\theta^{(s)} (\alpha), \gamma^{(s)} (\alpha) \right )$
are also odd functions of time.
 \end{remark}
\medskip

\subsection{Main results for the translating bubble in the strip}

In this section, we first state the main results for the translating bubble.

It is convenient to define
\begin{definition}
\label{def5.8}
\begin{align*}
\Gamma(\alpha,t)&=
\gamma(\alpha,t)-\gamma^{(s)}(\alpha),
\end{align*}
\begin{equation}
\theta(\alpha,t)
=\widetilde\Theta(\alpha,t)+{\tilde\theta}^{(s)}(\alpha)
+\hat{\theta}(-1;t) e^{-i\alpha}+\hat{\theta}(1;t)e^{i\alpha}.
\end{equation}
\end{definition}

In this section, we will find
solutions $\widetilde\Theta$ to satisfy (B.1) with initial condition
with the initial  condition
\begin{equation}
\label{2p5.11}
\widetilde{\Theta} (\alpha, 0) = \widetilde{\Theta}_0 (\alpha)
\equiv \mathcal{Q}_1 \left [ \theta_0 - \theta^{(s)} \right ] (\alpha).
\end{equation}
We will also consider the motion of the interface
with small symmetric perturbation around the steady  bubble.
Since the bubble area is invariant with time,
we take  $V$ to be  the steady bubble area, i.e..
\begin{eqnarray} \label{2p5.10}
 V=\frac{1}{2}\Im\int_0^{2\pi}z^{(s)}_{{\alpha}}(z^{(s)})^{\ast}d\alpha.
\end{eqnarray}

The main result in this section is the following proposition:
\begin{prop}\label{prop8}{ For $\sigma>0$,  there exist $\epsilon,\,\Upsilon>0$ such that for
$r\geq3$, if $\|\widetilde\Theta(\cdot,0)\|_r<\epsilon$,
$0<\beta<\Upsilon$,  then for  initial  shape symmetric about channel
centerline, i.e. $\widetilde\Theta(-\alpha,0)=-\widetilde\Theta(\alpha,0)$,
there
exists a global solution $\widetilde\Theta\in {\dot H}_r $
to the Hele-Shaw initial value problem
with initial  condition (\ref{2p5.11}).
Furthermore, $\|\mathcal{Q}_0\Theta\|_r$
decays exponentially as $t \rightarrow \infty$.
Thus the  translating steady bubble is
asymptotically stable for sufficiently small symmetric initial disturbances in the $H^r_p $ space.}
\end{prop}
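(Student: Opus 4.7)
\medskip

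\noindent\textbf{Proof plan for Proposition \ref{prop8}.} The plan is to mirror the strategy used for the circular bubble ($\beta=0$) in \S 4, with the steady shape $\theta^{(s)}$ now replacing the constant trivial steady state and the sidewall operator $\mathcal{G}_2$ contributing $O(\beta^2)$ linear corrections. First I would invoke Lemma \ref{symmetry} to reduce the system to the single unknown $\widetilde\Theta(\alpha,t)$: the odd symmetry of the initial data and of $\theta^{(s)}$ (by Proposition \ref{prop5.4new} and Theorem \ref{theo5.6}) is preserved under the flow, so $\hat\theta(0;t)=0$ and $y(0,t)=0$ for all $t$, and the equations (B.2) and the $\hat\theta(0;\cdot)$-component of (B.1) drop out. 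Subtracting the steady equations (C.1)--(C.3) from (B.1), (B.3)--(B.6), and using Proposition \ref{propgamma} for the invertibility of $I+a_\mu\mathcal{F}[z]$ together with Lemmas \ref{lem6.2}, \ref{lem6.3}, \ref{lem3.11} to expand, I would write the evolution as
\begin{equation*}
\widetilde\Theta_t = \mathcal{A}[\widetilde\Theta] + \mathcal{A}_{\beta,s}[\widetilde\Theta] + \widetilde{\mathfrak{N}}[\widetilde\Theta;\beta],
\end{equation*}
where $\mathcal{A}$ is the same leading linear operator defined in (\ref{Adef}) (whose dissipativity in the weighted norm is supplied by Lemma \ref{lemAinner}), $\mathcal{A}_{\beta,s}$ is a linear correction coming from both the sidewall kernel $\mathcal{G}_2[z^{(s)}]$ and the deviation of $\theta^{(s)}$ from zero, and $\widetilde{\mathfrak{N}}$ collects all genuinely nonlinear terms in $\widetilde\Theta$.

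The crucial observation is that, by Theorem \ref{theo5.6}, $\|\theta^{(s)}\|_r+|u_0|+\|\gamma^{(s)}-2\sin(\cdot)\|_{r-2}\le C\beta^2$, and by Note \ref{mathcalK2} the kernel $\mathcal{G}_2$ contributes an $O(\beta^2)$ factor. Hence, proceeding as in Lemma \ref{ANnorm}, I would show that for sufficiently small $\beta$ one has an operator-norm bound
\begin{equation*}
\|\mathcal{A}_{\beta,s}[\widetilde\Theta]\|_{w,r-3/2} \le C\beta^2\sigma\,\|\widetilde\Theta\|_{w,r+3/2},
\end{equation*}
so that this linear correction can be absorbed into the dissipative term $\tfrac{15\sigma}{64}\|v\|_{w,r+3/2}^2$ produced by $\mathcal{A}$ in Lemma \ref{lemAinner}, leaving a strictly positive dissipation of size $\sim\sigma$ when $\beta<\Upsilon$ is chosen small enough. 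For the nonlinear remainder $\widetilde{\mathfrak{N}}$ I would use the decomposition in \S 4 (now centered at $\theta^{(s)}$ rather than the circle) together with Lemmas \ref{N1lem4.6}, \ref{lemmaW}, Corollary \ref{mathfrakNcor}, and the difference estimates (\ref{eq2prop319})--(\ref{T3.33}) of Proposition \ref{propgamma} to obtain
\begin{equation*}
\|\widetilde{\mathfrak{N}}\|_{H^{r-3}_\sigma} \le C\|\widetilde\Theta\|_{H^r_\sigma}^2.
\end{equation*}

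Once these bounds are in place, I would follow Lemmas \ref{nonhomogeneous}--\ref{lemcontract} verbatim: define an integral operator $\mathcal{S}$ via Duhamel's formula (Note \ref{noteduhammel}) applied to the semigroup generated by $\mathcal{A}$, verify $\mathcal{S}$ is a contraction on a small ball of radius $\epsilon$ in the space of odd functions within $H^r_\sigma$ (the odd subspace being invariant since $\mathcal{A}$, $\mathcal{A}_{\beta,s}$ and $\widetilde{\mathfrak{N}}$ all preserve the symmetry $\widetilde\Theta(-\alpha,\cdot)=-\widetilde\Theta(\alpha,\cdot)$), and conclude existence and uniqueness of a global solution. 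The built-in factor $e^{\sigma t}$ in the $H^r_\sigma$ norm gives the stated exponential decay of $\|\widetilde\Theta\|_r$, and the constraint (B.4) together with (\ref{Imomlbound}) then forces $|L-2\sqrt{\pi V}|$ to decay exponentially as well. Combined with Theorem \ref{theo5.6}, this yields $\|\mathcal{Q}_0\Theta\|_r\to 0$ exponentially.

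The main obstacle I anticipate is the bookkeeping of the $\beta^2$ linear corrections: one must verify that in \emph{every} Fourier mode (in particular the low modes $|k|\le K(\sigma)$ where the weight $w(\sigma,k)$ differs from $1$) the correction $\mathcal{A}_{\beta,s}$ has the structure required to be dominated by $\sigma$ times the $H^{r+3/2}$ dissipation. Because the weights satisfy $w(\sigma,k)\le \sigma^{-1}w(\sigma,k+1)$ in the range where they vary, the $\beta^2$-corrections coupling neighboring modes produce at most a $\beta^2\sigma^{-1}$ factor; this is still absorbed once $\Upsilon$ is taken small depending on $\sigma$, but this is the delicate point where the weighted norm from Definition \ref{transdef5.8} is essential and a naïve unweighted Sobolev estimate would fail.
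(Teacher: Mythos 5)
Your plan is in essence the paper's own: reduce to the odd--symmetric unknown $\widetilde\Theta$ via Lemma \ref{symmetry}, decompose the evolution into $\mathcal{A}$ plus a $\beta^2$-small correction plus a genuinely nonlinear remainder, and run a contraction in $H^r_\sigma$ using the $e^{t\mathcal{A}}$ Duhamel formula. The one point where your treatment diverges is the handling of the $\beta^2$ correction. You propose to extract a linear operator $\mathcal{A}_{\beta,s}$ and absorb its operator norm into the dissipation of Lemma \ref{lemAinner}; the paper instead keeps a quasi-linear term $\mathcal{L}_\beta[\widetilde\Theta]$ (see the splitting in (\ref{eqTheta}) and Proposition \ref{mathcalNprop}) satisfying $\|\mathcal{L}_\beta\|_{r-1}\le C_1\beta^2\exp\left(C_2\|\widetilde\Theta\|_r\right)\|\widetilde\Theta\|_r$, lumps it with $\mathcal{N}$ as a source term in the Duhamel equation (\ref{eqfixed}), and wins because the Lipschitz constant $C(\epsilon+\beta^2)$ of the resulting map $\mathcal{R}$ can be made small (Lemmas \ref{lemmathcalN}, \ref{lem8.11}). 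Both routes give the same smallness condition, so this is a cosmetic difference, but be aware of a small internal inconsistency in your phrasing: if $\mathcal{A}_{\beta,s}$ is absorbed into the dissipation, the relevant semigroup is $e^{t(\mathcal{A}+\mathcal{A}_{\beta,s})}$, not $e^{t\mathcal{A}}$; if you use $e^{t\mathcal{A}}$ in Duhamel, then $\mathcal{A}_{\beta,s}$ is a source and there is no ``absorption,'' only a $O(\beta^2)$ Lipschitz contribution. You should commit to one of these. Finally, the claimed bound $\|\mathcal{A}_{\beta,s}[\widetilde\Theta]\|_{w,r-3/2}\le C\beta^2\sigma\|\widetilde\Theta\|_{w,r+3/2}$ carries a factor of $\sigma$ with no provenance: the $\beta^2$ smallness comes from $\mathcal{G}_2$ (Note \ref{mathcalK2}) and from the $O(\beta^2)$ deviation of $\theta^{(s)}$ (Theorem \ref{theo5.6}), neither of which introduces a $\sigma$. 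The correct version, in parallel to Proposition \ref{mathcalNprop}, is $O(\beta^2)\|\widetilde\Theta\|_{w,r+3/2}$ without the extra $\sigma$; your closing remark that $\Upsilon$ must then be taken small depending on $\sigma$ is nonetheless the right conclusion and is what the paper does as well.
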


\begin{note}\label{translatingnote5.7}
Proposition \ref{prop8} and Lemma \ref{lem2.7} imply  Theorem \ref{theo8}.
\end{note}

\subsection{Evolution equation in integral form}

It is readily checked that $\Gamma (\alpha, t)$ satisfies
\begin{multline}
\label{D.2}
\big(I+a_\mu\mathcal{F}[z]\big)\Gamma=
-a_\mu\mathcal{F}[z]\gamma^{(s)}+a_\mu\mathcal{F}[z^{(s)}]\gamma^{(s)}
+\frac{2\pi-L}{L}\sigma\theta_{\alpha\alpha}
+\sigma(\theta-\theta^{(s)})_{\alpha\alpha}\\
+\frac{L-2\pi}{\pi}\big(1+\frac{\mu_2}{\mu_1+\mu_2}u_0\big)
\sin\big(\alpha+\theta\big)\\
+2\big(1+\frac{\mu_2}{\mu_1+\mu_2}u_0\big)
\Big(\sin\big(\alpha+\theta)-\sin\big(\alpha+\theta^{(s)}\big)\Big).
\end{multline}
Hence, we have
\begin{prop}
\label{translatingGamma}
If  $\widetilde\Theta\in\dot{H}^r$ with
$\|\widetilde\Theta\|_1<\epsilon_1$, and $0\le\beta<\Upsilon$
then for sufficiently small $\epsilon_1$ and $\Upsilon$,
there
exists a unique solution $ \Gamma \in \{u\in H^{r-2}_p
|\hat{u}(0)=0\}$ for
$r\ge 3$ satisfying (\ref{D.2}).
This solution $\Gamma$ satisfies the estimates
\begin{eqnarray*}
\| \Gamma \|_0  &\leq&  C\|
\widetilde\Theta \|_2, \\
\|\Gamma\|_{r-2}&\leq &C_1\exp(C_2\|\widetilde\Theta\|_{r-2})
\|\widetilde\Theta\|_r,
\end{eqnarray*}
where  $C_1$ and  $C_2$ depend on $r$.

Let $\Gamma^{(1)} $  and $\Gamma^{(2)} $ correspond to ${\widetilde
\Theta}^{(1)}$ and ${\widetilde \Theta}^{(2)}  $ respectively. Assume $\|\widetilde\Theta^{(1)}\|_1<\epsilon_1$ and $\|\widetilde\Theta^{(2)}\|_1<\epsilon_1$. If $\widetilde\Theta^{(1)},\widetilde\Theta^{(2)}\in\dot{H}^r$ with $r\ge3$,   then for sufficient small $\epsilon_1$,
\begin{equation}\label{gamma4.61}
 \| \Gamma^{(1)} - \Gamma^{(2)} \|_{r-2} \le C_1\exp\big(C_2(\|\widetilde\Theta^{(1)}\|_r+\|\widetilde\Theta^{(2)}\|_r)\big)
\big \| {\widetilde \Theta}^{(1)} - {\widetilde \Theta}^{(2)}\big \|_r
\end{equation}
where  $C_1$ and $C_2$ depend on $r$ alone.
\end{prop}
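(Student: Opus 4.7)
The plan is to follow the same strategy as Proposition \ref{translatingprop6.1}, with the role of the constant circular state $({\tilde\theta},\gamma)=(0,2\sin\alpha)$ replaced by the steady bubble $({\tilde\theta}^{(s)},\gamma^{(s)})$ constructed in Theorem \ref{theo5.6}, and keeping track of the extra $\mathcal{G}_2$ contributions from the sidewall kernel. First I would establish invertibility of the left side $I+a_\mu\mathcal{F}[z]$. Writing
\[
I+a_\mu\mathcal{F}[z]=\left(I+a_\mu(\mathcal{F}[z]-\mathcal{F}_1[\omega_0])\right)+a_\mu\mathcal{F}_1[\omega_0],
\]
and recalling from Note \ref{noteg1} that $\mathcal{F}_1[\omega_0]\gamma=0$ when ${\hat\gamma}(0)=0$, Lemma \ref{lem3.11} together with Note \ref{mathcalK2} gives
\[
\|(\mathcal{F}[z]-\mathcal{F}_1[\omega_0])\gamma\|_0\le C(\|{\tilde\theta}\|_1+\beta^2)\|\gamma\|_0,
\]
where ${\tilde\theta}={\tilde\theta}^{(s)}+\widetilde\Theta+(\text{Fourier components }\pm 1)$ is small because $\|{\tilde\theta}^{(s)}\|_r\le C\beta^2$ (Theorem \ref{theo5.6}), $\|\widetilde\Theta\|_1<\epsilon_1$, and ${\hat\theta}(\pm 1;t)$ is controlled by Proposition \ref{prop2.6}. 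Hence for $\epsilon_1$, $\Upsilon$ small, the operator is invertible on $\{u\in H^{r-2}_p:{\hat u}(0)=0\}$ with bounded inverse independent of the parameters.

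Next I would bound the right-hand side $\mathcal{R}$ of (\ref{D.2}). The first two terms combine as $-a_\mu(\mathcal{F}[z]-\mathcal{F}[z^{(s)}])\gamma^{(s)}$, whose $H^0_p$ norm is controlled by Lemma \ref{lem3.11} in terms of $\|\theta-\theta^{(s)}\|_1+|L-2\pi|+\beta^2|y(0,t)|$; by Proposition \ref{prop2.6} and Lemma \ref{lem6.3} (in particular (\ref{6.14}) with ${\tilde\theta}^{(1)}={\tilde\theta}$, ${\tilde\theta}^{(2)}={\tilde\theta}^{(s)}$) this is bounded by $C\|\widetilde\Theta\|_1$. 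The $\sin$-difference term is handled by Lemma \ref{coroXi} (Note \ref{notecoroXi}, with $j_0=1$ applied to the perturbation $\theta-\theta^{(s)}$). The remaining $\sigma\theta_{\alpha\alpha}$ type contributions are linear in derivatives of $\widetilde\Theta$ after using (\ref{6.14}) to replace $|2\pi-L|$. This yields the $\|\Gamma\|_0$ bound. For the higher bound, I would first prove an auxiliary $\|\Gamma\|_1$ estimate (mirroring (\ref{gammar-2}) in Proposition \ref{propgamma}) and then obtain $\|\Gamma\|_{r-2}$ by applying Lemma \ref{lem3.11} in the $H^{r-2}_p$ norm, using the Banach-algebra property (Note \ref{note1.2}) and absorbing an $\exp(C_2\|\widetilde\Theta\|_{r-2})$ factor coming from Lemma \ref{lem3.1}.

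For the difference estimate (\ref{gamma4.61}), I would subtract the equations (\ref{D.2}) for $\Gamma^{(1)}$ and $\Gamma^{(2)}$, rewriting the difference as
\[
(I+a_\mu\mathcal{F}[z^{(1)}])(\Gamma^{(1)}-\Gamma^{(2)})=a_\mu(\mathcal{F}[z^{(2)}]-\mathcal{F}[z^{(1)}])\Gamma^{(2)}+\mathcal{R}^{(1)}-\mathcal{R}^{(2)}.
\]
The operator difference is estimated by Lemma \ref{lem3.11}, while $\mathcal{R}^{(1)}-\mathcal{R}^{(2)}$ again reduces, via Lemmas \ref{lem3.11}, \ref{lem6.3}, \ref{coroXi} and Proposition \ref{prop2.6}, to a multiple of $\|\widetilde\Theta^{(1)}-\widetilde\Theta^{(2)}\|_r$ times the stated exponential factor. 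Inverting $I+a_\mu\mathcal{F}[z^{(1)}]$ completes the proof.

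The main obstacle is bookkeeping rather than analysis: one must carefully track the fact that $\theta-\theta^{(s)}$, $L-2\pi$, and the constraint-determined modes ${\hat\theta}(\pm 1;t)$ are all controlled by $\|\widetilde\Theta\|_1$ alone (through Proposition \ref{prop2.6} and (\ref{6.14})), while the sidewall-induced $\mathcal{G}_2$ terms in $\mathcal{F}[z]$ contribute only $O(\beta^2)$ perturbations (Note \ref{mathcalK2}) that are harmless once $\Upsilon$ is small. Once these ingredients are assembled, the argument is structurally identical to Proposition \ref{translatingprop6.1}.
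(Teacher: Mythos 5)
Your plan is correct and lands in the right place, but it is substantially more work than the paper actually does. The paper's proof is essentially one line: it simply instantiates the already-proved Proposition \ref{propgamma} with $\gamma^{(1)}=\gamma$, ${\tilde\theta}^{(1)}={\tilde\theta}$, $L^{(1)}=L$ on one side and $\gamma^{(2)}=\gamma^{(s)}$, ${\tilde\theta}^{(2)}={\tilde\theta}^{(s)}$, $L^{(2)}=2\pi$ on the other, so that $\Gamma=\gamma^{(1)}-\gamma^{(2)}$ and the bounds of (\ref{eq1prop319}) and (\ref{eq2prop319}) apply directly; the conversion from $\|\theta-\theta^{(s)}\|_r$, $|L-2\pi|$, and $\beta^2|y(0,t)-y^{(s)}(0)|$ to $\|\widetilde\Theta\|_r$ is then immediate via Proposition \ref{prop2.6}, (\ref{6.14}), and the vanishing of $y(0,t)$ in the symmetric setting. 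Your proposal, by contrast, re-derives the content of Proposition \ref{propgamma} from scratch (the $\mathcal{F}_1[\omega_0]$ decomposition, the $C(\|{\tilde\theta}\|_1+\beta^2)$ operator bound, the term-by-term estimate of the right side of (\ref{D.2}), and the subtraction argument), which is all correct and tracks the $\mathcal{G}_2$/$O(\beta^2)$ bookkeeping properly, but duplicates work the paper already packaged. One small slip in the write-up: $\tilde\theta$ is by definition $\mathcal{Q}_1\theta$, so $\tilde\theta={\tilde\theta}^{(s)}+\widetilde\Theta$ with no additional $\pm 1$ Fourier components; those modes live in $\theta$ but are projected out of $\tilde\theta$. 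This does not affect the estimate you quote (you still control everything through $\|\widetilde\Theta\|_1$ and Proposition \ref{prop2.6}), but the decomposition should be stated carefully.
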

\begin{proof}
In Proposition \ref{propgamma}, we take
$\gamma^{(2)} = \gamma$, ${\tilde \theta}^{(1)} =
{\tilde \theta}$, $L^{(1)} = L$,
$$\gamma^{(2)} = \gamma^{(s)} ,~~{\tilde \theta}^{(2)}
= \tilde\theta^{(s)}, ~~L^{(2)} = 2 \pi
$$
and use Lemma  \ref{lem3.11} to obtain the first two statements.
The statement (\ref{gamma4.61}) follows in a similar
manner from (\ref{eq2prop319}).

\end{proof}

\medskip
The evolution equation (B.1) translates into the following
equation for
$\Theta$:
\begin{equation}
\label{eqTheta}
\widetilde{\Theta}_t(\alpha,t)=
\frac{2\pi}{L}\mathcal{Q}_1\big(U_\alpha+T(1+\theta_\alpha)\big)
= \mathcal{A} \big [\widetilde{\Theta} \big]
+ \mathcal{L}_\beta [\widetilde{\Theta} ] + \mathcal{N} [
\widetilde{\Theta} ].
\end{equation}
where $L$ is determined from (B.4) with $V$ determined from (\ref{2p5.10}).

We can integrate the evolution equation (\ref{eqTheta}) and
rewrite it as the following integral equation:
\begin{equation}
\label{eqfixed}
\widetilde{\Theta} (\alpha, t) = e^{t \mathcal{A}} \widetilde{\Theta}_0 +
\int_0^t e^{(t-\tau) \mathcal{A}}
\left ( \mathcal{L}_\beta [ \widetilde{\Theta} ] + \mathcal{N} [ \widetilde{
\Theta} ] \right ) (\alpha, \tau) d\tau \equiv \mathcal{R} [ \widetilde{\Theta}
] (\alpha, t).
\end{equation}
We will eventually show that $\mathcal{R}$ defines a contraction in
a sufficiently small ball in the $X_r$ space for $r \ge 3$.
For that purpose we need some properties.
\begin{prop}
\label{mathcalNprop}
If for $r\ge 3$, ${\widetilde \Theta} \in \dot{H}^r $ with
$\|\widetilde\Theta\|_1 <\epsilon_1$, and $0\le\beta<\Upsilon$,
then for sufficiently small $\epsilon_1$ and $\Upsilon$,
the functions $\mathcal{L}_\beta $,
and  $\mathcal{N} $, defined in Appendix (\S 7.3),
satisfy the following estimates
\begin{eqnarray*}
\Big\|\mathcal{L}_\beta\Big\|_{r-1}&\leq&C_1\beta^2\exp\big(C_2\|\widetilde\Theta\|_r\big)\|\widetilde\Theta\|_r,\nonumber\\
\Big\|\mathcal{N}\Big\|_{r-1}&\leq&C_1\exp\big(C_2\|\widetilde\Theta\|_r\big)\|\widetilde\Theta\|_r\|\widetilde\Theta\|_{r+1},\nonumber
\end{eqnarray*}
where $C_1$ and  $C_2$ depend only on $r$.
Further, let $\left (\mathcal{L}_\beta^{(1)}, \mathcal{N}^{(1)} \right ) $
and $\left (\mathcal{L}_\beta^{(2)}, \mathcal{N}^{(2)} \right ) $
correspond to
$\widetilde \Theta^{(1)}$ and ${\widetilde \Theta}^{(2)}$ respectively,
each in $\dot{H}^r$ with $\| {\widetilde \Theta}^{(1)} \|_1$ and
$\| {\widetilde \Theta}^{(2)} \|_1 $ $ < \epsilon_1$.
Then for sufficiently small $\epsilon_1$,
\begin{eqnarray*}
\left\| \mathcal{L}^{(1)}_\beta - \mathcal{L}^{(2)}_\beta
 \right\|_{r-1}
&\le& C_1\beta^2\exp{\Big(C_2\big(\| {\widetilde
\Theta}^{(1)}\|_r + \|{\widetilde \Theta}^{(2)}\|_r\big)\Big)}\big\| {\widetilde
\Theta}^{(1)} - {\widetilde \Theta}^{(2)}\big \|_r ,\\
\left\| \mathcal{N}^{(1)} - \mathcal{N}^{(2)}
 \right\|_{r-1}
&\le& C_1\exp{\Big(C_2\big(\| {\widetilde
\Theta}^{(1)}\|_r + \|{\widetilde \Theta}^{(2)}\|_r\big)\Big)}\Big\{\big(\| {\widetilde
\Theta}^{(1)}\|_r + \|{\widetilde \Theta}^{(2)}\|_r\big) \big\| {\widetilde
\Theta}^{(1)} - {\widetilde \Theta}^{(2)}\big \|_{r+1} \\
&&+\big(\| {\widetilde
\Theta}^{(1)}\|_{r+1} + \|{\widetilde \Theta}^{(2)}\|_{r+1}\big) \big\| {\widetilde
\Theta}^{(1)} - {\widetilde \Theta}^{(2)}\big \|_r\Big\},
\end{eqnarray*}
where $C_1$ and $C_2$ depend on $r$.
\end{prop}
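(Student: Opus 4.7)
The plan is to mirror the strategy used in Section 4 (Lemma \ref{N1lem4.6} and Corollary \ref{mathfrakNcor}) for the $\beta=0$ case, but now linearizing around the nontrivial steady state $(\theta^{(s)},\gamma^{(s)})$ rather than around the circle. First, I would decompose $\Gamma$ and $U$ coming from (\ref{D.2}) and (\ref{1.12}) in the same spirit as (\ref{gamma4.3}) and (\ref{translating6.6}): write
\[
\Gamma = \tfrac{2\pi}{L}\sigma\theta_{\alpha\alpha} - \tfrac{2\pi}{L}\sigma\theta^{(s)}_{\alpha\alpha} + \Gamma_L[\widetilde\Theta] + N_1 + N_2 + N_3,
\]
collecting in $\Gamma_L$ all terms that are linear in $\widetilde\Theta$ and collecting in $N_j$ all terms that are either quadratic (or higher) in $\widetilde\Theta$ or arise from differences like $\mathcal{F}[z]-\mathcal{F}[z^{(s)}]$ applied to $\widetilde\Theta$. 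A parallel decomposition for $U$ and for $T$ via (B.5) yields the splitting $\mathcal{A}[\widetilde\Theta] + \mathcal{L}_\beta[\widetilde\Theta] + \mathcal{N}[\widetilde\Theta]$ appearing in (\ref{eqTheta}), where $\mathcal{A}$ is exactly the operator (\ref{Adef}) obtained by linearizing at $\beta=0$, $\theta^{(s)}=0$, and $\mathcal{L}_\beta$ collects the $\beta$-corrections.

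The key input for the $\beta^2$ prefactor on $\mathcal{L}_\beta$ comes from three facts, combined multiplicatively via the Banach-algebra property of $\|\cdot\|_r$: (i) Theorem \ref{theo5.6}, which gives $\|\theta^{(s)}\|_r+|u_0|+\|\gamma^{(s)}-2\sin(\cdot)\|_{r-2}\le C\beta^2$; (ii) Note \ref{mathcalK2} together with Lemma \ref{lem6.2}, giving $\|\mathcal{G}_2[z]f\|_{r-1}\le C\beta^2\|f\|_0\exp(C\|\widetilde\Theta\|_{r-1})$; and (iii) Lemma \ref{lem6.3}, which bounds differences $\mathcal{G}_2[z^{(1)}]-\mathcal{G}_2[z^{(2)}]$ by $\beta^2$ times $\|\widetilde\Theta^{(1)}-\widetilde\Theta^{(2)}\|_r$ plus $|L^{(1)}-L^{(2)}|$. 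Every term appearing in $\mathcal{L}_\beta$ that is linear in $\widetilde\Theta$ carries at least one such factor (coming either from replacing $\sin(\alpha)$ by $\sin(\alpha+\theta^{(s)})$, replacing $\gamma^{(s)}$ by $2\sin\alpha$, using the $\mathcal{K}_2$ part of the kernel, or using the $u_0\mu_2/(\mu_1+\mu_2)$ prefactor in (C.2)); hence each such piece inherits the $\beta^2$ prefactor claimed.

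For the estimate of $\mathcal{N}$, I would apply, essentially term-by-term, the same battery of tools used in Lemma \ref{N1lem4.6}: Lemma \ref{coroXi} (applied to the $\Xi_e,\Xi_s,\Xi_c$ type terms coming from subtracting a linear expansion of $\sin(\alpha+\theta)-\sin(\alpha+\theta^{(s)})$), Lemma \ref{lemmaW} for the $\mathfrak{W}$-type pieces, Lemmas \ref{insertnew}, \ref{insertnew2}, \ref{leminsertnew4} for the boundary-integral terms that involve differences of $q_1,q_2$ evaluated at $\omega$ and at $\omega^{(s)}$, together with Proposition \ref{translatingGamma} to estimate $\Gamma$ and Proposition \ref{propgamma} to estimate $U$ and $T$. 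These bounds are quadratic in $\widetilde\Theta$, and crucially they cost one extra derivative on one of the factors (giving $\|\widetilde\Theta\|_r\|\widetilde\Theta\|_{r+1}$), which is consistent with the claimed estimate.

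The difference estimates are obtained by exactly the same style of bookkeeping: every lemma invoked above has a corresponding ``difference'' counterpart already proved in Section 3 (Lemmas \ref{lem3.1}, \ref{lem3.11}, \ref{lem6.3}, \ref{insertnew2}, \ref{leminsertnew4}), and Proposition \ref{translatingGamma} supplies the needed bound (\ref{gamma4.61}) for $\Gamma^{(1)}-\Gamma^{(2)}$. The main obstacle I anticipate is purely organizational rather than analytic: identifying precisely which pieces of the decomposition belong in $\mathcal{L}_\beta$ versus $\mathcal{N}$ so that every linear-in-$\widetilde\Theta$ piece of $\mathcal{L}_\beta$ can be traced back to a factor provided by Theorem \ref{theo5.6} or Note \ref{mathcalK2}, while $\mathcal{N}$ retains the quadratic structure necessary for the contraction argument in Section 6. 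Concretely, one must keep separate the contributions of $(\theta^{(s)},\gamma^{(s)})$ being non-circular (order $\beta^2$) from those involving $\widetilde\Theta$ nonlinearly, and resist the temptation to absorb the former into the latter, since doing so would destroy the small $\beta$ factor that ultimately makes the semigroup $e^{t\mathcal{A}}$ dominate $\mathcal{L}_\beta$.
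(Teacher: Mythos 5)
Your proposal matches the paper's proof in both structure and the specific lemmas invoked: the paper's proof simply cites Lemmas \ref{coroXi}, \ref{insertnew}, \ref{insertnew2}, \ref{leminsertnew4}, \ref{lem3.11}, \ref{lem6.3} and Proposition \ref{translatingGamma} and says the bounds follow term-by-term from the explicit expressions in Appendix \S 7.4, which is exactly what you describe. Your explicit accounting of where the $\beta^2$ prefactor comes from (Theorem \ref{theo5.6} giving $\|\theta^{(s)}\|_r + |u_0| + \|\gamma^{(s)} - 2\sin(\cdot)\|_{r-2} \le C\beta^2$, Note \ref{mathcalK2} giving the $\beta^2$ bound on $\mathcal{G}_2$, and Lemma \ref{lem6.3} for the $\mathcal{G}_2$ differences) and your warning not to let $O(\beta^2)$ terms linear in $\widetilde\Theta$ slip into $\mathcal{N}$ and lose the small factor are precisely the organizational content that the paper leaves implicit in the appendix expressions $\mathcal{L}_{\beta_1}, \mathcal{L}_{\beta_2}, \mathcal{L}_{\beta_3}$ versus $\mathcal{N}_1, \mathcal{N}_2, \mathcal{N}_3$.
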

\begin{proof} On using Lemmas  \ref{coroXi} (see Note \ref{notecoroXi}),   \ref{insertnew}, \ref{insertnew2},
\ref{leminsertnew4}, \ref{lem3.11}, \ref{lem6.3} and Proposition \ref{translatingGamma},
the proof follows from the expressions of $\mathcal{L}_\beta $ and
$\mathcal{N}$.
\end{proof}

\begin{remark} It is easily to check that $\big(\mathcal{L}_\beta[\widetilde\Theta]+\mathcal{N}[\widetilde\Theta]\big)(-\alpha)
=-\big(\mathcal{L}_\beta[\widetilde\Theta]+\mathcal{N}[\widetilde\Theta]\big)(\alpha)$.
\end{remark}

\subsection{Contraction properties of $\mathcal{R}$ and global
existence for symmetric disturbances}

\begin{note}\label{notesym}
For the linear evolution equation  (\ref{eqnv}), if $f$ and $v_0$ are odd with respect to $
\alpha$, then by uniqueness of the linear equation (\ref{eqnv}), $v(-\alpha,t)=-v(\alpha,t)$.
\end{note}
 First, by Proposition \ref{mathcalNprop}, we have
\begin{lemma} \label{lemmathcalN}
Assume $0\le\beta<\Upsilon$. Suppose for $r \ge 3$
$  {\widetilde \Theta} (\alpha, t)\in X_r$
satisfy the condition $
\| {\widetilde \Theta} \|_{H_\sigma^r} \le \epsilon$.
Then for $\mathcal{L}_\beta [{\widetilde \Theta}] (\alpha,t)$ and
$\mathcal{N} [ {\widetilde \Theta} ] (\alpha, t)$ determined
from the Appendix (\S 7.3), as $\epsilon$ and $\Upsilon$ are small enough,  we have
\begin{equation*}
\big\| \mathcal{L}_\beta [{\widetilde \Theta} ] +\mathcal{N} [ {\widetilde \Theta} ]
\big\|_{H_\sigma^{r-3}} \le
C\| {\widetilde \Theta} \|_{H_\sigma^r}
\left (
\| {\widetilde \Theta} \|_{H_\sigma^r} + \beta^2\right ) .
\end{equation*}
Further, if both
$ {\widetilde \Theta}^{(1)} (\alpha, t)$
and
$ {\widetilde \Theta}^{(2)} (\alpha, t)$
satisfy
(\ref{eqfixed}), then the corresponding
$\left (\mathcal{L}_\beta^{(1)}, \mathcal{N}^{(1)}\right )$
and $\left ( \mathcal{L}_\beta^{(2)},\mathcal{N}^{(2)}\right )$ satisfy
\begin{equation*}
\|\mathcal{L}_\beta^{(1)}-\mathcal{L}_\beta^{(2)}\|_{H_\sigma^{r-3}}+\| \mathcal{N}^{(1)}
- \mathcal{N}^{(2)}
\|_{H_\sigma^{r-3}} \le
C(\epsilon +\beta^2)
\| {\widetilde \Theta}^{(1)}-{\widetilde \Theta}^{(2)}
\|_{H_\sigma^r} .
\end{equation*}
\end{lemma}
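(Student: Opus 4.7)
The plan is to follow the template of Lemma \ref{NN0bounds} very closely: invoke the pointwise-in-time bounds from Proposition \ref{mathcalNprop} (and its analogues at shifted regularity indices), use the equivalence of $\|\cdot\|_r$ and $\|\cdot\|_{w,r}$, and exploit the two-part structure of the $\|\cdot\|_{H_\sigma^r}$ norm, namely the sup-in-time of $\|\cdot\|_{w,r}^2$ together with the weighted $L^2$-in-time of $\|\cdot\|_{w,r+3/2}^2$. Symmetry ($\widetilde\Theta\in X_r$) plays no role in the estimates themselves beyond ensuring the solutions stay in the correct space; all the quantitative work is identical to the $\beta=0$ case.

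First I would treat $\mathcal{L}_\beta$, which is linear in $\widetilde\Theta$ with prefactor $\beta^2$. Proposition \ref{mathcalNprop} gives $\|\mathcal{L}_\beta\|_{r-1}\le C\beta^2\|\widetilde\Theta\|_r$ pointwise in $t$; multiplying by $e^{\sigma t/2}$ and taking the supremum bounds the first part of the $H_\sigma^{r-3}$ norm by $C\beta^2\|\widetilde\Theta\|_{H_\sigma^r}$. For the integral part of $\|\cdot\|_{H_\sigma^{r-3}}$ I need an $H^{r-3/2}$ bound on $\mathcal{L}_\beta$; applying Proposition \ref{mathcalNprop} (or direct inspection of the explicit form of $\mathcal{L}_\beta$ given in the Appendix, which is linear and differentiates $\widetilde\Theta$ at most once beyond identity) gives $\|\mathcal{L}_\beta\|_{r-3/2}\le C\beta^2\|\widetilde\Theta\|_{r-1/2}$, and since $r-1/2\le r+3/2$ the integral is controlled by $C\beta^4\int_0^\infty e^{\sigma t}\|\widetilde\Theta\|_{w,r+3/2}^2\,dt\le C\beta^4\|\widetilde\Theta\|_{H_\sigma^r}^2$. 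Combining yields $\|\mathcal{L}_\beta\|_{H_\sigma^{r-3}}\le C\beta^2\|\widetilde\Theta\|_{H_\sigma^r}$.

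For the nonlinear part $\mathcal{N}$, Proposition \ref{mathcalNprop} provides bilinear estimates of the form $\|\mathcal{N}\|_{s-1}\le C\|\widetilde\Theta\|_s\|\widetilde\Theta\|_{s+1}$ for suitable $s$. For the supremum part of the $H_\sigma^{r-3}$ norm, I would use a bound at shifted regularity (or re-derive it via the Banach algebra property combined with the lemmas from \S 3) to get $\|\mathcal{N}\|_{r-3}\le C\|\widetilde\Theta\|_{r-1}\|\widetilde\Theta\|_{r-2}\le C\|\widetilde\Theta\|_r^2$, hence $\sup_t e^{\sigma t}\|\mathcal{N}\|_{w,r-3}^2\le C\|\widetilde\Theta\|_{H_\sigma^r}^4$. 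For the integral part I would use $\|\mathcal{N}\|_{r-3/2}\le C\|\widetilde\Theta\|_r\|\widetilde\Theta\|_{r+3/2}$ and split the two factors between sup-in-time and $L^2$-in-time:
\begin{equation*}
\int_0^\infty e^{\sigma t}\|\mathcal{N}\|_{w,r-3/2}^2\,dt\le C\Bigl(\sup_t e^{\sigma t}\|\widetilde\Theta\|_{w,r}^2\Bigr)\int_0^\infty e^{\sigma t}\|\widetilde\Theta\|_{w,r+3/2}^2\,dt\le C\|\widetilde\Theta\|_{H_\sigma^r}^4.
\end{equation*}
Adding the two contributions gives $\|\mathcal{N}\|_{H_\sigma^{r-3}}\le C\|\widetilde\Theta\|_{H_\sigma^r}^2$, and summing with the $\mathcal{L}_\beta$ bound completes the first inequality. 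The difference estimate is obtained by exactly the same splitting, now invoking the second half of Proposition \ref{mathcalNprop}, which gives bilinear-in-difference bounds of matching shape; the only additional observation is to symmetrize the factors so that the piece with $\widetilde\Theta^{(1)}-\widetilde\Theta^{(2)}$ is always placed inside the $L^2$-in-time factor when the $H^{r+3/2}$ norm is needed, and inside the sup-in-time factor otherwise.

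The step I expect to be the main obstacle is legitimizing the bilinear estimate $\|\mathcal{N}\|_{r-3/2}\le C\|\widetilde\Theta\|_r\|\widetilde\Theta\|_{r+3/2}$ at the non-integer regularity $r-3/2$, since Proposition \ref{mathcalNprop} is only stated for integer $r\ge 3$. This will need to be resolved either by reproving the bound directly from the explicit formulas for $\mathcal{N}$ in the Appendix, using the Cauchy--Schwartz/Banach algebra machinery of Lemmas \ref{insertnew}--\ref{leminsertnew4} and Lemma \ref{coroXi} (which extend routinely to fractional indices), or by standard interpolation between the integer-level bounds already in hand; the quadratic structure of $\mathcal{N}$ and the three-halves derivative gain built into $\|\cdot\|_{H_\sigma^r}$ are what make the estimate close.
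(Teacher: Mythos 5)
Your proposal is essentially the route the paper takes (the paper leaves the proof implicit, pointing back to Proposition \ref{mathcalNprop}, and your argument is the natural reconstruction modeled on the written-out proof of Lemma \ref{NN0bounds}): bound $\|\mathcal{L}_\beta\|$ and $\|\mathcal{N}\|$ pointwise in $t$ at suitable Sobolev indices, use the equivalence of $\|\cdot\|_s$ and $\|\cdot\|_{w,s}$, and split the two factors of $\widetilde\Theta$ between the sup-in-time and $L^2$-in-time pieces of $\|\cdot\|_{H_\sigma^r}$.

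The one thing worth flagging is that the ``main obstacle'' you anticipate at the end --- justifying a bilinear estimate for $\|\mathcal{N}\|_{r-3/2}$ at non-integer regularity --- is a non-issue. The time-integral part of the $H_\sigma^{r-3}$ norm indeed requires control of $\|\mathcal{N}\|_{w,r-3/2}$, but since $r-3/2 < r-1$ and the weight $w(\sigma,k)$ is independent of the Sobolev index, Proposition \ref{mathcalNprop} applied at the same index $r$ already gives
$\|\mathcal{N}\|_{w,r-3/2} \le C\|\mathcal{N}\|_{r-1} \le C\,\|\widetilde\Theta\|_r\,\|\widetilde\Theta\|_{r+1} \le C\,\|\widetilde\Theta\|_r\,\|\widetilde\Theta\|_{r+3/2}$,
so no interpolation or fractional-index re-derivation of the \S 3 machinery is needed; similarly $\|\mathcal{L}_\beta\|_{r-3/2}\le\|\mathcal{L}_\beta\|_{r-1}\le C\beta^2\|\widetilde\Theta\|_r$. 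The only place a shifted index is genuinely used is the $\sup_t\|\cdot\|_{w,r-3}$ piece (applying the proposition at $r'=r-2$, or re-deriving from the $N_j$-type bounds), exactly as you indicate. With that simplification your proof matches what the paper intends, and the difference estimate follows verbatim from the second half of Proposition \ref{mathcalNprop} by the same factor-splitting.
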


\medskip

Hence, by Lemmas \ref{nonhomogeneous} and \ref{lemmathcalN}, we have
\begin{lemma}\label{lem8.11} Assume $0\le\beta<\Upsilon$.
 Let $ r\geq3$, $\|\widetilde\Theta_0\|_{w,r}<\frac{\epsilon}2$ and $\widetilde\Theta\in X_r$ with $\|\widetilde\Theta\|_{H^r_\sigma}\le\epsilon$. For sufficiently small  $\epsilon$ and $\Upsilon$,   the operator $\mathcal{R}$ defined in (\ref{eqfixed}) satisfies
the following estimate:
\begin{eqnarray*}
\big\|\mathcal{R}\big[\widetilde\Theta\big]\big\|_{H_\sigma^r}&\leq&
C\epsilon.
\end{eqnarray*}
Further, if $\|\widetilde\Theta^{(1)}\|_{H^r_\sigma}\le\epsilon$ and $\|\widetilde\Theta^{(2)}\|_{H^r_\sigma}\le\epsilon$, then
\begin{eqnarray*}
\big\|\mathcal{R}\big[\widetilde\Theta^{(1)}\big]-\mathcal{R}\big[\widetilde\Theta^{(2)}\big]\big\|_{H_\sigma^r}&\leq&
C\epsilon\|\widetilde\Theta^{(1)}-\widetilde\Theta^{(2)}\|_{H_\sigma^r}.
\end{eqnarray*}
 Further,  $\mathcal{R}[\widetilde\Theta](-\alpha)=-\mathcal{R}[\widetilde\Theta](\alpha)$.
\end{lemma}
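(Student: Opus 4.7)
The plan is to estimate $\mathcal{R}[\widetilde\Theta]$ in the $H_\sigma^r$ norm by splitting into its linear semigroup piece and its Duhamel integral piece, then using the two bounds collected in (\ref{S0S1bound}) together with the nonlinear estimates of Lemma \ref{lemmathcalN}. For the first bound, apply the homogeneous half of (\ref{S0S1bound}) to the initial data term to get $\|e^{t\mathcal{A}}\widetilde\Theta_0\|_{H^r_\sigma}\le \|\widetilde\Theta_0\|_{w,r}<\epsilon/2$, and apply the inhomogeneous half to the convolution term to get
\begin{equation*}
\left\|\int_0^t e^{(t-\tau)\mathcal{A}}\bigl(\mathcal{L}_\beta[\widetilde\Theta]+\mathcal{N}[\widetilde\Theta]\bigr)(\cdot,\tau)\,d\tau\right\|_{H_\sigma^r}
\le \tfrac{2\sqrt{2}}{\sqrt{3}\,\sigma}\,\bigl\|\mathcal{L}_\beta[\widetilde\Theta]+\mathcal{N}[\widetilde\Theta]\bigr\|_{H_\sigma^{r-3}}.
\end{equation*}
Lemma \ref{lemmathcalN} then bounds the right-hand side by $C\|\widetilde\Theta\|_{H_\sigma^r}(\|\widetilde\Theta\|_{H_\sigma^r}+\beta^2)\le C\epsilon(\epsilon+\Upsilon^2)$, and choosing $\epsilon,\Upsilon$ small enough so that $C(\epsilon+\Upsilon^2)\cdot \tfrac{2\sqrt{2}}{\sqrt{3}\sigma}\le 1/2$ makes the full sum $\le \epsilon$, giving the first claim.

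For the contraction estimate, subtract the two fixed-point expressions; the initial-data term cancels, so only the Duhamel term remains. Applying (\ref{S0S1bound}) and then the difference bound of Lemma \ref{lemmathcalN} immediately yields
\begin{equation*}
\|\mathcal{R}[\widetilde\Theta^{(1)}]-\mathcal{R}[\widetilde\Theta^{(2)}]\|_{H_\sigma^r}\le \tfrac{2\sqrt{2}}{\sqrt{3}\sigma}\,C(\epsilon+\beta^2)\,\|\widetilde\Theta^{(1)}-\widetilde\Theta^{(2)}\|_{H_\sigma^r},
\end{equation*}
which is of the desired form with constant $C\epsilon$ after absorbing the $\beta^2$ factor via $\beta<\Upsilon$ and adjusting $\epsilon$.

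For the symmetry claim, recall that $\widetilde\Theta_0=\mathcal{Q}_1(\theta_0-\theta^{(s)})$ is odd because both $\theta_0$ and $\theta^{(s)}$ are odd (the latter by Proposition \ref{prop5.4new} and Remark \ref{remsteadyshape}, and $\mathcal{Q}_1$ preserves oddness since it only removes the $k=0,\pm 1$ modes, with the $\pm 1$ coefficients being equal real parts that vanish in the odd case). The remark after Proposition \ref{mathcalNprop} asserts $(\mathcal{L}_\beta+\mathcal{N})[\widetilde\Theta](-\alpha)=-(\mathcal{L}_\beta+\mathcal{N})[\widetilde\Theta](\alpha)$, and by Note \ref{notesym} the semigroup $e^{t\mathcal{A}}$ maps odd functions to odd functions. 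Both terms in $\mathcal{R}[\widetilde\Theta]$ are therefore odd in $\alpha$.

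The only mildly nontrivial point is keeping the $\beta^2$ contribution of $\mathcal{L}_\beta$ small against the $\epsilon$-contraction target; this forces simultaneous smallness of $\epsilon$ and $\Upsilon$, but no genuine obstacle arises because the key bounds are already in place from Sections 3 and 5. The absence of a zero-mode equation to deal with (contrast Lemma \ref{lemcontract}, where $\mathfrak{N}_0$ contributed an extra scalar estimate) actually makes this lemma cleaner than its $\beta=0$ analogue.
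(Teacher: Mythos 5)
Your proof is correct and takes essentially the same route the paper does: the paper states Lemma \ref{lem8.11} as an immediate consequence of Lemma \ref{nonhomogeneous} (equivalently (\ref{S0S1bound})) and Lemma \ref{lemmathcalN}, and your write-up simply carries out that chain — semigroup bound for the initial-data term, Duhamel bound for the convolution term, Lemma \ref{lemmathcalN} for the nonlinear/$\beta^2$ contributions, and Note \ref{notesym} plus the symmetry remark after Proposition \ref{mathcalNprop} for oddness. Your observation that absorbing the $\beta^2$ term into a $C\epsilon$ contraction constant forces $\Upsilon^2 \lesssim \epsilon$ is a correct reading of "sufficiently small $\epsilon$ and $\Upsilon$", consistent with the paper's use of $C\epsilon<1$ in the proof of Proposition \ref{prop8}.
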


\medskip

\noindent{\bf Proof of Proposition \ref{prop8}:}
If
$C \epsilon < 1$, then it is clear that
the right side of (\ref{eqfixed}) define a contraction map in an $\epsilon$ ball in
the Banach
space $X_r\cap H_\sigma^r$. Therefore, there exists a unique solution
${\widetilde \Theta} $
satisfying the equation (\ref{eqfixed}), hence (B.1).
The local uniqueness of solutions (see Appendix \S \ref{A7.2}) 
implies that this
is the only solution. The $e^{-\sigma t/2}$ exponential decay of
${\widetilde \Theta}$ and hence of $\Theta$ implies that the steady symmetric translating bubble
is approached exponentially in time. The constraint condition (B.4)
shows that $L-2\pi$ decays exponentially.

\vspace{1cm}
{\bf{Acknowledgements:}}
Partial support for this research was provided by the U.S. National Science
(DMS-0733778, DMS-0807266).

\section{appendix}

\subsection{Proof of Lemma \ref{coroXi}}\label{A7.1}
 Consider $j_0=1$ firstly.
Let $F(u)=uh(u)$. Then $h(u)$ is also an entire function of order 1.
\begin{eqnarray}\label{translating6.2}
\left\|F\big(u(\cdot)\big)\right\|_\infty\leq
C_1\exp(C_2\|u\|_\infty)\|u\|_\infty\leq C_1\exp\left(C_2\|u\|_1\right)\|u\|_1.
\end{eqnarray}
We see
$$\left\|D_\alpha F(u(\cdot))\right\|_0=\left\|u_\alpha D_uF\right\|_0\le C_1\exp\left(C_2\|u\|_1\right)\|u\|_1.$$
 For $k\ge2$, by Banach Algebra property, we also have
\begin{multline}\label{new7.2}
\Big\|D_\alpha F\big(u(\alpha)\big)\Big\|_{k-1}\le C\|D_\alpha
u\|_{k-1}\|D_uF(u(\alpha))\|_{k-1}\leq C\|u\|_k\sum_{j=1}^\infty
|a_j|j\|u\|_{k-1}^{j-1}\\
\leq C_1\|u\|_k\exp\big(C_2\|u\|_{k-1}\big).
\end{multline}
Hence, by (\ref{translating6.2}) and (\ref{new7.2}), we have for
$k\ge2$,
\begin{equation}\label{new7.3}
\left\|F\big(u(\cdot)\big)\right\|_k\leq
C_1\|u\|_k\exp\big(C_2\|u\|_{k-1}\big),
\end{equation}
with $C_1$ and $C_2$ depending only on $k$.

Let $F(u)=u^2g(u)$. Then $g(u)$ is also an entire function of order $1$.
\begin{eqnarray*}
\left\|F\big(u(\cdot)\big)\right\|_\infty\leq
C\exp(\|u\|_\infty)\|u\|^2_\infty\leq C\exp\left(\|u\|_1\right)\|u\|_1^2.
\end{eqnarray*}
 And
$D_u F(u)$ is the entire function of order 1 with $j_0=1$ , so for
$k\ge 2$, by  Bananch Algebra and (\ref{new7.3}),  we have
\begin{eqnarray*}
\left\|D_\alpha F\big(u(\cdot)\big)\right\|_{k-1}\leq
C\|u_\alpha\|_{k-1}\|D_uF(u(\alpha))\|_{k-1}\leq
C_1\|u\|_k\|u\|_{k-1}\exp\big(C_2\|u\|_{k-1}\big)
\end{eqnarray*}
with $C_1$ and $C_2$ depending only on $k$. Hence, for $k\ge2$,
\begin{equation}\label{new7.4}
\left\|F\big(u(\cdot)\big)\right\|_k\leq
C_1\|u\|_{k-1}\|u\|_k\exp\big(C_2\|u\|_{k-1}\big),
\end{equation}
with $C_1$ and $C_2$ depending only on $k$.

By the same technique, we obtain the difference results.

\subsection{Local uniqueness of Hele-Shaw bubble solutions}\label{A7.2}

We  have the local uniqueness theorem for the system (B.1)-(B.6) as follows:
\begin{theorem}\label{uniqueness}Let $0\leq\beta<\Upsilon$ and  $|u_0|<1$, where $\Upsilon$ is small enough for Lemmas
\ref{lem6.2}, \ref{lem3.11} and Proposition \ref{propgamma} to apply.
Let $\big(\tilde\theta_1(\alpha,t), \hat{\theta}_1(0;t),y_1(0,t)\big)$ and $\big(\tilde\theta_2(\alpha,t), \hat{\theta}_2(0;t),y_2(0,t)\big)$ be solutions of the system (B.1)-(B.6) with the same initial condition (\ref{translating5.1}) in  the space $C\left([0,S],\mathcal{B}^r_\epsilon\times\mathbb{R}\times S_M\right)$ with $r\ge4$.
 Suppose $\|\tilde\theta_1\|_1<\epsilon_1$ and $\|\tilde\theta_2\|_1<\epsilon_2$ such that $|L_1-2\pi|<\frac{1}{2}$ and $|L_2-2\pi|<\frac{1}{2}$ by  (\ref{6.14}).
Then for sufficient small $\epsilon_1$ and $\Upsilon$, the two solutions are the same in $\dot{H}^2\times\mathbb{R}\times S_M$.
\end{theorem}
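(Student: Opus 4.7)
The proof is a Gronwall argument for the three-component difference, structured around the dissipative energy mechanism already exhibited in Lemmas \ref{lemAinner} and \ref{nonhomogeneous}. Set
\[
\delta\tilde\theta := \tilde\theta_1 - \tilde\theta_2, \quad \delta\hat\theta_0 := \hat\theta_1(0;\cdot) - \hat\theta_2(0;\cdot), \quad \delta y_0 := y_1(0,\cdot) - y_2(0,\cdot),
\]
each of which vanishes at $t=0$ by the common-initial-condition hypothesis. The plan is to bound
\[
E(t) := \|\delta\tilde\theta(\cdot,t)\|_{w,r-2}^2 + |\delta\hat\theta_0(t)|^2 + |\delta y_0(t)|^2
\]
by $\tfrac{d}{dt} E(t) \le C_* E(t)$ on $[0,S]$, which together with $E(0)=0$ and $r-2\ge 2$ forces $E\equiv 0$ and delivers equality in $\dot H^2\times\mathbb{R}\times S_M$.

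First I would subtract the two copies of (\ref{RHSC1}), (\ref{translating6.9}) and (\ref{y0}) to obtain
\[
(\delta\tilde\theta)_t = \mathcal{A}[\delta\tilde\theta] + \bigl(\mathcal{A}_N^{(1)} - \mathcal{A}_N^{(2)}\bigr) + \bigl(\mathfrak{N}^{(1)} - \mathfrak{N}^{(2)}\bigr)
\]
together with analogous scalar equations for $\delta\hat\theta_0$ and $\delta y_0$. All the constraint-determined objects $(\gamma_j, U_j, T_j, L_j, \hat\theta_j(\pm1;\cdot))$ that enter implicitly through (B.3)--(B.6) are Lipschitz in the evolved variables with at most a two-derivative loss: Proposition \ref{prop2.6} controls $G(\tilde\theta_1)-G(\tilde\theta_2)$, estimate (\ref{6.14}) controls $|L_1-L_2|$ by $\|\delta\tilde\theta\|_1$, and Proposition \ref{propgamma} together with Lemmas \ref{lem6.3} and \ref{lem3.11} delivers the analogous bounds for $\gamma_1-\gamma_2$, $U_1-U_2$, $T_1-T_2$.

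The main energy step is to take the $(\cdot,\cdot)_{w,r-2}$ inner product of the $\delta\tilde\theta$ equation with $\delta\tilde\theta$ itself. The linear part supplies, through Lemma \ref{lemAinner}, the dissipation $(\delta\tilde\theta, -\mathcal{A}[\delta\tilde\theta])_{w,r-2} \ge \tfrac{15\sigma}{64}\|\delta\tilde\theta\|_{w,r-1/2}^2$. The nonlinear differences, estimated by Lemma \ref{ANnorm} and Corollary \ref{mathfrakNcor} at appropriate indices and paired with $\delta\tilde\theta$ using Cauchy--Schwartz in the weighted norms, split into two families: (i) terms whose coefficient on $\|\delta\tilde\theta\|_{w,r-1/2}^2$ is proportional to $|L_j-2\pi|+\|\tilde\theta_j\|_1+\Upsilon$, which can be absorbed into the dissipation once $\epsilon_1$ and $\Upsilon$ are chosen small enough; and (ii) cross terms of the form $C\,E(t)^{1/2}\|\delta\tilde\theta\|_{w,r-1/2}$, handled by Young's inequality. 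The constant $C$ here depends on the uniform $\dot H^r$ bounds on $\tilde\theta_1,\tilde\theta_2$, and this is exactly where $r \ge 4$ enters, since the estimate for $\mathfrak{N}^{(1)} - \mathfrak{N}^{(2)}$ in Corollary \ref{mathfrakNcor} involves $\|\tilde\theta_j\|_r$ and (effectively) $\|\tilde\theta_j\|_{r-1}$ on its right-hand side.

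The scalar equations for $\delta\hat\theta_0$ and $\delta y_0$ are handled in the same spirit: their right-hand sides are Lipschitz in $(\delta\tilde\theta,\delta\hat\theta_0,\delta y_0)$ via Lemma \ref{lemB0} and Corollary \ref{mathfrakNcor} for the $\hat\theta_0$ evolution, and via Proposition \ref{propgamma} combined with Sobolev embedding for the $y(0,t)$ equation, contributing terms of size at most $CE(t)$. Summing over the three components yields the desired Gronwall inequality, and $E(0)=0$ concludes the proof. The chief obstacle is the derivative loss in the nonlinearity: the Lipschitz bounds on $\gamma$, $U$, $T$ cost two derivatives each, so an energy estimate at the ambient regularity $r$ would be circular. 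The two-derivative cushion from $r \ge 4$, combined with the $3/2$-derivative $L^2$-in-time gain supplied by the dissipative operator $\mathcal{A}$, is precisely what allows the top-order nonlinear terms to be absorbed into the dissipation rather than being moved to the right-hand side of the Gronwall inequality.
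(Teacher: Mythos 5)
Your proposal captures the right spirit—an energy/Gronwall argument in which dissipation from the leading $\theta_{\alpha\alpha\alpha}$ term absorbs the top-order losses in the nonlinearity—but as written it has a material gap that the paper's proof does not share. The decomposition $(\ref{RHSC1})$, $(\ref{translating6.9})$ and the operators $\mathcal{A}_N$, $\mathfrak{N}$, $\mathfrak{N}_0$, as well as the estimates in Lemma \ref{ANnorm} and Corollary \ref{mathfrakNcor}, are derived in \S4 \emph{only for} $\beta=0$ (the $\mathcal{G}_2$/$\mathcal{K}_2$ wall-interaction terms are dropped there). Theorem \ref{uniqueness} is stated for $0\le\beta<\Upsilon$ including $\beta>0$, so subtracting two copies of $(\ref{RHSC1})$ is not valid for the full range of parameters claimed, and you would also need to account for the $y(0,t)$-coupling that is present precisely because $\beta\ne 0$. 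The paper's proof avoids this by not passing through the $\beta=0$ decomposition at all: it works directly with the energy $E^d(t)=\tfrac12\|D_\alpha^2(\tilde\theta_1-\tilde\theta_2)\|_0^2+\tfrac12|\hat\theta_1(0;t)-\hat\theta_2(0;t)|^2+\tfrac12|y_1(0,t)-y_2(0,t)|^2$, differences $(\ref{1.12})$, (B.3) and (B.5) as written (valid for any $\beta$), and extracts the dissipative term $-\sigma\int D_\alpha^3(\delta\tilde\theta)\,\Lambda\, D_\alpha^3(\cdot)\,d\alpha$ by hand from the $\theta_{\alpha\alpha}$ contribution in (B.3). All Lipschitz control on $\gamma$, $U$, $T$, $L$, $\hat\theta(\pm1;\cdot)$ then comes from Propositions \ref{prop2.6} and \ref{propgamma} and Lemmas \ref{lem6.3}, \ref{lem3.11}—exactly the general $\beta\ge 0$ tools you cited—with the $\beta|y_1(0,t)-y_2(0,t)|$ terms appearing naturally and closing the system.

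A second, smaller concern is the index bookkeeping. You run the energy at weight level $r-2$ and invoke Corollary \ref{mathfrakNcor} for the nonlinear differences, but that corollary bounds $\|\mathfrak{N}^{(1)}-\mathfrak{N}^{(2)}\|_{r-1}$ by quantities involving $\|\delta\tilde\theta\|_{r+1}$, a two-derivative loss against a $3/2$-derivative dissipative gain; to instead apply it at a shifted index you would need the shifted index to satisfy the lemma's standing hypothesis $r\ge 3$, which pushes the regularity requirement above the stated $r\ge 4$. The paper circumvents this by working at the fixed low level $\|\cdot\|_2$: after integrating by parts, the nonlinear remainders enter as $C\|\delta\tilde\theta\|_2\big(\|\theta_1-\theta_2\|_3+\beta|\delta y_0|\big)$, and the single extra derivative is absorbed into $-\sigma c_0\|\delta\tilde\theta\|_{7/2}^2$ via interpolation and Young's inequality. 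Using the plain $H^2$ energy rather than the weighted norm is also simpler here, since the weights exist to win global-in-time estimates uniformly in $\sigma$, which uniqueness over a finite interval $[0,S]$ does not require.
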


 \begin{proof}We define the energy function $E^d(t)$ for the  difference of two solutions by
\begin{multline}\label{differenceenergy}
E^d(t)=\frac{1}{2}\int_0^{2\pi}\big( D^2_\alpha\tilde\theta_1- D^2_\alpha\tilde\theta_2\big)^2d\alpha+\frac{1}{2}\big(\hat{\theta}_1(0;t)-\hat{\theta}_2(0;t)\big)^2\\+\frac{1}{2}\big(y_1(0,t)-y_2(0,t)\big)^2.
\end{multline}
Taking derivatives on both sides with respect to $t$, and using (B.1)-(B.6), we have using (\ref{1.12})
\begin{multline}\label{derienergy}
\frac{dE^d(t)}{dt}=\int_0^{2\pi}D^2_\alpha\big(\tilde\theta_1-\tilde\theta_2\big)D^3_\alpha\mathcal{Q}_1\big(\frac{2\pi}{L_1}U_1-\frac{2\pi}{L_2}U_2\big)d\alpha\\
+\int_0^{2\pi}D^2_\alpha\big(\tilde\theta_1-\tilde\theta_2\big)D_\alpha\mathcal{Q}_1\big(\frac{2\pi}{L_1}(1+\theta_{1,\alpha})U_1-\frac{2\pi}{L_2}(1+\theta_{2,\alpha})U_2\big)d\alpha\\
+\int_0^{2\pi}D^2_\alpha\big(\tilde\theta_1-\tilde\theta_2\big)D^2_\alpha\mathcal{Q}_1\big(\frac{2\pi}{L_1}T_1\theta_{1,\alpha}-\frac{2\pi}{L_2}T_2\theta_{2,\alpha}\big)d\alpha\\
+\big(\hat{\theta}_1(0;t)-\hat{\theta}_2(0;t)\big)\int_0^{2\pi}\big[\frac{2\pi}{L_1}T_1(1+\theta_{1,\alpha})-\frac{2\pi}{L_2}T_2(1+\theta_{2,\alpha})\big]d\alpha\\+\big(y_1(0,t)-y_2(0,t)\big)\big[-U_1(0,t)\sin\big(\theta_1(0,t)\big)+U_2(0,t)\sin\big(\theta_2(0,t)\big)\big]
\end{multline}
\begin{multline*}
=\int_0^{2\pi}D^2_\alpha\big(\tilde\theta_1-\tilde\theta_2\big)(D^3_\alpha+D_\alpha)\mathcal{Q}_1\big(\frac{2\pi}{L_1}U_1-\frac{2\pi}{L_2}U_2\big)d\alpha\\
+\int_0^{2\pi}D^2_\alpha\big(\tilde\theta_1-\tilde\theta_2\big)D_\alpha\mathcal{Q}_1\big(\frac{2\pi}{L_1}\theta_{1,\alpha}U_1-\frac{2\pi}{L_2}\theta_{2,\alpha}U_2\big)d\alpha\\
+\int_0^{2\pi}D^2_\alpha\big(\tilde\theta_1-\tilde\theta_2\big)D_\alpha^2\mathcal{Q}_1\big(\frac{2\pi}{L_1}T_1\theta_{1,\alpha}-\frac{2\pi}{L_2}T_2\theta_{2,\alpha}\big)d\alpha\\
+\big(\hat{\theta}_1(0;t)-\hat{\theta}_2(0;t)\big)\int_0^{2\pi}\big[\frac{2\pi}{L_1}T_1(1+\theta_{1,\alpha})-\frac{2\pi}{L_2}T_2(1+\theta_{2,\alpha})\big]d\alpha\\+\big(y_1(0,t)-y_2(0,t)\big)\big[-U_1(0,t)\sin\big(\theta_1(0,t)\big)+U_2(0,t)\sin\big(\theta_2(0,t)\big)\big]
=I_1+I_2+I_3+I_4+I_5.
\end{multline*}

By (\ref{1.12}), we have
\begin{eqnarray*}
I_1&=&\int_0^{2\pi}D^2_\alpha\big(\tilde\theta_1-\tilde\theta_2\big)(D^3_\alpha+D_\alpha)\mathcal{Q}_1\big(\frac{2\pi^2}{L_1^2}\mathcal{H}[\gamma_1]-\frac{2\pi^2}{L_2^2}\mathcal{H}[\gamma_2]\big)d\alpha\\
&&+\int_0^{2\pi}D_\alpha^2\big(\tilde\theta_1-\tilde\theta_2\big)(D_\alpha^3+D_\alpha)\mathcal{Q}_1\Big(\frac{2\pi^2}{L_1^2}\Re\big(\mathcal{G}[z_1]\gamma_1\big)-\frac{2\pi^2}{L_2^2}\Re\big(\mathcal{G}[z_2]\gamma_2\big)\Big)d\alpha\\
&&+(u_0+1)\int_0^{2\pi}D^2_\alpha\big(\tilde\theta_1-\tilde\theta_2\big)(D_\alpha^3+D_\alpha)\mathcal{Q}_1\Big(\frac{2\pi}{L_1}\cos\big(\alpha+\theta_1(\alpha)\big)-\frac{2\pi}{L_2}\cos\big(\alpha+\theta(\alpha)\big)\Big)d\alpha.
\end{eqnarray*}
Using  (B.3) and by Lemma \ref{lem3.11} and Proposition \ref{propgamma}, we have
\begin{multline*}
I_1=-\sigma\int_0^{2\pi}D_\alpha^3\big(\tilde\theta_1-\tilde\theta_2\big)\Lambda D_\alpha^3\big(\frac{4\pi^3}{L_1^3}\tilde\theta_1-\frac{4\pi^3}{L_2^3}\tilde\theta_2\big)d\alpha+\sigma\int_0^{2\pi}D_\alpha^2\big(\tilde\theta_1-\tilde\theta_2\big)\Lambda D^2\big(\frac{4\pi^3}{L_1^3}\tilde\theta_1-\frac{4\pi^3}{L_2^3}\tilde\theta_2\big)d\alpha\\
+\big(1+\frac{\mu_2}{\mu_1+\mu_2}u_0\big)\int_0^{2\pi}D^2_\alpha\big(\tilde\theta_1-\tilde\theta_2\big)\Lambda D^2_\alpha\mathcal{Q}_1\Big(\frac{2\pi}{L_1}\sin\big(\alpha+\theta_1(\alpha)\big)-\frac{2\pi}{L_2}\sin\big(\alpha+\theta_2(\alpha)\big)\Big)d\alpha\\
-a_\mu\int_0^{2\pi}D_\alpha^2\big(\tilde\theta_1-\tilde\theta_2\big)\Lambda  D^2_\alpha\mathcal{Q}_1\Big(\frac{2\pi^2}{L_1^2}\mathcal{F}[z_1]\gamma_1-\frac{2\pi^2}{L_2^2}\mathcal{F}[z_2]\gamma_2\Big)d\alpha\\
+\big(1+\frac{\mu_2}{\mu_1+\mu_2}u_0\big)\int_0^{2\pi} D^2_\alpha\big(\tilde\theta_1-\tilde\theta_2\big)\Lambda \mathcal{Q}_1\Big(\frac{2\pi}{L_1}\sin\big(\alpha+\theta_1(\alpha)\big)-\frac{2\pi}{L_2}\sin\big(\alpha+\theta_2(\alpha)\big)\Big)d\alpha
\\-a_\mu\int_0^{2\pi} D^2_\alpha\big(\tilde\theta_1-\tilde\theta_2\big)\Lambda \mathcal{Q}_1\Big(\frac{2\pi^2}{L_1^2}\mathcal{F}[z_1]\gamma_1-\frac{2\pi^2}{L_2^2}\mathcal{F}[z_2]\gamma_2\Big)d\alpha\\
+\int_0^{2\pi} D^2_\alpha\big(\tilde\theta_1-\tilde\theta_2\big)(D_\alpha^3+D_\alpha)\mathcal{Q}_1\Big(\frac{2\pi^2}{L_1^2}\Re\big(\mathcal{G}[z_1]\gamma_1\big)-\frac{2\pi^2}{L_2^2}\Re\big(\mathcal{G}[z_2]\gamma_2\big)\Big)d\alpha\\
+(u_0+1)\int_0^{2\pi} D^2_\alpha\big(\tilde\theta_1-\tilde\theta_2\big)(D^3_\alpha+D_\alpha)\mathcal{Q}_1\Big(\frac{2\pi}{L_1}\cos\big(\alpha+\theta_1(\alpha)\big)-\frac{2\pi}{L_2}\cos\big(\alpha+\theta_2(\alpha)\big)\Big)d\alpha\\
\leq-\sigma\int_0^{2\pi}D^3_\alpha\big(\tilde\theta_1-\tilde\theta_2\big)\Lambda D^3_\alpha\big(\frac{4\pi^3}{L_1^3}\tilde\theta_1-\frac{4\pi^3}{L_2^3}\tilde\theta_2\big)d\alpha\\
+C\|\tilde\theta_1-\tilde\theta_2\|_2\Big(\|\theta_1-\theta_2\|_3+\beta|y_1(0,t)-y_2(0,t)|\Big),
\end{multline*}
where $C$ depends on $\epsilon$.
For $I_2, I_3, I_4$ and $I_5$, by (\ref{U3.32}) and (\ref{T3.33}) in Proposition \ref{propgamma},  we obtain
\begin{multline}\label{diffenergy}
I_2+I_3+I_4+I_5\leq C\|\tilde\theta_1-\tilde\theta_2\|_2\Big(\|\theta_1-\theta_2\|_3+\beta|y_1(0,t)-y_2(0,t)|\Big)\\
+C\big|\hat{\theta}_1(0;t)-\hat{\theta}_2(0;t)\big|\Big(\|\theta_1-\theta_2\|_3+\beta|y_1(0,t)-y_2(0,t)|\Big)\\+\big|y_1(0,t)-y_2(0,t)\big|\|U_1(\cdot,t)\sin\big(\cdot+\theta_1(\cdot,t)\big)-U_2(\cdot.t)\sin\big(\cdot+\theta_2(\cdot,t)\big)\big\|_1\\
\leq
C\|\tilde\theta_1-\tilde\theta_2\|_2\Big(\|\theta_1-\theta_2\|_3+\beta|y_1(0,t)-y_2(0,t)|\Big)\\
+C\big|\hat{\theta}_1(0;t)-\hat{\theta}_2(0;t)\big|\Big(\|\theta_1-\theta_2\|_3+\beta|y_1(0,t)-y_2(0,t)|\Big)\\+\big|y_1(0,t)-y_2(0,t)\big|\Big(\|\theta_1-\theta_2\|_3+\beta|y_1(0,t)-y_2(0,t)|\Big),\\
\end{multline}
where $C$ depends on $\epsilon$.
Actually, combining the estimates for $I_1$, $I_2$, $I_3$, $I_4$ and $I_5$, by Cauchy inequality, we have
\begin{equation*}
\frac{dE^d(t)}{dt}\leq CE^d(t).
\end{equation*}
That is
$$E^d(t)\leq E^d(0)e^{Ct}.$$
Hence, $E^d(t)=0$ if $E^d(0)=0$.
\end{proof}

\subsection{The Fr$\acute{e}$chet derivative  
$\mathfrak{U}_{\tilde\theta^{(s)}}[0,0,0]$ in \S 5}\label{FrechU}
From (C.2), $\gamma^{(s)}$ is the result of an
operator acting on $(\tilde\theta^{(s)},u_0,\beta)$.
From substituting ${\tilde \theta}^{(s)} = \epsilon h$ and
taking the $\epsilon$ derivative at $\epsilon=0$ 
and using Proposition \ref{prop2.6},  
we have
 \begin{equation}\label{frechetU}
 \mathfrak{U}_{\tilde\theta^{(s)}}[0,0,0]h=\frac12\mathcal{H}\big[\gamma^{(s)}_{\tilde\theta^{(s)}}[0,0,0]h\big](\alpha)+i\sum_{k=1}^\infty\frac{1}{k+2}\hat{h}(k+1)e^{ik\alpha}-h (\alpha) \sin{\alpha} +c.c..
 \end{equation}
From (C.2) and Proposition \ref{prop2.6},  we have
\begin{multline}\label{frechetgamma}
\gamma^{(s)}_{\tilde\theta^{(s)}}[0,0,0]h(\alpha)=2(1+a_\mu)
h(\alpha) \cos{\alpha} +\sigma h_{\alpha\alpha}(\alpha)+
2a_\mu\mathcal{H}[h \sin{\alpha}](\alpha)\\-a_\mu\sum_{k=1}^\infty\frac{2}{k+2}\hat{h}(k+1)e^{ik\alpha}+c.c..
\end{multline}

Hence, combining (\ref{frechetU}) and (\ref{frechetgamma}), using $\mathcal{H}^2=-I$, we obtain
\begin{multline*}
 \mathfrak{U}_{\tilde\theta^{(s)}}[0,0,0]h=\frac{\sigma}2\mathcal{H}\big[h_{\alpha\alpha}\big](\alpha)+(1+a_\mu)i\sum_{k=1}^\infty\frac{1}{k+2}\hat{h}(k+1)e^{ik\alpha}\\
 -(1+a_\mu) h(\alpha) \sin{\alpha} +(1+a_\mu)\mathcal{H}\big[ h \cos\alpha 
\big](\alpha)+c.c.\\
 =\frac{\sigma}2\mathcal{H}\big[h_{\alpha\alpha}\big](\alpha)-i(1+a_\mu)\sum_{k=1}^\infty\frac{k+1}{k+2}\hat{h}(k+1)e^{ik\alpha}+c.c..
 \end{multline*}

  \subsection{Expressions for $\mathcal{L}_\beta$ and $\mathcal{N}$}\label{A7.4}

\begin{definition}
We define the function
\begin{align*}
\omega_s(\alpha)=\int_0^{\alpha}e^{i\tau+i\theta^{(s)}(\tau)}d\tau.
\end{align*}
\end{definition}
\begin{multline*}
\mathcal{L}_\beta\big[\widetilde\Theta\big](\alpha,t)=\mathcal{Q}_1\Big\{\Big(\frac{1}{2}\mathcal{H}\Big(\mathcal{L}_{\beta_1}
\big[\widetilde\Theta\big]\Big)(\alpha,t)
+\mathcal{L}_{\beta_2}\big[\widetilde\Theta\big](\alpha,t)\Big)_\alpha+\mathcal{L}_{\beta_3}\big[\widetilde\Theta\big](\alpha,t)\Big\},\\
\mathcal{N}\big[\widetilde\Theta\big](\alpha,t)=\frac{2\pi}{L}\mathcal{Q}_1\Big\{\Big(\frac{1}{2}\mathcal{H}\Big(\mathcal{N}_1\big[\widetilde\Theta\big]\Big)(\alpha,t)
+\mathcal{N}_2\big[\widetilde\Theta\big](\alpha,t)\Big)_\alpha+\mathcal{N}_3\big[\widetilde\Theta\big](\alpha,t)\Big\}\nonumber\\
+\frac{2\pi-L}{L}\Big\{ \sum_{k=2}^\infty
(1+a_\mu)\frac{(k^2-1)(k+1)}{k(k+2)}\widehat\Theta(k+1)
e^{ik\alpha}\nonumber\\
-\sum_{k=-2}^{-\infty}
(1+a_\mu)\frac{(k^2-1)(k-1)}{k(k-2)}\widehat\Theta(k-1)
e^{ik\alpha}
+\mathcal{L}_\beta\big[\widetilde\Theta\big](\alpha,t)\Big\},
\end{multline*}
where
\begin{eqnarray*}
&&\mathcal{L}_{\beta_1}\big[\widetilde\Theta\big](\alpha)\\
&=&a_\mu\Re\Big(-\frac{1}{i}\mathcal{G}[z^{(s)}]\Gamma\Big)+a_\mu\Re\Big(-\frac{1}{i}\mathcal{G}_1[z]
(\gamma^{(s)}-2\sin{\alpha})+\frac{1}{i}\mathcal{G}_1[\omega_s]
(\gamma^{(s)}-2\sin\alpha)\Big)\nonumber\\
 &&-a_\mu \Re\Big(z_{\alpha}\mathcal{K}_2[z]\gamma^{(s)}(\alpha)-i\omega_{s_\alpha}\mathcal{K}_2[z^{(s)}]\gamma^{(s)}(\alpha)\Big) -4a_\mu D_\alpha\Re\Big(\mathfrak{B}[\Theta](\alpha)-\mathfrak{W}[\Theta](\alpha)\Big)\\
&&+\frac{L-2\pi}{\pi}\big(\sin(\alpha+\theta^{(s)})-\sin{\alpha}\big)+2\Theta\big(\cos(\alpha+\theta^{(s)})-\cos\alpha\big)+\frac{2\pi-L}{L}\sigma\theta^{(s)}_{{\alpha\alpha}}\\
&&+\frac{L-2\pi}{\pi}\frac{\mu_2}{\mu_1+\mu_2}u_0\sin\big(\alpha+\theta\big)+2\frac{\mu_2}{\mu_1+\mu_2}u_0\Big(\sin\big(\alpha+\theta)-\sin\big(\alpha+\theta^{(s)}\big)\Big),
\end{eqnarray*}
\begin{multline*}
\mathcal{L}_{\beta_2}\big[\widetilde\Theta\big]=\frac{2\pi-L}{2L}\mathcal{H}[\gamma^{(s)}-2\sin\alpha']
+\Re\Big(\frac{1}{2}\mathcal{G}[z^{(s)}]\Gamma\Big)
+\Re\Big(\frac{\pi}{L}\mathcal{G}_1[z]\big(\gamma^{(s)}(\alpha)-2\sin{\alpha}\big)\\
-\frac{1}{2}\mathcal{G}_1[\omega_s]\big(\gamma^{(s)}(\alpha)-2\sin{\alpha}\big)\Big)+2 \Re\Big(-\omega_{\alpha}\mathcal{K}_2[z]\gamma^{(s)}(\alpha)+\omega_{s_\alpha}\mathcal{K}_2[z^{(s)}]\gamma^{(s)}(\alpha)\Big)\\
 +\frac{2\pi-L}{L}\Re\Big(\mathcal{G}_1[\omega_s]\sin\alpha\Big)
+\Re\Big(2i\frac{\partial}{\partial\alpha}\mathfrak{B}[\Theta](\alpha)-2i\frac{\partial}{\partial\alpha}\mathfrak{W}[\Theta](\alpha)\Big)\\
+u_0\big[\cos\big(\alpha+\theta(\alpha)\big)-\cos\big(\alpha+\theta^{(s)}(\alpha)\big)\big],
\end{multline*}
\begin{eqnarray*}
\mathcal{L}_{\beta_3}\big[\widetilde\Theta\big]
&=&\Big(\int_0^\alpha\theta^{(s)}_{\alpha}(\alpha')\big(\frac{2\pi^2}{L^2}\sigma\mathcal{H}(\Theta_{\alpha\alpha})(\alpha')+\mathcal{L}\big[\widetilde\Theta\big](\alpha')\big)d\alpha'\\
&&-\frac{\alpha}{2\pi}\int_0^{2\pi}\theta^{(s)}_{\alpha}(\alpha)\big(\frac{2\pi^2}{L^2}\sigma\mathcal{H}(\Theta_{\alpha\alpha})(\alpha')+\mathcal{L}\big[\widetilde\Theta\big](\alpha')\big)d\alpha\Big)(1+\theta^{(s)}_{\alpha})\nonumber\\
&& +\theta_{\alpha}^{(s)}\Big(\int_0^\alpha\big(\frac{2\pi^2}{L^2}\sigma\mathcal{H}(\Theta_{\alpha\alpha})(\alpha')+\mathcal{L}\big[\widetilde\Theta\big](\alpha')\big)d\alpha'
-\frac{\alpha}{2\pi}\int_0^{2\pi}
\mathcal{L}\big[\widetilde\Theta\big](\alpha)d\alpha\Big)\nonumber\\&&+\Big\{\int_0^\alpha\big(1+\theta^{(s)}_{\alpha}(\alpha')\big)
\Big[\frac{1}{2}\mathcal{H}\Big(\mathcal{L}_{\beta_1}\big[\widetilde\Theta\big](\cdot)\Big)(\alpha')
+\mathcal{L}_{\beta_2}\big[\widetilde\Theta\big](\alpha')\Big]d\alpha'\nonumber\\
&&-\frac{\alpha}{2\pi}\int_0^{2\pi}\big(1+\theta_{\alpha}^{(s)}(\alpha)\big)
\Big[\frac{1}{2}\mathcal{H}\Big(\mathcal{L}_{\beta_1}\big[\widetilde\Theta\big](\cdot)\Big)(\alpha)
+\mathcal{L}_{\beta_2}\big[\widetilde\Theta\big](\alpha)\Big]
d\alpha\Big\}(1+\theta^{(s)}_{\alpha}),\nonumber\\
\end{eqnarray*}
\begin{multline*}\mathcal{N}_1\big[\widetilde\Theta\big]=a_\mu\Re\Big(-\frac{1}{
i}\mathcal{G}[z]\Gamma+\frac{1}{
i}\mathcal{G}[z^{(s)}]\Gamma\Big)
+\frac{L-2\pi}{\pi}\Big(\sin(\alpha+\theta)
-\sin\big(\alpha+\theta^{(s)}(\alpha)\big)\Big)\\
+2\Big(\sin(\alpha+\theta)-\sin\big(\alpha+\theta^{(s)}(\alpha)\big)-\Theta\cos\big(\alpha+\theta^{(s)}(\alpha)\big)\Big) -2a_\mu\Re\Big(\frac{1}{i}\frac{\partial}{\partial\alpha}\big\{\mathfrak{B}[\Xi_e[\Theta]](\alpha)\big\}\Big)\\
+2a_\mu\Re\Big(i(e^{i\Theta}-1)\Big\{\frac{\omega_{s_\alpha}}{\omega_\alpha}\big(\mathcal{G}_1[\omega]\sin\alpha-\cos\alpha\big)-\big(\mathcal{G}_1[\omega_s]\sin\alpha-\cos\alpha\big)\Big\}\Big)
\\+2a_\mu\Re\Big(\frac{\omega_{s_\alpha}}{\pi i
}\mbox{PV}\int_{\alpha-\pi}^{\alpha+\pi}\sin(\alpha')\frac{q_1[\omega-\omega_s](\alpha,\alpha')}{q_1[\omega_s](\alpha,\alpha')}
\Big(\frac{1}{\omega(\alpha)-\omega(\alpha')}-\frac{1}{\omega_s(\alpha)-\omega_s(\alpha')}
\Big)d\alpha'\Big),
\end{multline*}
\begin{multline}
\mathcal{N}_2\big[\widetilde\Theta\big]
=\frac{2\pi-L}{2L}\mathcal{H}[\Gamma-\frac{2\pi}{L}\sigma\Theta_{\alpha\alpha}]+\Re\Big(\frac{\pi}{L}\mathcal{G}[z]\Gamma-\frac{1}{2}\mathcal{G}[z^{(s)}]\Gamma
\Big)\\
+\frac{2\pi-L}{L}\Re\Big(\mathcal{G}_1[\omega]\sin\alpha-\mathcal{G}_1[\omega_s]\sin\alpha\Big)
+\Re\Big(\frac{\partial}{\partial\alpha}\big(\mathfrak{B}[\Xi_e[\Theta]](\alpha)\big)\Big)\\
+\Re\Big((e^{i\Theta}-1)\big\{\frac{\omega_{s_\alpha}}{\omega_\alpha}\big(\mathcal{G}_1[\omega]\sin(\alpha)-\cos\alpha\big)-\big(\mathcal{G}_1[\omega_s]\sin(\alpha)-\cos\alpha\big)\big\}\Big)\\
+\Big(\cos\big(\alpha+\theta(\alpha)\big)-\cos\big(\alpha+\theta^{(s)}\big)+\Theta\sin\big(\alpha+\theta^{(s)}\big)\Big)\\
-\Re\Big(\frac{\omega_{0_\alpha}}{\pi
}\mbox{PV}\int_{\alpha-\pi}^{\alpha+\pi}\sin(\alpha')\frac{q_1[\omega-\omega_s](\alpha,\alpha')}{q_1[\omega_s](\alpha,\alpha')}\cdot
\Big(\frac{1}{\omega(\alpha)-\omega(\alpha')}
-\frac{1}{\omega_s(\alpha)-\omega_s(\alpha')}
\Big)d\alpha'\Big),\nonumber
\end{multline}
\begin{eqnarray}
\mathcal{N}_3\big[\widetilde\Theta\big]&=&\Big\{\int_0^\alpha\big(1+\theta^{(s)}_{\alpha}(\alpha')\big)
\Big[\frac{1}{2}\mathcal{H}\Big(\mathcal{N}_1\big[\widetilde\Theta\big](\cdot)\Big)(\alpha')
+\mathcal{N}_2\big[\widetilde\Theta\big](\alpha')\Big]d\alpha'\nonumber\\
&& -\frac{\alpha}{2\pi}\int_0^{2\pi}
\big(1+\theta^{(s)}_{\alpha}(\alpha)\big)
\Big[\frac{1}{2}\mathcal{H}\Big(\mathcal{N}_1\big[\widetilde\Theta\big](\cdot)\Big)(\alpha)
+\mathcal{N}_2\big[\widetilde\Theta\big](\alpha)\Big]d\alpha\nonumber\\
&&+\int_0^\alpha\Theta_\alpha(\alpha')
U(\alpha')d\alpha'-\frac{\alpha}{2\pi}\int_0^{2\pi}\Theta_\alpha(\alpha)U(\alpha)d\alpha\Big\}(1+\theta^{(s)}_{\alpha})\nonumber\\
&&+\Big(\int_0^\alpha\Theta_\alpha(\alpha')U(\alpha')d\alpha'-\frac{\alpha}{2\pi}\Theta_\alpha(\alpha)U(\alpha)d\alpha\Big)
\Theta_\alpha(\alpha).\nonumber
\end{eqnarray}
\begin{multline*}
\mathcal{L}_\Gamma\big[\widetilde\Theta\big](\alpha)=2\Theta(\alpha,t)\cos{\alpha}+\frac{L-2\pi}{\pi}\sin{\alpha}-4a_\mu\Re\Big(\frac{\partial}{\partial\alpha}\big\{\mathfrak{W}[\Theta](\alpha)\big\}\Big),
\end{multline*}
\begin{eqnarray*}\mathcal{L}\big[\widetilde\Theta\big]
&=&\frac{1}{2}\mathcal{H}[\mathcal{L}_\Gamma](\alpha,t)+\frac{L-2\pi}{L}\cos{\alpha}
-\mathcal{Q}_0\theta\sin\alpha+\Re\Big(i\frac{\partial}{\partial\alpha}\big(\mathfrak{W}[\Theta](\alpha)\big)\Big).
\end{eqnarray*}

\end{document}